\newtheorem{dummy}{dummy}[section]
\newtheorem{lemma}[dummy]{Lemma}
\newtheorem{proposition}[dummy]{Proposition}
\newtheorem{corollary}[dummy]{Corollary}
\newtheorem{theorem}[dummy]{Theorem}
\theoremstyle{definition}
\newtheorem{definition}[dummy]{Definition}
\newtheorem{definition/proposition}[dummy]{Definition/Proposition}
\newtheorem{remark}[dummy]{Remark}
\newtheorem{example}[dummy]{Example}
\numberwithin{equation}{subsection}
\numberwithin{figure}{section}
\newcommand{\mc}{\mathcal}
\newcommand{\mr}{\mathrm}
\newcommand{\mb}{\mathbb}
\setlist[description]{style=nextline}
\title{Ruling polynomials and augmentations for Legendrian tangles}
\author[Tao Su]{Tao Su}
\email{taosu@math.berkeley.edu}
\address{Department of Mathematics, University of California, Berkeley}
\begin{document}

\begin{abstract}
Associated to Legendrian links in the standard contact three-space, Ruling polynomials are Legendrian isotopy invariants, which also compute augmentation numbers, that is, the points-counting of augmentation varieties for Legendrian links (up to a normalized factor) \cite{HR15}. In this article, we generalize this picture to Legendrian tangles, which are morally the pieces obtained by cutting Legendrian link fronts along 2 vertical lines. Moreover, we show that the Ruling polynomials for Legendrian tangles satisfy the composition axiom. In the special case of Legendrian knots, our arguments provide new proofs to the main results in \cite{HR15}. In the end, we also introduce generalized Ruling polynomials for Legendrian tangles, to account for non-acyclic augmentations in the ``Ruling polynomials compute augmentation numbers" picture.
\end{abstract}

\maketitle

\tableofcontents

\section*{Introduction}

Similar to smooth knot theory, there's a parallel study of Legendrian knots in contact three manifolds. The fundamental case is the Legendrian knots in the standard contact three space. The classical Legendrian invariants are the topological knot type, the Thurston-Bennequin number and the rotation number. They determine a complete set of invariants for some Legendrian knots, including the unknots, torus knots and the figure eight knots \cite{EF95,EH00}. However, in general they do not determine a complete Legendrian knot invariant, as shown by the Chekanov pairs \cite{Che02}. They have the same classical invariants, but are distinguished by a stronger invariant, the Chekanov-Eliashberg differential graded algebra. The Chekanov-Eliashberg DGAs are special cases of Legendrian contact homology differential graded algebras (LCH DGAs). Morally, associate to any pair $(V, \Lambda)$ of a Legendrian submanifold $\Lambda$ contained in a contact manifold $V$ the LCH DGAs $(\mc{A}(V,\Lambda), \partial)$ are defined via Floer theory \cite{Eli98,EGH00}. The generators are indexed by the Reeb chords of $\Lambda$. The differential counts holomorphic disks in the symplectization $\mb{R}\times V$, with boundaries along the Lagrangian cylinder $\mb{R}\times\Lambda$, and meeting the Reeb chords at positive or negative infinity. The algebras are Legendrian isotopy invariants, up to homotopy equivalence.
In the case for Legendrian knots $\Lambda$ in the standard contact three space, the LCH DGA $(\mc{A}(\Lambda), \partial)$ can also be defined purely combinatorially \cite{Che02,ENS02}.

Usually, the LCH DGAs $(\mc{A}(\Lambda), \partial)$ associated to Legendrian knots are hard to work with directly. Instead, one tries to extract some numerical invariants from them. One fundamental idea in \cite{Che02} is to consider the functor of points of $(\mc{A}(\Lambda),\partial)$:
$$\text{commutative ring } \mathbf{r}\rightarrow \{\text{DGA morphisms }(\mc{A},\partial)\rightarrow \mathbf{r}\}/\text{DGA homotopy}$$
and count the points appropriately over a finite field. One way to do so is as follows. Let $r$ be the gcd of the rotation numbers of the connected components of $\Lambda$, which ensures the existence of a $\mb{Z}/2r$-valued Maslov potential $\mu$, the DGA $\mc{A}(\Lambda)$ associated to $(\Lambda,\mu)$ is then naturally $\mb{Z}/2r$-graded. Start with a nonnegative integer $m$ dividing $2r$, we consider the space of $\mathbb{Z}/m$-graded augmentations (``$m$-graded points") valued in any finite field $k$. This defines an algebraic variety $\mr{A}ug_m(\Lambda, k)$, called the augmentation variety (See \ref{subsec:augmentations for tangles} for more details). Now, a normalized count of the points of $\mr{A}ug_m(\Lambda, \mb{F}_q)$ over a finite field $\mb{F}_q$ gives the augmentation number $aug_m(\Lambda,q):=q^{-\mr{dim}_{\mb{C}}\mr{A}ug(\Lambda,\mb{C})}|\mr{A}ug(\Lambda,\mb{F}_q)|$. This is a Legendrian isotopy invariant \cite[Thm.3.2]{HR15} and in fact distinguishes the Chekanov pairs.

More recently, some categorical Legendrian isotopy invariants, the augmentation categories $\mc{A}ug_+(\Lambda,k), \mc{A}ug_-(\Lambda,k)$, and also some of their equivalent versions or generalizations are constructed \cite{STZ14,BC14,NRSSZ15}. The augmentation categories are $A_{\infty}$ categories, and up to $A_{\infty}$-equivalence, are invariant under Legendrian isotopy of $\Lambda$. They can be viewed as the categorical refinement of augmentation varieties, in the sense that augmentation varieties only encodes the 0-th order information (points) of the LCH DGA $\mc{A}(\Lambda)$, while the augmentation categories defined also encode the higher order information (tangent spaces with additional structures). It's expected that, a refined counting of points using the augmentation category (homotopy cardinality) may give a more natural way to count augmentations (See \cite{NRSS15}).

On the other hand, similar to knot projections in smooth knot theory, the Legendrian knots admit and are determined by the front projections. By considering the types of the decomposition of the front diagrams, one leads to the notion of normal rulings \cite{CP05,Fuc03}. In \cite{CP05}, for each $m\geq 0$ as above, it's shown that a weighed count of the ($m$-graded) normal rulings of the front diagram for $\Lambda$, gives a Legendrian isotopy invariant $R_{\Lambda}^m(z)$, called the $m$-graded ruling polynomials. It turns out, the ruling polynomials can also be used to distinguish the Chekanov pairs. Ruling polynomials are the analogue of Jones polynomials in smooth knot theory, in the sense that they can also be characterized by skein relations \cite{Rut05}.

Moreover, the Ruling polynomials also admit a contact geometry interpretation in terms of augmentation numbers:

\begin{proposition}[{\cite[Thm.1.1]{HR15}}]\label{prop:Rulings and augmentations for Legendrian knots}
The augmentation numbers and the Ruling polynomials of $\Lambda$ determine each other by
\begin{equation*}
\mr{aug}(\Lambda,q)=q^{-\frac{d+l}{2}}z^lR_{\Lambda}^m(z)
\end{equation*}
where $z=q^{\frac{1}{2}}-q^{-\frac{1}{2}}$, $d$ is the maximal degree in $z$ of the Laurent polynomial $R_{\Lambda}^m(z)$, and $l$ is the number of connected components of $\Lambda$.
\end{proposition}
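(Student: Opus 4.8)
The plan is to reduce this global identity to explicit computations on elementary pieces and then glue, exploiting a locality structure on both sides of the formula. I would cut the front of $\Lambda$ along finitely many generic vertical lines into a composition of elementary Legendrian tangles $T_1,\dots,T_N$, each of which is a single crossing, a single cusp, or a collection of horizontal strands. To each $T_i$ one attaches a relative LCH DGA, hence an augmentation variety $\mr{A}ug_m(T_i,\mb{F}_q)$ lying over the ``boundary data'' on its two cut lines, and likewise a notion of normal ruling. The first task is to make the gluing precise: that $\mr{A}ug_m(\Lambda,\mb{F}_q)$ is the fiber product of the $\mr{A}ug_m(T_i,\mb{F}_q)$ over the common boundary data, and that a normal ruling of $\Lambda$ is exactly a compatible tuple of normal rulings of the $T_i$, with $R^m_\Lambda(z)$ recovered by summing the product of the local ruling contributions over all compatible tuples --- the composition axiom. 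Passing first to a resolved (``dipped'') front makes both structures transparent, since then the pieces interact only through their boundary points.

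Granting this, I would prove the formula for each elementary tangle directly. The structural input I expect to need, for the crossing pieces, is that every augmentation of the relative DGA carries a well-defined induced normal ruling --- switched precisely at the crossings where the augmentation value is forced to be nonzero --- and that, for fixed compatible boundary data, the augmentations inducing a prescribed ruling form a variety of polynomial point count $q^{a}(q-1)^{b}$ (morally an iterated bundle with fibers $\mb{F}_q$ at returns and $\mb{F}_q^{\times}$ at switches), with $a,b$ read off from the local combinatorics. This yields for each piece an identity whose terms, after the substitution $z = q^{1/2}-q^{-1/2}$, match the $z$-degrees occurring in the tangle ruling polynomial, up to universal normalizing powers of $q$ and $z$.

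The gluing step then combines these. Frobenius point counts are multiplicative over the fiber products of the decomposition --- the boundary-restriction maps being iterated affine-space bundles --- so the local identities multiply, and summing the resulting product of local ruling contributions over all compatible tuples is precisely the composition axiom and reassembles $R^m_\Lambda(z)$. What remains is normalization bookkeeping: one identifies $\dim_{\mb{C}}\mr{A}ug_m(\Lambda,\mb{C})$ with the expected combinatorial quantity so that $q^{-\dim}$ is the correct normalizing factor, and checks that the accumulated powers of $q$ and $q-1$ from the elementary pieces combine into exactly $q^{-(d+l)/2}$ times $z^l R^m_\Lambda(z)$, with $d$ the top $z$-degree as in the statement and the factor $z^l$ the standard normalization reflecting the $l$ closures, one per component of $\Lambda$, relating the tangle and closed-up ruling polynomials. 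A subtlety to keep in view throughout is the role of non-acyclic augmentations --- the point of the generalized ruling polynomials of the last section --- so one must check that for the plain statement here their contribution is already accounted for by $R^m_\Lambda(z)$.

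The main obstacle is the elementary-piece analysis together with this normalization: one must prove cleanly that augmentations induce rulings and that the fibers over a fixed ruling have the predicted polynomial point count --- the step that in the knot-only treatments requires a careful normal form for the DGA (the dipped or ``A-form'' DGA) --- and then chase the half-integer powers of $q$ and the factors of $q-1$ through the gluing without an off-by-$l$ or sign error. Independence of the resulting formula from the auxiliary choice of cutting lines, and its full Legendrian invariance, then follow from the corresponding invariance of each side, already available on the augmentation side from \cite[Thm.3.2]{HR15}.
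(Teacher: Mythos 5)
Your outline follows essentially the route the paper itself takes (Theorem \ref{thm:counting for tangles} specialized to a knot with one base point per component): cut the front into elementary tangles, identify augmentations with filtered chain complexes and handleslide data, show that the stratum over a fixed normal ruling $\rho$ has point count $(q-1)^{-\chi(\rho)+B}q^{r(\rho)}$, and glue via the sheaf property of augmentation varieties. The genuine gap is in the step you dismiss as ``normalization bookkeeping.'' It is not true that the local identities ``match the $z$-degrees occurring in the tangle ruling polynomial, up to universal normalizing powers of $q$ and $z$'': an $m$-graded return contributes a bare factor of $q$, and the number of returns $r(\rho)$ varies from ruling to ruling. Writing $q-1=q^{1/2}z$, the stratum attached to $\rho$ contributes $q^{\frac{-\chi(\rho)+B}{2}+r(\rho)}\,z^{-\chi(\rho)+B}$, so the single normalization by $q^{-\dim_{\mb{C}}\mr{Aug}}$ turns the total count into $q^{-\frac{d+B}{2}}z^{B}\sum_\rho z^{-\chi(\rho)}$ only if $-\chi(\rho)+2r(\rho)$ takes the same value on every ruling with the given boundary behavior; the same fact is needed to identify $d$ with $-\chi(\rho_0)$ for the dimension-maximizing stratum. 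This is a nontrivial combinatorial lemma, not bookkeeping: it is \cite[Lem.~3.5]{HR15}, generalized here as Lemma \ref{lem:a constant function of rulings} and proved via Proposition \ref{prop:r-d is independent of normal rulings} (the number of $m$-graded returns minus $m$-graded departures is independent of the ruling), which requires its own argument (the counting function $A(\rho|_{\{x\}})$ tracked across crossings and cusps). Your plan as written supplies no mechanism for this, and without it the accumulated powers of $q$ simply do not assemble into a single prefactor.

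Two smaller inaccuracies, not fatal: the elementary-piece computation (Lemma \ref{lem:augmentation varieties for elementary tangles}) is carried out with every right cusp marked, and the configuration with $B=l$ base points, one per component, is reached afterwards by the change-of-base-point argument (Lemma \ref{lem:dependence of aug numbers on base points}), not automatically; and the factor $z^{l}$ reflects the $l$ base points, each contributing a $k^{\times}$ factor to every stratum, rather than any ``closure'' of the tangle. Your worry about non-acyclic augmentations evaporates for knots: the left piece is empty and acyclicity propagates rightward (Corollary \ref{cor:nondegeneracy of augmentations}), so only honest normal rulings occur.
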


Regarding the structure of the augmentation variety $\mr{A}ug_m(\Lambda, k)$, the following is known:
\begin{proposition}[{\cite[Thm.3.4]{HR15}}]\label{prop:augmentation varieties for Legendrian knots}
Suppose $\Lambda$ has the nearly plat front diagram $\pi_{xz}(\Lambda)$ (see Section \ref{subsubsec:fronts}), with a fixed Maslov potential $\mu$ and $l$ base points such that each connected component has a single base point. Then there's a decomposition of the augmentation variety $\mr{Aug}_m(T;k)$ into subvarieties
\begin{eqnarray*}
\mr{Aug}_m(T;k)=\sqcup_{\rho}\mr{Aug}_m^{\rho}(T;k)
\end{eqnarray*}
where $\rho$ runs over all $m$-graded normal rulings of $\pi_{xz}(\Lambda)$, and
\begin{eqnarray*}
\mr{Aug}_m^{\rho}(T;k)\cong (k^*)^{-\chi(\rho)+l}\times k^{r(\rho)}
\end{eqnarray*}
where $\chi(\rho)=c_R-s(\rho)$, $c_R$ is the number of right cusps in $\pi_{xz}(\Lambda)$ and $s(\rho)$ is the number of switches of $\rho$ (See Definition \ref{def:switches and returns}). Finally, $r(\rho)$ is the number of $m$-graded returns if $m\neq 1$ and the number of $m$-graded returns and right cusps if $m=1$.
\end{proposition}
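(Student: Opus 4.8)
The plan is to analyze the combinatorial Chekanov--Eliashberg DGA of the nearly plat front $T=\pi_{xz}(\Lambda)$ by a left-to-right sweep, organizing the computation through the elementary tangle pieces (a single crossing, a single matched pair of left cusps, a single matched pair of right cusps, or trivial parallel strands) into which $T$ decomposes. Recall the model: the generators of the DGA are the crossings $a_1,\dots,a_c$ and right cusps $b_1,\dots,b_{c_R}$, together with base-point variables $t_1^{\pm1},\dots,t_l^{\pm1}$; an $m$-graded augmentation $\epsilon$ is a unital ring map to $k$ killing every generator of Maslov degree $\not\equiv 0\pmod m$ and satisfying $\epsilon\circ\partial=0$. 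The base-point variables split off an evident $(k^*)^l$ factor, and the nearly plat hypothesis ensures that $\partial$ of a crossing has no constant term, so the crossing and cusp variables are constrained only by the $c_R$ cusp equations $\epsilon(\partial b_j)=0$.

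First I would produce the decomposition by showing that every $\epsilon$ canonically determines an $m$-graded normal ruling $\rho_\epsilon$. Sweeping from left to right, the restriction of $\epsilon$ to the crossings to the left of a vertical slice assembles, via the augmented disk count (which in the nearly plat model is a product of elementary matrices, one per crossing, each involving the parameter $\epsilon(a)$), into a strictly upper-triangular ``path matrix'' $A_\epsilon$ on the strands of that slice. The pairs of strands that are companions in the ruling are then characterized by where $A_\epsilon$ is forced to be invertible by the downstream cusp equations; one checks that this pairing is a fixed-point-free involution that changes only across crossings, and that the degree hypothesis on the crossing generators together with the nearly plat normalization confines these changes to the configurations allowed at a \emph{switch} of a normal ruling (companion pairs nested or disjoint, never interleaved). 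This defines $\rho_\epsilon$, hence the disjoint union $\mr{Aug}_m(T;k)=\sqcup_\rho\mr{Aug}_m^{\rho}(T;k)$ with $\mr{Aug}_m^{\rho}:=\{\epsilon:\rho_\epsilon=\rho\}$, the union being over $m$-graded normal rulings $\rho$ of $T$.

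Next, with $\rho$ fixed, I would parametrize $\mr{Aug}_m^{\rho}(T;k)$ by sweeping left to right and classifying each crossing, relative to $\rho$, as a switch, an $m$-graded return, or neither. At a crossing that is neither, the coordinate $\epsilon(a)$ is pinned down by the already-processed data, or is killed by the grading, and contributes nothing. At an $m$-graded return, $\epsilon(a)$ is a genuinely free parameter in $k$, which accounts for the $k^{r(\rho)}$ factor --- together with one free $k$-parameter $\epsilon(b)$ per right cusp when $m=1$, where the degree-$1$ cusp generators are no longer grading-constrained. At a switch, the relevant coordinate (a triangular modification of $\epsilon(a)$ in the sweep order) is free but constrained to be a unit, contributing a factor $k^*$. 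Hence the parameters encountered so far live in $(k^*)^{l+s(\rho)}\times k^{r(\rho)}$, the $(k^*)^l$ coming from base points; the $c_R$ cusp equations then each impose one independent $k^*$-valued condition on the torus factor, cutting it to $(k^*)^{l+s(\rho)-c_R}=(k^*)^{-\chi(\rho)+l}$. Assembling over the elementary pieces, using that the augmentation variety of a composite tangle is the fiber product over the augmentations of the separating slices, yields the asserted isomorphism $\mr{Aug}_m^{\rho}(T;k)\cong(k^*)^{-\chi(\rho)+l}\times k^{r(\rho)}$.

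The step I expect to be the main obstacle is the first one: showing that the involution read off from an arbitrary augmentation is genuinely a \emph{normal} ruling, and that $\epsilon\mapsto\rho_\epsilon$ is well-defined and independent of the order in which crossings at nearby $x$-coordinates are processed. This is exactly where the nearly plat hypothesis and the Maslov-potential constraints on the degree-$0$ generators are indispensable, and it is also the point at which one must verify that the cusp equations involve only the torus directions (so the return directions really are free) and that they are independent. Once the bijection $\epsilon\leftrightarrow(\rho,\text{free choices})$ is established, the exponents are pure bookkeeping, which I would nail down --- including all signs --- against the standard unknot ($c_R=l=1$, $s(\rho)=r(\rho)=0$, giving a point, while for $m=1$ the extra cusp parameter gives $k$) and, say, a stabilized unknot or the Legendrian Hopf link.
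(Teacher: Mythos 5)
Your overall architecture — sweep the nearly plat front left to right through elementary pieces, package the augmentation at each vertical slice into a strictly upper-triangular "path matrix", stratify by the induced ruling, and count one $k^*$ per switch and one $k$ per $m$-graded return — is the same as the paper's (which proves the tangle version and specializes). However, both steps you defer are exactly where your proposed mechanisms, as stated, do not work, so the proposal has genuine gaps. First, the stratification: you propose to read off the companion pairing at a slice from "where $A_\epsilon$ is forced to be invertible by the downstream cusp equations". The downstream equations are the same polynomials for every point of $\mr{Aug}_m(T;k)$, so they cannot separate strata; the pairing genuinely depends on the values of $A_\epsilon$ and must be extracted intrinsically from the isomorphism class of the $\mb{Z}/m$-graded filtered complex that the left-of-slice data defines — this is the Barannikov normal form, Lemma \ref{lem:Barannikov normal form} — while the cusp equations only enter to force acyclicity (fixed-point-freeness of the involution), via Corollary \ref{cor:nondegeneracy of augmentations}. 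Relatedly, your claim that the variables "are constrained only by the $c_R$ cusp equations" is false in general: $\epsilon\circ\partial=0$ is a nontrivial equation at every generator of degree $\equiv 1\pmod m$, including crossings (and nearly plat does not exclude constant terms in crossing differentials, e.g. a crossing of the two branches of a left cusp). In the paper these extra equations are absorbed automatically because augmentations are identified slice-by-slice with chain-level data (the A-form MCS identification, Lemma \ref{lem:aug and A-form MCS for elementary tangles}); in your setup they are simply unaccounted for.

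Second, the parameter count. Asserting that at a switch "a triangular modification of $\epsilon(a)$" is free-but-a-unit, and that the $c_R$ cusp equations then cut the torus by independent conditions, is precisely what has to be proven; moreover in a nearly plat (as opposed to plat) front the cusps are interleaved with crossings, so the order "collect all parameters, then impose the cusp equations" is not even available — each equation must be processed at its position in the sweep. The paper's proof supplies the missing content: at a degree-zero crossing the condition on $\epsilon(q)$ is $r+r'(\epsilon_L)\neq 0$, $=0$, or vacuous according to switch/departure/return, where the shift $r'$ comes from handleslide moves, and Lemma \ref{lem:canonical automorphism via non-degenerate augmentation} makes $r'$ an algebraic function of the left-hand data so that each elementary piece is a trivial fiber bundle (Lemmas \ref{lem:augmentation varieties for elementary tangles} and \ref{lem:fibration for augmentation varieties of elementary tangles}); at each right cusp the equation forces the two capping strands to be paired with unit coefficient and eliminates exactly one unit parameter; the global statement then follows by iterating the fiber product (Theorem \ref{thm:augmentation varieties for Legendrian tangles}), and the passage from one base point per right cusp to one per component uses the base-point-moving isomorphisms in the proof of Lemma \ref{lem:dependence of aug numbers on base points}. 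Until you provide analogues of these normalization and triviality statements, the isomorphism $\mr{Aug}_m^{\rho}(T;k)\cong (k^*)^{-\chi(\rho)+l}\times k^{r(\rho)}$ is asserted rather than proved.
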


\addtocontents{toc}{\protect\setcounter{tocdepth}{1}}
\subsection*{Main results}
In this article, we will generalize the previous results to Legendrian tangles\footnote{Throughout the context, Legendrian tangles are assumed to be oriented.}. Legendrian tangles are (special) Legendrian submanifolds in $J^1U\subset \mb{R}_{x,y,z}^3$ transverse to the boundary $\partial \overline{J^1U}$, for some open interval $U$ in $\mb{R}_x$. Similar to Legendrian knots, one can consider the types of the decompositions of Legendrian tangle fronts. As a generalization, this leads to normal rulings and Ruling polynomials for Legendrian tangles. In this case, the boundaries (some set of labeled endpoints) of the tangles are also invariant during a Legendrian isotopy. Hence one can in fact define Ruling polynomials $<\rho_L|R_T^m(z)|\rho_R>$ with fixed boundary conditions, for a Legendrian tangle $T$ with a Maslov potential $\mu$ (see Section \ref{sec:Ruling polynomials for tangles}). Here $\rho_L$ (resp. $\rho_R$) is a given $m$-graded normal Ruling on the left (resp. right) piece ($=$ parallel strands) $T_L$ (resp. $T_R$) of $T$. As the first result, we show the Legendrian invariance and composition axiom for Ruling polynomials:

\begin{theorem}[See Theorem \ref{thm:invariance and composition of Ruling polynomials}]
The $m$-graded Ruling polynomials $<\rho_L|R_T^m(z)|\rho_R>$ are Legendrian isotopy invariants for $(T,\mu)$.\\
Moreover, suppose $T=T_1\circ T_2$ is the composition of two Legendrian tangles $T_1, T_2$, that is, $(T_1)_R=(T_2)_L$ and $T=T_1\cup_{(T_1)_R}T_2$, then the \emph{composition axiom} for Ruling polynomials holds:
\begin{equation*}
<\rho_L|R_T^m(z)|\rho_R>=
\sum_{\rho_I}<\rho_L|R_{T_1}^m(z)|\rho_I><\rho_I|R_{T_2}^m(z)|\rho_R>
\end{equation*}
where $\rho_I$ runs over all the $m$-graded normal rulings of $(T_1)_R=(T_2)_L$.
\end{theorem}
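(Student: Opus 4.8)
The plan is to prove the two assertions separately, with the composition axiom doing most of the work and Legendrian invariance following as an easy consequence. First I would set up the combinatorial model: a front diagram for $T$ that is nearly plat in the interior, with all left (resp. right) endpoints of $T$ forming the parallel-strand tangles $T_L$ (resp. $T_R$), and an $m$-graded normal ruling $\rho$ of $T$ recorded as a pair $(\rho_L,\rho_R)$ of boundary rulings together with the switching data in the interior. The weighted count defining $\langle\rho_L|R_T^m(z)|\rho_R\rangle$ is $\sum_{\rho}z^{s(\rho)-c_R(\rho)}$ (or the appropriate normalization, following the conventions of Section \ref{sec:Ruling polynomials for tangles}), the sum running over $m$-graded normal rulings of $T$ restricting to $\rho_L,\rho_R$ on the two sides. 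The composition axiom then amounts to the observation that an $m$-graded normal ruling of $T=T_1\circ T_2$ is precisely a pair of $m$-graded normal rulings $(\rho^{(1)},\rho^{(2)})$ of $T_1$ and $T_2$ that agree on the shared slice $(T_1)_R=(T_2)_L$, and that the relevant statistics are additive: $s(\rho)=s(\rho^{(1)})+s(\rho^{(2)})$ and $c_R(\rho)=c_R(\rho^{(1)})+c_R(\rho^{(2)})$, since every switch and every right cusp of $T$ lies in exactly one of the two pieces. Summing $z^{s(\rho)-c_R(\rho)}=z^{s(\rho^{(1)})-c_R(\rho^{(1)})}\cdot z^{s(\rho^{(2)})-c_R(\rho^{(2)})}$ over all such pairs, and grouping according to the intermediate ruling $\rho_I$ on the common slice, yields exactly the claimed formula. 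The only subtlety here is bookkeeping of normalizing factors (powers of $z$ or $q$ attached to the boundary pieces) so that they telescope correctly across the cut; I would fix the convention so that each $\langle\cdot|R^m|\cdot\rangle$ carries ``half'' the endpoint data on each side, making the composition manifestly multiplicative.

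Next I would address Legendrian invariance. By the Legendrian isotopy extension for tangles, any two fronts representing Legendrian-isotopic $(T,\mu)$ with the same boundary are related by a finite sequence of (i) planar isotopies of the front, (ii) the three Legendrian Reidemeister moves performed in the interior, and (iii) the ``base point'' moves, all of which can be taken to occur inside a small sub-tangle while the rest is held fixed. Using the composition axiom, invariance of $\langle\rho_L|R_T^m(z)|\rho_R\rangle$ reduces to invariance of the middle factor $\langle\rho_I|R_{T_0}^m(z)|\rho_I'\rangle$ for the small sub-tangle $T_0$ in which the move occurs. So it suffices to check, for each elementary move and each pair of boundary rulings, that the two weighted counts coincide — this is a finite, local computation, essentially the tangle analogue of the classical verification that ruling polynomials are Legendrian invariants \cite{CP05, Fuc03}. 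For the triple-point (Reidemeister III) and cusp-crossing moves one has to match up normal rulings on either side of the move, tracking how switches and returns change; the near-plat/generic position hypothesis guarantees that only finitely many configurations arise.

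The main obstacle is the local invariance check for the Reidemeister III move (and, depending on conventions, the move sliding a crossing past a cusp): one must exhibit an explicit bijection between $m$-graded normal rulings before and after the move that preserves the weight $z^{s-c_R}$, and the normality condition interacts with the relative heights of the three strands in a way that forces a careful case analysis according to which strands are paired and which are switched. I expect this to occupy the bulk of the proof; everything else (planar isotopy invariance, the stabilization/base-point move, and the additivity of statistics under composition) is routine once the definitions in Section \ref{sec:Ruling polynomials for tangles} are in place. A secondary point worth care is ensuring the $m$-graded condition is respected by every bijection — i.e. that the Maslov-potential constraint on which crossings may switch is local and hence unaffected by a move performed away from it.
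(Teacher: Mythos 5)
Your proposal is correct and follows essentially the same route as the paper: the paper's Lemma \ref{lem:filling_surface} is exactly your move-by-move bijection of $m$-graded normal rulings (tracking the filling surface $S_\rho$ rel boundary rather than the weight $z^{s-c_R}$ directly, which is equivalent by Remark \ref{rem:filling_surface_computation_formula}), and the composition axiom is obtained from additivity of $\chi(\rho)$ across the cut just as you argue via additivity of $s$ and $c_R$, with no extra normalization needed. The one step you call routine that is not quite: a planar isotopy exchanging the $x$-order of two crossings can force the switch of a ruling to jump from one crossing to the other (the normality condition at a switch depends on the relative heights of the companion strands at that $x$-coordinate), so this case needs its own small bijection argument—see Figure \ref{fig:filling surface_smooth isotopy}—though the weight is preserved and it fits within your planned local checks.
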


On the other hand, generalizing the LCH DGAs for Legendrian knots, one can construct a (bordered) LCH DGA $\mc{A}(T,\mu,*_1,\ldots,*_B)$, associated to any Legendrian tangle $(T,\mu)$ with base points $*_1,\ldots,*_B$. For example, see \cite{Siv11} and \cite[Section.6]{NRSSZ15} in the case when $T$ has the simple front diagram. As usual, one obtains the homotopy invariance of the DGAs. Hence, by a similar procedure as in the case of Legendrian knots, one can consider the associated augmentation varieties $\mr{Aug}_m(T,\epsilon_{\rho_L},\rho_R;k)$ and augmentation numbers $\mr{aug}_m(T,\rho_L,\rho_R;q)$, with fixed boundary conditions $(\rho_L, \rho_R)$ as above (see Definition \ref{def:aug varieties with boundary conditions}, \ref{def:augmentation number}). These augmentation numbers are again Legendrian isotopy invariants. Moreover, generalizing the previous Proposition \ref{prop:Rulings and augmentations for Legendrian knots} \cite[Thm.1.1]{HR15}, we show that:

\begin{theorem}[See Theorem \ref{thm:counting for tangles}]
Let $T$ be a Legendrian tangle equipped with a $\mb{Z}/2r$-valued Maslov potential $\mu$ and $B$ base points so that each connected component containing a right cusp has at least one base point. Fix a nonnegative integer $m$ dividing $2r$ and $m$-graded normal rulings $\rho_L,\rho_R$ of $T_L,T_R$ respectively, then the augmentation numbers and Ruling polynomials of $(T,\mu)$ are related by
\begin{equation*}
\mr{aug}_m(T,\rho_L,\rho_R;q)=q^{-\frac{d+B}{2}}z^B<\rho_L|R_T^m(z)|\rho_R>
\end{equation*}
where $q$ is the order of a finite field $\mb{F}_q$, $z=q^{\frac{1}{2}}-q^{-\frac{1}{2}}$, $d$ is the maximal degree in $z$ of $<\rho_L|R_T^m(z)|\rho_R>$.
\end{theorem}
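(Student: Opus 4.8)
\emph{Proof plan.} The argument follows the template of \cite{HR15}: first upgrade Proposition \ref{prop:augmentation varieties for Legendrian knots} to tangles with fixed boundary conditions, then read off the point count. Put $T$ in a generic (nearly plat / simple) front and slice it by vertical lines into elementary tangles --- each a single crossing, a single left or right cusp, a single base point, or a trivial strip possibly carrying a Maslov potential jump. First I would establish, by induction on the number of slices, the structure theorem
\begin{equation*}
\mr{Aug}_m(T,\epsilon_{\rho_L},\rho_R;k)=\bigsqcup_{\rho}\mr{Aug}_m^{\rho}(T;k),\qquad \mr{Aug}_m^{\rho}(T;k)\cong(k^{*})^{a(\rho)}\times k^{b(\rho)},
\end{equation*}
where $\rho$ ranges over the $m$-graded normal rulings of $T$ that restrict to $\rho_L$ on the left and $\rho_R$ on the right, $a(\rho)=-\chi(\rho)+B$ with $\chi(\rho)=c_R-s(\rho)$, and $b(\rho)$ counts the $m$-graded returns of $\rho$ (together with the right cusps when $m=1$) --- i.e.\ the same formulas as in the closed case. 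The tool is the bordered LCH DGA $\mc{A}(T,\mu,*_1,\dots,*_B)$: adding one elementary slice amounts to composing $T$ with a one-piece tangle, the bordered DGA of the composite is the pushout of the two bordered DGAs over the trivial DGA of the shared trivial tangle, so the augmentation variety of the composite is a fibre product of those of the pieces; analysing the finitely many local pictures at the added crossing / cusp / base point (recorded by whether the slice is a switch, a return, a departure, or irrelevant relative to $\rho$) shows each such restriction map is an affine fibration with fibre $(k^{*})^{?}\times k^{?}$, and the inductive assembly yields the displayed form. The base-point hypothesis is used exactly to bring the right-cusp relations of $\mc{A}(T,\mu,\dots)$ into a shape that contributes a free $k^{*}$-coordinate rather than an equation; the same argument run on a closed front (one base point per component, $B=l$) reproduces Proposition \ref{prop:augmentation varieties for Legendrian knots}, hence the promised new proof of \cite{HR15}.

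Granting the structure theorem, the count over $\mb{F}_q$ is
\begin{equation*}
\bigl|\mr{Aug}_m(T,\epsilon_{\rho_L},\rho_R;\mb{F}_q)\bigr|=\sum_{\rho}(q-1)^{a(\rho)}q^{b(\rho)}=\sum_{\rho}q^{a(\rho)/2+b(\rho)}\,z^{a(\rho)},
\end{equation*}
using $q-1=q^{1/2}z$. On the other side, by definition the ruling polynomial with boundary conditions is $<\rho_L|R_T^m(z)|\rho_R>=\sum_{\rho}z^{-\chi(\rho)}$ over the same index set, so $z^{B}<\rho_L|R_T^m(z)|\rho_R>=\sum_{\rho}z^{a(\rho)}$ and $d=\max_{\rho}(-\chi(\rho))$. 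Dividing the point count by $q^{\dim_{\mb{C}}\mr{Aug}_m(T,\epsilon_{\rho_L},\rho_R;\mb{C})}=q^{\max_{\rho}(a(\rho)+b(\rho))}$, the asserted identity reduces, term by term in $\rho$, to
\begin{equation*}
-\,\frac{\chi(\rho)}{2}+b(\rho)=\max_{\rho'}\bigl(a(\rho')+b(\rho')\bigr)-\frac{d+B}{2},
\end{equation*}
i.e.\ to the purely combinatorial claim that $-\chi(\rho)+2b(\rho)$ is independent of the $m$-graded normal ruling $\rho$ with boundary $(\rho_L,\rho_R)$. This I would again prove by slicing: each elementary piece increments $-\chi(\rho)+2b(\rho)$ by an amount that does not depend on the local ruling choice at that piece, so the total is forced; alternatively it follows from Legendrian invariance of $\dim\mr{Aug}_m$ combined with the composition axiom for ruling polynomials already established. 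Feeding this in and identifying $d$ with the top $z$-degree of $<\rho_L|R_T^m(z)|\rho_R>$ gives $\mr{aug}_m(T,\rho_L,\rho_R;q)=q^{-(d+B)/2}z^{B}<\rho_L|R_T^m(z)|\rho_R>$.

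\emph{Main obstacle.} Everything hinges on the structure theorem of the first step. Unlike the closed case one must carry along the frozen boundary datum $\epsilon_{\rho_L}$ on the left edge of the bordered DGA and the compatibility requirement with $\rho_R$ on the right edge, and show the restriction maps attached to the elementary slices remain iterated affine fibrations in their presence; pinning down the fibres precisely --- in particular verifying that the normality of $\rho$ and the $\mb{Z}/m$-grading conditions are exactly what excludes the bad local configurations at crossings --- is where essentially all the work lies. Once that is in hand, the $\mb{F}_q$-count and the normalization bookkeeping, including the constancy of $-\chi(\rho)+2b(\rho)$, are routine.
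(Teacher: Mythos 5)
Your overall route is the same as the paper's (slice into elementary tangles, prove an iterated-fibration structure theorem for $\mr{Aug}_m(T,\epsilon_{\rho_L},\rho_R;k)$, count $\mb{F}_q$-points, and reduce the normalization to the constancy of $-\chi(\rho)+2r(\rho)$), but there are two genuine gaps. The more serious one is your justification of the combinatorial claim that $-\chi(\rho)+2b(\rho)$ is independent of $\rho$. Your proposed argument --- ``each elementary piece increments $-\chi(\rho)+2b(\rho)$ by an amount that does not depend on the local ruling choice at that piece'' --- is false: at a single degree-$0$ crossing a switch contributes $1$, an $m$-graded return contributes $2$, and an $m$-graded departure contributes $0$, so the local increment does depend on the ruling. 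The constancy is a global statement; in the paper it is obtained by writing $-\chi(\rho)+2r'(\rho)=r_m-c_R+r'(\rho)-d(\rho)$ (with $r_m$ the number of degree-$0$ crossings) and then proving Proposition \ref{prop:r-d is independent of normal rulings}, i.e.\ that $r'(\rho)-d(\rho)$ is determined by the boundary conditions; this requires introducing the counting function $A(x)=A(\rho|_{\{x\}})$ of Definition \ref{def:subsets of I determined by normal rulings} and checking that it changes by $\pm1$ exactly at $m$-graded returns/departures and by $\rho$-independent constants at cusps. Your fallback (``Legendrian invariance of $\dim\mr{Aug}_m$ plus the composition axiom'') does not obviously supply this either, since invariance of a maximum over rulings says nothing about the individual terms; some substitute for this lemma is genuinely needed (and note the paper cannot simply quote \cite[Lem.3.5]{HR15}, which assumes a nearly plat front).

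The second gap is the base-point bookkeeping. Your slicing argument for the structure theorem needs every right cusp to be marked: at an unmarked right cusp $q$ the relation $\partial q=1+a_{k,k+1}$ forces $\epsilon_L(a_{k,k+1})=-1$, a nontrivial closed condition on $\epsilon_L\in\mc{O}_m(\rho_L;k)$, so the restriction map of that slice is not a surjective affine fibration and the induction breaks; this is precisely why Theorem \ref{thm:augmentation varieties for Legendrian tangles} and Lemma \ref{lem:augmentation varieties for elementary tangles} are stated only for marked right cusps. The theorem you are proving, however, only assumes one base point per component containing a right cusp, so most right cusps are unmarked. The paper bridges this by first proving the count when every right cusp is marked and then invoking Lemma \ref{lem:dependence of aug numbers on base points}, which shows the normalized augmentation number $q^{\frac{d+B}{2}}z^{-B}\mr{aug}_m$ is unchanged under adding/moving base points (via explicit DGA isomorphisms such as $a\mapsto t_i^{-1}a$ and $t_B\mapsto t_Bt_{B+1}$). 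Your proposal needs either this reduction or a direct argument at unmarked right cusps; as written, asserting the structure theorem under the weak base-point hypothesis and proving it by slicing does not go through.
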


\begin{remark}
When $T$ is a Legendrian knot\footnote{Throughout the context, we make no distinction between `Legendrian knot' and `Legendrian link'.}, with $B=l$ base points placed on $T$ so that each connected component of $T$ contains a single base point. The left and right pieces of $T$ are empty tangles, hence the boundary conditions become trivial and the theorem reduces to the previous proposition \ref{prop:Rulings and augmentations for Legendrian knots}. This gives a new proof of Proposition \ref{prop:Rulings and augmentations for Legendrian knots} \cite[Thm.1.1]{HR15}.
\end{remark}

More generally, one can consider the augmentation varieties $\mr{Aug}_m(T,\epsilon_{\rho_L},\rho_R;k)$ with boundary conditions $(\epsilon_L,\rho_R)$, where $\epsilon_L$ is any $m$-graded augmentation defining $\rho_L$ of $T_L$ (all such augmentations form an orbit $\mc{O}_m(\rho_L;k)$ of the canonical one $\epsilon_{\rho_L}$, see Remark \ref{rem:normal rulings via non-degenerate augmentations}).
Similar to Proposition \ref{prop:augmentation varieties for Legendrian knots} \cite[Thm.3.4]{HR15}, we have the following structure theorem for the augmentation varieties $\mr{Aug}_m(T,\epsilon_{\rho_L},\rho_R;k)$, but with a different proof:

\begin{theorem}[See Theorem \ref{thm:augmentation varieties for Legendrian tangles}]
Let $(T,\mu)$ be any Legendrian tangle, with $B$ base points placed on $T$ so that \emph{each right cusp is marked}.
Fix $m$-graded normal rulings $\rho_L,\rho_R$ of $T_L, T_R$ respectively. Fix $\epsilon_L\in\mc{O}_m(\rho_L;k)$. Then
there's a decomposition of augmentation varieties into disjoint union of subvarieties
\begin{eqnarray*}
\mr{Aug}_m(T,\epsilon_L,\rho_R;k)=\sqcup_{\rho}\mr{Aug}_m^{\rho}(T,\epsilon_L,\rho_R;k)
\end{eqnarray*}
where $\rho$ runs over all $m$-graded normal rulings of $T$ such that $\rho|_{T_L}=\rho_L,\rho|_{T_R}=\rho_R$. Moreover,
\begin{eqnarray*}
\mr{Aug}_m^{\rho}(T,\epsilon_L,\rho_R;k)\cong (k^*)^{-\chi(\rho)+B}\times k^{r(\rho)}.
\end{eqnarray*}
\end{theorem}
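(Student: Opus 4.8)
The plan is to prove the theorem by an induction that sweeps the front of $T$ from left to right, gluing on one elementary tangle at a time; this is different from the global computation on a nearly plat front used in \cite{HR15}. Fix a front for $T$ with all crossings and cusps at distinct $x$-coordinates (no loss, after a small Legendrian isotopy), and cut it along vertical lines into elementary pieces $\mc{E}_1,\ldots,\mc{E}_N$, where each $\mc{E}_i$ is either a single crossing, a single left cusp, a single right cusp together with the base point it is required to carry, or a trivial tangle of parallel strands carrying at most one base point. Set $T^{[j]}:=\mc{E}_1\circ\cdots\circ\mc{E}_j$, so that $T^{[0]}=T_L$ and $T^{[N]}=T$; write $B^{[j]}$ for its number of base points and $\mc{I}_j$ for the trivial tangle on its right boundary. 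I would prove, by induction on $j$, the following: for each $m$-graded normal ruling $\rho$ of $T^{[j]}$ with $\rho|_{T_L}=\rho_L$, the set of augmentations $\epsilon\in\mr{Aug}_m(\mc{A}(T^{[j]});k)$ with $\epsilon|_{T_L}=\epsilon_L$ whose restriction to every vertical slice induces the corresponding restriction of $\rho$ (in the sense of Remark \ref{rem:normal rulings via non-degenerate augmentations}) is isomorphic to $(k^*)^{-\chi(\rho)+B^{[j]}}\times k^{r(\rho)}$, and these sets, as $\rho$ varies, partition $\{\epsilon:\epsilon|_{T_L}=\epsilon_L\}$. Specializing to $j=N$ and keeping only the $\rho$ with $\rho|_{T_R}=\rho_R$ --- equivalently, by the same remark, those $\epsilon$ with $\epsilon|_{T_R}\in\mc{O}_m(\rho_R;k)$ --- yields the theorem.

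The engine of the induction is the van Kampen property of the bordered LCH DGA: since $T^{[j]}=T^{[j-1]}\circ\mc{E}_j$ with the two pieces glued along the trivial tangle $\mc{I}_{j-1}$, the construction of $\mc{A}(T,\mu,*_1,\ldots,*_B)$ (cf. \cite{Siv11,NRSSZ15}) presents $\mc{A}(T^{[j]})$ as the pushout of $\mc{A}(T^{[j-1]})$ and $\mc{A}(\mc{E}_j)$ over $\mc{A}(\mc{I}_{j-1})$, so an augmentation of $\mc{A}(T^{[j]})$ is exactly a pair of augmentations of $\mc{A}(T^{[j-1]})$ and $\mc{A}(\mc{E}_j)$ that induce the same augmentation of $\mc{A}(\mc{I}_{j-1})$. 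Hence, fixing a ruling $\rho^{[j-1]}$ of $T^{[j-1]}$ and one of the augmentations it carries, $\epsilon^{[j-1]}$, the inductive step reduces to the purely local problem: with $\epsilon_I:=\epsilon^{[j-1]}|_{\mc{I}_{j-1}}$ held fixed, describe the augmentations of $\mc{A}(\mc{E}_j)$ with that left restriction, organized by the normal ruling $\rho'$ of $\mc{E}_j$ they induce --- which must restrict to $\rho_I:=\rho^{[j-1]}|_{\mc{I}_{j-1}}$ on the left and glue with $\rho^{[j-1]}$ to a ruling $\rho^{[j]}$ of $T^{[j]}$.

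The base cases are then read off the short list of generators and differentials of each elementary tangle. A base point $*$ away from a cusp simply makes $\epsilon(t_*)\in k^*$ free: one $k^*$ factor, no change of ruling. A left cusp introduces two new strands, which any extending ruling $\rho'$ matches to one another, and the attendant DGA relations create no new free parameters. A right cusp contributes its Reeb-chord generator $c$ with $\partial c=t_*^{\pm1}+\cdots$; solving $\epsilon(\partial c)=0$ determines $\epsilon(t_*)$ as an invertible element, while $\epsilon(c)$ is forced to $0$ unless $m=1$, in which case $\epsilon(c)\in k$ is free. Thus a right cusp with its base point contributes $0$ to the exponent of $k^*$ --- the $-1$ it puts into $-\chi(\rho)$ via $c_R$ being cancelled by the $+1$ it puts into $B$ --- and $+1$ to the exponent of $k$ precisely when $m=1$, matching the definition of $r(\rho)$. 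The essential case is a crossing $a$: from $\partial a$ and the occurrences of $a$ in the differentials of the two strands it joins, one finds that relative to $\rho_I$ the crossing is of exactly one local type --- a switch, which forces $\epsilon(a)\in k^*$ and transposes the matching (one $k^*$ factor, in line with $s(\rho)$ entering $-\chi(\rho)=s(\rho)-c_R$); a return, which leaves $\epsilon(a)\in k$ free (one $k$ factor, counted by $r(\rho)$); or neither, when $\epsilon(a)$ is determined and nothing is contributed --- and the admissible $\rho'$ are exactly those that extend $\rho_I$ through $a$ normally. Since $B$, $c_R$, $s(\rho)$ and $r(\rho)$ are all additive under the cut, and the local contribution of $\mc{E}_j$ equals, case by case, the increments of $-\chi(\rho)+B$ and of $r(\rho)$, the inductive step closes; the partition property is preserved because every augmentation of $\mc{A}(\mc{E}_j)$ with a given left restriction induces a unique $\rho'$. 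The main obstacle I anticipate is exactly this local bookkeeping at crossings and right cusps --- establishing the ``switch $\Leftrightarrow$ invertible / return $\Leftrightarrow$ free / otherwise determined'' dichotomy, treating the degenerate configurations in which the two crossing strands are $\rho_I$-matched to each other or to a common strand, checking that the right-cusp relation does determine $\epsilon(t_*)$ invertibly under the incoming ruling, and confirming that the exponents add up --- whereas the reduction to these local cases is formal once the van Kampen property is available.
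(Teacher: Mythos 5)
Your overall architecture coincides with the paper's: cut $T$ into elementary tangles, use the pushout/co-sheaf property of the bordered DGAs to reduce everything to a local analysis of one elementary piece with fixed left boundary augmentation, classify the local possibilities by the switch/return/departure type, and induct from left to right (this is precisely the scheme of Definition \ref{def:augmentaion varieties via normal rulings} together with Lemmas \ref{lem:aug and A-form MCS for elementary tangles} and \ref{lem:augmentation varieties for elementary tangles}). The gap sits exactly where you flag the ``main obstacle''. First, the local dichotomy at a crossing $q$ is not ``switch $\Leftrightarrow\epsilon(q)\in k^*$'' for an arbitrary $\epsilon_I\in\mc{O}_m(\rho_I;k)$: translating $\epsilon_I$ into a filtered complex $(C,d_I)$, the right-hand restriction corresponds to $s_k\circ H_{-\epsilon(q)}$ applied to $d_I$, and the single value of $\epsilon(q)$ producing the departure-type right ruling is a constant $c(\epsilon_I)$ which is in general nonzero; the switch locus is $\{\epsilon(q)\neq c(\epsilon_I)\}$. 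Even establishing this dichotomy requires first conjugating $d_I$ to a standard (Barannikov) form by a unipotent automorphism and commuting the handleslide $H_{-\epsilon(q)}$ past the handleslides representing that automorphism --- this is the actual content of the proof of Lemma \ref{lem:augmentation varieties for elementary tangles}, and it is also where your ``degenerate configurations'' and the check that the marked right cusp forces the incoming ruling to pair the two cusp strands (so that the partition claim holds) get resolved.

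Second, and more seriously for this particular statement: your inductive step fixes one augmentation $\epsilon^{[j-1]}$ at a time and computes the fiber of $P_j$ over it. That yields a surjection whose every fiber is isomorphic to $(k^*)^{a}\times k^{b}$, which suffices for counting $\mb{F}_q$-points (Theorem \ref{thm:counting for tangles}) but does not yield the asserted isomorphism $\mr{Aug}_m^{\rho}(T,\epsilon_L,\rho_R;k)\cong(k^*)^{-\chi(\rho)+B}\times k^{r(\rho)}$: since the intermediate restrictions vary over a positive-dimensional family, you must show the fibration is algebraically \emph{trivial}, i.e.\ that the critical value $c(\epsilon_I)$ depends algebraically on $\epsilon_I$. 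The paper supplies exactly this via Lemma \ref{lem:canonical automorphism via non-degenerate augmentation} (a canonical algebraic section of $\mr{Aut}(C)\to\mc{O}_m(\rho;k)$ normalizing every complex in the orbit to standard form) and Lemma \ref{lem:fibration for augmentation varieties of elementary tangles} (the projection $\mr{Aug}_m(E,\rho_L,\rho_R;k)\to\mc{O}_m(\rho_L;k)$ is a trivial bundle, trivialized by translating the crossing coordinate by this algebraically varying constant). Without an ingredient of this kind your induction produces an iterated fibration with the correct fibers, not the product decomposition claimed in the theorem; adding it would make your plan essentially identical to the paper's proof.
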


In the end, we consider \emph{generalized normal rulings} and introduce \emph{generalized Ruling polynomials} for Legendrian tangles, partly suggested by the proofs of the main theorems above. Moreover, the previous main results admit a direct generalization to this setting, and essentially the same arguments apply. It turns out that there're 2 slightly different approaches to do so. The generalized normal rulings as in Definition \ref{def:generalized normal rulings_2}, were firstly introduced in \cite{LR12}. See also \cite{LR12} for some applications of generalized normal rulings to the study of Legendrian knots in $J^1S^1$.

\subsection*{Organization}
In Section \ref{sec:background}, We review the basic backgrounds in Legendrian knot theory. In Section \ref{sec:Ruling polynomials for tangles}, we discuss the basics of Legendrian tangles, define the normal rulings and Ruling polynomials for Legendrian tangles. Then we prove that the Ruling polynomials are Legendrian isotopy invariants and satisfy the composition axiom, the key axiom of a TQFT (Theorem \ref{thm:invariance and composition of Ruling polynomials}). In Section \ref{sec:LCH DGA for tangles}, we discuss the LCH DGAs for any Legendrian tangles (not necessarily with nearly plat fronts) via the front projection and resolution construction. In Section \ref{sec:Augmentations for tangles}, we define augmentation varieties and augmentation numbers for Legendrian tangles (with fixed boundary conditions). After that, we prove an algorithm to compute the augmentation numbers. The invariance of augmentation numbers and the main Theorem \ref{thm:counting for tangles} then follow quickly. The key ingredients of the algorithm are the structures of the augmentation varieties associated to the trivial Legendrian tangle of $n$ parallel strands and elementary Legendrian tangles. The former is a result about the Barannikov normal forms (Lemma \ref{lem:Barannikov normal form}). The latter (Lemma \ref{lem:augmentation varieties for elementary tangles}) is dealt with in Section \ref{sec:Augmentations for elementary Legendrian tangles}, where we also prove a stronger result, which leads to a structure theorem (Theorem \ref{thm:augmentation varieties for Legendrian tangles}) for the augmentation varieties associated to any Legendrian tangles. In Section \ref{sec:Ruling vs aug:general case}, we consider generalized normal rulings and generalized Ruling polynomials for Legendrian tangles, and give 2 possible ways to generalize the main results: Theorem \ref{thm:invariance and composition of Ruling polynomials}, Theorem \ref{thm:counting for tangles} and Theorem \ref{thm:augmentation varieties for Legendrian tangles}.

\subsection*{Acknowledgements}
First of all, I would like to express my deep gratitude to my advisor, Prof. Vivek Shende for numerous invaluable discussions and suggestions throughout this project. Moreover, I'm very grateful to the American Institute of Mathematics for sponsoring a 2017 SQUARE meeting ``Sheaf theory and Legendrian knots", where part of this article was improved, to the other participants of the meeting: Roger Casals, Lenhard Ng, Dan Rutherford, Vivek Shende, especially to Dan Rutherford, for valuable suggestions and comments in helping improving the article. I'm also grateful to my co-advisor, Prof. Richard E.Borcherds for allowing me to try problems according to my own interest. Finally, I would like to thank Kevin Donoghue for useful conversations, and thank Benson Au for teaching me how to use Inkscape.

\addtocontents{toc}{\protect\setcounter{tocdepth}{2}}

\section{Background}\label{sec:background}
\subsection{Legendrian knot basics}
\subsubsection{Contact basics}\label{subsubsec:contact_basics}
Take the standard contact three-space $\mathbb{R}_{x,y,z}^3=J^1(\mathbb{R}_x)=T^*\mathbb{R}_x\times\mathbb{R}_z$ with contact form $\alpha=dz-ydx$. The Reeb vector field of $\alpha$ is then $R_{\alpha}=\partial_z$. We consider a (one-dimensional) Legendrian submanifold (termed as \emph{knot} or \emph{link}) $\Lambda$ in this three space $\mathbb{R}^3$. The \emph{front} and \emph{Lagrangian projections} of $\Lambda$ are $\pi_{xz}(\Lambda)$ and $\pi_{xy}(\Lambda)$ respectively, with the obvious projections $\pi_{xz}:\mathbb{R}_{x,y,z}^3\rightarrow\mathbb{R}_{x,z}^2$ and $\pi_{xy}:\mathbb{R}_{x,y,z}^3\rightarrow\mathbb{R}_{x,y}^2$.

\subsubsection{Front diagrams}\label{subsubsec:fronts}
We will always \emph{assume} the Legendrian link $\Lambda\subset \mathbb{R}^3$ is in a generic position inside its Legendrian isotopy class. So, the front projection $\pi_{xz}(\Lambda)$ gives a \emph{front diagram} (i.e. an immersion of a finite union of circles into $\mathbb{R}_{xz}^2$ away from finitely many points (cusps) having no vertical tangent, which is also an embedding away from finitely many points (cusps and transversal crossings)). The significance of front diagrams is that, any Legendrian link is uniquely determined by its front projection. That is, the $y$-coordinate can be recovered from the $x$ and $z$-coordinates of the front projection as the slope, via the Legendrian condition $dz-ydx=0\Rightarrow y=dz/dx$. In other words, in passing to the front projection, we loss no information. Note also that, near each crossing of a front diagram, the strand of the lesser slope is always the over-strand.

Given a front diagram, the \emph{strands} of $\pi_{xz}(\Lambda)$ are the maximally immersed connected submanifolds, the \emph{arcs} of $\pi_{xz}(\Lambda)$ are the maximally embedded connected submanifolds and the \emph{regions} are the maximal connected components of the complement of $\pi_{xz}(\Lambda)$ in $\mb{R}_{xz}^2$.

We say a front diagram in $\mathbb{R}_{x,z}^2$ is \emph{plat} if the crossings have distinct $x$-coordinates, all the left cusps have the same $x$-coordinate and likewise for the right cusps. We say a front diagram is \emph{nearly plat}, if it's a perturbation of a front diagram, so that the crossings and cusps all have different $x$-coordinates. We can always make the front diagram $\pi_{xz}(\Lambda)$ (nearly) plat by smooth isotopies and Legendrian Reidemeister II moves (see FIGURE \ref{fig:LR}).

We say a front diagram in $\mb{R}_{xz}^2$ is \emph{simple} if it's smooth isotopic to a front whose right cusps have the same $x$-coordinate. For example, any (nearly) plat front diagram is simple.

\subsubsection{Resolution construction}\label{subsubsec:resolution_construction}
In this article, we will use both the front and Lagrangian projections. Hence, it's often necessary to translate between the 2 projections in some simple way. This can be realized by the resolution construction \cite[Prop.2.2]{Ng03}. Given the front diagram $\pi_{xz}(\Lambda)$, we can obtain the Lagrangian projection $\pi_{xy}(\Lambda')$ of a link $\Lambda'$ Legendrian isotopic to $\Lambda$, via a resolution procedure as in FIGURE \ref{fig:Res}. We say that $\Lambda'$ is obtained from $\Lambda$ by \emph{resolution construction}. Note that the same conclusion applies to Legendrian tangles (see Section \ref{subsec:Legendrian tangles} for the definition.)

\begin{figure}[!htbp]
\begin{center}
\minipage{0.8\textwidth}
\includegraphics[width=\linewidth,height=0.5in]{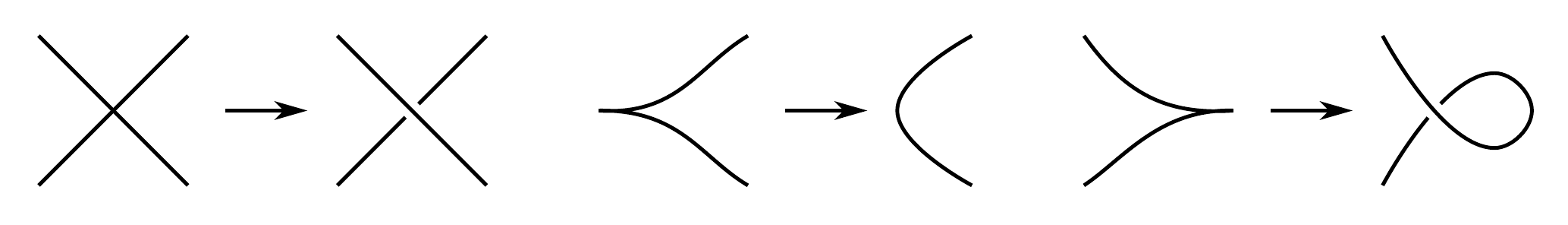}
\endminipage\hfill
\end{center}
\caption{Resolving a front into the Lagrangian projection of a Legendrian isotopic link/tangle.}
\label{fig:Res}
\end{figure}

\subsection{Legendrian Reidemeister moves and Classical invariants}
\subsubsection{Legendrian Reidemeister moves}\label{subsubsec:LR moves}
It's well known that any smooth knot can be represented by a knot diagram, and any 2 knot diagrams represent smoothly isotopic knots if and only if they differ by smooth isotopy and a finite sequence of 3 types of topological Reidemeister moves. There's an analogue for Legendrian knots via front diagrams. That is, 2 front diagrams in $\mb{R}_{xz}^2$ represent the same Legendrian isotopy class of Legendrian knots in $\mb{R}_{x,y,z}^3$ if and only if they differ by a finite sequence of smooth isotopies and the following \emph{Legendrian Reidemeister moves} of 3 types (\cite{Swi92}):

\begin{figure}[!htbp]
\begin{center}
\minipage{0.3\textwidth}
\includegraphics[width = \linewidth]{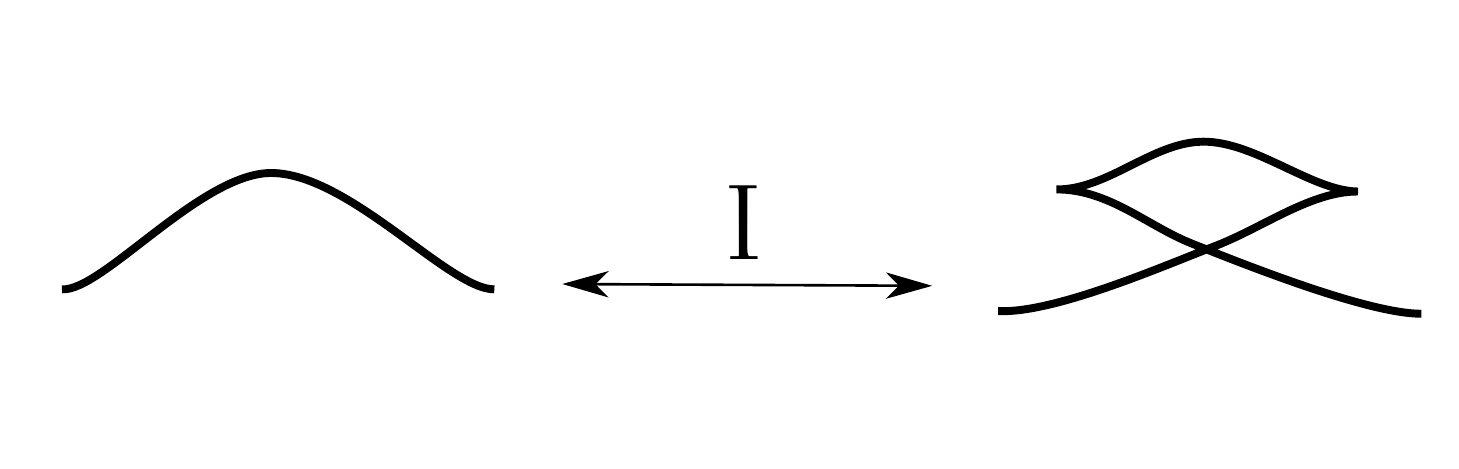}
\endminipage\hfill
\minipage{0.3\textwidth}
\includegraphics[width=\linewidth]{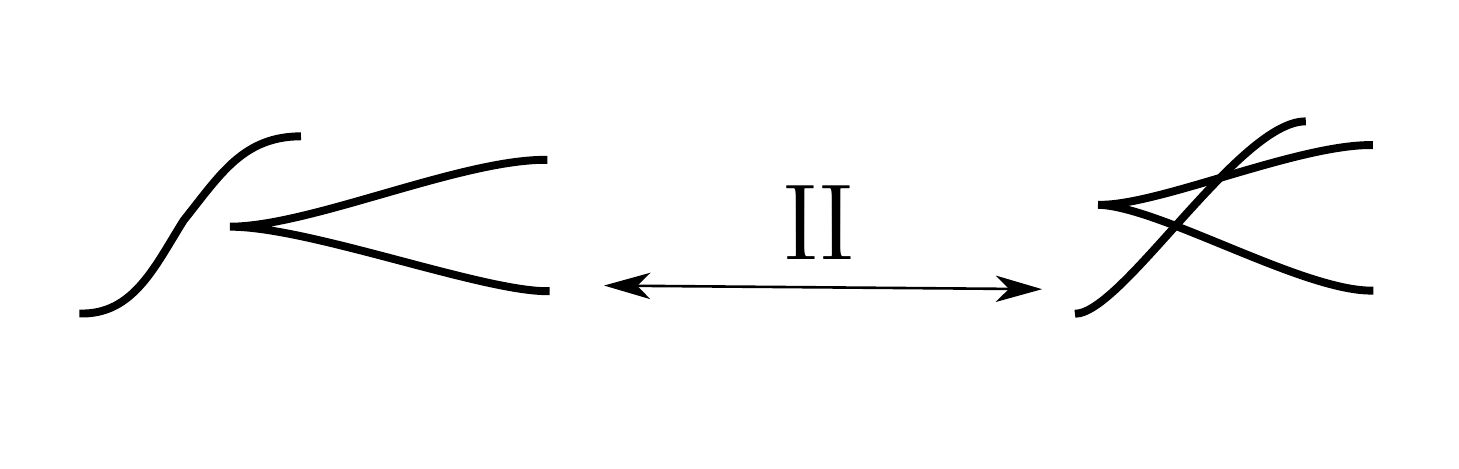}
\endminipage\hfill
\minipage{0.3\textwidth}
\includegraphics[width=\linewidth]{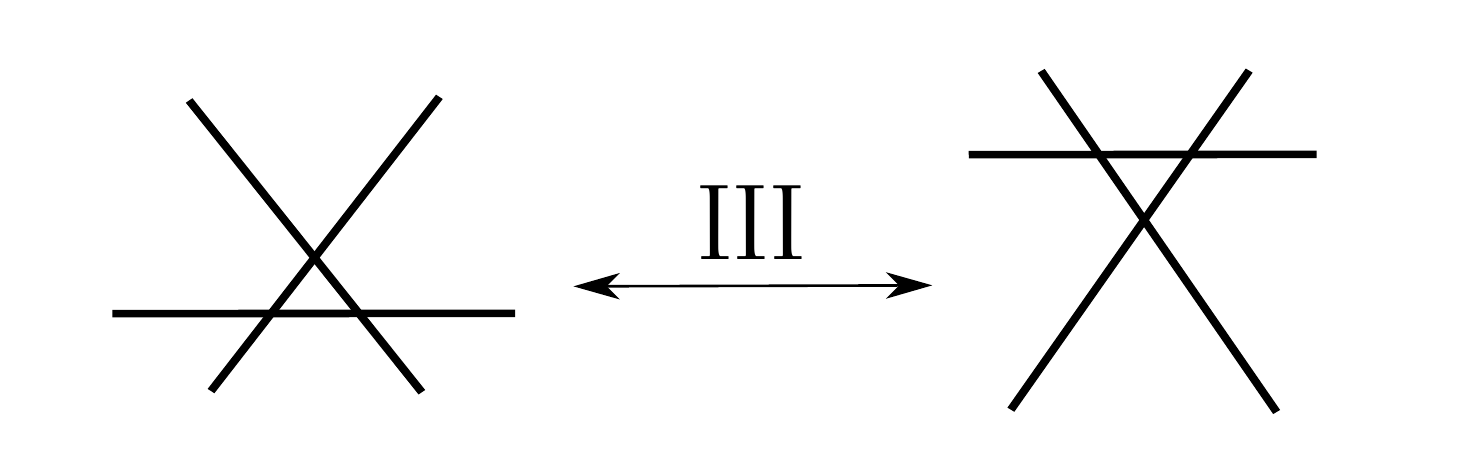}
\endminipage\hfill
\end{center}
\caption{The 3 types of Legendrian Reidemeister moves relating Legendrian-isotopic fronts. Reflections of these moves along the coordinate axes are also allowed.}
\label{fig:LR}
\end{figure}

\subsubsection{Topological knot type}\label{subsubsec:topological_knot_type}
The \emph{topological knot type} of a Legendrian knot is the smooth isotopy class of its underlying smooth knot. Clearly, this defines a Legendrian isotopy invariant. As a consequence, all topological knot invariants (\cite{Jon85, HOMFLY85, Wit89} as well as their ``categorified" versions (\cite{Kho99, Kho07, KR08}) are automatically Legendrian isotopy invariants.

\subsubsection{Thurston-Bennequin number}\label{subsubsec:tb}
Given an oriented Legendrian knot $\Lambda$ in $(\mb{R}_{x,y,z}^3,\alpha=dz-ydx)$, the \emph{Thurston-Bennequin number} (denoted by $tb(\Lambda)$) measures the twisting of the oriented contact plane field along the knot. It can be defined as the linking number of the Legendrian knot and its push-off along the Reeb direction $R_{\alpha}=\partial_z$, that is, $tb(\Lambda):=lk(\Lambda,\Lambda+\epsilon z)$. The geometric definition makes it automatically a Legendrian invariant. On the other hand, project the Legendrian knot down to the front plane $\mb{R}_{xz}^2$, the number can be computed via the front diagram: $tb(\Lambda)=wr(\pi_{xz}(\Lambda))-c(\pi_{xz}(\Lambda))$, where $wr$ is the writhe number and $c$ is the number of right cusps. It's then also easy to check the Legendrian isotopy invariance via Legendrian Reidemeister moves.

\subsubsection{Rotation number}\label{subsubsec:r}
Given an oriented connected Legendrian knot $\Lambda$ in $\mb{R}^3$, the \emph{rotation number} $r(\Lambda)$ is the obstruction to extending the tangent vector field of $\Lambda$ to a nonzero section of the contact plane field over a Seifert surface (an embedded compact oriented surface) bounding $\Lambda$. It can also be computed via the front diagram:
$r(L)=1/2(U(\pi_{xz}(\Lambda)-D(\pi_{xz}(\Lambda))))$, where $U$ (resp. $D$) is the number of \emph{up (resp. down) cusps} ($=$ a cusp near which the orientation of $\pi_{xz}(\Lambda)$ goes up (resp. down)). This is another example where the Legendrian isotopy invariance can be checked via Legendrian Reidemeister moves. By definition, the rotation number depends via a sign on the orientation. For an oriented multi-component Legendrian knot $\Lambda$, we usually define its rotation number $tb(\Lambda)$ as the $gcd$ of the rotation numbers of its components.

\subsubsection{Maslov potential}\label{subsubsec:Maslov}
Given a Legendrian knot $\Lambda$ with front diagram $\pi_{xz}(\Lambda)$. Let $r=|r(\Lambda)|$ and $n$ be a nonnegative integer. A $\mb{Z}/n\mb{Z}$-valued \emph{Maslov potential} of $\pi_{xz}(\Lambda)$ is a map
\begin{equation*}
\mu:\{\text{strands of }\pi_{xz}(\Lambda)\}\rightarrow\mathbb{Z}/n\mb{Z}
\end{equation*}
such that near any cusp, have $\mu(\text{upper strand})=\mu(\text{lower strand})+1$. Such a Maslov potential exists if and only if $2r$ is a multiple of $n$. In particular, the existence of a $\mb{Z}$-valued Maslov potential implies that every component of $\Lambda$ has rotation number 0. We will often fix for $\Lambda$ a $\mb{Z}/2r$-valued Maslov potential $\mu$.

We usually view the topological knot type, the Thurston-Bennequin number and the rotation number as the \emph{classical invariants} of (oriented) Legendrian knots. It turns out, they are not sufficient to classify Legendrian knots up to Legendrian isotopy. In \cite{Che02}, a pair of Legendrian knots (they represent the same class $5_2$ in the classification of topological knots) having the same classical invariants, were shown to be distinguished by a new Legendrian isotopy invariants, the LCH DGA (or Chekanov-Eliashberg DGA) (See Section \ref{subsec:LCH DGA} below).

\subsection{LCH differential graded algebras}\label{subsec:LCH DGA}
\subsubsection{LCH DGA via Lagrangian projection}
Here we recall the Legendrian contact homology differential graded algebra for Legendrian links in $\mathbb{R}^3$ \cite{ENS02}. The version of DGAs we need will also allow an arbitrary number of base points placed on the Legendrian links \cite{NR13,NRSSZ15}. The construction is naturally formulated via Lagrangian projection.

\noindent\emph{Initial data:} Let $\Lambda$ be an oriented Legendrian link in $\mathbb{R}_{x,y,z}^3$, with rotation number $r(\Lambda)$. Take $r=|r(\Lambda)|$ and fix a $\mb{Z}/2r$-valued Maslov potential $\mu$ of $\pi_{xz}(\Lambda)$. Let $*_1,\ldots,*_B$ be the base points placed on $\Lambda$, avoiding the crossings of the Lagrangian projection $\pi_{xy}(\Lambda)$, such that each component of $\Lambda$ contains at least one base point. Denote by $\{a_1,\ldots,a_R\}$ the set of crossings of $\pi_{xy}(\Lambda)$, corresponding to the Reeb chords of the Reeb vector field $R_{\alpha}=\partial_z$.

The $\mathbb{Z}/2r$-graded LCH DGA $\mathcal{A}=\mathcal{A}(\Lambda,\mu,*_1,\ldots,*_B)$ is then defined as follows:

\noindent\emph{As an algebra:} $\mathcal{A}$ is the noncommutative associative unital algebra $\mathbb{Z}[t_1^{\pm 1},\ldots,t_B^{\pm1}]<a_1,\ldots,a_R>$ over the commutative ring $\mathbb{Z}[t_i^{\pm 1},1\leq i\leq B]$, freely generated by $a_j,1\leq j\leq R$. The generators $t_i, t_i^{-1}$ can be regarded as information encoded at the base point $*_i$.

\noindent\emph{The grading:} The algebra $\mathcal{A}$ is assigned a $\mathbb{Z}/2r$-grading via $|x\cdot y|=|x|+|y|, |t_i|=|t_i^{-1}|=0$ and $|a_j|$ is defined as follows. The 2 endpoints of the Reeb chord $a_j$ belong to 2 distinct strands of the front projection $\pi_{xz}(\Lambda)$. Near the upper (lower) endpoint  of $a_j$, the overstrand (understrand) can be parameterized as $x\rightarrow (x,z=f_u(x))$(resp. $(x,z=f_l(x))$), and $f_u'(x(a_j))=f_l'(x(a_j))$. By the generic assumption of $\Lambda$, $f_u-f_l$ attains either a local maximum (or minimum) at $a_j$, accordingly we define
$|a_j|=\mu(\text{over-strand})-\mu(\text{under-strand})+$ either $0($or$-1)$.

\noindent\emph{The differential $\partial$:} To define the differential $\partial$ on $\mathcal{A}$, we firstly impose the \emph{Leibniz Rule} $\partial(x\cdot y)=(\partial x)\cdot y+(-1)^{|x|}\cdot\partial y$ and $\partial(t_i)=\partial(t_i^{-1})=0$. It then suffices to define the differential $\partial a_j$ for each Reeb chord. Intuitively, $\partial a_j$ is a weighted count of boundary-punctured holomorphic disks in the symplectization $(\mathbb{R}_{\tau}\times\mathbb{R}^3_{x,y,z},d(e^{\tau}\alpha)$ with boundary along the Lagrangian $\mathbb{R}_{\tau}\times\Lambda$, with one positive boundary puncture limiting to the Reeb chord $a_j$ at $+\infty$, and several negative boundary punctures limiting to some Reeb chords at $-\infty$.

In our case, we can describe the differential combinatorially. Given Reeb chords $a=a_j$ and $b_1,\ldots,b_n$ for some $n\geq 0$. Let $D_n^2=D^2-\{p,q_1,\ldots,q_n\}$ be a fixed oriented disk with $n+1$ boundary punctures $p,q_1,\ldots,q_n$, arranged in a counterclockwise order.
\begin{definition}[Admissible disks]\label{def:admissible_disks}
Define the moduli space $\Delta(a;b_1,\ldots,b_n)$ to be the space of \emph{admissible disks} $u$ of $\pi_{xy}(\Lambda)$ up to re-parametrization, that is,
\begin{itemize}
\item
$u:(D_n^2,\partial D_n^2)\rightarrow (\mathbb{R}_{x,y}^2,\pi_{xy}(\Lambda))$ is a smooth orientation-preserving immersion, extends continuously to $D^2$;

\item
$u(p)=a, u(q_i)=b_i (1\leq i\leq n)$ and $u$ sends a neighborhood of $p$ (resp. $q_i$) in $D^2$ to a single quadrant of $a$ (resp. $b_i$) with positive (resp. negative) \emph{Reeb sign} (see below).
\end{itemize}
\noindent{}\emph{Reeb signs:} Near a crossing of the Lagrangian projection $\pi_{xy}(\Lambda)$, the 2 quadrants lying in the counterclockwise (resp. clockwise) direction of the over-strand are assigned \emph{positive} (resp. \emph{negative}) \emph{Reeb signs}. See Figure \ref{fig:Orientation signs} (left).
\end{definition}

For each $u\in \Delta(a;b_1,\ldots,b_n)$, walk along $u(\partial D^2)$ starting from $a$, we encounter a sequence $s_1,\ldots,s_N(N\geq n)$ of crossings (excluding $a$) and base points of $\pi_{xy}(\Lambda)$. We then define the weight $w(u)$ of $u$ as follows
\begin{definition}\label{def:weight for disks}
$w(u):=s(u)w(s_1)\ldots w(s_N)$, where
\begin{enumerate}[label=(\roman*)]
\item
$w(s_i)=b_j$ if $s_i$ is the crossing $b_j$.
\item
$w(s_i)=t_j ($resp. $t_j^{-1})$ if $s_i$ is the base point $*_j$, and the boundary orientation of $u(\partial D^2)$ agrees (resp. disagrees) with the orientation of $\Lambda$ near $*_j$.
\item
$s(u)$ is the product of the \emph{orientation signs} (see below) of the quadrants near $a$ and $b_1, \ldots$, $b_n$ occupied by $u$.
\end{enumerate}
\noindent\emph{Orientation signs:} We will use the same convention as \cite{NRSSZ15}. That is, at each crossing $a$ such that $|a|$ is even, we assign negative \emph{orientation signs} to the 2 quadrants that lie on any chosen side of the under-strand of $a$; We assign positive \emph{orientation signs} to all the other quadrants. See Figure \ref{fig:Orientation signs} (right).
\end{definition}

\begin{figure}[!htbp]
\begin{center}
\minipage{0.5\textwidth}
\includegraphics[width = \linewidth, height=0.8in]{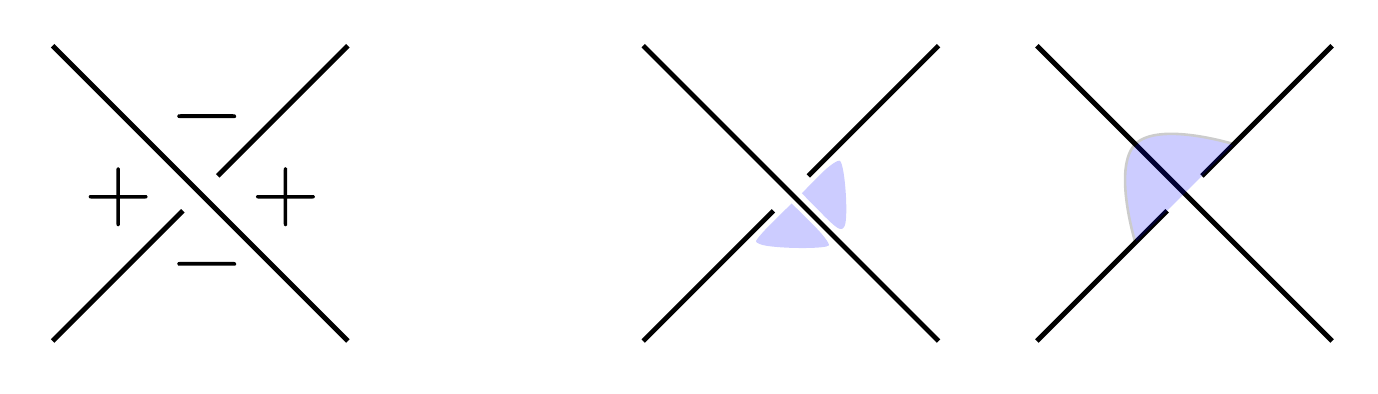}
\endminipage\hfill
\end{center}
\caption{Left: the Reeb signs of the quadrants at a crossing in a Lagrangrian projection. Right: the two possible choices of orientation signs for the quadrants at a crossing of even degree in a Lagrangian projection. The shaded quadrants have negative orientation signs and the unshaded quadrants have positive orientation signs. At a crossing of odd degree, all the four quadrants have positive orientation signs.}
\label{fig:Orientation signs}
\end{figure}

Now we can define the differential of $a=a_j$:
\begin{equation}
\partial a=\sum_{n,b_1,\dots,b_n}\sum_{u\in\Delta(a;b_1,\ldots,b_n)}w(u)
\end{equation}

\begin{theorem}[\cite{Che02,ENS02}]
$(\mathcal{A},\partial)$ is a $\mathbb{Z}/2r$-graded DGA with $\mathrm{deg}(\partial)=-1$.
\end{theorem}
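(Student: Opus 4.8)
The claim has three separate components, and I would organize the argument accordingly. First, that $\partial$ has degree $-1$: this is a local computation at each admissible disk. Given $u\in\Delta(a;b_1,\ldots,b_n)$, I would compare the Maslov-potential jumps picked up by the boundary $u(\partial D^2)$ as it traverses the strands of $\pi_{xz}(\Lambda)$ between the punctures. The grading $|a_j|$ was defined precisely as a difference of Maslov potentials corrected by $0$ or $-1$ according to whether $f_u-f_l$ has a local max or min; the key point is that this correction term is dictated by the convexity of the quadrant occupied by $u$ at that puncture, and the Reeb/orientation-sign conventions are set up so that summing the contributions around the boundary of an immersed disk telescopes. A standard winding-number / rotation-number bookkeeping argument (as in \cite{Che02,ENS02}) then gives $|a| = 1 + \sum_{i=1}^n |b_i|$ for every nonempty moduli space, hence $|\partial a| = |a| - 1$; the $t_i^{\pm 1}$ contribute $0$ and do not affect this.

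Second, that $\partial$ is well-defined, i.e.\ the sum is finite and $\partial a$ lies in $\mathcal{A}$. Finiteness follows from a standard area/index argument: each admissible disk has positive symplectic area bounded above by the area enclosed near $a$, and for fixed total area there are only finitely many immersed disks up to reparametrization; alternatively one invokes the Gromov-type compactness already available in this combinatorial setting. I would state this briefly and cite \cite{ENS02}.

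Third, and the main point, that $\partial^2 = 0$. The strategy is the usual one: $\partial^2 a$ is a signed count of the boundary points of the $1$-dimensional moduli spaces $\Delta(a; c_1,\ldots,c_k)$ of index $2$. By a gluing/degeneration analysis, the ends of such a $1$-manifold are in bijection with two-level broken disks, i.e.\ pairs $(u,v)$ where $u\in\Delta(a;b_1,\ldots,b_n)$ and $v\in\Delta(b_i;\ldots)$ for some $i$ — these are exactly the terms appearing in $\partial(\partial a)$. Since a compact $1$-manifold has an even number of boundary points counted without sign, the content is that the \emph{signed} count vanishes; this is where the orientation-sign convention from Figure \ref{fig:Orientation signs} does its work. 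I would check that at each such boundary degeneration the two incoming ends carry opposite signs (equivalently, that the product $s(u)s(v)$ for a glued configuration matches the orientation-sign of the nearby smooth disk with the opposite parity), and that the base-point weights $t_j^{\pm1}$ multiply consistently across the gluing region — here one uses that base points are disjoint from crossings, so they are simply transported along the boundary and contribute the same monomial on both sides. The Leibniz rule extension and $\partial t_i = 0$ make $\partial$ a derivation, so $\partial^2 = 0$ on generators suffices. The hard part, as always with LCH, is the sign verification in the $\partial^2=0$ step; since we are told to use the conventions of \cite{NRSSZ15}, I would import their sign lemma rather than rederiving it, and limit the new work to checking compatibility with the tangle/base-point setup.
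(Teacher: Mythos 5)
The paper does not actually prove this statement: it is quoted directly from \cite{Che02} and \cite{ENS02}, so there is no in-paper argument to compare yours against. Your outline is nevertheless the standard one and is essentially what those references do: the degree computation reduces to the identity $|a|=1+\sum_i|b_i|$ for any admissible disk, proved by a turning/Maslov bookkeeping argument around the boundary of the immersed disk; finiteness of the sum follows from the positivity of area (the action of the positive corner dominates the negative corners); and $\partial^2=0$ comes from identifying the terms of $\partial(\partial a)$ with two-level broken configurations that cancel in pairs. One small point of emphasis: in the purely combinatorial setting of \cite{Che02} (and the $\mathbb{Z}$-coefficient treatment in \cite{ENS02}), the cancellation in $\partial^2=0$ is usually organized not as Gromov compactness plus gluing for an abstract $1$-dimensional moduli space, but as an explicit combinatorial pairing: each broken pair $(u,v)$ glues to an immersed ``obtuse'' disk which can be cut in exactly two ways, and the two resulting terms carry opposite orientation signs. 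Your analytic phrasing is equivalent in spirit, and deferring the sign verification to the conventions of \cite{NRSSZ15} (while checking that base-point monomials $t_j^{\pm 1}$ transport unchanged through the gluing region, since base points avoid crossings) is a legitimate way to organize it; just be aware that this step is where the actual content lies, so a complete write-up would have to either reproduce that sign lemma or verify that the convention of Figure \ref{fig:Orientation signs} agrees with the one used in the cited source.
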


\subsubsection{Invariance of LCH DGA}
We have seen that, the definition of the LCH DGA $\mc{A}(\Lambda)$ associated to a Legendrian link $\Lambda$ depends on several choices: a specific choice of the representative of $\Lambda$ inside its Legendrian isotopy class, and a choice of base points. Here we review that the LCH DGA $\mc{A}(\Lambda)$ is a Legendrian isotopy invariant, up to a \emph{stable isomorphism}, in particular, up to homotopy equivalence of $\mb{Z}/2r$-graded DGAs.

\begin{definition}\label{def:stablization}
An \emph{(algebraic) stabilization} of a $\mb{Z}/2r$-graded DGA $(\mc{A},\partial)$ is a $\mb{Z}/2r$-graded DGA $(S(\mc{A}),\partial')$ obtained by adding 2 new free generators $e$ and $f$, with $|e|=|f|+1$, such that $\partial'|_{\mc{A}}=\partial$ and $\partial' e=f, \partial'f=0$. Two $\mb{Z}/2r$-graded DGAs $(\mc{A},\partial)$ and $(\mc{A}',\partial')$ are \emph{stable isomorphic}, if they are isomorphic as $\mb{Z}/2r$-graded DGAs, after possibly stabilizing each finitely many times.
\end{definition}

\begin{theorem}[{\cite[Thm.2.20]{NR13},\cite[Thm.3.10]{ENS02}}]\label{thm:stable isom}
The isomorphism class of the DGA $(\mc{A}(\Lambda),\partial)$ is independent of the locations of the base points on each connected component of $\Lambda$. The stable isomorphism class of $(\mc{A}(\Lambda),\partial)$ is invariant under Legendrian isotopy of $\Lambda$.
\end{theorem}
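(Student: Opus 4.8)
The plan is to prove the two assertions separately, following the strategy of Chekanov and of the sign- and base-point-refined treatments in \cite{ENS02,NR13}. For the first assertion, fix a representative of the Lagrangian projection $\pi_{xy}(\Lambda)$. Any two placements of the base points $*_1,\dots,*_B$ with the same number of points on each component are connected by a finite sequence of elementary moves, in each of which a single base point $*_i$ slides along one arc of $\pi_{xy}(\Lambda)$, crossing at most one transverse double point or one other base point at a time. Sliding $*_i$ past another base point changes neither the algebra nor the differential. For sliding $*_i$ past a crossing $a$, I would write down the induced map explicitly: the move changes the weight word $w(u)$ of exactly those admissible disks $u$ whose boundary traverses the piece of strand that $*_i$ crossed, by inserting or deleting one factor $t_i^{\pm1}$ at the position of $a$, and this is realized by the algebra automorphism $\phi$ obtained from an affine substitution $a_j\mapsto t_i^{\epsilon_j}a_j t_i^{-\epsilon_j'}$ dictated by the local picture; one checks directly from the Leibniz rule that $\phi$ intertwines the two differentials, so $\phi$ is a DGA isomorphism, and composing these gives the first assertion.

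For the second assertion, by the Legendrian Reidemeister theorem recalled in Section \ref{subsubsec:LR moves} and the resolution construction of Section \ref{subsubsec:resolution_construction} (FIGURE \ref{fig:Res}), any Legendrian isotopy of $\Lambda$ is realized by a finite sequence of planar isotopies of $\pi_{xy}(\Lambda)$ that do not change the crossing combinatorics --- under which the DGA is literally unchanged --- together with the Lagrangian-projection analogues of the three Legendrian Reidemeister moves of FIGURE \ref{fig:LR}. Whenever a base point lies inside the disk in which a move is performed, I first apply the isomorphism from the previous paragraph to push it outside; so it suffices to check that each of the three moves, performed away from all base points, changes $(\mc{A}(\Lambda),\partial)$ only by a stable isomorphism in the sense of Definition \ref{def:stablization}.

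For each move I would: (i) fix a bijection between the crossings before and after the move, augmented in the Reidemeister I and II cases by two extra generators $e,f$ with $|e|=|f|+1$, the grading being forced by the cusp/bigon geometry and the grading rule of Section \ref{subsec:LCH DGA}; (ii) analyze how the moduli spaces $\Delta(a;b_1,\dots,b_n)$ of admissible disks change, keeping track of both the $t_i$-coefficients and the orientation signs of FIGURE \ref{fig:Orientation signs}; and (iii) assemble from this a DGA map and identify its type. For the triple-point move the resulting map is a tame automorphism (a composite of elementary automorphisms $a\mapsto a+v$ with $v$ not involving $a$), which one verifies to be a chain isomorphism. For the Reidemeister I and II moves the new differential satisfies $\partial e=\pm f+(\text{terms in the old generators and the }t_i^{\pm1})$ and $\partial f=0$ (after possibly swapping the roles of $e,f$), and I would exhibit a triangular --- hence tame --- change of variables $e\mapsto e-(\text{those correction terms})$ after which $\partial e=f$, presenting the new DGA as a tame-isomorphic copy of the stabilization $S(\mc{A}(\Lambda))$; combined with finitely many such stabilizations and tame isomorphisms this gives the claimed stable isomorphism invariance.

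The main obstacle is the Reidemeister II move together with the sign bookkeeping: one must verify that the correction terms in $\partial e$ can be removed by a substitution that is itself a DGA automorphism, that this substitution respects the $\mb{Z}/2r$-grading, and that the orientation signs of FIGURE \ref{fig:Orientation signs} assemble consistently on the two sides of the move. Once this is handled, the triple-point and Reidemeister I moves are comparatively routine. (In the actual write-up one may instead simply invoke \cite[Thm.2.20]{NR13} and \cite[Thm.3.10]{ENS02}.)
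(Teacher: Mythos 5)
Your outline follows the same standard route as the sources the paper cites for this statement (the paper itself gives no argument beyond the citation of \cite{NR13} and \cite{ENS02}): the base-point independence is proved exactly as you say, by sliding a base point past one crossing at a time and writing down the isomorphism $a\mapsto t_i^{\pm1}a$ or $a\mapsto a\,t_i^{\pm1}$ --- these are the same maps the paper later uses for tangles in Lemma \ref{lem:dependence of aug numbers on base points} --- and the isotopy invariance is reduced to a move-by-move check producing tame isomorphisms and stabilizations. Your handling of the triple-point move and of base points is correct as sketched.

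There is, however, a genuine gap in your treatment of the moves that create the pair $e,f$. You claim that after such a move the only change is $\partial e=\pm f+v$, $\partial f=0$ with $v$ a word in the old generators and the $t_i^{\pm1}$, so that a triangular substitution absorbing $v$ (equivalently $f\mapsto\pm f+v$) exhibits the new DGA as the stabilization $S(\mc{A}(\Lambda))$ of Definition \ref{def:stablization}. This tacitly assumes that the differentials of the \emph{old} generators are unchanged by the move, which is false: after a Reidemeister II move (and likewise for the front move I, which after the resolution construction of Section \ref{subsubsec:resolution_construction} contributes one new crossing and one new right-cusp generator with its invisible disk), admissible disks may have negative corners at the new crossings, so the new differential of an old generator acquires words containing $e$ and $f$ and need not agree with the old one. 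The actual argument of \cite{Che02,ENS02} needs two further ingredients: the destabilization lemma, asserting that a semi-free DGA with $\partial e=\pm f+v$, $v$ not involving $e,f$, is tamely isomorphic to a stabilization of the DGA on the remaining generators equipped with the differential $\tau\circ\partial$, where $\tau$ is the algebra map killing $e$ and sending $f\mapsto\mp v$; and the geometric identification of $\tau\circ\partial$ with the differential of the original diagram, via a correspondence between disks in the new diagram that enter the bigon and (possibly broken) disks in the old diagram, with signs and $t_i$-weights matching. A single change of variables in $e$ and $f$ does not accomplish this. A smaller inaccuracy: in the Lagrangian projection there are only two local moves (the triple point and the double-point birth/death), not three; the front move I is not a separate Lagrangian move but is absorbed into these, or handled directly in the front picture by the same stabilization lemma. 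With these corrections your proposal coincides with the cited proofs.
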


\subsubsection{LCH DGA via front projection}\label{subsubsec:DGA via fro}
\emph{Assume} the front projection $\Lambda$ is simple (see Section \ref{subsubsec:fronts} for the definition). Then the LCH DGA also admits a simple front projection description.

The resolution construction of $\Lambda$ gives a Legendrian isotopic link $\Lambda'=\mathrm{Res}(\Lambda)$, whose Reeb chords are in one-to-one correspondence with the crossings and right cusps of $\pi_{xz}(\Lambda)$. We will denote by $\mathcal{A}(\Lambda_{xz},\mu,*_1,\ldots *_B;k)$ the LCH DGA associated to $\mathrm{Res}(\Lambda)$. Denote by $\{a_1,\ldots a_p\}$ (resp. $\{c_1,\ldots c_q\}$) the set of crossings (resp., of right cusps) of $\pi_{xz}(\Lambda)$. Under the correspondence, the algebra is generated over $\mathbb{Z}[t_i^{\pm 1}, 1\leq i\leq B]$ by $\{a_k,1\leq k\leq p, c_k,1\leq k\leq q\}$. The grading is given by: $|t_k^{\pm}|=0, |a_k|=\mu(\text{over-strand})-\mu(\text{under-strand})$ and $|c_k|=1$. One can also translate the definition of the differential for $\mathrm{Res}(\Lambda)$ into the front projection $\pi_{xz}(\Lambda)$. The definition uses the same formula by ``counting" the disks in $\pi_{xz}(\Lambda)$ plus the additional ``invisible disks", one for each right cusp. An ``invisible disk" (See FIGURE \ref{fig:Res}, the last picture) corresponds to a disk with one unique corner on its left at the crossing of $\pi_{xy}(\mathrm{Res}(\Lambda))$ corresponding to the right cusp of $\pi_{xz}(\Lambda)$.

\section{Ruling polynomials for Legendrian tangles}\label{sec:Ruling polynomials for tangles}
\subsection{Legendrian tangles}\label{subsec:Legendrian tangles}
Fix $U=(x_L,x_R)$ to be a open interval in $\mathbb{R}_x$ ($-\infty\leq x_L<x_R\leq\infty$), so the standard contact form $\alpha=dz-ydx$ induces a standard contact structure on $J^1U=U\times\mb{R}_{y,z}^2$. A \emph{Legendrian tangle} $T$ is a Legendrian submanifold in $J^1U$ transverse to the boundary $\partial J^1(\overline{U})$. Typical examples of Legendrian tangles can be obtained from a Legendrian link front by removing the parts outside of a vertical strip in $\mb{R}_{xz}^2$.

\begin{remark}\label{rem:fronts for Legendrian tangles}
In Section \ref{subsubsec:fronts}, notice that the same concepts (\emph{front diagrams, strands, arcs, regions, etc.}) can be introduced for any Legendrian tangle $T$ in $J^1U$. We only have to replace $\Lambda$ by $T$ and $\mb{R}_{xz}^2$ by $U\times\mb{R}_z$ there. Similarly, in Section \ref{subsubsec:Maslov}, the same procedure applies to define \emph{Maslov potentials} for $T$.
\end{remark}

As usual, we will assume $T$ has a generic front projection.\footnote{From now on, we will make no distinction between the Legendrian tangle $T$ and its front projection $\pi_{xz}(T)$.} We equip $T$ with a $\mb{Z}/2r$-valued Maslov potential $\mu$ for some fixed $r\geq 0$. Denote by $n_L$ (resp. $n_R$) the number of left (resp. right) end-points on $T$.

We say $2$ Legendrian tangles in $J^1U$ are \emph{Legendrian isotopic} if there's an isotopy between them along Legendrian tangles in $J^1U$. Note that during the Legendrian isotopy, we require the \emph{ordering} via $z$-coordinates of the end-points is preserved. That is, for two (say, left) end-points $p_1, p_2$, they necessarily have the common $x$-coordinate $x_L$, take any path $\gamma$ in $\partial J^1(\overline{U})$ from $p_2$ to $p_1$, then we say $p_1>p_2$ if $z(p_1)-z(p_2)=\int_{\gamma}\alpha>0$. Then, similar to the case of Legendrian links, two (generic) Legendrian tangle fronts are Legendrian isotopic \emph{if and only if} they differ by a finite sequence of smooth isotopies (preserving the ordering of the end-points) and \emph{Legendrian Reidemeister moves} of the 3 types (see Figure \ref{fig:LR}).

\subsection{Normal Rulings and Ruling polynomials}
Similar to Legendrian knots, we can introduce the notion of $m$-graded normal rulings and Ruling polynomials for any Legendrian tangles. Given a Legendrian tangle $T$, with $\mathbb{Z}/2r$-valued Maslov potential $\mu$ for some fixed $r\geq 0$. Fix a nonnegative integer $m$ dividing $2r$.

\noindent{}\emph{Assume} that the numbers $n_L, n_R$ of the left endpoints and right endpoints of $T$ are both \emph{even}. For example, any Legendrian tangle obtained from cutting a Legendrian link front along 2 vertical lines, satisfies this assumption.

Recall that in Remark \ref{rem:fronts for Legendrian tangles}, we have introduced the notions of \emph{arcs}, \emph{crossings}, \emph{cusps} and \emph{regions} of the front $\pi_{xz}(T)$. In particular, the front diagram is divided into arcs, crossings and cusps.
For example, an arc begins at a cusp, a crossing or an end-point, going from left to right, and ends at another cusp, crossing or end-point, meeting no cusp or crossing in-between. Given a crossing $a$ of the front $T$, its degree is given by $|a|:=\mu(\text{over-strand})-\mu(\text{under-strand})$.

\begin{definition}\label{def:eye}
We say an embedded (closed) disk of $\overline{U}\times\mb{R}_{z}$, is an \emph{eye} of the front $T$, if it is the union
of (the closures of) some \emph{regions}, such that the boundary of the disk in $U\times \mb{R}_z$, being the union of arcs, crossings and cusps, consists of 2 \emph{paths}, starting at the same left cusp or a pair of left end-points, going from left to right through arcs and crossings, meeting no cusps in-between, and ending at the same right cusp or a pair of right end-points.
\end{definition}

\begin{definition}\label{def:normal_ruling}
A \emph{$m$-graded normal Ruling} $\rho$ of $(T,\mu)$ is a \emph{partition} of the set of arcs of the front $T$ into the boundaries in $U\times \mb{R}_z$ of \emph{eyes} (say $e_1,\ldots, e_n$), or in other words,
\begin{equation*}
\sqcup\text{ arcs of }T=\sqcup_{i=1}^{n}(\partial e_i\setminus\{\text{crossings, cusps}\})\cap U\times\mb{R}_z,
\end{equation*}
and such that the following conditions are satisfied:
\begin{enumerate}[label=(\arabic*).]
\item
If some eye $e_i$ starts at a pair of left end-points (resp. ends at a pair of right end-points), we require $\mu(\text{upper-end-point})=\mu(\text{lower-end-point})+1 (\mr{mod} m)$.
\item
Call a crossing $a$ a \emph{switch}, if it's contained in the boundary of some eye $e_i$. In this case, we require $|a|=0 (\mr{mod} m)$.
\item
Each switch $a$ is clearly contained in exactly 2 eyes, say $e_i, e_j$. We require the relative positions of $e_i, e_j$ near $a$ to be in one of the 3 situations in Figure \ref{fig:NR}(top row).
\end{enumerate}

\end{definition}

\begin{figure}[!htbp]
\begin{center}
\minipage{0.8\textwidth}
\includegraphics[width=\linewidth, height=2in]{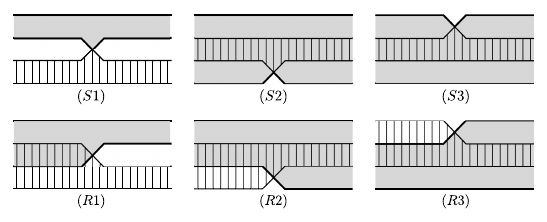}
\endminipage\hfill
\end{center}
\caption{Top row: The behavior (of the 2 eyes $e_i, e_j$) of a $m$-graded normal ruling $\rho$ at a \emph{switch} (where $e_i$ and $e_j$ are the dashed and shadowed regions respectively), where the crossings are required to have degree 0 $(\mr{mod} m)$. Bottom row: The behavior (of the 2 eyes $e_i, e_j$) of $\rho$ at a return. Three more figures omitted: The 3 types of departures obtained by reflecting each of $(R1)$-$(R3)$ with respect to a vertical axis.}
\label{fig:NR}
\end{figure}

\begin{definition}\label{def:switches and returns}
Given a Legendrian tangle $(T,\mu)$, let $\rho$ be a $m$-graded normal ruling of $(T,\mu)$, and let $a$ be a crossing. Then, $a$ is called a \emph{return} if the behavior of $\rho$ at $a$ is as in Figure \ref{fig:NR}(bottom row). $a$ is called a \emph{departure} if the behavior of $\rho$ at $a$ looks like one of the three pictures obtained by reflecting each of $(R1)-(R3)$ in Figure \ref{fig:NR}(bottom row) with respect to a vertical axis. Moreover, returns (resp. departures) of degree 0 modulo $m$ are called \emph{$m$-graded returns} (resp. \emph{$m$-graded departures}) of $\rho$.

\noindent{}\emph{Define} $s(\rho)$ (resp. $d(\rho)$) to be the number of switches (resp. $m$-graded departures) of $\rho$.\\ \noindent{}\emph{Define} $r(\rho)$ to be the number of $m$-graded returns of $\rho$ if $m\neq 1$, and the number of $m$-graded returns and right cusps if $m=1$.
\end{definition}

\begin{remark}\label{rem:ruling by switches}
If we fix the pairing $\rho_L$ of left end-points, a $m$-graded normal Ruling $\rho$ determines and is determined by a subset, denoted by the same symbol $\rho$, of the switches in the set of crossings of $T$. In this case, we will usually make no distinction between a $m$-graded normal Ruling and its set of switches.
\end{remark}

\begin{definition}
Given a $m$-graded normal Ruling $\rho$ of a Legendrian tangle $(T,\mu)$, denote by $e_1,\ldots,e_n$ the eyes in $J^1(\overline{U})$ defined by $\rho$.  The \emph{filling surface} $S_{\rho}$ of $\rho$ is the the disjoint union $\sqcup_{i=1}^ne_i$ of the eyes, glued along the switches via half-twisted strips. This is a compact surface possibly with boundary. See FIGURE \ref{fig:filling_surface} for an example.
\end{definition}

\begin{figure}[!htbp]
\begin{center}
\minipage{0.8\textwidth}
\includegraphics[width = \linewidth, height=1.2in]{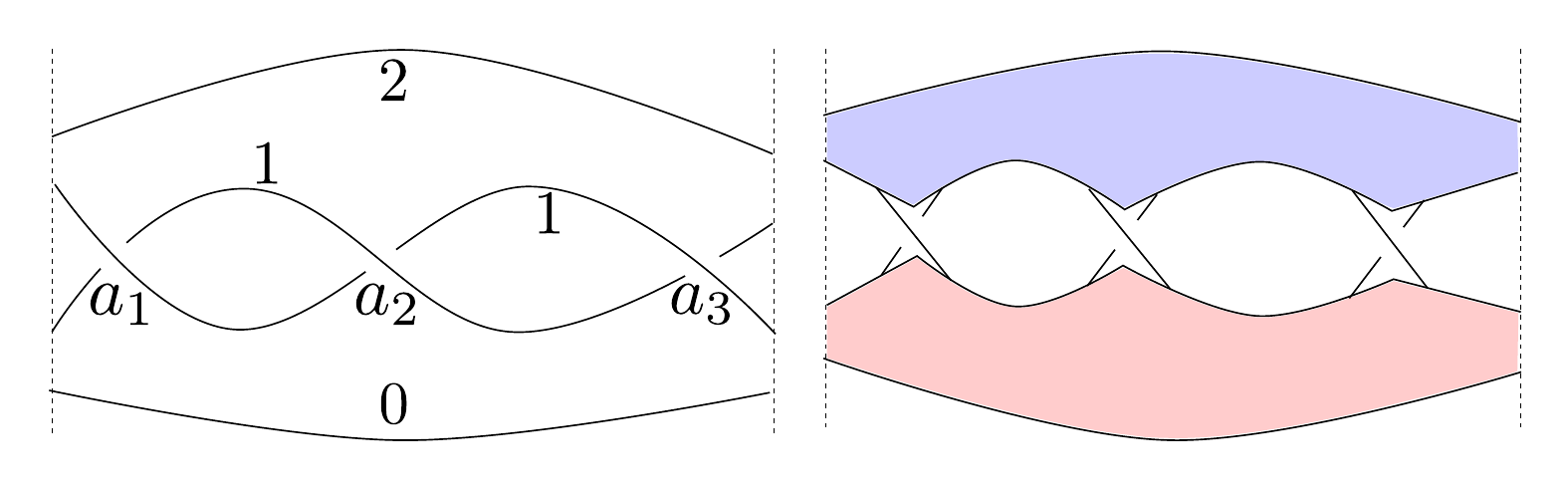}
\endminipage\hfill
\end{center}
\caption{Left: a Legendrian tangle front $T$ with 3 crossings $a_1, a_2, a_3$, the numbers indicate the values of the Maslov potential $\mu$ on each of the 4 strands. Right: the filling surface for a normal ruling of $T$ by gluing the 2 eyes along the 3 switches via half-twisted strips.}
\label{fig:filling_surface}
\end{figure}

Let $T_L$ (resp. $T_R$) be the left (resp. right) pieces $T$ near the left (resp. right) boundary. It's clear that any $m$-graded normal Ruling $\rho$ of $T$ restricts to a $m$-graded normal Ruling of the left piece $T_L$ (resp. of the right piece $T_R$), denoted by \emph{$r_L(\rho)$ or $\rho|_{T_L}$ (resp. $r_R(\rho)$ or $\rho|_{T_R}$)}.
\begin{definition}\label{def:Ruling polynomial}
Fix a $m$-graded normal Ruling $\rho_L$ (resp. $\rho_R$) of $T_L$ (resp. $T_R$). We define a Laurent polynomial $<\rho_L|R_{T,\mu}^m(z)|\rho_R>=<\rho_L|R_{T}^m(z)|\rho_R>$ in $\mb{Z}[z,z^{-1}]$ by
\begin{equation}
<\rho_L|R_{T}^m(z)|\rho_R>:=\sum_{\rho \text{: $r_L(\rho)=\rho_L,r_R(\rho)=\rho_R$}}z^{-\chi(\rho)}
\end{equation}
where the sum is over all $m$-graded normal Rulings $\rho$ such that $r_L(\rho)=\rho_L, r_R(\rho)=\rho_R$. $\chi(\rho)$ is called the \emph{Euler characteristic} of $\rho$ and defined by
\begin{equation}
\chi(\rho):=\chi(S_{\rho})-\chi(S_{\rho}|_{x=x_R}).
\end{equation}
where $x_R$ is the right endpoint of the open interval $U=(x_L,x_R)$ and $\chi(S_{\rho})$ (resp. $\chi(S_{\rho}|_{x=x_R})$) is the usual Euler characteristic of $S_{\rho}$ (resp. $S_{\rho}|_{x=x_R}$). Equivalently, $\chi(\rho)=\chi_c(S_{\rho}|_{x_L\leq x<x_R})$ is the Euler characteristic with compact support of $S_{\rho}|_{x_L\leq x<x_R}$. Also, notice that when $x_R=\infty$, $S_\rho|_{x=x_R}$ is empty with vanishing Euler characteristic.

We will call $<\rho_L|R_T^m(z)|\rho_R>$ the \emph{$m$-graded Ruling polynomial} of $T$ with boundary conditions $(\rho_L,\rho_R)$.
\end{definition}

\begin{remark}\label{rem:filling_surface_computation_formula}
Given a $m$-graded normal Ruling $\rho$, with $n_L=2n_L'$ (resp. $n_R=2n_R'$) left (resp. right) end-points and $c_L$ (resp. $c_R$) left (resp. right) cusps, then $S_{\rho}|_{x=x_R}$ is the disjoint union of $n_R'$ closed line segments and $n=n_L'+c_L=n_R'+c_R$ is the number of eyes in $\rho$. Hence, $\chi(S_{\rho}|_{x=x_R})=n_R'$ is independent of $\rho$ and we get a simple computation formula
\begin{equation}
\chi(\rho)=c_R-s(\rho)
\end{equation}
where $s(\rho)$ is defined in Definition \ref{def:switches and returns}.
In particular, when $T$ is a Legendrian link, the definition here coincides with the usual definition \cite{HR15} of Ruling polynomials for Legendrian links.

\noindent{}Moreover, when $T$ is a trivial Legendrian tangle of $2n$ parallel strands, then 
\begin{eqnarray*}
<\rho_L|R_T^m(z)|\rho_R>=\delta_{\rho_L,\rho_R}.
\end{eqnarray*} 
This may be called the \emph{Identity axiom} for Ruling polynomials, see Remark \ref{rem:TQFT interpretation} below.
\end{remark}

\subsection{Invariance and composition axiom}
Given a Legendrian tangle $T$, let's denote by $\mr{NR}_T^m$ (resp. $\mr{NR}^m_T(\rho_L,\rho_R)$) the set of $m$-graded normal Rulings of $T$ (resp. those with boundary conditions $(\rho_L,\rho_R)$).

\begin{lemma}\label{lem:filling_surface}
Given a Legendrian isotopy $h$ between 2 Legendrian tangles $T$, $T'$, preserving the Maslov potentials $\mu$, $\mu'$, there's a canonical bijection between the set of $m$-graded normal Rulings of $T$ and $T'$
\begin{equation*}
\phi_h: \mr{NR}_T^m \xrightarrow[]{\sim} \mr{NR}^m_{T'}
\end{equation*}
commuting with the restrictions $r_L,r_R$, and such that for any $m$-graded normal Ruling $\rho$, $S_{\rho}$ and $\phi(S_{\rho})$ are homeomorphic, relative to the boundary pieces at  $x=x_L$ and $x=x_R$.
\end{lemma}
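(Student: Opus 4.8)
The plan is to reduce the statement to a finite check on the Legendrian Reidemeister moves, exactly as one does for Legendrian-link rulings, but carrying along the extra bookkeeping at the two vertical boundary lines $x=x_L$ and $x=x_R$. First I would recall that, by the tangle version of the Legendrian Reidemeister theorem stated in Section~\ref{subsec:Legendrian tangles}, the isotopy $h$ factors as a finite composition of (i) smooth planar isotopies of the front that preserve the ordering of the endpoints at $\partial J^1(\overline U)$ and move no crossing or cusp past $x=x_L$ or $x=x_R$, and (ii) Legendrian Reidemeister moves of types I, II, III (and their reflections), each taking place in a small disk in the interior of $U\times\mb R_z$. It therefore suffices to construct $\phi_h$ for each of these elementary pieces, check that the elementary bijections commute with $r_L,r_R$ (which is automatic for moves supported away from the boundary, since they do not touch the boundary pieces of any eye), and check the homeomorphism-of-filling-surfaces claim for each; composing gives the general $\phi_h$.

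For a smooth planar isotopy there is nothing to do: the combinatorial data (arcs, crossings, cusps, their degrees via $\mu$, the eye-partition) is literally carried along, so $\phi_h$ is the tautological identification and $S_\rho$ is homeomorphic — in fact ambient-isotopic rel the boundary slices — to its image. For the Reidemeister moves the argument is the standard normal-ruling move analysis: for a type~I move (cusp creation/annihilation) a normal ruling on one side extends uniquely across the new left/right cusp by pairing the two new strands into a single small eye, and $S_\rho$ changes by attaching or deleting a disk that touches neither $x=x_L$ nor $x=x_R$, hence a homeomorphism rel those slices; for a type~II move one uses the normality conditions (R1)--(R3) together with the degree-$0$ requirement at switches to see that the rulings on the two fronts are in canonical bijection, the filling surface changing only by an isotopy of a collar of a strip; for the type~III move one enumerates, using Definition~\ref{def:normal_ruling}(3) and the switch/return/departure dichotomy of Definition~\ref{def:switches and returns}, the possible local eye-configurations on the three strands before and after the move and exhibits the matching, again with $S_\rho$ unchanged up to homeomorphism in the supporting disk. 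In each case the normality at a switch is preserved precisely because the move is supported in a contractible disk disjoint from the endpoints, so conditions (1)--(3) are local and transported verbatim outside the disk.

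The one genuinely content-bearing point, and the step I expect to be the main obstacle, is the \emph{canonicity} (well-definedness independent of the chosen factorization of $h$) together with the \emph{naturality} under $r_L,r_R$ stated in the lemma. Canonicity does not follow formally from "a bijection exists" for each move; one wants $\phi_h$ to depend only on the isotopy class of $h$ rel endpoints. The clean way to get this is to argue that $\phi_h$ is determined by a continuity/transport principle: a normal ruling $\rho$ of $T$ is, by Remark~\ref{rem:ruling by switches}, equivalent to the pair $(\rho_L,\ \text{set of switches})$, and the filling surface $S_\rho$ together with its embedding into $\overline U\times\mb R_z$ up to isotopy rel $x=x_L, x=x_R$ is a complete invariant of $\rho$ given the front; since $h$ carries the front of $T$ to that of $T'$ through fronts, dragging $S_\rho$ along $h$ (resolving the finitely many non-generic instants by the local models above) produces a well-defined surface $\phi_h(S_\rho)$, and its boundary data on the front of $T'$ recovers a unique normal ruling $\phi_h(\rho)$. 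Commutation with $r_L$ and $r_R$ is then immediate because $h$ fixes $\partial J^1(\overline U)$ pointwise (up to the allowed reparametrization preserving the $z$-ordering), so the restriction of $S_{\phi_h(\rho)}$ to $x=x_L$ (resp. $x=x_R$) equals that of $S_\rho$. I would present the move-by-move local models as the bulk of the proof and isolate the transport/canonicity argument as a short lemma, noting that the homeomorphism-rel-boundary statement is exactly what the transport construction delivers.
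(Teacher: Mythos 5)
There is a genuine gap, and it is exactly at the step you dismiss as trivial: the smooth-isotopy case. Your claim that ``the combinatorial data \dots is literally carried along, so $\phi_h$ is the tautological identification'' fails, because the normality condition (3) of Definition \ref{def:normal_ruling} is not invariant under changing the $x$-ordering of crossings. When a planar isotopy slides a crossing $a$ past a neighboring crossing $b$ (no Reidemeister move occurs, the front diagram is combinatorially ``the same''), the vertical configuration of the two eyes meeting at a switch can change from one of the three allowed patterns to a forbidden one; consequently a ruling of $T$ with a switch at $b$ need not transport to a normal ruling of $T'$ with a switch at $b$, and the correct correspondent may instead have a switch at $a$ (this is the content of the paper's Figure \ref{fig:filling surface_smooth isotopy}). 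So $\phi_h$ is not the identity on switch sets even for smooth isotopies, and the set of switches is genuinely not an invariant; this is also why Remark \ref{rem:types of crossings under Legendrian isotopy} has content. Your ``transport the filling surface along $h$'' principle does not repair this by itself: one must check case by case that what the transported surface induces on the new front is again a \emph{normal} ruling (conditions (1)--(3)), and, for the homeomorphism claim, one needs the explicit handle-sliding argument showing that, e.g., $A\leftrightarrow C$ and $B\leftrightarrow D$ glued by a half-twisted strip is homeomorphic rel the slices $x=x_L,x=x_R$ to $A\leftrightarrow D$ and $B\leftrightarrow C$ glued likewise. The same issue recurs in the type III move when two of the three crossings are switches: the bijection is forced by normality to move the switch set (e.g.\ from $\{a,b\}$ to $\{b,c\}$), which your enumeration sentence gestures at but does not establish.

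Apart from this, your overall reduction (factor $h$ into planar isotopies and Reidemeister moves supported away from the boundary, construct local bijections, note compatibility with $r_L,r_R$ because the boundary slices are untouched) matches the paper's strategy, and your type I and type II discussions are essentially the paper's. But as written the proposal proves the lemma only for isotopies that never permute the $x$-coordinates of crossings and cusps and never involve multi-switch type III configurations, which is far from the general case; the missing case analyses are the actual mathematical content of the paper's proof.
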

Note that for such 2 Legendrian isotopic tangles $(T,\mu), (T',\mu')$, their left and right pieces are necessarily identical: $T_L=T'_L, T_R=T'_R$.

As a consequence of Lemma \ref{lem:filling_surface}, we obtain
\begin{theorem}\label{thm:invariance and composition of Ruling polynomials}
The $m$-graded Ruling polynomials $<\rho_L|R_T^m(z)|\rho_R>$ are Legendrian isotopy invariants for $(T,\mu)$.\\
Moreover, suppose $T=T_1\circ T_2$ is the composition of two Legendrian tangles $T_1, T_2$, that is, $(T_1)_R=(T_2)_L$ and $T=T_1\cup_{(T_1)_R}T_2$, then the \emph{composition axiom} for Ruling polynomials holds:
\begin{equation}
<\rho_L|R_T^m(z)|\rho_R>=\sum_{\rho_I}<\rho_L|R_{T_1}^m(z)|\rho_I><\rho_I|R_{T_2}^m(z)|\rho_R>
\end{equation}
where $\rho_I$ runs over all the $m$-graded normal rulings of $(T_1)_R=(T_2)_L$.
\end{theorem}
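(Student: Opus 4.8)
The plan is to deduce everything from Lemma~\ref{lem:filling_surface}, together with the combinatorial formula $\chi(\rho)=c_R-s(\rho)$ from Remark~\ref{rem:filling_surface_computation_formula}. For the invariance statement, given a Legendrian isotopy $h:(T,\mu)\to(T',\mu')$, the bijection $\phi_h:\mr{NR}_T^m\xrightarrow{\sim}\mr{NR}_{T'}^m$ commutes with $r_L,r_R$, so it restricts to a bijection $\mr{NR}_T^m(\rho_L,\rho_R)\xrightarrow{\sim}\mr{NR}_{T'}^m(\rho_L,\rho_R)$ (here we use the observation, noted right after the lemma, that $T_L=T'_L$ and $T_R=T'_R$, so the boundary conditions make sense on both sides). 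Since $S_\rho$ and $S_{\phi_h(\rho)}$ are homeomorphic relative to the boundary pieces at $x=x_L$ and $x=x_R$, the two Euler characteristics entering $\chi(\rho)=\chi(S_\rho)-\chi(S_\rho|_{x=x_R})$ are preserved, hence $\chi(\rho)=\chi(\phi_h(\rho))$. Therefore the sum defining $<\rho_L|R_T^m(z)|\rho_R>$ is carried term-by-term to the sum defining $<\rho_L|R_{T'}^m(z)|\rho_R>$, proving invariance.

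For the composition axiom, fix $T=T_1\circ T_2$ with $(T_1)_R=(T_2)_L=:S$ (a trivial tangle of $2n$ parallel strands, say). The key point is a bijection
\begin{equation*}
\mr{NR}_T^m(\rho_L,\rho_R)\;\xrightarrow{\sim}\;\coprod_{\rho_I\in\mr{NR}_S^m}\mr{NR}_{T_1}^m(\rho_L,\rho_I)\times\mr{NR}_{T_2}^m(\rho_I,\rho_R),\qquad \rho\mapsto\bigl(\rho|_{T_1},\,\rho|_{T_2}\bigr),
\end{equation*}
where $\rho_I:=\rho|_S=(\rho|_{T_1})|_{(T_1)_R}=(\rho|_{T_2})|_{(T_2)_L}$. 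This map is well-defined because the conditions (1)--(3) in Definition~\ref{def:normal_ruling} are local to crossings, cusps, and endpoints, each of which lies entirely in $T_1$ or in $T_2$; the only data shared between the two pieces is the pairing of strands along the cut line $S$, which is exactly $\rho_I$. Conversely, given $\rho_1\in\mr{NR}_{T_1}^m(\rho_L,\rho_I)$ and $\rho_2\in\mr{NR}_{T_2}^m(\rho_I,\rho_R)$ with matching interior boundary condition $\rho_I$, the eyes glue across $S$ (since the strand pairings agree there) into a partition of the arcs of $T$ satisfying (1)--(3), giving the inverse. One then checks additivity of the Euler characteristic under this decomposition: cutting along $x=x_{\mathrm{mid}}$ (the common line), $S_\rho|_{x\le x_{\mathrm{mid}}}$ and $S_\rho|_{x\ge x_{\mathrm{mid}}}$ are the filling surfaces $S_{\rho_1}$ and $S_{\rho_2}$ respectively, glued along the $n$ line segments $S_\rho|_{x=x_{\mathrm{mid}}}=S_{\rho_I}$; by inclusion--exclusion,
\begin{equation*}
\chi(S_\rho)=\chi(S_{\rho_1})+\chi(S_{\rho_2})-\chi(S_{\rho_I}),
\end{equation*}
and subtracting $\chi(S_\rho|_{x=x_R})=\chi(S_{\rho_2}|_{x=x_R})$ while noting $\chi(S_{\rho_1}|_{x=x_R})=\chi(S_{\rho_I})$ (the right boundary of $T_1$ is $S$) yields $\chi(\rho)=\chi(\rho_1)+\chi(\rho_2)$. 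Hence $z^{-\chi(\rho)}=z^{-\chi(\rho_1)}z^{-\chi(\rho_2)}$, and summing over the bijection factors the double sum:
\begin{equation*}
<\rho_L|R_T^m(z)|\rho_R>=\sum_{\rho_I}\Bigl(\sum_{\rho_1\in\mr{NR}_{T_1}^m(\rho_L,\rho_I)}z^{-\chi(\rho_1)}\Bigr)\Bigl(\sum_{\rho_2\in\mr{NR}_{T_2}^m(\rho_I,\rho_R)}z^{-\chi(\rho_2)}\Bigr)=\sum_{\rho_I}<\rho_L|R_{T_1}^m(z)|\rho_I><\rho_I|R_{T_2}^m(z)|\rho_R>.
\end{equation*}

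Alternatively, one may avoid the Euler-characteristic bookkeeping entirely by using the closed formula $\chi(\rho)=c_R-s(\rho)$: the right cusps of $T$ are the disjoint union of those of $T_1$ and $T_2$, and the switches of $\rho$ are the disjoint union of the switches of $\rho_1$ and $\rho_2$ (since $S$ is a trivial tangle with no crossings, it contributes no switches), so $c_R(T)-s(\rho)=\bigl(c_R(T_1)-s(\rho_1)\bigr)+\bigl(c_R(T_2)-s(\rho_2)\bigr)$ directly. I expect the main obstacle to be purely bookkeeping rather than conceptual: verifying carefully that the gluing of eyes across $S$ is compatible with the normality conditions near the cut and that the Euler-characteristic correction term $\chi(S_{\rho_I})$ enters with the right sign in both the gluing formula and the normalization $\chi(S_\rho|_{x=x_R})$; once the trivial tangle $S$ is handled (where $\mr{NR}_S^m$ is just the set of admissible pairings and every filling surface is a disjoint union of $n$ rectangles), everything is immediate.
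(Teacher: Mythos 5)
Your proposal is correct and follows essentially the same route as the paper: invariance is deduced term-by-term from Lemma~\ref{lem:filling_surface}, and the composition axiom from the additivity $\chi(\rho)=\chi(\rho_1)+\chi(\rho_2)$ obtained by cutting the filling surface $S_\rho$ along the common line $(T_1)_R=(T_2)_L$. The only difference is presentational: you spell out the restriction/gluing bijection $\rho\mapsto(\rho|_{T_1},\rho|_{T_2})$ and the inclusion--exclusion step, which the paper treats as immediate, and your alternative bookkeeping via $\chi(\rho)=c_R-s(\rho)$ is a valid shortcut consistent with Remark~\ref{rem:filling_surface_computation_formula}.
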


\begin{proof}
The invariance of Ruling polynomials follows immediately from Lemma \ref{lem:filling_surface}. As for the composition axiom, let $\rho$ be any $m$-graded normal ruling of $T$ such that $\rho|_{T_L}=\rho_L, \rho|_{T_R}=\rho_R$. Let $T_1, T_2$ be Legendrian tangles over the open intervals $(x_L,x_I), (x_I,x_R)$ respectively. Take $\rho_I=\rho|_{(T_1)_R},\rho_1=\rho|_{T_1},\rho_2=\rho|_{T_2}$.
 Let $S_{\rho}, S_{\rho_1}$,$ S_{\rho_2}$ be the filling surfaces of $\rho, \rho_1$,$\rho_2$ over $T, T_1$, $T_2$ respectively. Then $S_{\rho_1}|_{x=x_I}=S_{\rho_2}|_{x=x_I}$ and $S_{\rho}=S_{\rho_1}\cup_{S_{\rho_1}|_{x=x_I}}S_{\rho_2}$, it follows that $\chi(\rho)=\chi(S_{\rho})-\chi({S_{\rho}|_{x=x_R}})
=\chi(S_{\rho_1})-\chi(S_{\rho_1}|_{x=x_I})+\chi(S_{\rho_2})-\chi(S_{\rho_2}|_{x=x_R})=\chi(\rho_1)+\chi(\rho_2)$.
Now, the composition axiom follows immediately from applying this into Definition \ref{def:Ruling polynomial} of Ruling polynomials.
\end{proof}

\begin{remark}\label{rem:TQFT interpretation}
The previous theorem suggests a ``TQFT" interpretation of Ruling polynomials for Legendrian tangles, strengthen the analogue that Ruling polynomials are Legendrian versions of Jones polynomials, which fits into a TQFT in smooth knot theory.

Morally, we may regard Legendrian tangles $(T,\mu)$ as 1-dimensional cobordisms from the $0$-manifold of left endpoints with additional structures (equivalently, $T_L$) to the $0$-manifold of right endpoints with additional structures (equivalently, $T_R$). In other words, the $0$-manifolds with additional structures (equivalently, trivial Legendrian tangles $(E,\mu_0)$ of even parallel strands) and $1$-dimensional cobordisms (equivalently, Legendrian tangles $(T,\mu)$) form a special 1-dimensional cobordism category $\mc{LT}_1$. Now, we can view Ruling polynomials $R_T^m(z)$ as a ``1-dimensional TQFT" functor $R^m$ from $\mc{LT}_1$ into the category of free modules of finite rank over $K:=\mb{Z}[z,z^{-1}]$ (See \cite{Ati88} for the basic concepts of TQFTs).

More precisely, associate to any any trivial Legendrian tangle $(E,\mu_0)$ of even parallel strands (viewed as an object of $\mc{LC}_1$), $R^m$ assigns the free module $R^m(E)$ over $K$ generated by all the $m$-graded normal rulings of $E$; Associate to any 1-dimensional cobordism $(T,\mu)$, $R^m$ assigns the $K$-module morphism $R^m(T):=R_T^m(z)$ from $R^m(T_L)$ to $R^m(T_R)$, defined by the matrix coefficients $<\rho_L|R_T^m(z)|\rho_R>$. The previous theorem and Remark \ref{rem:filling_surface_computation_formula} shows that $R^m$ is indeed a functor.
\end{remark}

\begin{proof}[Proof of Lemma \ref{lem:filling_surface}]
As any Legendrian isotopy of Legendrian tangles is a composition of a finite sequence of smooth isotopies and Legendrian Reidemeister moves of the 3 types (see FIGURE \ref{fig:LR}), it suffices to show the proposition for a single smooth isotopy or each of the 3 types of Legendrian Reidemeister moves. As always, we assume the $x$-coordinates of the crossings and cusps of the tangle fronts in question are all distinct. The proof is essentially done for each case by pictures.

\emph{If $h$ is a smooth isotopy}. This case is actually not trivial, as the ordering by $x$-coordinates of the crossings and cusps will change during a smooth isotopy, which will affect the set of switches of the normal Rulings. We illustrate only one such a case (FIGURE \ref{fig:filling surface_smooth isotopy}), the other cases are either similar or trivial. Let $a,b$ be 2 neighboring crossings of $T$, say $x(a)<x(b)$, and the smooth isotopy $h$ moves $a$ to the right of $b$ (i.e. $x(a)>x(b)$ after the isotopy), with the remaining part fixed.

Given a $m$-graded normal ruling $\rho$ of $T$ before the smooth isotopy, if $\rho$ has no switches at $a, b$, take $\phi_h(\rho)$ to be the obvious $m$-graded normal ruling corresponding to $\rho$. In particular, $\phi_h(\rho)$ has no switches at $a, b$ either. It's also clear that the filling surfaces $S_{\rho}$ and $S_{\phi_h(\rho)}$ are homeomorphic relative to the boundary pieces at $x=x_L$ and $x=x_R$.

\begin{figure}[!htbp]
\begin{center}
\minipage{0.8\textwidth}
\includegraphics[width=\linewidth,height=1in]{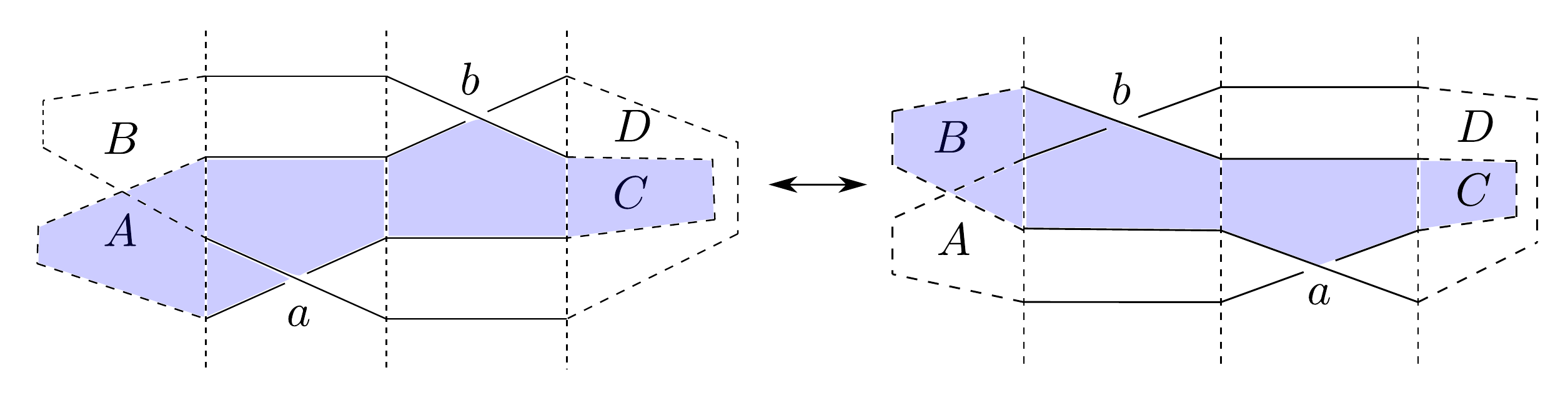}
\endminipage\hfill
\end{center}
\caption{The bijection between $m$-graded normal Rulings under smooth isotopy: The left hand side is a $m$-graded normal ruling $\rho$ with a switch at $b$, instead the corresponding $m$-graded normal ruling $\phi_h(\rho)$ on the right hand side has a switch at $a$. $A,B,C,D$ are the parts of the filling surface $S_{\rho}$ (as well as $S_{\phi_h(\rho)}$) outside the vertical strip drawn in the picture.}
\label{fig:filling surface_smooth isotopy}
\end{figure}

If $\rho$ has a single switch at $a$ or $b$, say $b$, then $|b|=0 (\mr{mod} m)$. The switch $b$ belongs to 2 eyes of $\rho$, say $e_1, e_2$. Recall that each eye has 2 paths (see definition \ref{def:eye}) on the boundary going from left to right, let's call them the \emph{upper-path} and \emph{lower-path} according to their $z$-coordinates. In our case, each of the 2 eyes $e_1, e_2$ has one path containing $b$. If the the remaining 2 companion paths of $e_1, e_2$ contain at most one of the two strands at the crossing $a$, again $\phi_h(\rho)$ is taken to be the obvious normal ruling corresponding to $\rho$ with a switch at $b$, no switch at $a$. The proposition holds trivially. If the remaining 2 companion paths of $e_1, e_2$ restrict to the strands near $a$. By the first condition in the definition \ref{def:normal_ruling} of a $m$-graded normal ruling, we also have that $|a|=0 (\mr{mod} m))$. We look at the relative positions of the 2 eyes $e_1, e_2$ in the vertical strip near $a, b$. We consider only one situation illustrated by FIGURE \ref{fig:filling surface_smooth isotopy}, the others are entirely similar. In this case, the picture on the left gives $\rho$ near $a,b$. We can define $\phi_h(\rho)$ to be the $m$-graded normal ruling (the picture on the right) having a switch at $a$, no switch at $b$ and with the same remaining part as $\rho$. It's easy to see from FIGURE \ref{fig:filling surface_smooth isotopy} that $\phi_h(\rho)$ satisfies the conditions in the definition \ref{def:normal_ruling}. Moreover, denote the remaining parts of the filling surface $S_{\rho}$ by $A, B, C, D$ respectively as in FIGURE \ref{fig:filling surface_smooth isotopy}, then $S_{\rho}$ is: glue $A, C$ by a strip, call the result $A\leftrightarrow C$, glue $B, D$ by a strip, get $B\leftrightarrow D$, then glue $A\leftrightarrow C$ and $B\leftrightarrow D$ by a half-twisted strip, by moving the strips in the gluing, we see the result is simply $A, B, C, D$ with 3 strips (or 1-handles) attached. On the other hand, the filling surface $S_{\phi_h(\rho)}$ is $A\leftrightarrow D$ and $B\leftrightarrow C$ glued via a half-twisted strip (FIGURE \ref{fig:filling surface_smooth isotopy} (right)), again the picture shows $S_{\phi_h(\rho)}$ is $A,B,C,D$ attached with 3 strips. Hence, we conclude that the 2 filling surfaces $S_{\rho}$ and $S_{\phi_h(\rho)}$ are homeomorphic relative to the boundary pieces at $x=x_L$ and $x=x_R$. Note also that, this homeomorphism is orientation-preserving if $S_{\rho}$ is orientable.

When $a,b$ are both the switches of $\rho$, we define $\phi_h(\rho)$ to be the corresponding $m$-graded normal ruling having both $a, b$ as switches. A similar argument proves the proposition.

\emph{If $h$ is a type I Legendrian Reidemeister move (FIGURE \ref{fig:LR} (left)).} Under a type I Legendrian Reidemeister move, the additional crossing necessarily has degree 0. Given a $m$-graded normal ruling $\rho$, we define $\phi_h(\rho)$ to be the corresponding normal ruling having this additional crossing as a switch, and vice versa. The proposition holds trivially, since adding one disk along the boundary doesn't change the topological type (and also the orientability) of a surface.

\emph{If $h$ is a type II Legendrian Reidemeister move (FIGURE \ref{fig:LR} (middle)).} The defining conditions of a normal ruling ensures that the 2 additional crossings can not be switches, so $\phi_h$ is the obvious bijection. Again, the proposition holds trivially.

\emph{If $h$ is a type III Legendrian Reidemeister move (FIGURE \ref{fig:LR} (right)).} Let $a, b, c$ be the 3 crossings involved in the Legendian Reidemeister move III. By a smooth isotopy as proved above, we may assume $a, b, c$ are neighboring crossings (see FIGURE \ref{fig:filling surface_LRIII_1} for an example). Given a $m$-graded normal ruling $\rho$ of $T$ before the move, we need to construction the bijection $\phi_h$ case by case. If $\rho$ has at most one switch at $a, b, c$, $\phi_h(\rho)$ is the obvious normal ruling corresponding to $\rho$, with the same switches at $a, b, c$ as $\rho$. The proposition follows easily.

\begin{figure}[!htbp]
\begin{center}
\minipage{0.8\textwidth}
\includegraphics[width=\linewidth, height=1.2in]{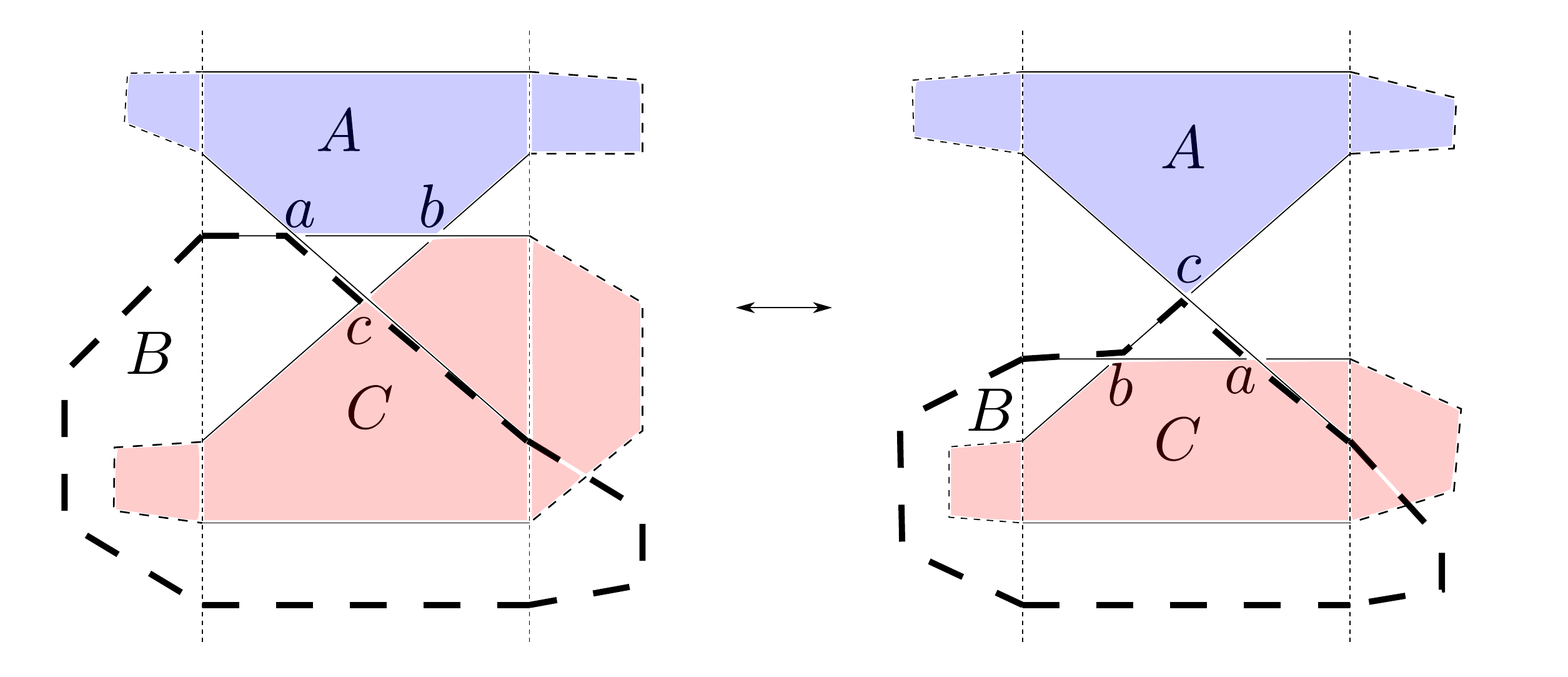}
\endminipage\hfill
\end{center}
\caption{The bijection between $m$-graded normal Rulings under a Legendrian Reidemeister move III: In the left figure, the normal ruling $\rho$ has switches at $a, b$, $A, B, C$ are the 3 eyes containing $a, b, c$, with $A$ the blue disk, $C$ the red disk and $B$ the disk enclosed by thick dashed lines. In the right figure, the corresponding normal ruling $\phi_h(\rho)$ has switches at $b, c$, the other letters are similar. By moving the strips at the switches in the vertical strip, the fillings surfaces $S_{\rho}, S_{\phi_h{(\rho)}}$ are homeomorphic relative to the boundaries at $x=x_L$ and $x=x_R$.}
\label{fig:filling surface_LRIII_1}
\end{figure}

If $\rho$ has 2 switches at $a, b$, no switch at $c$. Then the switches belong to 3 eyes, called $A, B, C$. We look at the relative positions of the 6 paths of the eyes $A, B, C$ in the small vertical strip containing $a, b, c$. We only consider one such a case as in FIGURE \ref{fig:filling surface_LRIII_1} (left), the other cases are similar. Note that $a, b$ are switches imply that all the 3 crossings $a, b, c$ have degree 0 modulo $m$. Moreover, the normal conditions in the definition \ref{def:normal_ruling} of a $m$-graded normal ruling ensures that, there's a unique way to construct a $m$-graded normal ruling $\phi_h(\rho)$ which coincides with $\rho$ outside the vertical strip. The converse is also true for the same reason. The picture of $\phi_h$ is shown in FIGURE \ref{fig:filling surface_LRIII_1} (right), note that now $\phi_h(\rho)$ has 2 switches at $b, c$, no switch at $a$. Moreover, the ruling surfaces $S_{\rho}$ and $S_{\phi_h(\rho)}$ only differ by the gluing of the 3 eyes $A, B, C$ inside the vertical strip. Use the notations as in the proof for smooth isotopies, the left hand side of FIGURE \ref{fig:filling surface_LRIII_1} gives $-B\leftrightarrow A\leftrightarrow -C$, where $-B$ (resp. $-C$) means $B$ with the opposite orientation as that induced from $U\times \mb{R}_z$. On the other hand, the right hand side gives $A\leftrightarrow -(B\leftrightarrow C)$. By moving the gluing strips (or $1$-handles), it's easy to see that the results after gluing can be identified by an (orientation preserving) homeomorphism which is identity outside the vertical strip. This shows that $S_{\rho}$ and $S_{\phi_h(\rho)}$ are homeomorphic relative to the boundaries at $x=x_L$ and $x=x_R$, and the homeomorphism is orientation-preserving if $S_{\rho}$ is orientable.

\begin{figure}[!htbp]
\begin{center}
\minipage{0.8\textwidth}
\includegraphics[width=\linewidth, height=1.2in]{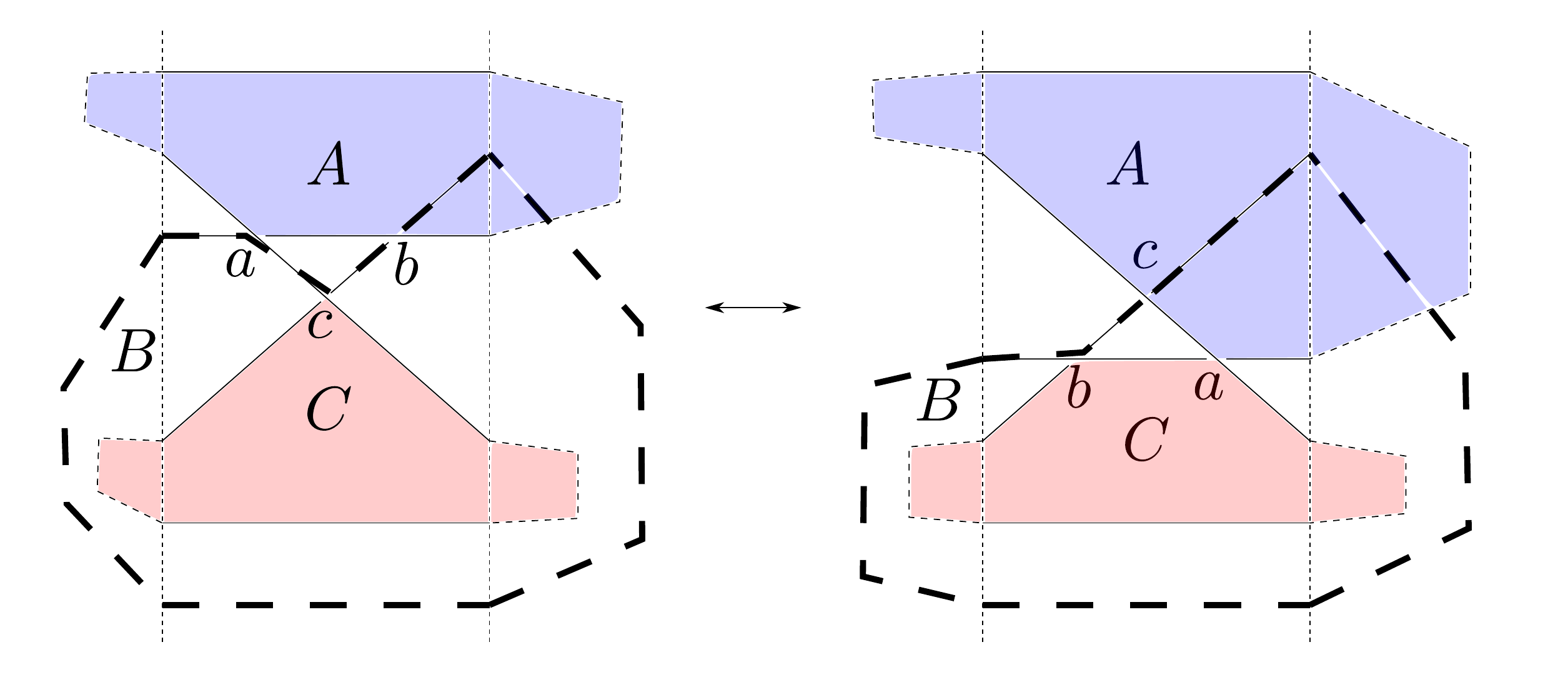}
\endminipage\hfill
\end{center}
\caption{The bijection between $m$-graded normal Rulings under a Legendrian Reidemeister move III: In the left figure, the normal ruling $\rho$ has switches at $a, c$. In the right figure, the corresponding normal ruling $\phi_h(\rho)$ has switches at $a, b$. the other letters are similar as in FIGURE \ref{fig:filling surface_LRIII_1}. By moving the strips at the switches in the vertical strip, the fillings surfaces $S_{\rho}, S_{\phi_h{(\rho)}}$ are homeomorphic relative to the boundaries at $x=x_L$ and $x=x_R$.}
\label{fig:filling surface_LRIII_2}
\end{figure}

If the normal ruling $\rho$ has 2 switches at $a, c$, and no switch at $b$. Similarly, we look at the relative positions of the 3 eyes $A, B, C$ containing $a, b, c$. Again, we only look at one such a case as in FIGURE \ref{fig:filling surface_LRIII_2} (left). The other cases are similar. Now by a similar argument as above, FIGURE \ref{fig:filling surface_LRIII_2} proves the proposition. Note, in this case $\phi_h(\rho)$ has 2 switches at $a, b$, no switch at $c$. The gluing of the 3 eyes on the left figure is $A\leftrightarrow -(B\leftrightarrow C)$, the gluing of the 3 eyes on the right figure is $-(B\leftrightarrow C)\leftrightarrow A$. They can be identified without changing the parts outside the vertical strip.

The case when $\rho$ has switches at $b, c$ is entirely similar to the case above.

\begin{figure}[!htbp]
\begin{center}
\minipage{0.8\textwidth}
\includegraphics[width=\linewidth, height=1.2in]{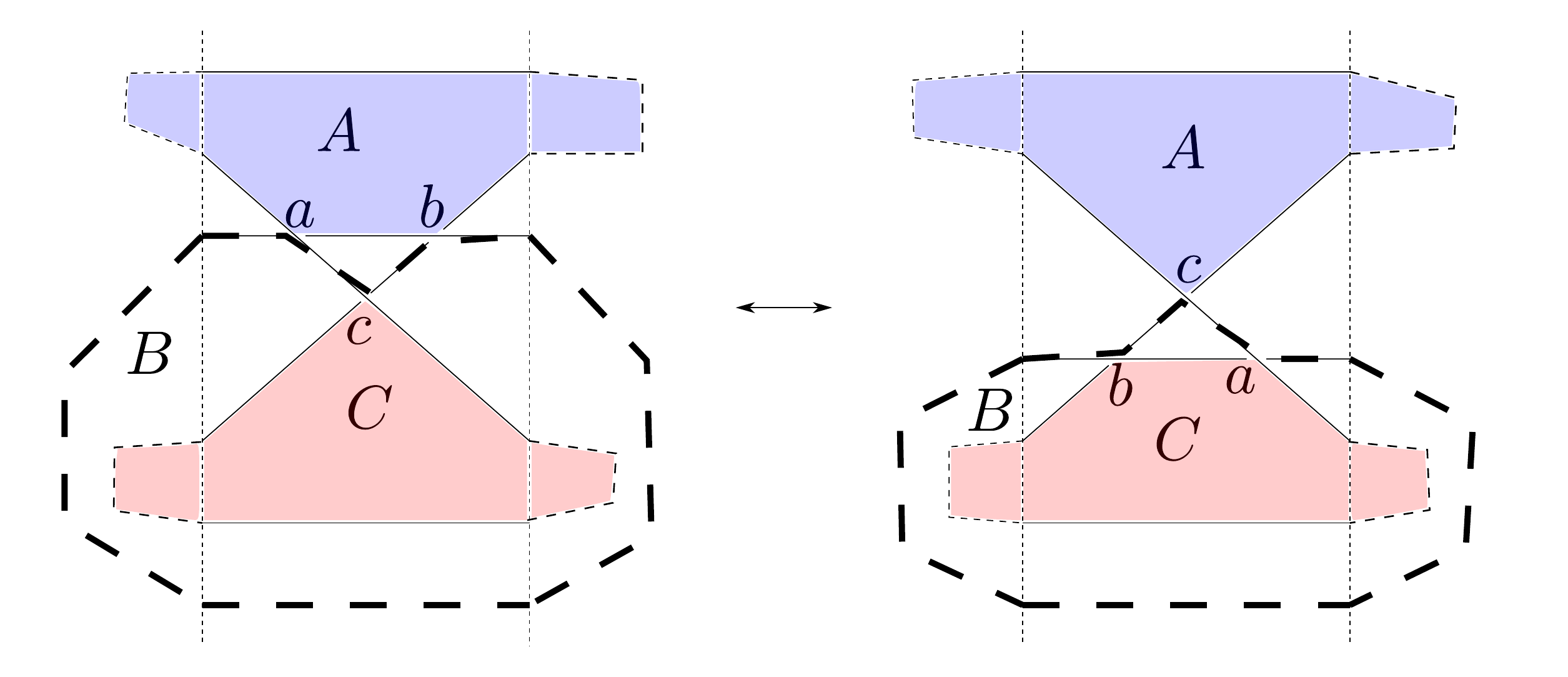}
\endminipage\hfill
\end{center}
\caption{The bijection between $m$-graded normal Rulings under a Legendrian Reidemeister move III: the normal ruling $\rho$ (left) and the corresponding normal ruling $\phi_h(\rho)$ (right) both have switches at the 3 crossings $a, b, c$.}
\label{fig:filling surface_LRIII_3}
\end{figure}

If the normal ruling $\rho$ has switches at all the 3 crossings $a, b, c$ (see FIGURE \ref{fig:filling surface_LRIII_3} for an example), then $\phi_h(\rho)$ is the obvious normal ruling having $a, b, c$ as switches and the same shape as $\rho$ outside the vertical strip. The proposition again follows easily. This finishes the proof.

\end{proof}

\begin{remark}\label{rem:types of crossings under Legendrian isotopy}
Given any $m$-graded normal ruling $\rho$ of a Legendrian tangle $T$, in the proof of the previous lemma, a direct check also shows that $s(\phi_h(\rho))-s(\rho), r(\phi_h(\rho))-r(\rho)$ and $d(\phi_h(\rho))-d(\rho)$ (see Definition \ref{def:switches and returns}) are all independent of $\rho$. We will use this fact in the proof of Corollary \ref{cor:invariance of augmentation numbers}.
\end{remark}

\subsection{Example}

\begin{example}\label{ex:bordered trefoil knot}
Consider the Legendrian tangle (front) $T$ given by FIGURE \ref{fig:filling_surface} (left), obtained by removing the left and right cusps of a Legendrian trefoil knot. $T$ has 3 crossings $a_1, a_2, a_3$, and with the $\mb{Z}$-valued Maslov potential $\mu$ chosen as in the figure, the degrees are $|a_1|=|a_2|=|a_3|=0$. Moreover, $T$ has $4$ left end-points, $4$ right end-points and no left or right cusps. So in Remark \ref{rem:filling_surface_computation_formula}, $n_L=n_R=4, c_L=c_R=0$. The left piece $T_L$ (resp. the right piece $T_R$) consists of 4 parallel lines, labeled from top to bottom, say, by $1, 2, 3, 4$, with Maslov potential values $2, 1, 1, 0$ respectively.

Let's calculate the Ruling polynomials for $(T,\mu)$. Firstly, let $m\neq 1$ be a nonnegative integer, then the set of $m$-graded normal rulings for $T_L$ (resp. $T_R$) is $NR^m_{T_L}=\{(\rho_L)_1=(1 2)(3 4), (\rho_L)_2=(1 3)(2 4)\}$ (resp. $NR^m_{T_R}=\{(\rho_R)_1=(1 2)(3 4), (\rho_R)_2=(1 3)(2 4)\}$). Here, for example in $T_L$ $(1 2)(3 4)$ means the pairing between the strands $1, 2$ (resp. $3, 4$), corresponding to a $m$-graded normal ruling of $T_L$.

Using the notations before Lemma \ref{lem:filling_surface} and in Remark \ref{rem:ruling by switches}, the $m$-graded normal rulings of $(T,\mu)$ are:

\begin{enumerate}[label=(\arabic*).]
\item
$NR_T^m((\rho_L)_1,(\rho_R)_1)=\{\rho_3=\{a_1\};\rho_5=\{a_3\};\rho_8=\{a_1,a_2,a_3\}\}$;
\item
$NR_T^m((\rho_L)_1,(\rho_R)_2)=\{\rho_1=\emptyset;\rho_6=\{a_1,a_2\}\}$;
\item
$NR_T^m((\rho_L)_2,(\rho_R)_1)=\{\rho_2=\emptyset;\rho_7=\{a_2,a_3\}\}$;
\item
$NR_T^m((\rho_L)_2,(\rho_R)_2)=\{\rho_4=\{a_2\}\}$.
\end{enumerate}
Note that $\{a_1, a_3\}$ is not a $m$-graded normal ruling since it violates the normal condition in the definition \ref{def:normal_ruling}. Apply the computation formula in Remark \ref{rem:filling_surface_computation_formula}, the Ruling polynomials of $(T,\mu)$ and the corresponding maximal degrees $d$ in $z$ are:
\begin{enumerate}[label=(\arabic*).]
\item
$<(\rho_L)_1|R_T^m(z)|(\rho_R)_1>=2z+z^3$, and $d=3$;
\item
$<(\rho_L)_1|R_T^m(z)|(\rho_R)_2>=1+z^2$, and $d=2$;
\item
$<(\rho_L)_2|R_T^m(z)|(\rho_R)_1>=1+z^2$ with $d=2$;
\item
$<(\rho_L)_2|R_T^m(z)|(\rho_R)_2>=z$ with $d=1$.
\end{enumerate}

Note that for $m=1$, each of the 2 sets $NR^1_{T_L}$ and $NR^1_{T_R}$ contains an additional $1$-graded normal ruling $(\rho_L)_3$ (resp. $(\rho_R)_3)=\{(1 4)(2 3)\}$. However, no $1$-graded normal ruling of $T$ restricts to $(\rho_L)_3$ or $(\rho_R)_3$, hence the above formula also computes the $1$-graded ruling polynomials for $(T,\mu)$.

In Example \ref{ex:Ruling polynoimals vs aug numbers for bordered trefoil knot}, as an illustration of the main theorem \ref{thm:counting for tangles}, we will see that the Ruling polynomials with boundary conditions calculated here, indeed match with the augmentation numbers with the same boundary conditions (to be defined in Section \ref{subsec:augmentations for tangles}) for the Legendrian tangle $(T,\mu)$ as above.

\end{example}

\section{The LCH differential graded algebras for Legendrian tangles}\label{sec:LCH DGA for tangles}
Generalizing the Chekanov-Eliashberg construction of the LCH DGAs for Legendrian links, the LCH DGAs for Legendrian tangles with simple fronts (See Section \ref{subsubsec:fronts}) were explicitly constructed in \cite{Siv11}. Here we give the basic constructions and properties of the LCH DGAs associated to any Legendrian tangle $T$ (not necessarily with simple front). The key properties of the DGAs are the homotopy invariance and co-sheaf property.
As in the Section \ref{subsec:LCH DGA}, we allow some base points placed on the tangle.

Given an oriented tangle front $T$, provided with a $\mb{Z}/2r$-valued Maslov potential $\mu$. We can orient the tangle so that each strand is right-moving (resp. left-moving) if and only if its Maslov potential value is even (resp. odd). We place some base points $*_1,\ldots,*_B$ on $T$ so that each connected component containing a right cusp has at least one base point. Assume $T$ has $n_L$ left end-points and $n_R$ right end-points, labeled from top to bottom by $1, 2,\ldots, n_L$ (resp. $1, 2,\ldots, n_R$).
We will construct a LCH DGA associated to the resolution (see Section \ref{subsubsec:resolution_construction}) of $T$. The idea is to embed the tangle front $T$ into a Legendrian link front $\Lambda$, and take the resolution of $\Lambda$. Then define the $\mb{Z}/2r$-graded LCH DGA $\mc{A}(T)=\mc{A}(T,\mu,*_1,\ldots,*_B)$ as a sub-DGA of $\mc{A}(\mr{Res}(\Lambda))$.

\subsection{Embed a Legendrian tangle into a Legendrian link}\label{subsec:embed a Legendrian tangle into a link}

Let $(T,\mu,*_1,\ldots,*_B)$ be given as above. In this subsection, we will give the construction to embed $T$ into a Legendrian link front (see FIGURE \ref{fig:LCH DGA_tangle} for an illustration). In the case of FIGURE \ref{fig:LCH DGA_tangle} (left), $T$ is the Legendrian tangle in the vertical strip, with $n_L=4, n_R=2$ and $B=2$.

We firstly glue a $n_L$-copy of a left cusp along the top end-points to the $n_L$ left end-points of $T$ (see the $4$-copy of the left cusp with crossings $\alpha_{ij}$'s in FIGURE \ref{fig:LCH DGA_tangle} (left)), and also glue a $n_R$-copy of a right cusp along the bottom end-points to the $n_R$ right end-points of $T$ (See the $2$-copy of the right cusp with crossing $\beta_1$ in FIGURE \ref{fig:LCH DGA_tangle} (left)). Next, we glue a $n_R$-copy of a left cusp, placed to the left of the diagram, along the top end-points to the top end-points of the $n_R$-copy of the right cusp (See the $2$-copy left cusp to the left of the diagram in FIGURE \ref{fig:LCH DGA_tangle} (left)). Now we are left with a diagram, say $D$, with $n_L+n_R$ right end-points (see the bottom dashed line in FIGURE \ref{fig:LCH DGA_tangle} (left)). We glue these right end-points via $n_L+n_R$ right cusps as follows. We extend the Maslov potential $\mu$ to $D$, this extension is unique. Every connected component with nonempty boundary (i.e. component which is not a loop) of $T$ is connected to exactly 2 such right end-points, and it's easy to see that $\mu(\text{upper end-point})-\mu(\text{lower end-point})=\pm 1 (\mr{mod} 2r)$. We glue a right cusp to the 2 end-points from the right so that $\mu$ defines a $\mb{Z}/2r$-valued Maslov potential on the resulting front diagram. We will place these right cusps so that they have almost the same $x$-coordinates. Note that this procedure may involve some additional crossings (See the bottom-right in FIGURE \ref{fig:LCH DGA_tangle} (left)).

\begin{figure}[!htbp]
\begin{center}
\minipage{0.5\textwidth}
\includegraphics[width=\linewidth, height=2.2in]{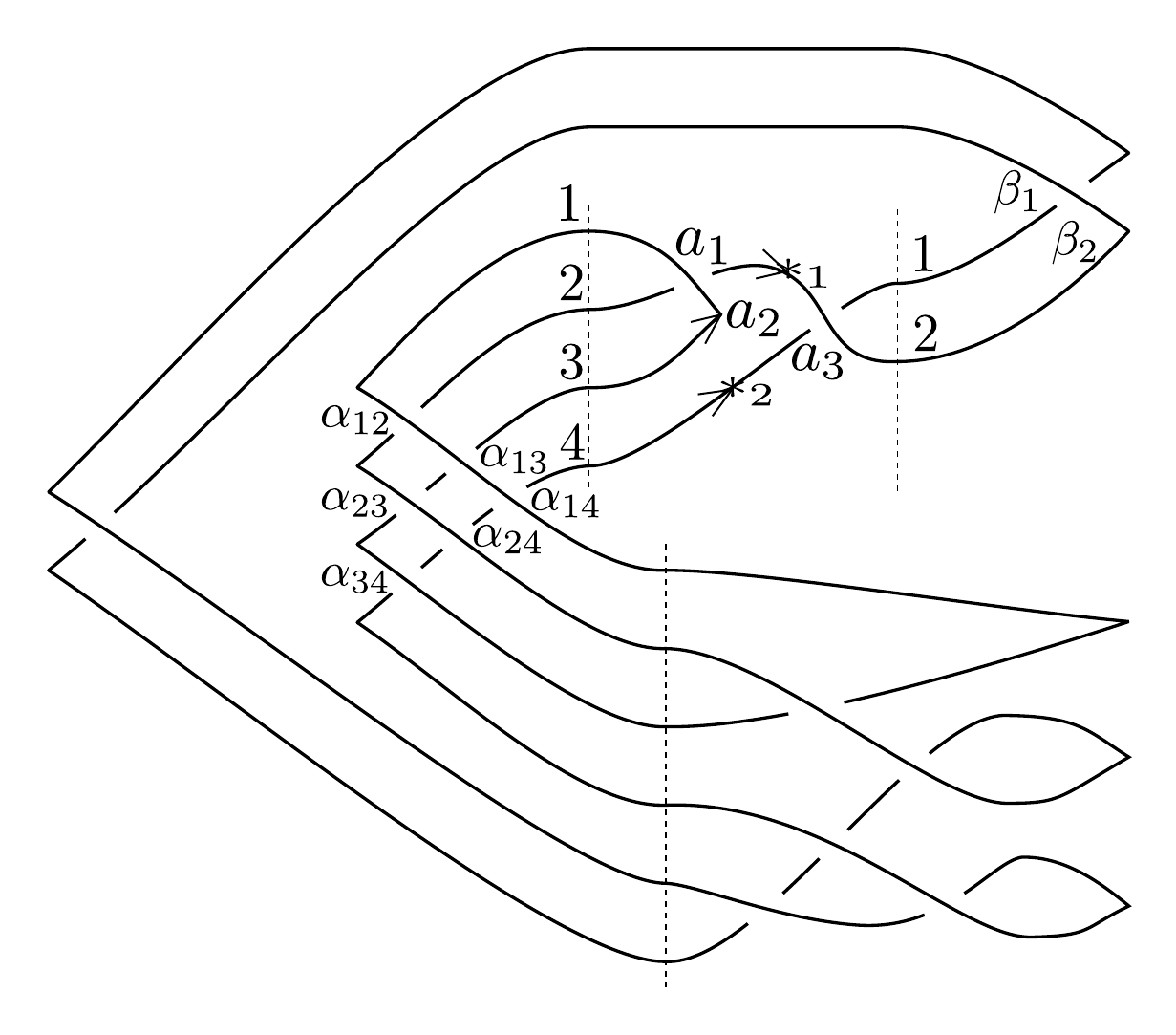}
\endminipage\hfill
\minipage{0.5\textwidth}
\includegraphics[width=\linewidth, height=2.2in]{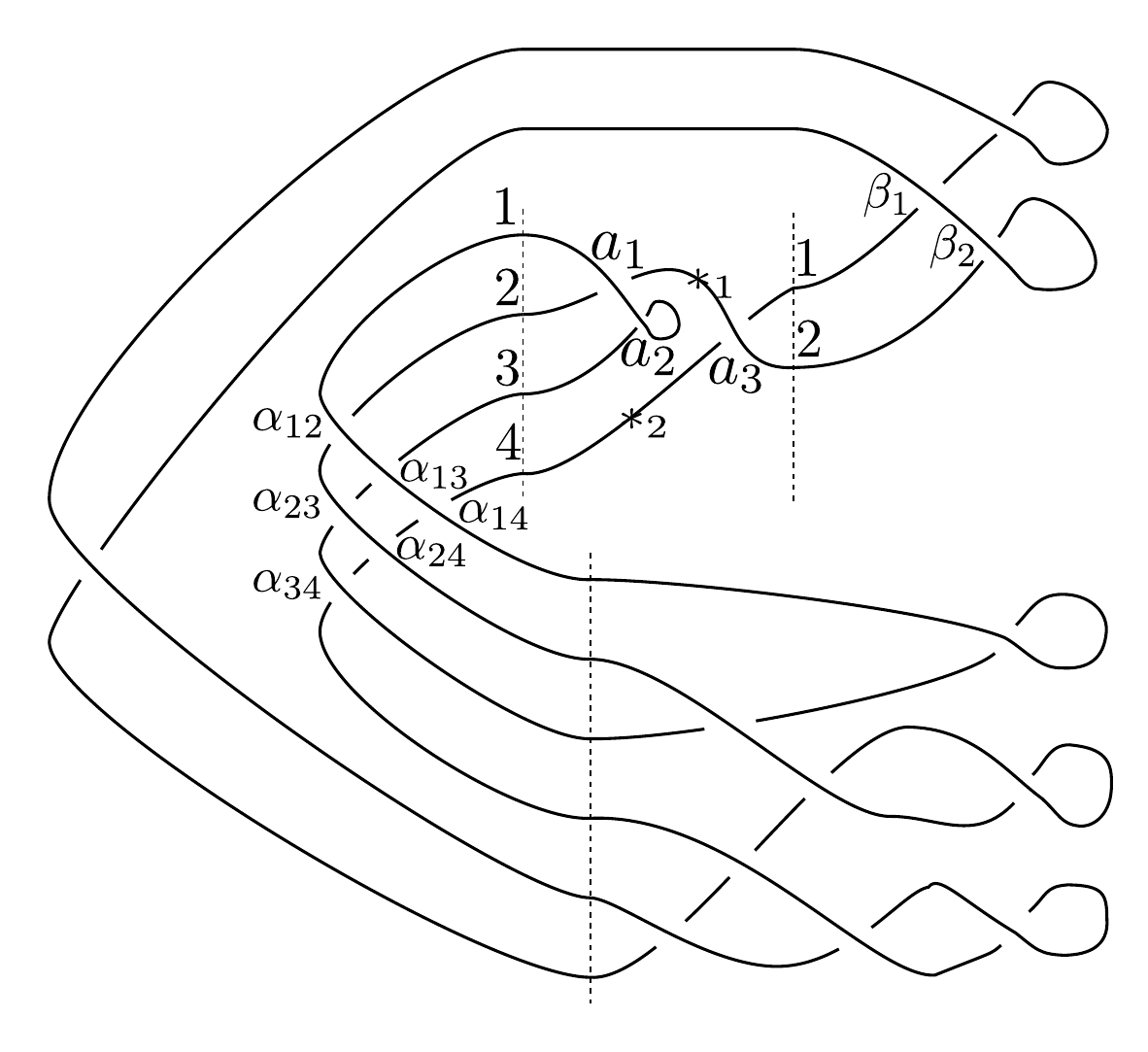}
\endminipage\hfill
\end{center}
\caption{LCH DGA for tangles: In the left picture, embed the Legendrian tangle $T$ (the part in the vertical strip) into a Legendrian link front $\Lambda$; In the right picture, take the resolution of the Legendrian link front $\Lambda$.}
\label{fig:LCH DGA_tangle}
\end{figure}

Let's denote by $\Lambda$ the resulting front diagram, we will also add some additional base points, for example one base point at each of the additional $n_R+n_L$ right cusps in the bottom of $\Lambda$, so that each component of $\Lambda$ contains at least one base point. By construction, $\Lambda$ is equipped with a $\mb{Z}/2r$-valued Maslov potential, still denoted by $\mu$. Moreover, $\Lambda$ is simple (see Section \ref{subsubsec:resolution_construction}) away from $T$. The $n_L$-copy of the left cusp glued to the left end-points of $T$ has $\binom{n_L}{2}$ crossings, denoted by $\alpha_{ij}, 1\leq i<j\leq n_L$, where $\alpha_{ij}$ is the crossing of the 2 strands connected to the 2 left end-points $i, j$ of $T$. Then, we have $|\alpha_{ij}|=\mu(i)-\mu(j)-1$.

Now by the resolution construction, we can define the LCH DGA $\mc{A}(\mr{Res}(\Lambda))$. Let $\{a_1,a_2,\ldots,a_R\}$ be the crossings and right cusps of $T$, $t_1,t_2,\ldots,t_B$ be the generators in $\mc{A}(\mr{Res}(\Lambda))$ corresponding to the base points $*_1,*_2,\ldots,*_B$. By the resolution construction \cite{Ng03}, the differential $\partial a_i$ only involves the generators $\{a_j, 1\leq j\leq R, t_j^{\pm 1}, 1\leq j\leq B\}$ and $\{\alpha_{ij}, 1\leq i<j\leq n_L\}$.

Moreover, the differentials of $\alpha_{ij}$'s are given by
\begin{equation*}
\partial \alpha_{ij}=\sum_{i<k<j}(-1)^{|\alpha_{ik}|+1}\alpha_{ik}\alpha_{kj}.
\end{equation*}

As a consequence, the subalgebra generated by $t_1^{\pm 1},\ldots,t_B^{\pm 1},a_1,\ldots,a_R$ and $\alpha_{ij}, 1\leq i<j\leq n_L$ form a sub-DGA of $\mc{A}(\mr{Res}(\Lambda))$. This leads to the definition of the LCH DGA $\mc{A}(T)$ of the Legendrian tangle front $T$.

\subsection{LCH DGAs via Legendrian tangle fronts}
Now, let's translate the construction of sub-DGAs in the previous subsection into definitions involving only $T$.

\subsubsection{The general definition}
\begin{definition/proposition}
Define the $\mb{Z}/2r$-graded LCH DGA $\mc{A}(T)$ as follows:

\noindent{}\emph{As an algebra:} $\mc{A}(T)=\mb{Z}[t_1^{\pm 1},\ldots,t_B^{\pm 1}]<a_i, 1\leq i\leq R, a_{ij}, 1\leq i<j\leq n_L>$ is a free associative algebra over $\mb{Z}[t_1^{\pm 1},\ldots,t_B^{\pm 1}]$, where $a_{ij}$ corresponds to the pair of left end-points $i, j$ of $T$.

\noindent{}\emph{The grading and differential} is induced from the identification of $\mc{A}(T)$ with the sub-DGA obtained above, via $t_i^{\pm 1}\leftrightarrow t_i^{\pm 1}, a_i \leftrightarrow a_i$ and $a_{ij}\leftrightarrow \alpha_{ij}$.
By the construction above, the DGA $\mc{A}(T)$ is \emph{independent of the choices} involved in the construction of $\Lambda$. In particular, we can translate the DGA $\mc{A}(T)$ purely in terms of the combinatorics of the tangle front $T$.
More precisely, we have:

\noindent{}\emph{The grading:} $|t_i^{\pm 1}|=0, |a_i|=\mu(\text{over-strand})-\mu(\text{under-strand})$ if $a_i$ is a crossing, $|a_i|=1$ if $a_i$ is a right cusp, and $|a_{ij}|=\mu(i)-\mu(j)-1$.

\noindent{}\emph{The differential:} As usual, we impose the graded Leibniz Rule $\partial(x\cdot y)=(\partial x)\cdot y+(-1)^{|x|}x\cdot\partial y$ and the differential of the generators are defined as follows: $\partial(t_i^{\pm 1})$=0; The differential of $a_{ij}$ given by the same formula for $\alpha_{ij}$ with $\alpha_{\bullet,\bullet}$ replaced by $a_{\bullet,\bullet}$, that is,
\begin{equation}\label{eqn:differential for pair of ends}
\partial a_{ij}=\sum_{i<k<j}(-1)^{|a_{ik}|+1}a_{ik}a_{kj}.
\end{equation}

To translate the differential of a crossing or a right cusp, we proceed as in \cite[Def.2.6]{Ng03}. Let $a=a_i$ and $v_1,\ldots, v_n$ be some elements in the generators $\{a_i, 1\leq i\leq R, a_{ij}, 1\leq i<j\leq n_L\}$ of $T$ for $n\geq 0$. Let $D_n^2=D^2-\{p,q_1,\ldots,q_n\}$ be a fixed oriented disk with $n+1$ boundary punctures (or vertices) $p,q_1,\ldots,q_n$, arranged in a counterclockwise order.

\begin{figure}[!htbp]
\begin{center}
\minipage{0.8\textwidth}
\includegraphics[width=\linewidth, height=4in]{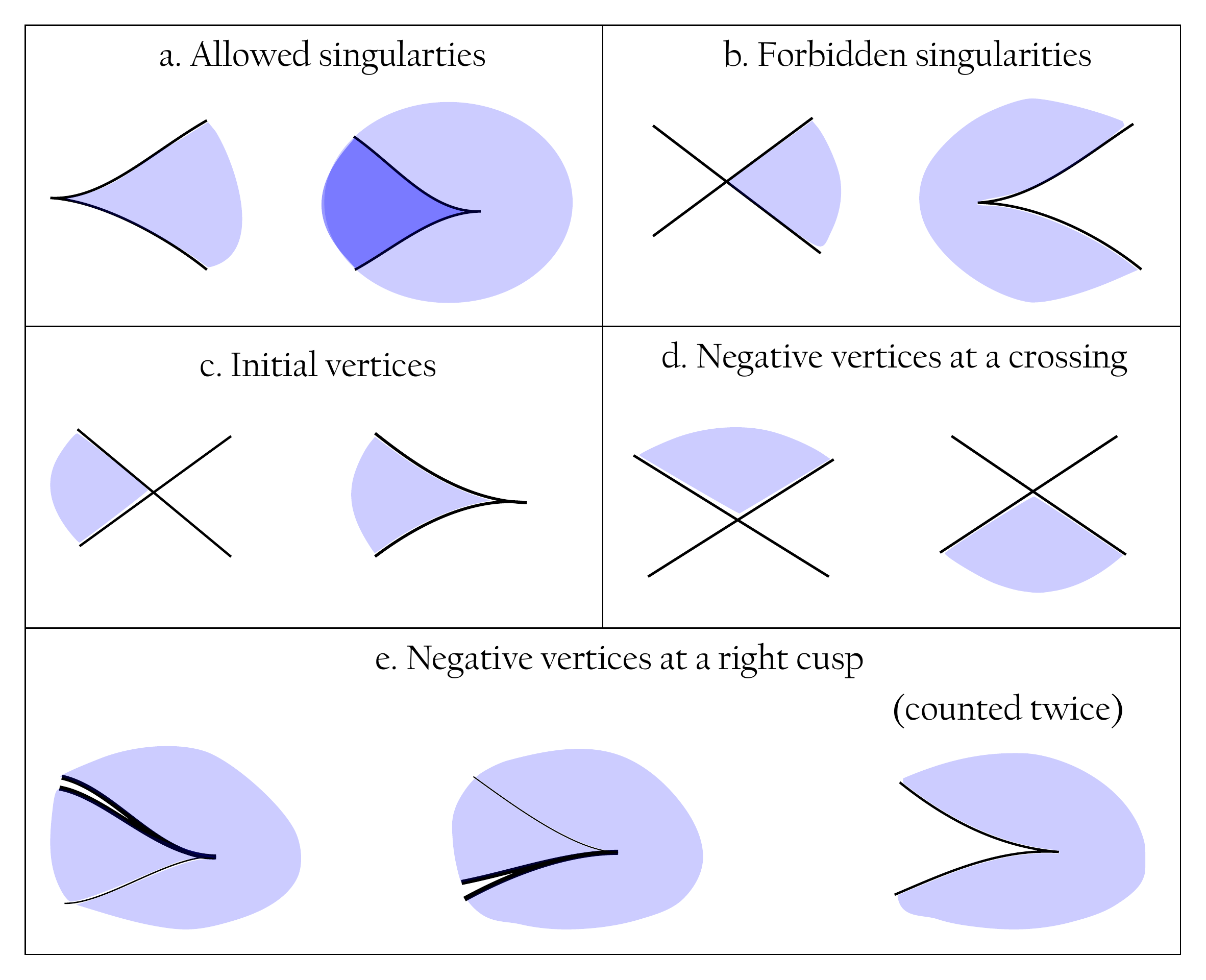}
\endminipage\hfill
\end{center}
\caption{Admissible disks: The image of the disk $D_n^2$ under an admissible map near a singularity or a vertex on the boundary $\partial D_n^2$. The first row indicates the possible singularities, the second and third rows indicate the possible vertices. In the first 2 pictures of part e, 2 copies of the same strand (the heavy lines) are drawn for clarity.}
\label{fig:Admissible_disks}
\end{figure}

\begin{definition}\label{def:admissible_disks_via_fronts}
Define the moduli space $\Delta(a;v_1,\ldots,v_n)$ to be the space of \emph{admissible disks} $u$ of the tangle front $T$ up to re-parametrization, that is,
\begin{enumerate}[label=(\roman*)]
\item\label{item:i}
\noindent{}\emph{(Immersion with singularities)} The map
$u:(D_n^2,\partial D_n^2)\rightarrow (\mathbb{R}_{xz}^2,T)$ is an immersion, orientation-preserving, and smooth away from possible singularities at left and right cusps, near which the image of the map are indicated as in FIGURE \ref{fig:Admissible_disks}.a,b. 
Note that the singularities are not vertices of $D_n^2$;

\item
\noindent{}\emph{(Initial/positive vertex)} $u$ extends continuously to $p$, with
$u(p)=a$, near which the image of the map is indicated as in Figure \ref{fig:Admissible_disks}.c;

\item
\noindent{}\emph{(Negative vertices at a crossing)} If $v_i$ is a crossing, $u$ extends continuously to $q_i$, with $u(q_i)=v_i$, near which the image of the map is indicated as in Figure \ref{fig:Admissible_disks}.d;

\item
\noindent{}\emph{(Negative vertices at a right cusp)} If $v_i$ is a right cusp, $u$ extends continuously to $q_i$, with $u(q_i)=v_i$, near which the image of the map is indicated as in Figure \ref{fig:Admissible_disks}.e;

\item\label{item:v}
\noindent{}\emph{(Negative vertices at a pair of left end-points)}
If $v_i$ is a pair of left end-points $a_{jk}$, we require that, as one approaches $q_i$ in $D_n^2$, $u$ limits to the line segment $[j,k]$ at the left boundary between the left end-points $j, k$ of $T$;

\item
The $x$-coordinate on the image $\overline{u(D_n^2})$ has a unique local maximum at $a$.
\label{item:vi}
\end{enumerate}
Note: the last condition \ref{item:vi} is in fact a consequence of the previous ones \ref{item:i}-\ref{item:v}. All the defining conditions are direct translations from those in Definition \ref{def:admissible_disks} via Lagrangian projection, for $\mc{A}(\mr{Res}(\Lambda))$ in Section \ref{subsec:embed a Legendrian tangle into a link}. Via the resolution construction (Figure \ref{fig:Res}), the only nontrivial part is the translation for a right cusp, near which the defining conditions are illustrated by Figure \ref{fig:Admissible_disks_resolution}.
\end{definition}

\begin{figure}[!htbp]
\begin{center}
\minipage{0.8\textwidth}
\includegraphics[width=\linewidth, height=1in]{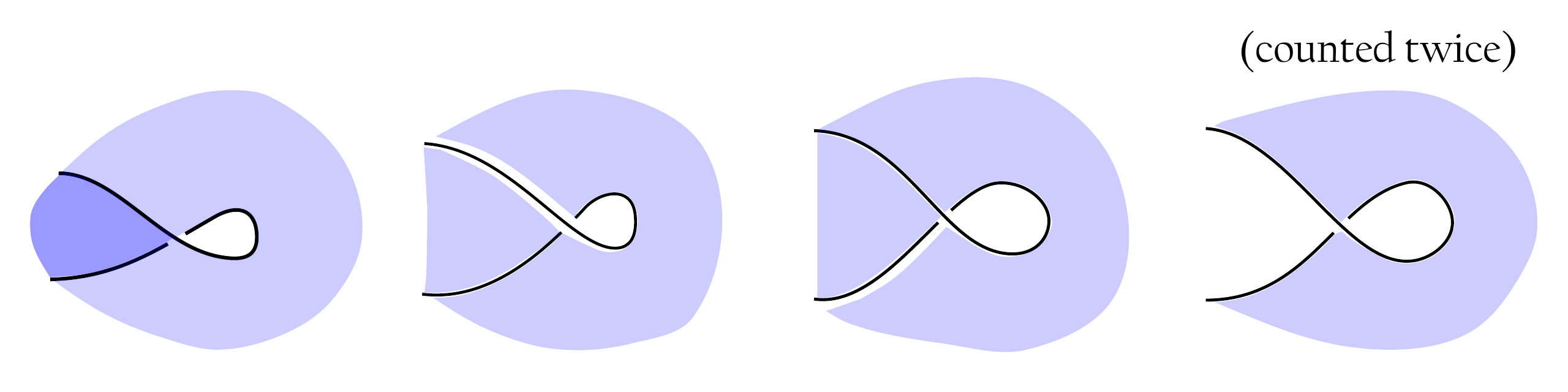}
\endminipage\hfill
\end{center}
\caption{The singularity and negative vertices at a right cusp after resolution: The first figure corresponds to a singularity (Figure \ref{fig:Admissible_disks}.a), the remaining ones correspond to a negative vertex (FIGURE \ref{fig:Admissible_disks}.e, going from left to right).}
\label{fig:Admissible_disks_resolution}
\end{figure}

For each $u\in \Delta(a;v_1,\ldots,v_n)$, walk along $\overline{u(\partial D_n^2)}$ starting from $a$ in counterclockwise direction, we encounter a sequence $s_1,\ldots,s_N(N\geq n)$ of negative vertices of $u$ (crossings, right cusps, or pairs of left end-points as in Definition \ref{def:admissible_disks_via_fronts}) and base points (away from the previous negative vertices). Translate Definition \ref{def:weight for disks}, we obtain

\begin{definition}
The \emph{weight} of $u$ is $w(u):=w(s_1)\ldots w(s_N)$, where
\begin{enumerate}[label=(\roman*)]
\item
$w(s_k)=t_i ($resp. $t_i^{-1})$ if $s_k$ is the base point $*_i$, and the boundary orientation of $u(\partial D^2)$ agrees (resp. disagrees) with the orientation of $T$ near $*_i$. Note that this includes the case when the base point $*_i$ is located at a right cusp, which is also a singularity of $u$ (See Figure \ref{fig:Admissible_disks}.a);
\item
$w(s_k)=v_i$ (resp. $(-1)^{|v_i|+1}v_i$) if $s_k$ is the crossing $v_i$ and the disk $\overline{u(D_n^2)}$ occupies the top (resp. bottom) quadrant of $v_i$ (See Figure \ref{fig:Admissible_disks}.d);
\item
$w(s_k)=a_{ij}$ if $s_k$ is the pair of left end-points $a_{ij}$;
\item
$w(s_k)=w_1(s_k)w_2(s_k)$ if $s_k$ is the right cusp $v_i=u(q_i)$ (see Figure \ref{fig:Admissible_disks}.e), where\\
$w_2(s_k)=v_i$ (resp. $v_i^2$) if the image of $u$ near $q_i$ looks like the first two diagrams (resp. the third diagram) of Figure \ref{fig:Admissible_disks}.e;\\
$w_1(s_k)=1$ if $s_k$ is a unmarked right cusp (equipped with no base point);\\
$w_1(s_k)=t_j$ (resp. $t_j^{-1}$) if $v_i$ is a marked right cusp equipped with the base point $*_j$, and $v_i$ is an up (resp. down) right cusp\footnote{Recall that a cusp is called \emph{up} (resp. \emph{down}) if the orientation of the front $T$ near the cusp goes up (resp. down).}. See Figure \ref{fig:Admissible_disks_resolution} for an illustration.
\end{enumerate}

\noindent Notice that the \emph{convention for the orientation signs} here is as follows: At each crossing of even degree of the tangle front $T$, the two quadrants to the lower right of the under-strand have negative orientation signs. All other quadrants have positive orientation signs.
\end{definition}

\begin{definition}\label{def:differential via tangle fronts}
For $a=a_i$ a crossing or a right cusp, its differential is given by
\begin{equation}
\partial a=\sum_{n,v_1,\dots,v_n}\sum_{u\in\Delta(a;v_1,\ldots,v_n)}w(u)
\end{equation}
where for $a=a_i$ a right cusp, we also include the contribution from an ``invisible" disk $u$ coming from the resolution construction (see Section \ref{subsubsec:resolution_construction}), with $w(u)=1$ (resp. $t_j^{-1}$ or $t_j$), if there's no base point (resp. a base point $*_j$, depending on whether $a_i$ is an up or down right cusp).
\end{definition}

\end{definition/proposition}

\begin{example}
Let $T$ be the Legendrian tangle in Figure \ref{fig:LCH DGA_tangle} (left), with a choice of $\mb{Z}/2r$-valued Maslov potential $\mu$. As an algebra, the LCH DGA is $\mc{A}(T)=\mc{A}(T,\mu,*_1,*_2)=\mb{Z}[t_1^{\pm 1},t_2^{\pm 1}]<a_1,a_2,a_3,a_{ij},1\leq i<j\leq 4>$, where as usual the $a_{ij}$'s are the generators corresponding to the pairs of left endpoints. The differential $\partial$ for $\mc{A}(T)$ is: As usual, $\partial a_{ij}=\sum_{i<k<j}(-1)^{|a_{ik}|+1}a_{ik}a_{kj}$ and $\partial(t_i^{\pm 1})=0$. The differentials for $a_i$'s are as follows
\begin{eqnarray*}
&&\partial a_1=a_{12};\\
&&\partial a_2=1+a_{13}+(-1)^{|a_1|+1}a_1a_{23};\\
&&\partial a_3=t_1^{-1}(a_{24}+a_{23}(a_{14}+(-1)^{|a_1|+1}a_1a_{24}+a_2a_{34}))t_2.
\end{eqnarray*}
Note: there's a strategy to compute $\partial a_3$. We can cut $T$ into elementary Legendrian tangles and apply Definition/Proposition \ref{def:corestriction of DGA}.
\end{example}

By embedding the Legendrian tangle $T$ into a Legendrian link, the proof of Theorem \ref{thm:stable isom} also shows that
\begin{proposition}
The isomorphism class of $\mc{A}(T)$ is independent of the locations of the base points on each connected component of $T$. The stable isomorphism class of $\mc{A}(T)$ is invariant under Legendrian isotopy of $T$.
\end{proposition}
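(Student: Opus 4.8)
The plan is to reduce the statement for the tangle DGA $\mc{A}(T)$ to the corresponding invariance statement for Legendrian links, Theorem~\ref{thm:stable isom}, by exploiting the embedding of $T$ into a Legendrian link front $\Lambda$ constructed in Section~\ref{subsec:embed a Legendrian tangle into a link}, together with the identification of $\mc{A}(T)$ with the sub-DGA of $\mc{A}(\mr{Res}(\Lambda))$ generated by $t_1^{\pm1},\ldots,t_B^{\pm1}$, the crossings and right cusps $a_1,\ldots,a_R$ of $T$, and the extra generators $\alpha_{ij}=a_{ij}$ coming from the $n_L$-copy of a left cusp. The point is that the differentials of $a_i$ and of $a_{ij}$ only ever involve generators from this list (this was observed in the excerpt via the resolution construction of Ng), so the sub-DGA is genuinely a sub-differential-graded-algebra and the inclusion $\mc{A}(T)\hookrightarrow\mc{A}(\mr{Res}(\Lambda))$ is a DGA map that is split as a map of graded algebras.

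First I would address independence of the base-point locations. Moving a base point $*_i$ along its connected component of $T$ within a face (not crossing a crossing or cusp) does not change the DGA at all; the only nontrivial move is pushing $*_i$ past a crossing or past a cusp. Each such move can be realized inside $\Lambda$ as the corresponding move on $\mr{Res}(\Lambda)$, to which Theorem~\ref{thm:stable isom} applies: it gives a DGA isomorphism of $\mc{A}(\mr{Res}(\Lambda))$ that, by the explicit formula for the change-of-base-point isomorphism (conjugation of the affected generators by $t_i^{\pm1}$ together with a relabeling), restricts to the sub-DGA $\mc{A}(T)$. So one checks that the base-point-move isomorphism of Theorem~\ref{thm:stable isom} preserves the sub-DGA generated by our chosen list of generators; this is immediate from its locality, since the move only touches generators associated to the component of $T$ carrying $*_i$, all of which (crossings, right cusps, and the $\alpha_{ij}$ attached to left endpoints of that component) lie in $\mc{A}(T)$. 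Here one uses the standing hypothesis that each component of $T$ with a right cusp already carries a base point, so no extra marking is forced on $T$ itself.

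Next, Legendrian isotopy. Any Legendrian isotopy of tangles is a finite sequence of planar isotopies and the three Legendrian Reidemeister moves performed \emph{in the interior} of the strip $U\times\mb{R}_z$. Given such a move $h$ on $T$, I would perform the \emph{same} move inside $\Lambda$ — the cusps and crossings added in the construction of $\Lambda$ lie strictly to the left, right, and below $T$, so $h$ is supported away from them — obtaining a Legendrian isotopy $\tilde h$ of $\mr{Res}(\Lambda)$. By Theorem~\ref{thm:stable isom}, $\tilde h$ induces a stable isomorphism of $\mc{A}(\mr{Res}(\Lambda))$. The new front $\Lambda'$ is again of the form produced by the construction applied to $T'=h(T)$ (the left/right/bottom parts are unchanged and $T_L,T_R$ are preserved by the isotopy), so $\mc{A}(\mr{Res}(\Lambda'))$ again contains $\mc{A}(T')$ as the analogous sub-DGA. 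The key verification is that the stable isomorphism (or, for a single move, the explicit DGA isomorphism, possibly after one stabilization in the Reidemeister~I case) restricts to a stable isomorphism between the sub-DGAs $\mc{A}(T)$ and $\mc{A}(T')$: the stabilization generators $e,f$ introduced by a Reidemeister~I move on $T$ are crossings/cusps of $T$ itself, hence already in $\mc{A}(T)$, and the algebra isomorphisms in the Reidemeister~II and~III cases are, by inspection of their standard formulas (triangle maps), supported on generators coming from the moved region of $T$ and therefore carry the list of generators of $\mc{A}(T)$ to the list of generators of $\mc{A}(T')$. Composing over the sequence of moves gives the asserted Legendrian isotopy invariance of the stable isomorphism class of $\mc{A}(T)$.

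The main obstacle is precisely this last point: one must check that the DGA isomorphisms and stabilizations witnessing invariance of $\mc{A}(\mr{Res}(\Lambda))$ are ``compatible with the co-sheaf/corestriction structure'', i.e. that they descend to the sub-DGA cut out by $T$ rather than mixing in the auxiliary generators $\alpha_{ij}$, $\beta$, and the bottom right cusps of $\Lambda$. This is where the locality of the LCH invariance proofs (each elementary move changes the DGA only through generators geometrically near the move) does the work, but it requires writing out the change-of-coordinates maps near each move type and confirming their support — a finite, diagram-by-diagram check entirely parallel to (indeed, a special case of) the computations already carried out for $\mc{A}(\mr{Res}(\Lambda))$ in the references \cite{ENS02,NR13}. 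Once this compatibility is in hand the proposition follows formally.
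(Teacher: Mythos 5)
Your proposal takes essentially the same route as the paper: the paper's entire proof is the one-line remark that, after embedding $T$ into the link $\Lambda$ of Section \ref{subsec:embed a Legendrian tangle into a link} and identifying $\mc{A}(T)$ with the sub-DGA of $\mc{A}(\mr{Res}(\Lambda))$, the proof of Theorem \ref{thm:stable isom} carries over, which is exactly your strategy. Your write-up just makes explicit the locality/restriction check (that the base-point and Reidemeister-move isomorphisms preserve the sub-DGA) that the paper leaves implicit, and that check is correct.
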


\subsubsection{LCH DGA for simple Legendrian tangles}\label{subsubsec:LCH DGA for simple tangles}
In the case when $T$ is a simple Legendrian tangle (see Section \ref{subsubsec:fronts} for the definition), in particular when $T$ is nearly plat, we have a simple description of $\mr{A}(T)$.

The algebra and grading are the same as before, but the differential counts simpler objects. More precisely, for $a=a_i$ a crossing or a right cusp, the differential $\partial a$ is given by the same formula as in Definition \ref{def:differential via tangle fronts}. However, we have

\begin{lemma}[{\cite[$\S$.2.3]{Ng03}}]\label{lem:disks for simple fronts}
For $T$ a simple Legendrian tangle front, any admissible disk $u$ in $\Delta(a;v_1,\ldots,v_n)$ must satisfy:
\begin{enumerate}
\item
$u$ is an \emph{embedding}, not just an immersion, so no singularity at a right cusp (see Figure \ref{fig:Admissible_disks}.a);
\item
Each negative vertex of $u$ must be a crossing, so there's no negative vertex at a right cusp (see Figure \ref{fig:Admissible_disks}.e);
\item
There's at most one negative vertex at a pair of left end-points.
\end{enumerate}
\end{lemma}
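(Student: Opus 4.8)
The plan is to show that, for a simple front in the standard normal form, an admissible disk is nothing more than a ``stack of strands'' trapped between two $x$-monotone boundary arcs, from which (1)--(3) are immediate. Since $T$ is simple we may assume it is drawn so that all of its right cusps share the maximal $x$-coordinate $x_0$, with every crossing (and every left cusp) of $T$ lying strictly to its left; this is the representative used to compute the DGA of a simple front, so it suffices to prove the lemma for $T$ in this form, and I would argue directly inside $T$ using the explicit local models of FIGURE \ref{fig:Admissible_disks} (and their resolutions, FIGURE \ref{fig:Admissible_disks_resolution}) together with the uniqueness clause in condition (vi) of Definition \ref{def:admissible_disks_via_fronts}: the restriction of $x$ to $\overline{u(D_n^2)}$ attains its maximum only at the positive vertex $a$.

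First I would classify the critical points of $x$ along $\partial D_n^2$. A negative vertex at a crossing (FIGURE \ref{fig:Admissible_disks}.d) is always the \emph{top} or \emph{bottom} quadrant, whose two bounding strand-germs leave the crossing in opposite $x$-directions, so $x$ is strictly monotone through such a corner -- the ``left-opening'' quadrant, which would make the corner an interior local maximum of $x$, never occurs. A left-cusp singularity (FIGURE \ref{fig:Admissible_disks}.b) and a negative vertex at a pair of left endpoints (where $u$ limits to a sub-segment of $\{x=x_L\}$) are local \emph{minima} of $x$. Thus the only possible local maxima of $x$ on $\overline{u(D_n^2)}$ along the boundary are $a$ and singularities at right cusps. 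But a singularity at a right cusp (FIGURE \ref{fig:Admissible_disks}.a), or a negative vertex at a right cusp (FIGURE \ref{fig:Admissible_disks}.e), forces $u$ to fill a full neighbourhood of, resp.\ a quadrant at, the resolved right-cusp region, hence to contain a point realizing a local maximum of $x$ at that cusp; since the front is simple this maximum equals $x_0$ and is attained at a point $\neq a$, contradicting (vi). (This is exactly where simpleness enters: on a non-simple front a disk may legitimately thread around a right cusp that is not the rightmost feature.) Hence $u$ has no singularity and no negative vertex at a right cusp -- the second half of (1), and all of (2).

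It remains to prove that $u$ is an embedding and that (3) holds. With right-cusp singularities and left-opening crossing corners excluded, $x$ has a unique local maximum ($a$) and a unique local minimum on $\partial D_n^2$, so the boundary circle splits into two arcs on each of which $x$ is strictly monotone; writing these as graphs $z=f_{\mathrm{up}}(x)$ and $z=f_{\mathrm{dn}}(x)$ over $[x_{\min},x(a)]$, the two cannot cross in the interior of this interval -- a crossing point would be a crossing of $T$ at which $u$ occupies the forbidden left-opening quadrant, or a right-cusp singularity. Therefore $f_{\mathrm{up}}>f_{\mathrm{dn}}$ there, $u|_{\partial D_n^2}$ is an embedded circle, and $u$ is an embedding with $\overline{u(D_n^2)}=\{(x,z): x_{\min}\le x\le x(a),\ f_{\mathrm{dn}}(x)\le z\le f_{\mathrm{up}}(x)\}$. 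In particular $\overline{u(D_n^2)}\cap\{x=x_L\}$ is a single segment (empty unless $x_{\min}=x_L$), whose two endpoints lie on two strands of $T$ ending at the left boundary; this is at most one pair-of-left-endpoints vertex, giving (3). I expect the genuine work to be in the second paragraph: confirming, via the local pictures of FIGURES \ref{fig:Admissible_disks} and \ref{fig:Admissible_disks_resolution}, that a singularity or a negative corner at a right cusp really does produce the forbidden local maximum of $x$, and treating the borderline case in which $a$ is itself a rightmost right cusp (so $x(a)=x_0$) -- this is precisely the case analysis of \cite[$\S$2.3]{Ng03}, the tangle setting contributing only the elementary left-edge bookkeeping behind (3).
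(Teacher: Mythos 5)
The paper offers no proof of Lemma \ref{lem:disks for simple fronts} at all -- it is quoted from \cite[$\S$2.3]{Ng03} -- so your proposal is measured against Ng's argument, whose overall plan (the boundary splits into two $x$-monotone arcs and the disk is the stack of strands trapped between them) you correctly identify. The execution, however, has two genuine gaps. The first is the mechanism you use to exclude right-cusp singularities and right-cusp negative vertices: you claim these force ``a point realizing a local maximum of $x$ at that cusp, contradicting (vi)''. That cannot be right, because condition (vi) is a consequence of conditions (i)--(v) (as noted immediately after Definition \ref{def:admissible_disks_via_fronts}) and therefore holds for \emph{every} admissible disk, including the immersed disks with the local behavior of Figure \ref{fig:Admissible_disks}.a,e that genuinely occur for non-simple fronts; any argument deriving a contradiction with (vi) from the mere presence of such a vertex would prove the lemma for arbitrary fronts, which is false. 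In those local models the disk covers the \emph{outside} of the cusp (after resolution, quadrants meeting the loop), so the cusp is not a local maximum of $x$ on the image; what these configurations actually force is that the image contains points with $x$ strictly greater than the cusps' common coordinate $x_0$. The correct use of simplicity is then: (vi) plus compactness gives $\overline{u(D_n^2)}\subset\{x\le x(a)\}$, and when the right cusps are rightmost one has $x(a)\le x_0$, a contradiction (this also disposes of your deferred case $x(a)=x_0$). Note in addition that your normalization -- all crossings and left cusps strictly to the left of $x_0$ -- is not supplied by the paper's definition of a simple \emph{tangle}: a tangle can have crossings to the right of the cusps' common $x$-coordinate, and no smooth isotopy moves them across, so this reduction needs either justification or a sharpened hypothesis.

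The second gap is in the embeddedness step. Your reason that the two $x$-monotone boundary arcs cannot cross -- ``a crossing point would be a crossing of $T$ at which $u$ occupies the forbidden left-opening quadrant, or a right-cusp singularity'' -- does not hold: two boundary arcs of an immersed disk can pass transversally through a crossing of $T$ with no vertex of $u$ there, so none of the forbidden local models of Definition \ref{def:admissible_disks_via_fronts} is produced, and nothing in the local conditions is violated. Ruling out such crossings (and self-overlaps of the region between the arcs) is exactly the inductive, slice-by-slice content of Ng's argument, and even once the boundary is known to be embedded one still needs a short argument (e.g.\ a winding-number/degree count for an orientation-preserving immersion with embedded boundary) to conclude that $u$ itself is an embedding. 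As written, the heart of statement (1) is asserted rather than proved, and your derivation of (3) relies on it.
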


\begin{example}\label{ex:DGA for bordered trefoil knot}
Consider the Legendrian tangle $(T,\mu)$ in Example \ref{ex:bordered trefoil knot} (See Figure \ref{fig:filling_surface} (left)).
As usual, label the left end points of $T$ by $1,2,3,4$ from top to bottom. Let $a_{ij}, 1\leq i<j\leq 4$ be the pairs of left end points of $T$. Then the LCH DGA $\mc{A}(T)$ is generated by $a_{ij}$ and $a_1,a_2,a_3$, with the grading: $|a_{12}|=|a_{13}|=|a_{24}|=|a_{34}|=0, |a_{23}|=-1, |a_{14}|=1$ and $|a_1|=|a_2|=|a_3|=0$. The differential is given by
\begin{eqnarray*}
&&\partial a_{ij}=\sum_{i<l<j}(-1)^{|a_{il}|+1}a_{il}a_{lj};\\
&&\partial a_1=a_{23};\\
&&\partial a_2=\partial a_3=0.
\end{eqnarray*}
\end{example}

\subsection{The co-sheaf property}\label{subsubsec:co-sheaf for DGAs}
Let $T$ be a Legendrian tangle in $J^1U$. Let $V$ be an open subinterval of $U$ such that, the boundary $(\partial \overline{U})\times \mb{R}_z$ is disjoint from the crossings, cusps and base points of $T$. $T|_V$ then gives a Legendrian tangle in $J^1V$ with Maslov potential induced from that of $T$, hence the LCH DGA $\mc{A}(T|_V)$ is defined. There's indeed a co-restriction map of DGAs.
\begin{definition/proposition}[{\cite[Prop.6.12]{NRSSZ15},\cite{Siv11}}]\label{def:corestriction of DGA}
The following defines a morphism of $\mb{Z}/2r$-graded DGAs $\iota_{UV}:\mc{A}(T|_V)\rightarrow \mc{A}(T)$:
\begin{enumerate}[label=(\arabic*)]
\item
$\iota_{UV}$ sends a generator of $\mc{A}(T|_V)$, corresponding to a crossing, a right cusp or a base point of $T$, to the corresponding generator of $\mc{A}(T)$;
\item
For a generator $b_{ij}$ in $\mr{A}(T|_V)$ corresponding to the pair of left end-points $i,j$ of $T|_V$, the image $\iota_{UV}(b_{ij})$ is defined as follows:\\
Use the notations in Section \ref{def:admissible_disks_via_fronts} and consider the moduli space $\Delta(b_{ij};v_1,\ldots,v_n)$ of disks $u:(D_n^2,\partial D_n^2)\rightarrow (\mb{R}_{xz}^2,T)$ satisfying the conditions in definition \ref{def:admissible_disks_via_fronts}, with the condition for $a$ there replaced by ``$u$ limits to the line segments $[i,j]$ between the pair of left end-points $i,j$ of $T|_V$ at the puncture $p\in \partial D^2$ and $u$ attains its local maxima exactly along $[i,j]$". Then define
\begin{equation}
\iota_{UV}(b_{ij})=\sum_{n,v_1,\ldots,v_n}\sum_{u\in\Delta(b_{ij},v_1,\ldots,v_n)}w(u)
\end{equation}
\end{enumerate}
\end{definition/proposition}
\begin{proof}
Apply the proof of Prop. 6.12 in \cite{NRSSZ15}: Though it only deals with Legendrian tangles in nearly plat positions, essentially the same arguments work in the general case, with `embedded disks' replaced by `immersed disks' everywhere.
\end{proof}

\begin{remark}
From the definition, it's easy to see that if the left boundary of $V$ coincides with that of $U$, then the co-restriction
map $\iota_{UV}:\mc{A}(T|V)\hookrightarrow\mc{A}(T)$ is an inclusion.
\end{remark}

\begin{example}[co-restriction $\iota_R$ for a right cusp]
One key example for the co-restriction of DGAs is $\iota_R:\mc{A}(T_R)\rightarrow \mc{A}(T)$, where $T$ be an elementary Legendrian tangle of a single (marked or unmarked) right cusp $a$, and $T_R$ is the right piece of $T$. For simplicity, assume $T$ has $4$ left endpoints and $2$ right endpoints as in Figure \ref{fig:right cusp}. Then $\mc{A}(T_R)=\mb{Z}<b_{12}>$, where $b_{12}$ is the generator corresponding to the pair of left endpoints of $T_R$, and $\mc{A}(T)=\mb{Z}[t,t^{-1}]<a,a_{ij},1\leq i<j\leq 4>$ with $\partial a=t^{\sigma(a)}+a_{23}$ (see Definition \ref{def:sign for a right cusp} below), where $a_{ij}$'s correspond to the pairs of left endpoints of $T$, $t$ is the generator corresponding to the base point if the right cusp is marked and $t=1$ otherwise. Then $\iota_R:\mc{A}(T_R)\rightarrow \mc{A}(T)$ is given by
\begin{eqnarray*}
\iota_R(b_{12})
&=&a_{14}+a_{13}t^{-\sigma(a)}a_{24}+a_{12}at^{-\sigma(a)}a_{24}+a_{13}t^{-\sigma(a)}aa_{34}+a_{12}at^{-\sigma(a)}aa_{34}\\
&=&a_{14}+t^{-\sigma(a)}(a_{13}+a_{12}a)(a_{24}+aa_{34}).
\end{eqnarray*}
\end{example}

\begin{figure}[!htbp]
\begin{center}
\minipage{0.6\textwidth}
\includegraphics[width=\linewidth,height=0.8in]{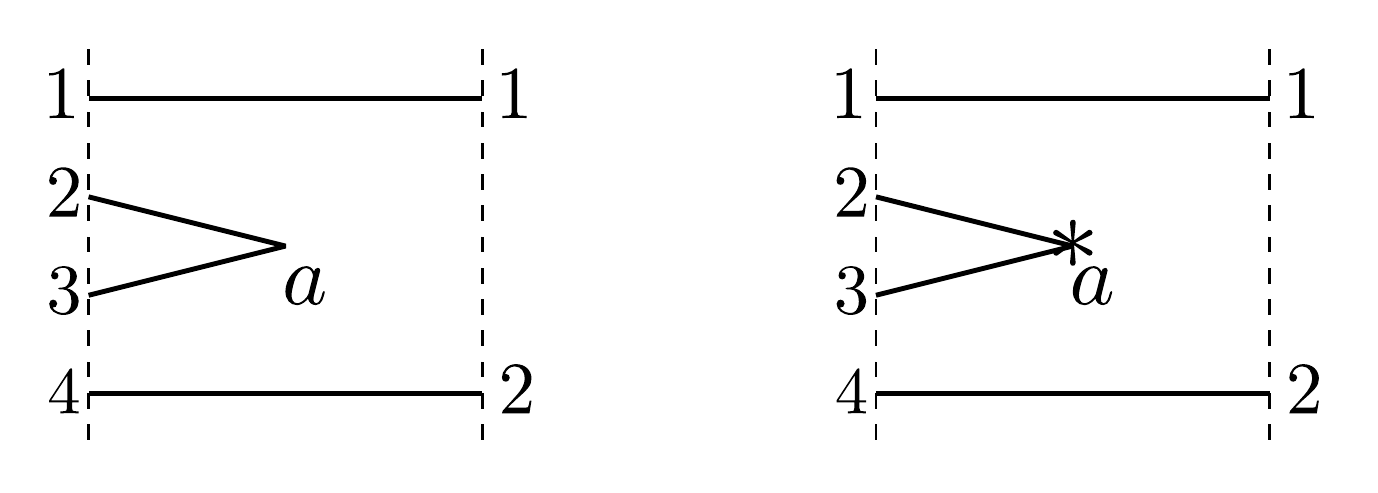}
\endminipage\hfill
\end{center}
\caption{Left: An elementary Legendrian tangle of an unmarked right cusp. Right: An elementary Legendrian tangle of a marked right cusp.}
\label{fig:right cusp}
\end{figure}

We introduce a sign at a right cusp, which will also be used later (see Lemma \ref{lem:aug and A-form MCS for elementary tangles}).
\begin{definition}\label{def:sign for a right cusp}
Given a right cusp $a$ of the oriented tangle front $T$, we define the \emph{sign} $\sigma=\sigma(a)$ of $a$ to be $1$ (resp. $-1$) if $a$ is a down (resp. up) cusp. See Figure \ref{fig:sign for a right cusp}.
\end{definition}

\begin{figure}[!htbp]
\begin{center}
\minipage{0.5\textwidth}
\includegraphics[width=\linewidth,height=0.6in]{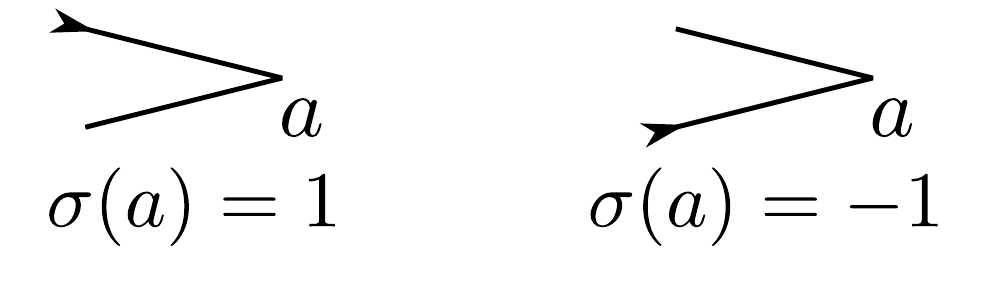}
\endminipage\hfill
\end{center}
\caption{Left: a down right cusp. Right: an up right cusp.}
\label{fig:sign for a right cusp}
\end{figure}

One key property of LCH DGAs for Legendrian tangles is the co-sheaf property:
\begin{proposition}[{\cite[Thm.6.13]{NRSSZ15}}]\label{prop:co-sheaf of DGAs}
If $U=L\cup_V R$ is the union of 2 open intervals $L, R$ with non-empty intersection $V$, then the diagram of co-restriction maps
\begin{equation}\label{eq:co-sheaf property}
\xymatrix{
\mc{A}(T|_V) \ar[r]^{\iota_{RV}} \ar[d]_{\iota_{LV}} & \mc{A}(T|_R) \ar[d]^{\iota_{UR}} \\
\mc{A}(T|_L) \ar[r]^{\iota_{UL}} & \mc{A}(T)
}
\end{equation}
gives a pushout square of $\mb{Z}/2r$-graded DGAs.
\end{proposition}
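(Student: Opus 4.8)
The plan is to reduce the statement to a completely local, combinatorial verification, and then to check the universal property of the pushout directly on generators. First I would fix a vertical line $\ell = \{x = x_0\} \times \mathbb{R}_z$ inside $V$ that misses all crossings, cusps and base points of $T$; after a Legendrian isotopy (which is harmless by the invariance of $\mathcal{A}(-)$ up to stable isomorphism, but in fact I want to arrange it so no isotopy is needed, only a choice of cutting line) this exists, and it cuts $U$ into $L = (x_L, x_0')$ and $R = (x_0', x_R)$ with $V = (x_L', x_R')$, $x_L' < x_0 < x_R'$. Then $\mathcal{A}(T|_V)$ is free over $\mathbb{Z}$ on the generators $b_{ij}$ attached to pairs of left end-points of $T|_V$, since $T|_V$ has no crossings or cusps if we shrink $V$ to a thin strip around $\ell$; more importantly, the left end-points of $T|_V$ are in canonical bijection with the strands of $T$ crossing $\ell$. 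This makes $\mathcal{A}(T|_V)$ the ``trivial tangle DGA'' at the cut, which is exactly the object that should play the role of the intersection in a pushout.

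Next I would set up the map out of the square. Suppose $g_L : \mathcal{A}(T|_L) \to \mathcal{B}$ and $g_R : \mathcal{A}(T|_R) \to \mathcal{B}$ are DGA morphisms with $g_L \circ \iota_{LV} = g_R \circ \iota_{RV}$. I want to produce a unique $g : \mathcal{A}(T) \to \mathcal{B}$ with $g \circ \iota_{UL} = g_L$ and $g \circ \iota_{UR} = g_R$. On generators this is forced: every crossing, right cusp, or base point of $T$ lies either in $T|_L$ or in $T|_R$ (after shrinking $V$), and the generators $a_{ij}$ of $\mathcal{A}(T)$ coming from the left end-points of $T$ are left end-points of $T|_L$, so $g(a_{ij}) := g_L(a_{ij})$, and $g$ on a crossing/cusp/base point generator is defined via whichever of $g_L, g_R$ sees it. So uniqueness is immediate; the content is \textbf{well-definedness}, i.e. that this $g$ is (a) consistent on generators that could be reached from both sides — but after the shrinking no generator is in both $L$ and $R$, so there is nothing to check there — and (b) a chain map, i.e. $g \partial = \partial g$ on each generator. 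Point (b) is the crux.

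For the chain-map check, the key input is the description of $\iota_{UL}$ and $\iota_{UR}$ on the ``pair of left end-points'' generators via counts of admissible disks (Definition/Proposition~\ref{def:corestriction of DGA}), together with the following geometric dichotomy: any admissible disk $u$ for $T$ contributing to $\partial a$, where $a$ is a generator of, say, $T|_R$, is cut by the line $\ell$ into two kinds of pieces — one piece of $u$ lying to the right of $\ell$ (an admissible disk of $T|_R$) and some pieces to the left of $\ell$, each of which is, by the conditions \ref{item:i}--\ref{item:vi} of Definition~\ref{def:admissible_disks_via_fronts}, an admissible disk of $T|_L$ with positive vertex limiting to a line segment between two strands at $\ell$, i.e. a disk contributing to $\iota_{UL}(b_{ij})$ for the corresponding generator $b_{ij}$ of $\mathcal{A}(T|_V)$. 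This ``splitting of disks along a vertical line'' is exactly the geometric mechanism behind the co-restriction maps, and it yields: $\partial^{\mathcal{A}(T)} a = \iota_{UR}$ applied to a sum of words in generators of $T|_R$ and the $b_{ij}$'s, with the $b_{ij}$'s replaced by $\iota_{UL}(b_{ij})$. Applying $g$ and using $g \iota_{UR} = g_R$, $g \iota_{UL} = g_L$, and the compatibility $g_L \iota_{LV} = g_R \iota_{RV}$ (which identifies how both sides evaluate the intersection generators $b_{ij}$), one gets $g(\partial a) = \partial(g_R(a)) = \partial(g(a))$, with the signs and base-point weights matching because the weight $w(u)$ is multiplicative over the decomposition of $\partial D_n^2$ into boundary arcs and the orientation-sign convention is local to each crossing. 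The same argument runs with $L$ and $R$ interchanged, and for the $a_{ij}$ generators of $T$ it is immediate since these live entirely in $T|_L$.

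I expect the \textbf{main obstacle} to be making the ``disk-splitting along $\ell$'' rigorous and bijective: one must show that restricting an admissible disk of $T$ to the two sides of the generic vertical line $\ell$ produces admissible disks of $T|_L$ and $T|_R$ (checking each of conditions \ref{item:i}--\ref{item:vi}, in particular that new boundary punctures at $\ell$ are of the ``pair of end-points'' type and that no new local maxima of $x$ are created on $\ell$ by genericity of $x_0$), and conversely that gluing is inverse to cutting, with weights and signs multiplying correctly. Fortunately this is precisely the geometric fact already established in \cite[Thm.6.13]{NRSSZ15}, so I would structure the proof as: (1) reduce to a thin strip $V$ around a generic cutting line, identifying $\mathcal{A}(T|_V)$ with the trivial-tangle DGA at the cut; (2) invoke (or re-derive, citing \cite{NRSSZ15}) the disk-splitting bijection, which simultaneously proves that $\iota_{UL}, \iota_{UR}$ are the stated disk counts and that $\partial^{\mathcal{A}(T)}$ factors through the two sides glued along $\iota_{\bullet V}$; (3) deduce the universal property on generators as above; (4) note commutativity of the square is part of the same factorization. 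The remaining checks (DGA-morphism axioms, gradings matching mod $2r$) are routine and follow from the corresponding local statements in Section~\ref{sec:LCH DGA for tangles}.
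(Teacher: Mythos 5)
Your proposal is correct and follows essentially the same route as the paper: the paper's entire proof is the remark that the disk-splitting argument of \cite[Thm.6.13]{NRSSZ15} (stated there for simple/nearly plat fronts) carries over to general fronts with immersed admissible disks, which is precisely the geometric input you isolate in step (2), while your universal-property bookkeeping on generators is the formal shell of that same argument, resting on Definition/Proposition \ref{def:corestriction of DGA} exactly as the paper does. The only cosmetic difference is your reduction to a thin strip around a cutting line, which is not needed (the generator-by-generator check works for the given $V$ directly, since compatibility of $g_L,g_R$ on generators of $T|_V$ is immediate) and, if kept, should be justified by a pasting/cancellation argument for pushout squares.
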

\begin{proof}
Again the same argument in the proof of Theorem 6.13 in \cite{NRSSZ15} (The case for Legendrian tangles in nearly plat positions) applies to the general case.
\end{proof}

\section{Augmentations for Legendrian tangles}\label{sec:Augmentations for tangles}

\subsection{Augmentation varieties and augmentation numbers}\label{subsec:augmentations for tangles}
Fix a Legendrian tangle $T$, with $\mb{Z}/2r$-valued Maslov potential $\mu$, base points $*_1,\ldots,*_B$ so that each connected component containing a right cusp has at least one base point. Denote the crossings, right cusps and pairs of left end-points by $\mathcal{R}=\{a_1,\ldots,a_N\}$. As always, the base points are assumed to be away from the crossings and left cusps of $T$. Let $n_L, n_R$ be the numbers of left and right end-points in $T$ respectively.

We define the LCH DGA $(\mathcal{A},\partial)$ as in the previous Section. So as an associative algebra we have $\mc{A}(T)=\mb{Z}[t_1^{\pm1},\dots,t_B^{\pm1}]<a_1,\ldots,a_N>$. Fix a nonnegative integer $m$ dividing $r$ and a base field $k$.

\begin{definition}
A \emph{$m$-graded (or $\mathbb{Z}/m$-graded) $k$-augmentation} of $\mathcal{A}$ is unital algebraic map $\epsilon:(\mathcal{A},\partial)\rightarrow (k,0)$ such that $\epsilon\circ \partial =0$, and for all $a$ in $\mc{A}$ we have $\epsilon(a)=0$ if $|a|\neq 0 (\mathrm{mod} m)$. Here $(k,0)$ is viewed as a DGA concentrated on degree 0 with zero differential. Morally, ``$\epsilon$ is a $\mb{Z}/m\mb{Z}$-graded DGA map".
\end{definition}

\begin{definition}
Define $\mr{Aug}_m(T,k)$ to be the set of $m$-graded $k$-augmentations of $\mc{A}(T)$. This defines an affine subvariety of $(k^{\times})^B\times k^N$, via the map
$$\mr{Aug}_m(T,k)\ni\epsilon\rightarrow (\epsilon(t_1,\ldots,\epsilon(t_B),\epsilon(a_1),\ldots,\epsilon(a_N)))\in(k^{\times})^B\times k^N$$
with the defining polynomial equations $\epsilon\circ\partial(a_i)=0, 1\leq i\leq N$ and $\epsilon(a_i)=0$ for $|a_i|\neq 0 (\mr{mod} m)$. This affine variety $\mr{Aug}_m(T,k)$ will be called the (full) \emph{$m$-graded augmentation variety} of $(T,\mu,*_1,\ldots,*_B)$.
\end{definition}

\begin{example}[The augmentation variety for trivial Legendrian tangles]\label{ex:aug. variety for lines}
Let $T$ be the trivial Legendrian tangle of $n$ parallel strands, labeled from top to bottom by $1,2,\ldots,n$, equipped a $\mb{Z}/2r$-valued Maslov potential $\mu$. The LCH DGA is $\mc{A}(T)=\mb{Z}<a_{ij},1\leq i<j\leq n>$, with the grading $|a_{ij}|=\mu(i)-\mu(j)-1$ and the differential given by formula (\ref{eqn:differential for pair of ends}). The $m$-graded augmentation variety $\mr{Aug}_m(T;k)$ is
\begin{eqnarray*}
\mr{Aug}_m(T;k)=\{(\epsilon(a_{ij}))_{1\leq i<j\leq n}|\epsilon\circ\partial a_{ij}=0, \text{ and $\epsilon(a_{ij})=0$ if $|a_{ij}|\neq 0 (\mr{mod} m)$}.\}
\end{eqnarray*}

On the other hand,
\begin{definition}\label{def:filtered module for trivial Legendrian tangles}
Associate to the trivial Legendrian tangle $(T,\mu)$, \emph{define} a canonical $\mb{Z}/m$-graded filtered $k$-module $C=C(T)$:
$C$ is the free $k$-module generated by $e_1,\ldots,e_{n}$ corresponding to the $n$ strands of $T$ with grading $|e_i|=\mu(i) (\mr{mod} m)$. Moreover, $C$ is equipped with a decreasing filtration $F^0\supset F^1\supset\ldots\supset F^{n}: F^iC=\mr{Span}\{e_{i+1},\ldots,e_{n}\}$.

\noindent{}\emph{Define} $B_m(T):=\mr{Aut}(C)$ to be the automorphism group of the $\mb{Z}/m$-graded filtered $k$-module $C$. \emph{Denote} $I=I(T):=\{1,2,\ldots,n\}$.
\end{definition}

Now, in the example, given any $m$-graded augmentation $\epsilon$ for $\mc{A}(T)$, we construct a $\mb{Z}/m$-graded chain complex $C(\epsilon)=(C,d(\epsilon))$: The differential $d=d(\epsilon)$ is filtration preserving, of degree $-1$ given by
\begin{eqnarray*}
<de_i,e_j>=0 \text{ for $i\geq j$ and $<de_i,e_j>=(-1)^{\mu(i)}\epsilon(a_{ij})$ for $i<j$}.
\end{eqnarray*}
Here $<de_i,e_j>$ denotes the coefficient of $e_j$ in $de_i$. The condition that $d$ is of degree $-1$ is equivalent to: $<de_i,e_j>=(-1)^{\mu(i)}\epsilon(a_{ij})=0$ if $\mu(i)-\mu(j)-1=|a_{ij}|\neq 0 (\mr{mod} m)$ for all $i<j$. The condition of the differential $d^2=0$ is equivalent to: for all $i<j$ have $<d^2e_i,e_j>=\sum_{i<k<j}<de_i,e_k><de_k,e_j>=0$, i.e. $\sum_{i<k<j}(-1)^{\mu(i)-\mu(k)}\epsilon(a_{ik})\epsilon(a_{kj})=\epsilon\circ\partial a_{ij}=0$.

Thus, we see that the map $\epsilon\rightarrow C(\epsilon)$ gives an \emph{isomorphism} between the augmentation variety $\mr{Aug}_m(T;k)$ and the set $MCS_m^A(T;k)$ of $\mb{Z}/m$-graded filtered chain complexes $(C,d)$, or equivalently, the set of filtration preserving degree $-1$ differentials $d$ of $C$. From now on, we will always use this identification (see also Section \ref{subsec:aug vs A-form MCS}).
\end{example}

Given the Legendrian tangle $(T,\mu)$ of $n$ parallel strands, $B_m(T)$ acts naturally on $\mr{Aug}_m(T;k)=MCS_m^A(T)$ via conjugation: given $\varphi\in B_m(T)$ and $(C,d)$ in $MCS_m^A(T;k)$, have $\varphi\cdot (C,d):=(C,\varphi\circ d\circ \varphi^{-1})$. In particular, the $B_m(T)$-orbit $B_m(T)\cdot (C,d)$ (or $B_m(T)\cdot d$) is simply the isomorphism classes of $d$.

\begin{lemma}[Barannikov normal form, See also \cite{Bar94, Lau15}]\label{lem:Barannikov normal form}
Let $(C,d)$ be any $\mb{Z}/m$-graded filtered chain complex over $k$, where $C=\mr{Span}_k\{e_1,\ldots,e_n\}$ is fixed with the decreasing filtration $F^0\supset F^1\supset\ldots\supset F^n$: $F^iC=\mr{Span}_k\{e_{i+1},\ldots,e_n\}$, then the isomorphism class of $(C,d)$ has a unique representative, say $(C,d_0)$, such that the matrix $(<d_0e_i,e_j>)_{i,j}$ has at most one nonzero entry in each row and column and moreover these are all $1$'s.
Equivalently, there're $2k$ distinct indices $i_1<j_1,\ldots,i_k<j_k$ in $\{1,\ldots,n\}$ for some $k$, such that $d_0e_{i_l}=e_{j_l}$ for $1\leq l\leq k$ and $d_0e_i=0$ otherwise.

The unique representative $(C,d_0)$ is called the \emph{Barannikov normal form} of $(C,d)$.
\end{lemma}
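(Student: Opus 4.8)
The plan is to prove existence and uniqueness separately, both by induction on $n$, the rank of $C$, using the filtration to strip off one basis element at a time. For existence, I would argue as follows. Consider the largest index $j$ such that $e_j \in \mathrm{image}(d) + (\text{something})$; more precisely, look at the filtration level where the differential first "reaches." Let $i_1$ be the smallest index with $de_{i_1} \neq 0$, and among the terms appearing in $de_{i_1}$, let $e_{j_1}$ be the one with the \emph{smallest} index $j_1$ (equivalently, $de_{i_1} \in F^{j_1 - 1}C \setminus F^{j_1}C$, so $j_1$ is the leading term). Rescaling $e_{i_1}$ (a filtered automorphism of $C$) we may assume the coefficient of $e_{j_1}$ in $de_{i_1}$ is $1$. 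Then I perform two kinds of filtered column/row operations: (a) for each $i < i_1$ with $de_i \neq 0$ — impossible by minimality of $i_1$, so this is vacuous; (b) for each index $\ell$ with a term $e_{j_1}$ appearing in $de_\ell$ (necessarily $\ell > i_1$ since $j_1 > i_1$ and $d$ strictly decreases filtration... actually $\ell$ could be various indices $> i_1$), replace $e_\ell$ by $e_\ell - c\, e_{i_1}$ where $c$ is that coefficient; this is a filtered automorphism (since $i_1 < \ell$) and it kills the $e_{j_1}$-component of $de_\ell$ while, crucially, introducing only terms $d e_\ell$ already had plus $-c\,de_{i_1}$, whose leading term is $e_{j_1}$ which we just cancelled, and whose lower-order terms lie deeper in the filtration. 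One must also cancel any occurrence of $e_{i_1}$ as a \emph{target}: if $e_{i_1}$ appears in some $de_m$, replace $e_m$ by $e_m - c' e_{j_1}$ — wait, that is not filtered unless $j_1 > $ ... here one uses that $d^2 = 0$: since $de_{i_1}$ has leading term $e_{j_1}$, any $e_m$ mapping to $e_{i_1}$ would force (after the other reductions) a contradiction with $d^2 = 0$, so in fact no such $m$ survives, or can be removed. After these moves, $e_{i_1}$ and $e_{j_1}$ are "used up": $de_{i_1} = e_{j_1}$, $de_{j_1} = 0$, and neither appears in any other $de_k$. Restricting to the span of the remaining $n-2$ basis vectors, which inherits a filtered chain complex structure, I apply the inductive hypothesis.

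For uniqueness, suppose $(C, d_0)$ and $(C, d_0')$ are two Barannikov normal forms that are filtered-isomorphic, say $\varphi d_0 \varphi^{-1} = d_0'$ with $\varphi$ a filtered automorphism (upper-triangular unipotent up to rescaling, i.e. $\varphi e_i \in e_i \cdot k^* + F^i C$). The invariant to extract is: for each pair $i < j$, whether the induced map on the associated graded / on the filtration quotients $F^{j-1}C / F^j C$ coming from elements of $F^{i-1}C/F^iC$ is an isomorphism after passing to homology of the appropriate subquotient. Concretely, $d_0 e_i = e_j$ in the normal form if and only if $j$ is the smallest index with $d_0(F^{i-1}C) \not\subseteq F^j C$ \emph{and} $i$ is the largest index with that property for this particular $j$ — and these conditions are phrased entirely in terms of the filtered chain-homotopy-invariant data $\dim_k (F^p C \cap d_0^{-1}(F^q C))$ for all $p, q$, which a filtered isomorphism preserves. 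So I would introduce the two-variable function $r(p,q) := \dim_k \big( (F^p C \cap \ker d) + F^{p}C\cap d^{-1}(F^q C) \big)$ — or more cleanly, the dimensions $\dim (d(F^p C) \cap F^q C)$ — show it is invariant under filtered isomorphism, compute it explicitly for a normal form in terms of the pairing $\{(i_l, j_l)\}$, and observe the pairing is recoverable from this function. That forces $d_0 = d_0'$.

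The main obstacle I anticipate is the bookkeeping in the existence step: ensuring that each "column operation" used to clear the pivot $e_{j_1}$ out of the other differentials $de_\ell$ is realized by a \emph{filtered} automorphism (only allowed to add $e_{i}$-multiples into $e_\ell$ when $i < \ell$), and that after clearing we have genuinely reduced to a smaller \emph{filtered} chain complex rather than merely a chain complex — i.e. that the remaining basis vectors, with their inherited order and filtration, still form the right kind of object, and that no later reduction re-pollutes the already-normalized rows/columns. The standard fix is to always choose the pivot at the lexicographically extreme position (smallest source index $i_1$, then among its targets the smallest target index $j_1$), which guarantees all subsequent operations happen at strictly larger indices and hence cannot disturb the finished part. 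The $d^2 = 0$ condition is what makes the "target" cleanups automatic; spelling out exactly why is the one genuinely non-formal point. Everything else — the invariance of $\dim(d(F^pC)\cap F^qC)$ under filtered isomorphism, its value on a normal form — is routine linear algebra.
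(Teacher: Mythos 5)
Your uniqueness argument is fine, but the existence half has a genuine flaw, and it is not just bookkeeping. A filtered automorphism of $(C,F^\bullet)$ must send each $e_\ell$ to a unit multiple of $e_\ell$ plus a combination of $e_k$ with $k>\ell$ (this is exactly the paper's notion of an $\ell$-admissible element); equivalently, you are only allowed to add a \emph{later} column of $d$ to an \emph{earlier} one, never the reverse. Your clearing move (b), replacing $e_\ell$ by $e_\ell-c\,e_{i_1}$ with $i_1<\ell$, sends $e_\ell\in F^{\ell-1}C$ to an element not in $F^{\ell-1}C$, so it is \emph{not} a filtered automorphism --- your parenthetical ``(since $i_1<\ell$)'' has the direction of the filtration backwards. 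Worse, the pivot rule itself (smallest source $i_1$ with $de_{i_1}\neq 0$, then its shallowest target $j_1$) selects pairs that are not invariants of the filtered isomorphism class, so no repair of the moves can save it. Concretely, take $m=1$, $n=4$, $de_1=de_2=e_3$, $de_3=de_4=0$. Your algorithm would output the pair $(1,3)$, i.e.\ $d_0e_1=e_3$ and $d_0=0$ on the other generators; but $e_1-e_2$ is a $1$-admissible cycle, so index $1$ is closed, and the true normal form of this complex is $d_0e_2=e_3$, $d_0e_1=d_0e_3=d_0e_4=0$. The two candidate normal forms are not filtered-isomorphic (any filtered automorphism sends $e_2$ to $c\,e_2+(\text{deeper terms})$, whose image under $d$ is $c\,e_3\neq 0$), so the output of your elimination is not isomorphic to the input; your own invariants $\dim\bigl(d(F^pC)\cap F^qC\bigr)$ already detect this. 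The appeal to $d^2=0$ to make the ``target cleanups'' automatic is likewise unsubstantiated. The correct greedy goes the other way, and is what the paper implements: for each index $i$ one maximizes the depth $\rho(i)$ of $dx$ over all $i$-admissible $x$ (pushing the pivot as \emph{deep} as possible using the allowed additions of later basis vectors), one proves $\rho$ is a bijection from the upper onto the lower indices, and the new basis $\{e_i'\}$ is assembled from these admissible elements all at once rather than by a two-at-a-time reduction.

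The uniqueness half of your proposal is correct and genuinely different from the paper's. The dimensions $\dim\bigl(d(F^pC)\cap F^qC\bigr)$ are preserved under conjugation by filtered automorphisms, equal $\#\{l:\ i_l>p,\ j_l>q\}$ on a normal form, and recover the pairing $\{(i_l,j_l)\}$ by inclusion--exclusion; the paper instead shows directly that the partition of indices into upper/lower/homological and the map $\rho$ are isomorphism invariants. Your route is a clean alternative (and is essentially the standard persistence-barcode argument), but since existence is where the content of the lemma lies, the proposal as written does not prove the statement. A minor further omission: you never address the $\mathbb{Z}/m$-grading, although here degree reasons do make the relevant operations automatically graded.
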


\begin{proof}
We divide the index set $I:=\{1,\ldots,n\}$ into 3 types: \emph{upper}, \emph{lower} and \emph{homological}.

For each $1\leq i\leq n$, an element of the form $c_ie_i+\sum_{k>i}c_ke_k$ is called \emph{$i$-admissible} if $c_i\neq 0$ and $c_k=0$ if $|e_k|\neq |e_i| (\mr{mod} m)$ for all $k>i$. In other words, the set of $i$-admissible elements is the same as $\mr{Aut}(C)\cdot e_i$, the image of $e_i$ under the automorphism group of the $\mb{Z}/m$-graded filtered $k$-module $C$. In particular, any automorphism of $C$ preserves the set of $i$-admissible elements.

\begin{itemize}
\item
$i$ is called \emph{$d$-closed} (or \emph{closed}) if there's an $i$-admissible element $x$ such that $dx=0$. Otherwise, $i$ is called \emph{$d$-upper} (or \emph{upper}).
\end{itemize}
To check the definition only depends on the isomorphism class of $d$: If $d'$ is another representative in the isomorphism class of $d$, so $d'=\varphi\cdot d=\varphi\circ d\circ \varphi^{-1}$ for some $\varphi\in\mr{Aut}(C)$. If $i$ is $d$-closed, say $dx=0$ with $x$ $i$-admissible, then $d'\varphi(x)=\varphi(dx)=0$ with $\varphi(x)$ $i$-admissible, hence $i$ is also $d'$-closed.

For each index $i$, and any $i$-admissible element $x$, we can write $dx=*_le_l+\sum_{k>l}*_ke_k$ for some $l>i$ with $*_l\neq 0$, i.e. $dx$ is $l$-admissible. If $dx=0$ (that is, $i$ is closed), then $l:=\infty$ and ``$dx$ is $\infty$-admissible" means $dx=0$. Now, \emph{define} $\rho_d(x):=l$.

If $d'=\varphi\cdot d$ is another representative, then $d'\varphi(x)=\varphi(dx)$ is also $l$-admissible, hence $\rho_{d'}(\varphi(x))=\rho_d(x)$.
For each index $i$, \emph{define}
\begin{eqnarray*}
\rho_d(i):=\mr{max}\{\rho_d(x)|x \text{ is $i$-admissible}\}
\end{eqnarray*}
By definition, $\rho_d(i)>i$. And, the previous identity shows that $\rho_d(i)$ only depends on the isomorphism class of $d$. So we can write $\rho(i)=\rho_d(x)$. Also, by definition, $i$ is upper if and only if $\rho(i)<\infty$.

\begin{itemize}
\item
$j$ is called \emph{lower}, if $j=\rho(i)$ for some upper index $i$.
\end{itemize}
If $j=\rho(i)$ is lower, then $j=\rho(x)$ for some $i$-admissible element $x$, hence
$dx=*_je_j+\sum_{k>j}*_ke_k$ is $j$-admissible. It follows that $d(*_je_j+\sum_{k>j}*_ke_k)=d^2x=0$. Therefore, $j$ is \emph{closed}.

\begin{itemize}
\item
$j$ is called \emph{homological}, if $j$ is closed but not lower.
\end{itemize}
As a consequence, we obtain a partition and a map associated to the isomorphism class of $d$
\begin{eqnarray}\label{eqn:partition associated to augmentations}
&&I=L\sqcup H\sqcup U\\
&&\rho: U\rightarrow L\nonumber
\end{eqnarray}
where $L, H$ and $U$ are the sets of lower, homological and upper indices respectively. We emphasize that the partition and the map depend only on the isomorphism class of $d$.
Moreover, $\rho:U\rightarrow L$ is a \emph{bijection}. By definition, it's clearly surjective. To show it's also injective: Otherwise, assume $i<i'$ are 2 upper indices such that $\rho(i)=\rho(i')=k$. In particular, $|e_i|=|e_{i'}|=|e_k|+1$. Then for some $i$-admissible $x$ and $i'$-admissible element $x'$ we have $k=\rho(i)=\rho(x)$ and $k=\rho(i')=\rho(x')$, that is, $dx=c_ke_k+\sum_{j>k}c_je_j$ and $dx'=c_k'e_k+\sum_{j>k}c_j'e_j$ are both $k$-admissible, i.e. $c_k\neq 0, c_k'\neq 0$. If follows that $d(c_k'x-c_kx')=\sum_{j>k}*_je_j$ and $c_k'x-c_kx'$ is still $i$-admissible. Hence, $\rho_d(c_k'x-c_kx')>k=\rho(i)=\mr{max}\{\rho_d(y)| y \text{ is $i$-admissible}\}$, contradiction.

Suppose $U=\{i_l,1\leq l\leq k|i_1<i_2<\ldots<i_k\}$ and $j_l:=\rho(i_l), 1\leq l\leq k$, then $L=\{j_l,1\leq l\leq k\}$.
By definition of $\rho$, for each $l$ there exists an $i_l$-admissible element, say $e_{i_l}'$, such that $e_{j_l}':=de_{i_l}'$ is $j_l$-admissible. We may even assume that $e_{i_l}'=e_{i_l}+\sum_{j>i_l}*_je_j$. For each $i$ in $H$, by definition, there exists an $i$-admissible element $e_i'=e_i+\sum_{j>i}*_je_j$ such that $de_i'=0$. We thus have constructed a set of elements $\{e_1',e_2',\ldots,e_n'\}$ in $C$ with $e_i'$ $i$-admissible, it follows that they form a basis of $C$. Define an automorphism $\varphi$ of $C$ by $\varphi(e_i')=e_i$, and take $d_0=\varphi\cdot d$. Then, $d_0e_{i}=\varphi(de_{i}')$. As a consequence, $d_0e_{i_l}=e_{j_l}$ for $1\leq l\leq k$ and $d_0e_i=0$ for $i\in H$. That is, $d_0$ is a Barannikov normal form of $d$.

Conversely, given a Barannikov normal form $d_0$ of $d$, there exist $2k$ distinct indices $i_1<j_1,\ldots, i_k<j_k$ such that $d_0e_{i_l}=e_{j_l}$ for $1\leq l\leq k$ and $d_0e_i=0$ otherwise. Apply the definition of the 3 types of indices with respect to $d_0$, we must have $U=\{i_1,\ldots, i_k\}$ and $\rho(i_l)=\mr{max}\{\rho_{d_0}(x)|x \text{ is $i_l$-admissible}\}=j_l$, so $L=\{j_1,\ldots,j_k\}$. Hence, $d_0$ is uniquely determined by the partition $I=L\sqcup H\sqcup U$ and the bijection $\rho: U\xrightarrow[]{\sim}L$, which are determined by the isomorphism class of $d$.
\end{proof}

\begin{definition}\label{def:isomorphism type for trivial Legendrian tangles}
Given a trivial Legendrian tangle $(T,\mu)$, a partition $I(T)=U\sqcup H\sqcup L$ together with a bijection $\rho:U\xrightarrow[]{\sim} L$ as in the proof of the previous lemma (see Equation (\ref{eqn:partition associated to augmentations})), will be called an \emph{$m$-graded isomorphism type} of $T$, denoted by $\rho$ for simplicity. Note: $\rho(i)>i$ and $|e_{\rho(i)}|=|e_i|-1 (\mr{mod} m)$ for all $i\in U$.
\end{definition}

\begin{remark}\label{rem:normal rulings via non-degenerate augmentations}
By Lemma \ref{lem:Barannikov normal form}, each $m$-graded isomorphism type $\rho$ of $T$ determines an unique isomorphism class $\mc{O}_m(\rho;k)$ of $\mb{Z}/m$-graded filtered $k$-complexes $(C(T),d)$. In other words, $\mc{O}_m(\rho;k)$ is the $B_m(T)$-orbit of the \emph{canonical augmentation} $\epsilon_{\rho}$ (equivalently, the \emph{Barannikov normal form} $d_{\rho}$ determined by $\rho$), using the identification in Example \ref{ex:aug. variety for lines}. We thus obtain a decomposition of the augmentation variety for the trivial Legendrian tangle $(T,\mu)$:
\begin{eqnarray}
\mr{Aug}_m(T;k)=\sqcup_{\rho}\mc{O}_m(\rho;k)
\end{eqnarray}
where $\rho$ runs over all $m$-graded isomorphism types of $T$.

In addition, take a $m$-graded augmentation $\epsilon$ of $\mc{A}(T)$, or equivalently the $m$-graded filtered chain complex $C(\epsilon)=(C,d(\epsilon))$. Suppose $\epsilon$ is \emph{acyclic}, meaning that $(C,d(\epsilon))$ is acyclic or $H=\emptyset$ in the partition $I=L\sqcup H\sqcup U$ associated to $d(\epsilon)$. Then, the associated $m$-graded isomorphism type $\rho:U\xrightarrow[]{\sim}L$ can be identified with an \emph{$m$-graded normal ruling} (denoted by the same $\rho$) of $T$.
\end{remark}

\begin{remark}\label{rem:standard Morse complex via augmentations}
In Lemma \ref{lem:Barannikov normal form}, given any complex $(C,d)$ (or the corresponding augmentation $\epsilon$), which determines a partition $I=U\sqcup L\sqcup H$ and a bijection $\rho:U\xrightarrow[]{\sim}L$, say $U=\{i_1<i_2<\ldots<i_k\}$ and $\rho(i_l)=j_l$. Then $\varphi^{-1}\cdot d=d_0$ is the Barannikov normal form for some $\varphi\in Aut(C)$. Can take the decomposition $\varphi=D\circ \varphi_0$, where $D$ is diagonal and $\varphi_0$ is unipotent, i.e. $\varphi_0(e_i)=e_i+\sum_{j>i}*_je_j$. Then $(\varphi_0^{-1}\cdot d) (e_{i_l})=c_le_{j_l}$ for $c_l\in k^*$ and $1\leq l\leq k$, and $(\varphi_0^{-1}\cdot d)(e_j)=0$ for the remaining cases. Such a complex $(C,\varphi_0^{-1}\cdot d)$ (or the corresponding augmentation $\varphi_0^{-1}\cdot \epsilon$) is called \emph{standard}, and we say $(C,\varphi_0^{-1}\cdot d)$ is \emph{standard with respect to $\rho$}.

In fact, the unipotent automorphism $\varphi_0$ can be taken to be canonical. See Lemma \ref{lem:canonical automorphism via non-degenerate augmentation}.
\end{remark}

Augmentation varieties for Legendrian tangles also satisfy a sheaf property, induced by the co-sheaf property of LCH DGAs in Section \ref{subsubsec:co-sheaf for DGAs}. More precisely, we have

\begin{definition/proposition}
Let $T$ a Legendrian tangle in $J^1U$.
\begin{enumerate}[label=(\arabic*)]
\item
Let $V$ be an open subinterval of $U$, then the co-restriction of DGAs $\iota_{UV}:\mc{A}(T|_V)\rightarrow \mc{A}(T)$ induces a restriction $r_{VU}=\iota_{VU}^*:\mr{Aug}_m(T;k)\rightarrow \mr{Aug}_m(T|_V;k)$.
\item
If $U=L\cup_V R$ is the union of 2 open intervals $L, R$ with non-empty intersection $V$, then the diagram of restriction maps
\begin{equation}\label{eq:sheaf property}
\xymatrix{
\mr{Aug}_m(T;k) \ar[r]^{r_{RU}} \ar[d]_{r_{LU}} & \mr{Aug}_m(T|_R;k) \ar[d]^{r_{VR}} \\
\mr{Aug}_m(T|_L;k) \ar[r]^{r_{VL}} & \mr{Aug}_m(T|_V;k)
}
\end{equation}
gives a fiber product of augmentation varieties.
\end{enumerate}
\end{definition/proposition}

Take the left and right pieces of $T$, called $T_L, T_R$ respectively. We get 2 restrictions of augmentation varieties $r_L=\iota_L^*:\mr{Aug}_m(T)\rightarrow \mr{Aug}_m(T_L)$ and $r_R=\iota_R^*:\mr{Aug}_m(T)\rightarrow \mr{Aug}_m(T_R)$. We can then define some subvarieties:
\begin{definition}\label{def:aug varieties with boundary conditions}
Given $m$-graded isomorphism types $\rho_L, \rho_R$ for $T_L, T_R$ respectively, and $\epsilon_L\in\mc{O}_m(\rho_L;k)$. Define the varieties
\begin{eqnarray*}
&&\mr{Aug}_m(T,\epsilon_L,\rho_R;k):=\{\epsilon_L\}\times_{\mr{Aug}_m(T_L;k)}\times \mr{Aug}_m(T;k)\times_{\mr{Aug}_m(T_R;k)}\times\mc{O}_m(\rho_R;k)\\
&&\mr{Aug}_m(T,\rho_L,\rho_R;k):=\mc{O}_m(\rho_L;k)\times_{\mr{Aug}_m(T_L;k)}\times \mr{Aug}_m(T;k)\times_{\mr{Aug}_m(T_R;k)}\times\mc{O}_m(\rho_R;k)
\end{eqnarray*}
$\mr{Aug}_m(T,\epsilon_L,\rho_R;k)$ will be called the \emph{$m$-graded augmentation variety with boundary conditions $(\epsilon_L,\rho_R)$} for $T$. When $\epsilon_L=\epsilon_{\rho_L}$ is the \emph{canonical augmentation} of $T_L$ corresponding to the Barannikov normal form determined by $\rho_L$, we will call $\mr{Aug}_m(T,\epsilon_{\rho_L},\rho_R;k)$ the \emph{$m$-graded augmentation variety (with boundary conditions $(\rho_L,\rho_R)$}) of $T$.
\end{definition}

By definition, we immediately obtain a decomposition of the full augmentation variety
\begin{eqnarray}\label{eqn:decomp for the full aug var}
\mr{Aug}_m(T;k)=\sqcup_{\rho_L,\rho_R}\mr{Aug}_m(T,\rho_L,\rho_R;k)
\end{eqnarray}
where $\rho_L,\rho_R$ run over all $m$-graded isomorphism types of $T_L,T_R$ respectively.

Note that the augmentation variety $\mr{Aug}_m(T,k)$ itself is not a Legendrian isotopy invariant.
However, \emph{assume} the numbers $n_L,n_R$ of left endpoints and right endpoints of $T$ are both \emph{even}. We can define
\begin{definition}\label{def:augmentation number}
Let $\mb{F}_q$ be any finite field, and $\rho_L,\rho_R$ be \emph{$m$-graded isomorphism types} of $T_L,T_R$ respectively. The \emph{$m$-graded augmentation number (with boundary conditions $(\rho_L,\rho_R)$)} of $T$ over $\mb{F}_q$ is
\begin{eqnarray}
\mr{aug}_m(T,\rho_L,\rho_R;q):=
q^{-\mr{dim}_{\mb{C}}\mr{Aug}_m(T,\epsilon_{\rho_L},\rho_R;\mb{C})}|\mr{Aug}_m(T,\epsilon_{\rho_L},\rho_R;\mb{F}_q)|
\end{eqnarray}
where $|\mr{Aug}_m(T,\epsilon_{\rho_L},\rho_R;\mb{F}_q)|$ is simply the counting of $\mb{F}_q$-points.
\end{definition}

\begin{remark}
Alternatively, we can use $\mr{Aug}_m(T,\rho_L,\rho_R;k)$ instead of $\mr{Aug}_m(T,\epsilon_{\rho_L},\rho_R;k)$ to define the augmentation number. However, this alternative definition only differs from the previous one by a normalized factor $q^{-\mr{dim}\mc{O}_m(\rho_L;k)}|\mc{O}_m(\rho_L;\mb{F}_q)|=(\frac{q-1}{q})^{|L|}$, where $L$ comes from the partition $I(T_L)=U\sqcup H\sqcup L$ determined by $\rho_L$. See Corollary \ref{cor:structures for oribits determined by normal rulings}.
\end{remark}

In the next subsection, we will see that the augmentation numbers defined above are Legendrian isotopy invariants. However, for the purpose of clarity, we will now restrict ourselves to the case when $\rho_L,\rho_R$ are $m$-graded normal rulings. In particular, this ensures that $T$ has even left and even right endpoints. In Section \ref{sec:Ruling vs aug:general case}, we will come back to the general case (the part related to $\mr{aug}_m(T,\rho_L,\rho_R;q)$ defined here is Section \ref{subsec:alternative generalization}).

\subsection{Computation for augmentation numbers}\label{subsec:computation of aug number}

Given a Legendrian tangle $(T,\mu)$. For the moment, we will assume $T$ is placed with $B$ base points so that \emph{each right cusp is marked}. Label the crossings, cusps and base points away from the right cusps of $T$ by $q_1,\ldots,q_n$ with $x$-coordinates, from left to right. Let $x_0<x_1<\ldots<x_{n}$ be the $x$-coordinates which cut $T$ into elementary tangles. That is, $x_0$ and $x_n$ are the the $x$-coordinates of the left and right end-points of $T$, and $x_{i-1}<x_{q_i}<x_i$ for all $1\leq i\leq n$. Let $T_i=T|_{\{x_0<x<x_i\}}$ and $E_i:=T|_{\{x_{i-1}<x<x_i\}}$ be the $i$-th elementary tangle around $q_i$, then $T=T_n=E_1\circ E_2\circ\ldots\circ E_n$ is the composition of $n$ elementary tangles.

Fix $m$-graded normal rulings $\rho_L,\rho_R$ of $T_L, T_R$ respectively. Fix $\epsilon_L\in\mc{O}_m(\rho_L;k)$.

\begin{definition}\label{def:augmentaion varieties via normal rulings}
For any $m$-graded normal ruling $\rho$ of $T$ such that $\rho|_{T_L}=\rho_L$ and $\rho|_{T_R}=\rho_R$, denote $\rho_i:=\rho|_{(T_i)_R=(T_{i+1})_L}$ for $0\leq i\leq n$. In particular, $\rho_0=\rho_L, \rho_n=\rho_R$. Define the variety
\begin{eqnarray*}
\mr{Aug}_m^{\rho}(T,\epsilon_L)&:=&
\mr{Aug}_m(E_1,\epsilon_L,\rho_1)\times_{\mc{O}_m(\rho_1)}
\ldots\times_{\mc{O}_m(\rho_{n-1})}\mr{Aug}_m(E_n,\rho_{n-1},\rho_n)\\
\mr{Aug}_m^{\rho}(T,\rho_L)&:=&
\mr{Aug}_m(E_1,\rho_0,\rho_1)\times_{\mc{O}_m(\rho_1)}
\ldots\times_{\mc{O}_m(\rho_{n-1})}\mr{Aug}_m(E_n,\rho_{n-1},\rho_n)
\end{eqnarray*}
while for simplicity we have ignored the coefficient field $k$.
\end{definition}

\begin{remark}\label{rem:nondegeneracy of augmentations}
Given any elementary Legendrian tangle $E$: a single crossing, a left cusp, a (marked or unmarked) right cusp, or $2n$ parallel strands with a single base point, let $\epsilon$ be any $m$-graded augmentation of $\mc{A}(E)$ and denote $\epsilon_L:=\epsilon|_{E_L},\epsilon_R:=\epsilon|_{E_R}$. If $\epsilon_L$ is acyclic (see Remark \ref{rem:normal rulings via non-degenerate augmentations}), then so is $\epsilon_R$.
By induction, this result then generalizes to all Legendrian tangles. For a justification, see Corollary \ref{cor:nondegeneracy of augmentations}.
\end{remark}

We then obtain a partition into subvarieties
\begin{eqnarray}\label{eqn:partition of aug var}
\mr{Aug}_m(T,\epsilon_L,\rho_R;k)=\sqcup_{\rho}\mr{Aug}_m^{\rho}(T,\epsilon_L;k)
\end{eqnarray}
where $\rho$ runs over all $m$-graded normal rulings of $T$ such that $\rho|_{T_L}=\rho_L$ and $\rho|_{T_R}=\rho_R$.

Consider the natural map
\begin{eqnarray}\label{eqn:iterated fibration for augmentation varieties}
P_n:\mr{Aug}_m^{\rho}(T_n,\epsilon_L;k)\rightarrow \mr{Aug}_m^{\rho|_{T_{n-1}}}(T_{n-1},\epsilon_L;k)
\end{eqnarray}
Clearly the fibers are $\mr{Aug}_m(E_n,\epsilon_{n-1},\rho_n;k)$,
where $\epsilon_{n-1}\in\mc{O}_m(\rho_{n-1};k)$.

\begin{lemma}\label{lem:augmentation varieties for elementary tangles}
Let $(E,\mu)$ be an elementary Legendrian tangle: a single crossing $q$, a left cusp $q$, a marked right cusp $q$ or $2n$ parallel strands with a single base point $*$. Let $\rho$ be a $m$-graded normal ruling of $E$, denote $\rho_L:=\rho|_{E_L},\rho_R:=\rho|_{E_R}$. Take any $\epsilon_L$ in $\mc{O}_m(\rho_L;k)$, have
\begin{eqnarray*}
\mr{Aug}_m(E,\epsilon_L,\rho_R;k)\cong (k^*)^{-\chi(\rho)+B}\times k^{r(\rho)}
\end{eqnarray*}
where $B$ is the number of base points in $E$, $\chi(\rho)=s(\rho)-c_R$, $c_R$ the number of right cusps in $E$. And, $s(\rho)$ and $r(\rho)$ are defined as in Definition \ref{def:switches and returns}.
\end{lemma}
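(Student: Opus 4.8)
The plan is to verify the formula case by case for the four types of elementary Legendrian tangle, reducing each to a direct computation with the LCH DGA $\mc{A}(E)$ and the co-restriction maps $\iota_L, \iota_R$ described in Section \ref{subsubsec:co-sheaf for DGAs}. In each case I fix the Barannikov normal form $d_{\epsilon_L}$ on the left complex $C(E_L)$ representing $\epsilon_L$ (this is legitimate since $\mr{Aug}_m(E,\epsilon_L,\rho_R;k)$ depends on $\epsilon_L$ only through the point, and I may as well pick the canonical representative inside its $B_m(E_L)$-orbit; alternatively, $\epsilon_L\mapsto$ standard form, cf. Remark \ref{rem:standard Morse complex via augmentations}), and then count the augmentations $\epsilon$ of $\mc{A}(E)$ restricting to $\epsilon_L$ on the left and lying in $\mc{O}_m(\rho_R;k)$ on the right.

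The trivial tangle of $2n$ parallel strands with a single base point is essentially immediate: here $\mc{A}(E)=\mb{Z}[t^{\pm1}]<a_{ij}>$ with $\partial a_{ij}=\sum_{i<k<j}(-1)^{|a_{ik}|+1}a_{ik}a_{kj}$ exactly as for the trivial tangle (the base point contributes only the invertible generator $t$), so $\mr{Aug}_m(E;k)\cong k^\times\times MCS^A_m(E;k)$ via Example \ref{ex:aug. variety for lines}; fixing $\epsilon_L=\epsilon_R$ (the restriction maps are both identities on the $a_{ij}$'s) forces the $MCS$-part to be a single point, leaving only the free choice of $t\in k^\times$, i.e. $(k^*)^1$. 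Since $\chi(\rho)=0$, $B=1$, $r(\rho)=0$ here, this matches. For a single crossing $q$: $\mc{A}(E)$ has generators $a_{ij}$ together with $a=q$, and the co-restriction $\iota_R$ is the identity except that the pair of strands crossing at $q$ gets its labels transposed, with a sign twist involving $a$ (as in the right-cusp example computed in the text). One then checks that if $|q|\neq 0\ (\mr{mod}\ m)$ then $\epsilon(q)=0$ is forced and $\epsilon$ is uniquely determined by $\epsilon_L$ (a point), and if $|q|\equiv0$ one distinguishes: when $q$ is a switch of $\rho$, the normal-ruling/Barannikov bookkeeping forces $\epsilon(q)\in k^\times$ (contributing the factor $(k^*)^{-\chi(\rho)}=(k^*)^{1}$, since $s(\rho)=1$, $c_R=0$); when $q$ is a return it contributes a free $\epsilon(q)\in k$ (the factor $k^{r(\rho)}=k^1$); when it is a departure, the normality condition again pins $\epsilon(q)$ down, giving a point. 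The left cusp case is trivial: $\mc{A}(E)$ acquires no new generators beyond those forced by the extra pair of end-strands, and the condition (1) in Definition \ref{def:normal_ruling} together with acyclicity of $\epsilon_L$ makes the extension of $\epsilon_L$ across the cusp unique, so the variety is a point, matching $\chi(\rho)=0$, $r(\rho)=0$, $B=0$.

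The marked right cusp is the main case and the main obstacle. Here $\mc{A}(E)=\mb{Z}[t^{\pm1}]<a, a_{ij}>$ with $\partial a = t^{\sigma(a)}+a_{k,k+1}$ where $k,k+1$ label the two strands meeting at the cusp, and the co-restriction $\iota_R:\mc{A}(E_R)\to\mc{A}(E)$ is the quadratic-in-$a$ expression exhibited in the displayed example in the text. The constraint $\epsilon\circ\partial a=0$ gives $\epsilon(a_{k,k+1})=-\epsilon(t)^{\sigma(a)}\in k^\times$, so the corresponding entry of the left differential is forced to be a unit; this is exactly the combinatorial shadow of the cusp being the ``closure'' of a pair in the ruling, and I must check it is compatible with $\epsilon_L$ lying in $\mc{O}_m(\rho_L;k)$ — i.e. that $k,k+1$ form a $\rho_L$-pair — which is where Remark \ref{rem:nondegeneracy of augmentations} (acyclicity propagates) and the structure of Barannikov normal forms (Lemma \ref{lem:Barannikov normal form}) are used. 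Then $\epsilon(t)\in k^\times$ is a free parameter (the factor $(k^*)^{B}=(k^*)^1$), the requirement $\epsilon_R=\iota_R^*\epsilon\in\mc{O}_m(\rho_R;k)$ cuts out exactly the locus where the induced right differential has the prescribed Barannikov type, and after substituting $\epsilon(a_{k,k+1})$ one finds the remaining $\epsilon(a_{ij})$ are uniquely determined by $\epsilon_L$ with no further freedom (so $r(\rho)=0$, consistent since a right cusp has no switches and $\chi(\rho)=s(\rho)-c_R=-1$, giving $(k^*)^{-\chi(\rho)+B}=(k^*)^{2}$ — accounting for the forced unit entry $\epsilon(a_{k,k+1})$ plus the free $t$). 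I expect the bookkeeping of which $a_{ij}$ are free, which are forced to be units, and which are forced to vanish — organized by the three index types upper/lower/homological of the Barannikov form of $\epsilon_L$ — to be the delicate part; once that dictionary is set up, each of the four verifications is a short computation, and the identity $\chi(\rho)=s(\rho)-c_R$ is just Remark \ref{rem:filling_surface_computation_formula} applied to an elementary piece.
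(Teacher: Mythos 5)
Your treatment of the marked right cusp is incorrect, and this is a genuine error rather than a bookkeeping slip. In $\mr{Aug}_m(E,\epsilon_L,\rho_R;k)$ the left augmentation is a \emph{fixed point} of $\mc{O}_m(\rho_L;k)$ (Definition \ref{def:aug varieties with boundary conditions}), so $\epsilon(a_{k,k+1})=\epsilon_L(a_{k,k+1})$ is not a coordinate of the variety at all; moreover it is automatically a unit because $\rho_L$ pairs $k$ with $k+1$ (the successor condition in the Barannikov form forces $\langle d_Le_k,e_{k+1}\rangle\neq 0$). The relation $\epsilon\circ\partial a=0$, i.e. $\epsilon(t)^{\sigma(a)}=-\epsilon_L(a_{k,k+1})$, then determines $\epsilon(t)$ \emph{uniquely}, so $\epsilon(t)$ is not a free parameter either. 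Hence the fibre is a single point when $m\neq 1$, and is $\cong k$ when $m=1$ (the cusp generator has degree $1$ and becomes a free coordinate; recall that for $m=1$ a right cusp is counted in $r(\rho)$). Your answer $(k^*)^2$ is also globally inconsistent: gluing a left cusp to a marked right cusp must reproduce the one-point augmentation variety of the unknot with one base point, which the correct count does and $(k^*)^2$ does not. You were partly misled by a sign typo in the statement: $\chi(\rho)$ here is $c_R-s(\rho)$, as in Remark \ref{rem:filling_surface_computation_formula} and as used in the paper's own proof (which writes $-\chi(\rho)=s(\rho)-c_R$), so the exponent for the marked right cusp is $s(\rho)-c_R+B=0$, not $2$.

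The single-crossing case also has a gap at exactly the point the paper works hardest. You say you ``may as well'' replace $\epsilon_L$ by its Barannikov/standard representative, but the variety is defined for the specific augmentation $\epsilon_L$, and the independence of its isomorphism type from the choice of $\epsilon_L$ inside $\mc{O}_m(\rho_L;k)$ is part of what the lemma asserts, not something you may assume. The paper proves the departure/switch/return trichotomy first for a \emph{standard} $\epsilon_L$ (via the handleslide moves of Figure \ref{fig:SR-form MCS}), and then devotes the second half of the proof to the general case: it writes $d_L=\varphi_0\cdot d_0$ with $\varphi_0$ unipotent (Remark \ref{rem:standard Morse complex via augmentations}), represents $\varphi_0$ by a properly ordered collection of handleslides, and uses the Type 5/6 moves to push everything across the crossing, showing that the defining condition on $r=-\epsilon(q)$ becomes $r+r'=0$, $r+r'\neq 0$, or $r$ free, for a constant $r'$ depending on $\epsilon_L$. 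Without this step your statements such as ``a switch forces $\epsilon(q)\in k^\times$'' are literally false for non-standard $\epsilon_L$, and the isomorphism type of the fibre over an arbitrary point of the orbit is not established. (A minor further inaccuracy: for the parallel-strands-with-base-point case the right restriction is not the identity on the $a_{ij}$'s but rescales by $\epsilon(t)^{\pm 1}$, as in the base-point case of Lemma \ref{lem:aug and A-form MCS for elementary tangles}; your conclusion there is nevertheless correct.)
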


We will not show the lemma until the Section \ref{subsec:aug vars for elementary Leg tangles}.
\begin{remark}
In fact, for any Legendrian tangle $T=T_n$ as above, one can show that
\begin{eqnarray*}
\mr{Aug}_m^{\rho}(T,\epsilon_L;k)\cong (k^*)^{-\chi(\rho)+B}\times k^{r(\rho)}
\end{eqnarray*}
See Theorem \ref{thm:augmentation varieties for Legendrian tangles}.
However, for our purpose of points-counting, the previous lemma will suffice.
\end{remark}

Assuming the lemma, we see that the map $P_n:\mr{Aug}_m^{\rho}(T_n,\epsilon_L;k)\rightarrow \mr{Aug}_m^{\rho|_{T_{n-1}}}(T_{n-1},\epsilon_L;k)$ is surjective with smooth isomorphic fibers $(k^*)^{-\chi(\rho|_{E_n})+B(E_n)}\times k^{r(\rho|_{E_n})}$, where $B(E_n)$ denotes the number of base points in $E_n$. It follows that
\begin{eqnarray*}
&&\mr{dim}\mr{Aug}_m^{\rho}(T_n,\epsilon_L)=
\mr{dim}\mr{Aug}_m^{\rho|_{T_{n-1}}}(T_{n-1},\epsilon_L)-\chi(\rho|_{E_n})+B(E_n)+r(\rho|_{E_n})\\
&&|\mr{Aug}_m^{\rho}(T_n,\epsilon_L;\mb{F}_q)|
=|\mr{Aug}_m^{\rho|_{T_{n-1}}}(T_{n-1},\epsilon_L;\mb{F}_q)|(q-1)^{-\chi(\rho|_{E_n})+B(E_n)}q^{r(\rho|_{E_n})}
\end{eqnarray*}

So by induction, we obtain
\begin{eqnarray}
&&\mr{dim}Aug_m^{\rho}(T,\epsilon_L;k)=-\chi(\rho)+B-r(\rho)\\
&&|\mr{Aug}_m^{\rho}(T,\epsilon_L;\mb{F}_q)|=(q-1)^{-\chi(\rho)+B}q^{r(\rho)}\nonumber
\end{eqnarray}

As a consequence of Equation (\ref{eqn:partition of aug var}), we then have
\begin{lemma}\label{lem:augmentation number formula}
Given a Legendrian tangle $(T,\mu)$ with $B$ base points so that each right cusp is marked, let $\rho_L, \rho_R$ be $m$-graded normal rulings of $T_L,T_R$ respectively, then for any $\epsilon_L\in\mc{O}_m(\rho_L;k)$, have
\begin{eqnarray}\label{eqn:dimension formula for augmentation varieties}
\mr{dim}\mr{Aug}_m(T,\epsilon_L,\rho_R;k)=\mr{max}_{\rho}\{-\chi(\rho)+B+r(\rho)\}
\end{eqnarray}
and the augmentation number is given by
\begin{eqnarray}\label{eqn:augmentation number formula}
\mr{aug}_m(T,\rho_L,\rho_R;q)
=q^{-\mr{max}_{\rho}\{-\chi(\rho)+B+r(\rho)\}}\sum_{\rho}(q-1)^{-\chi(\rho)+B}q^{r(\rho)}
\end{eqnarray}
where $\rho$ runs over all $m$-graded normal rulings such that $\rho|_{T_L}=\rho_L, \rho|_{T_R}=\rho_R$.
\end{lemma}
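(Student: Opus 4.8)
The plan is to combine the stratification (\ref{eqn:partition of aug var}) with the stratum-by-stratum computation that the iterated fibration $P_n$ together with Lemma \ref{lem:augmentation varieties for elementary tangles} already supplies, and then substitute the result into Definition \ref{def:augmentation number}. Throughout, $\rho$ ranges over the $m$-graded normal rulings of $T$ with $\rho|_{T_L}=\rho_L$ and $\rho|_{T_R}=\rho_R$.

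First I would fix such a $\rho$ and analyze the stratum $\mr{Aug}_m^{\rho}(T,\epsilon_L;k)$ through the tower of maps $P_j\colon \mr{Aug}_m^{\rho}(T_j,\epsilon_L;k)\to \mr{Aug}_m^{\rho|_{T_{j-1}}}(T_{j-1},\epsilon_L;k)$ obtained by cutting off the $j$-th elementary tangle $E_j$. By Lemma \ref{lem:augmentation varieties for elementary tangles}, over every point of the base lying in $\mc{O}_m(\rho_{j-1};k)$ the fiber $\mr{Aug}_m(E_j,\epsilon_{j-1},\rho_j;k)$ is isomorphic to $(k^*)^{-\chi(\rho|_{E_j})+B(E_j)}\times k^{r(\rho|_{E_j})}$, hence has a shape depending only on $\rho|_{E_j}$. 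The delicate step---and the main obstacle---is to upgrade this pointwise statement to the assertion that the dimension and the $\mb{F}_q$-point count of $P_j$ are multiplicative over the base (e.g.\ that $P_j$ is Zariski-locally trivial); this is precisely where one uses the explicit affine description of $\mr{Aug}_m(E_j,-,\rho_j;k)$ as an iterated tower of $\mb{G}_a$- and $\mb{G}_m$-torsors over the space of left boundary data, which is supplied by the proof of Lemma \ref{lem:augmentation varieties for elementary tangles} in Section \ref{subsec:aug vars for elementary Leg tangles}. Granting this, induction on $j$ yields $\mr{dim}\,\mr{Aug}_m^{\rho}(T,\epsilon_L;k)=-\chi(\rho)+B+r(\rho)$, independently of the ground field, and $|\mr{Aug}_m^{\rho}(T,\epsilon_L;\mb{F}_q)|=(q-1)^{-\chi(\rho)+B}q^{r(\rho)}$, independently of the chosen $\epsilon_L\in\mc{O}_m(\rho_L;k)$.

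Finally, since (\ref{eqn:partition of aug var}) presents $\mr{Aug}_m(T,\epsilon_L,\rho_R;k)$ as a finite disjoint union of locally closed strata, its dimension is the maximum of the stratum dimensions and its count of $\mb{F}_q$-points is the sum of the stratum counts; this gives (\ref{eqn:dimension formula for augmentation varieties}) and $|\mr{Aug}_m(T,\epsilon_L,\rho_R;\mb{F}_q)|=\sum_{\rho}(q-1)^{-\chi(\rho)+B}q^{r(\rho)}$. Setting $\epsilon_L=\epsilon_{\rho_L}$ and inserting these into Definition \ref{def:augmentation number}, the normalizing factor $q^{-\mr{dim}_{\mb{C}}\mr{Aug}_m(T,\epsilon_{\rho_L},\rho_R;\mb{C})}=q^{-\mr{max}_{\rho}\{-\chi(\rho)+B+r(\rho)\}}$ combines with the $\mb{F}_q$-count to produce exactly (\ref{eqn:augmentation number formula}); the use of the complex dimension here is harmless because we have shown that the stratum dimensions, and hence the total dimension, are given by explicit combinatorial expressions in $\rho$ valid over any field.
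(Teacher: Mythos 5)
Your proposal is correct and follows essentially the same route as the paper: stratify via (\ref{eqn:partition of aug var}), compute each stratum inductively along the tower of maps $P_j$ using Lemma \ref{lem:augmentation varieties for elementary tangles}, then take the maximum/sum over strata and insert into Definition \ref{def:augmentation number}. The local-triviality point you flag is indeed the content of Lemma \ref{lem:fibration for augmentation varieties of elementary tangles} later in the paper, though for the dimension and $\mb{F}_q$-count the paper simply uses that $P_j$ is surjective with all fibers isomorphic to $(k^*)^{-\chi(\rho|_{E_j})+B(E_j)}\times k^{r(\rho|_{E_j})}$.
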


\begin{corollary}[Invariance of augmentation numbers]\label{cor:invariance of augmentation numbers}
In the setting of the previous lemma with $B$ fixed, then the augmentation numbers $\mr{aug}_m(T,\rho_L,\rho_R;q)$ are Legendrian isotopy invariants.
\end{corollary}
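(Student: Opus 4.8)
The plan is to read the invariance straight off the closed formula for $\mr{aug}_m(T,\rho_L,\rho_R;q)$ in Lemma~\ref{lem:augmentation number formula}, using Lemma~\ref{lem:filling_surface} and Remark~\ref{rem:types of crossings under Legendrian isotopy} to compare the terms of that formula for two Legendrian isotopic tangles. First I would reduce to a single elementary step of the isotopy, i.e.\ a smooth isotopy or one of the three Legendrian Reidemeister moves; a general Legendrian isotopy is a finite composition of such steps, and each step can be taken to fix the endpoints and hence to fix the left and right pieces ($T_L=T_L'$, $T_R=T_R'$, as observed after Lemma~\ref{lem:filling_surface}). So it suffices to treat one step $h\colon T\rightsquigarrow T'$ with $T,T'$ both carrying $B$ base points and with every right cusp marked, and to prove $\mr{aug}_m(T,\rho_L,\rho_R;q)=\mr{aug}_m(T',\rho_L,\rho_R;q)$.

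Given such an $h$, Lemma~\ref{lem:filling_surface} furnishes a bijection $\phi_h\colon \mr{NR}_T^m\xrightarrow{\sim}\mr{NR}_{T'}^m$ commuting with $r_L,r_R$; since the left and right pieces and the isotopy restricted to them are trivial, $\phi_h$ maps $\mr{NR}_T^m(\rho_L,\rho_R)$ bijectively onto $\mr{NR}_{T'}^m(\rho_L,\rho_R)$. Moreover $S_\rho$ and $S_{\phi_h(\rho)}$ are homeomorphic relative to the slices at $x=x_L$ and $x=x_R$, so $\chi(\phi_h(\rho))=\chi(\rho)$ for every such $\rho$, and by Remark~\ref{rem:types of crossings under Legendrian isotopy} there is a constant $\delta=\delta(h)$, independent of $\rho$, with $r(\phi_h(\rho))=r(\rho)+\delta$. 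Substituting $\rho'=\phi_h(\rho)$ into Formula~(\ref{eqn:augmentation number formula}): the maximum $\max_{\rho'}\{-\chi(\rho')+B+r(\rho')\}$ for $T'$ equals $\delta+\max_\rho\{-\chi(\rho)+B+r(\rho)\}$, while $\sum_{\rho'}(q-1)^{-\chi(\rho')+B}q^{r(\rho')}=q^{\delta}\sum_\rho(q-1)^{-\chi(\rho)+B}q^{r(\rho)}$. The two factors $q^{\pm\delta}$ cancel, which gives exactly $\mr{aug}_m(T',\rho_L,\rho_R;q)=\mr{aug}_m(T,\rho_L,\rho_R;q)$.

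The step needing care, and the one I expect to be the only real obstacle, is keeping the normalization ``$B$ base points, every right cusp marked'' consistent along the isotopy: a type~I Reidemeister move can create or destroy a right cusp, so to stay within the hypotheses of Lemma~\ref{lem:augmentation number formula} on both sides with the \emph{same} integer $B$, I would, whenever a new right cusp appears, slide one of the base points already present on that component onto the new cusp (and symmetrically when a cusp disappears), leaving $B$ unchanged. By the tangle analogue of Theorem~\ref{thm:stable isom}, the isomorphism class of $\mc{A}(T)$ --- hence every dimension and every $\mb{F}_q$-point count entering $\mr{aug}_m$ --- does not depend on the location of the base points within each component, so this relocation does not affect the augmentation numbers, and the cancellation of the previous paragraph then applies verbatim. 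Everything else is the purely formal manipulation of Formula~(\ref{eqn:augmentation number formula}) above, so the corollary follows.
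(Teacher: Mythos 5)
Your proposal is correct and follows essentially the same route as the paper: apply Lemma \ref{lem:filling_surface} and Remark \ref{rem:types of crossings under Legendrian isotopy} to get $\chi(\phi_h(\rho))=\chi(\rho)$ and $r(\phi_h(\rho))=r(\rho)+\delta$ with $\delta$ independent of $\rho$, then observe that the shift cancels in Formula (\ref{eqn:augmentation number formula}). The extra maneuver in your last paragraph is not needed: the corollary assumes both tangles are already equipped with $B$ base points marking all right cusps, and the formula (hence the comparison) depends only on $B$ and the front diagrams, not on the base point locations or on tracking markings through the isotopy.
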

\begin{proof}
Given a Legendrian isotopy $h:T\rightarrow T'$, by Lemma \ref{lem:filling_surface} there's a canonical bijection $\phi_h:\mr{NR}_T^m\xrightarrow[]{\sim}\mr{NR}_{T'}^m$ between the sets of $m$-graded normal rulings of $T,T'$, which commutes with the restriction to left and right pieces, and $\chi(\phi_h(\rho))=\chi(\rho)$ for any $m$-graded normal ruling of $T$. Moreover, by Remark \ref{rem:types of crossings under Legendrian isotopy}, there's a constant $C_r$ which only depend on $T$ and $h$, such that $r(\phi_h(\rho))=r(\rho)+C_r$. Apply the previous lemma, we get
\begin{eqnarray*}
\mr{dim}\mr{Aug}_m(T',\rho_L,\rho_R;k)&=&\mr{max}_{\rho}\{-\chi(\phi_h(\rho))+B+r(\phi_h(\rho))\}\\
&=&\mr{max}_{\rho}\{-\chi(\rho)+B+r(\rho)\}+C_r\\
&=&\mr{dim}\mr{Aug}_m(T,\rho_L,\rho_R;k)+C_r
\end{eqnarray*}
where $\rho$ runs over all $m$-graded normal rulings of $T$ such that $\rho|_{T_L}=\rho_L,\rho|_{T_R}=\rho_R$, and
\begin{eqnarray*}
\mr{aug}_m(T',\rho_L,\rho_R;q)
&=&q^{-\mr{dim}\mr{Aug}_m(T,\rho_L,\rho_R;k)-C_r}\sum_{\rho}(q-1)^{-\chi(\phi_h(\rho))+B}q^{r(\phi_h(\rho))}\\
&=&q^{-\mr{dim}\mr{Aug}_m(T,\rho_L,\rho_R;k)-C_r}\sum_{\rho}(q-1)^{-\chi(\rho)+B}q^{r(\rho)+C_r}\\
&=&\mr{aug}_m(T,\rho_L,\rho_R;q)
\end{eqnarray*}
where $\rho$ runs as above.
\end{proof}

\subsection{Ruling polynomials compute augmentation numbers}\label{subsec:Ruling vs aug: acyclic case}
\begin{theorem}\label{thm:counting for tangles}
Let $T$ be a Legendrian tangle equipped with a $\mb{Z}/2r$-valued Maslov potential $\mu$ and $B$ base points so that each connected component containing a right cusp has at least one base point. Fix a nonnegative integer $m$ dividing $2r$ and $m$-graded normal rulings $\rho_L,\rho_R$ of $T_L,T_R$ respectively, then the augmentation numbers and Ruling polynomials of $(T,\mu)$ are related by
\begin{equation}\label{eqn:aug vs Ruling poly}
\mr{aug}_m(T,\rho_L,\rho_R;q)=q^{-\frac{d+B}{2}}z^B<\rho_L|R_T^m(z)|\rho_R>
\end{equation}
where $q$ is the order of a finite field $\mb{F}_q$, $z=q^{\frac{1}{2}}-q^{-\frac{1}{2}}$, $d$ is the maximal degree in $z$ of $<\rho_L|R_T^m(z)|\rho_R>$.
\end{theorem}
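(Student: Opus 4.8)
The plan is to reduce everything to the explicit formula for the augmentation number already established in Lemma \ref{lem:augmentation number formula}, which holds under the additional hypothesis that \emph{every right cusp is marked}, and then to remove that extra hypothesis using the invariance statement (Corollary \ref{cor:invariance of augmentation numbers}) together with the fact that both sides of \eqref{eqn:aug vs Ruling poly} transform in a controlled way under adding or moving base points. So the first step is: assume temporarily that each right cusp of $T$ carries a base point (possibly after adding some, so the number of base points is $B' \geq B$), and prove the identity in this case. Here we have
\begin{equation*}
\mr{aug}_m(T,\rho_L,\rho_R;q)=q^{-\mr{max}_{\rho}\{-\chi(\rho)+B'+r(\rho)\}}\sum_{\rho}(q-1)^{-\chi(\rho)+B'}q^{r(\rho)}
\end{equation*}
where $\rho$ runs over the $m$-graded normal rulings of $T$ restricting to $\rho_L,\rho_R$, and on the other side
\begin{equation*}
<\rho_L|R_T^m(z)|\rho_R>=\sum_{\rho}z^{-\chi(\rho)}.
\end{equation*}

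The key computational step is the algebraic identity relating $(q-1)^{-\chi(\rho)+B'}q^{r(\rho)}$ to $z^{-\chi(\rho)}$. Writing $z=q^{1/2}-q^{-1/2}$, one has $q-1 = q^{1/2}z$, so $(q-1)^{-\chi(\rho)+B'}q^{r(\rho)} = q^{\frac{-\chi(\rho)+B'}{2}} q^{r(\rho)} z^{-\chi(\rho)+B'}$. Thus I would factor out $z^{B'}$ and show that the exponent of $q$ appearing in each term, namely $\frac{-\chi(\rho)+B'}{2}+r(\rho)$, is \emph{independent of $\rho$}; call its common value $N$. This is the crucial point: it says that $-\chi(\rho)+2r(\rho)$ is a Legendrian/combinatorial invariant of the ruling count not depending on which normal ruling we pick. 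I expect this to follow from a parity/counting relation among switches, returns, departures, crossings, cusps and endpoints of $T$ — of the type $s(\rho)+r(\rho)+d(\rho)+(\text{non-}m\text{-graded crossings})=\#\{\text{crossings}\}$ together with $-\chi(\rho)=s(\rho)-c_R$ and a relation between $d(\rho)$ and $r(\rho)$ coming from the fixed boundary data $(\rho_L,\rho_R)$ (the signature/rotation-type bookkeeping that already underlies the knot case in Proposition \ref{prop:Rulings and augmentations for Legendrian knots}). Once $\frac{-\chi(\rho)+B'}{2}+r(\rho)\equiv N$ is established, the sum collapses:
\begin{equation*}
\mr{aug}_m(T,\rho_L,\rho_R;q)=q^{-\max_\rho\{-\chi(\rho)+B'+r(\rho)\}}\,q^{N}\,z^{B'}\sum_\rho z^{-\chi(\rho)}.
\end{equation*}
Since $-\chi(\rho)+B'+r(\rho)=N+r(\rho)+\tfrac{-\chi(\rho)+B'}{2}\cdot 0 \ldots$ — more precisely $-\chi(\rho)+B'+r(\rho)=N+\frac{-\chi(\rho)+B'}{2}$, so its maximum over $\rho$ is $N+\frac{d+B'}{2}$ where $d=\max_\rho(-\chi(\rho))$ is exactly the top $z$-degree of the Ruling polynomial — the powers of $q$ combine to $q^{-(N+\frac{d+B'}{2})+N}=q^{-\frac{d+B'}{2}}$, giving $\mr{aug}_m(T,\rho_L,\rho_R;q)=q^{-\frac{d+B'}{2}}z^{B'}<\rho_L|R_T^m(z)|\rho_R>$, which is \eqref{eqn:aug vs Ruling poly} with $B$ replaced by $B'$.

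The last step is descent from $B'$ back to $B$. Adding a base point to an already-marked component multiplies the DGA by an extra invertible generator $t$ and hence multiplies the augmentation variety by a $k^\ast$-factor; concretely $\mr{aug}_m$ picks up the factor $q^{-1}(q-1)=q^{-1/2}z$ per added base point, while the Ruling polynomial side is unchanged but the normalizing prefactor $q^{-\frac{d+B}{2}}z^{B}$ changes by exactly $q^{-1/2}z$ per unit increase of $B$. Hence the identity for $B'$ and the identity for $B$ are equivalent, and invariance of $\mr{aug}_m$ under Legendrian isotopy (Corollary \ref{cor:invariance of augmentation numbers}) plus invariance of $<\rho_L|R_T^m(z)|\rho_R>$ (Theorem \ref{thm:invariance and composition of Ruling polynomials}) let us move the base points off of unmarked-cusp components whenever the hypothesis only requires one base point per component containing a right cusp. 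I expect the main obstacle to be the $\rho$-independence of $\frac{-\chi(\rho)+B'}{2}+r(\rho)$: verifying it cleanly requires the right global parity identity for normal rulings of a tangle with fixed boundary conditions, generalizing the closed-link computation, and care is needed in the $m=1$ case where $r(\rho)$ also counts right cusps. Everything else is bookkeeping with $z=q^{1/2}-q^{-1/2}$ and the factorization $q-1=q^{1/2}z$.
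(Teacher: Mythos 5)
Your overall strategy coincides with the paper's: first assume every right cusp is marked and apply the formula of Lemma \ref{lem:augmentation number formula}, collapse the sum over rulings using constancy of $-\chi(\rho)+2r(\rho)$, identify $d=\max_\rho(-\chi(\rho))$ with the top $z$-degree, and finally remove the extra base points by showing the normalized augmentation number is insensitive to $B$. The factorization $q-1=q^{1/2}z$ and the collapse of the $q$-powers are carried out correctly. However, there is a genuine gap at the central step: you only ``expect'' that $-\chi(\rho)+2r(\rho)$ is independent of the ruling $\rho$ with fixed boundary conditions $(\rho_L,\rho_R)$, and the bookkeeping you offer --- $s(\rho)+r'(\rho)+d(\rho)=r_m$ (the number of degree-$0$ crossings) together with $-\chi(\rho)=s(\rho)-c_R$ --- only reduces the claim to showing that $r'(\rho)-d(\rho)$ (returns minus departures) is independent of $\rho$. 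That reduction is precisely where the paper's proof of Lemma \ref{lem:a constant function of rulings} begins; the remaining statement is Proposition \ref{prop:r-d is independent of normal rulings}, which is the real content. The paper proves it by introducing the counting function $A(\rho|_{\{x\}})$ of Definition \ref{def:subsets of I determined by normal rulings} and checking that, as $x$ sweeps from $x_L$ to $x_R$, it jumps by $+1$ at each $m$-graded return, by $-1$ at each $m$-graded departure, is unchanged at other crossings, and changes by $\rho$-independent constants at cusps, so that $r'(\rho)-d(\rho)=A(\rho_R)-A(\rho_L)-C$ depends only on $(T,\mu,\rho_L,\rho_R)$. No such argument is supplied in your proposal, and it does not follow from ``signature/rotation-type bookkeeping'': the corresponding statement in \cite{HR15} is proved only for nearly plat fronts of knots, so for general tangle fronts with boundary conditions a new argument is genuinely required. (Your worry about $m=1$ is harmless: the right-cusp contribution to $r(\rho)$ is the same for every ruling, so it cancels in differences.)

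A secondary, smaller issue concerns the descent from $B'$ to $B$. You invoke Legendrian isotopy invariance to ``move the base points off'', but relocating or adding base points is not a Legendrian isotopy, and Corollary \ref{cor:invariance of augmentation numbers} is stated for a fixed configuration of base points. What is needed (and what the paper proves as Lemma \ref{lem:dependence of aug numbers on base points}) is that sliding a base point across a crossing or cusp induces an explicit DGA isomorphism (of the form $a\mapsto t_i^{-1}a$ or $a\mapsto at_i$) which preserves the boundary conditions $(\epsilon_{\rho_L},\rho_R)$, and that adding a base point adjacent to an existing one changes the augmentation variety by a $k^*$-factor; together these show the normalized quantity $q^{\frac{d+B}{2}}z^{-B}\mathrm{aug}_m(T,\rho_L,\rho_R;q)$ is independent of the number and positions of base points. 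Your per-base-point factor $q^{-1}(q-1)=q^{-1/2}z$ is correct, but the position-independence requires its own argument rather than an appeal to isotopy invariance.
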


\begin{proof}
Firstly, we prove the theorem when \emph{each right cusp is marked} in $T$.
We need the following direct generalization of \cite[lem.3.5]{HR15}

\begin{lemma}\label{lem:a constant function of rulings}
Let $(T,\mu)$ be any Legendrian tangle and fix $m$-graded normal rulings $\rho_L,\rho_R$ of $T_L,T_R$ respectively. Let $\rho$ and $\rho'$ be any two $m$-graded normal rulings of $T$ which restricts to $\rho_L$ (resp. $\rho_R$) on $T_L$ (resp. $T_R$), then
\begin{equation*}
-\chi(\rho)+2r(\rho)=-\chi(\rho')+2r(\rho')
\end{equation*}
\end{lemma}
Note: Unlike \cite[lem.3.5]{HR15}, we do not assume $T$ to have nearly plat front diagram. We will postpone the proof of the lemma until the end of this subsection.

Assuming Lemma \ref{lem:a constant function of rulings}, we prove Theorem \ref{thm:counting for tangles}. Fix $\rho_0$ such that $\mr{dim} \mr{Aug}_m(T,\rho_L,\rho_R)=-\chi(\rho_0)+B+r(\rho_0)$. It follows from lemma \ref{lem:a constant function of rulings} that $-\chi(\rho_0)$ is also maximal, hence $d=-\chi(\rho_0)=\mr{max.deg}_z <\rho_L|R_T^m(z)|\rho_R>$. For any $m$-graded normal ruling $\rho$, Lemma \ref{lem:a constant function of rulings} implies that $r(\rho)-r(\rho_0)=\frac{1}{2}(d+\chi(\rho))$. Plug this into equation (\ref{eqn:augmentation number formula}), we obtain
\begin{eqnarray*}
aug_m(T,\rho_L,\rho_R;q)&=q^{-d-B-r(\rho_0)}\sum_{\rho}(q-1)^{-\chi(\rho)+B}q^{r(\rho)}\\
&=q^{-\frac{d+B}{2}}\sum_{\rho}(q^{\frac{1}{2}}-q^{-\frac{1}{2}})^{-\chi(\rho)+B}\\
&=q^{-\frac{d+B}{2}}z^B<\rho_L|R_T^m(z)|\rho_R>
\end{eqnarray*}
where $z=q^{\frac{1}{2}}-q^{-\frac{1}{2}}$ and $\rho$ runs over all $m$-graded normal rulings of $T$ such that $\rho|_{T_L}=\rho_L,\rho|_{T_R}=\rho_R$.

In general, the theorem reduces to the previous case via Lemma \ref{lem:dependence of aug numbers on base points} below.
\end{proof}

\begin{lemma}[Dependence on the base points of augmentation numbers]\label{lem:dependence of aug numbers on base points}
As in the previous theorem, let $(T,\mu)$ be a Legendrian tangle with $B$ base points $*_1,\ldots,*_B$ so that each connected component containing a right cusp has at least one base point. Fix $m$-graded normal rulings $\rho_L,\rho_R$ of $T_L,T_R$ respectively, then the \emph{normalized augmentation number}
\begin{eqnarray*}
\mr{N.aug}_m(T,\rho_L,\rho_R;q):=q^{\frac{d+B}{2}}z^{-B}\mr{aug}_m(T,\rho_L,\rho_R;q)
\end{eqnarray*}
is independent of the number and positions of the base points on $T$.
\end{lemma}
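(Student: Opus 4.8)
The plan is to prove the sharper statement that for \emph{every} admissible placement of base points on $(T,\mu)$ (i.e.\ every placement in which each connected component containing a right cusp carries at least one base point) one has
\[
\mr{N.aug}_m(T,\rho_L,\rho_R;q)=<\rho_L|R_T^m(z)|\rho_R>,
\]
whose right-hand side is patently independent of the base points. Since the Ruling polynomial $<\rho_L|R_T^m(z)|\rho_R>$ and its maximal $z$-degree $d$ do not involve base points at all, it suffices to connect any given admissible placement $c$ to a placement $c''$ in which \emph{each right cusp is marked}, by a chain of moves along which $\mr{N.aug}_m$ is unchanged: for $c''$ the identity $\mr{N.aug}_m=<\rho_L|R_T^m(z)|\rho_R>$ is immediate from the case of Theorem~\ref{thm:counting for tangles} already established above. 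I would use exactly two moves: (A) inserting a new base point immediately next to an existing base point on the same connected component (no crossing or cusp between them), and (B) sliding a base point within its connected component.

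For move (A), let $T'$ be $T$ with a new interior base point $*$, variable $s=\epsilon(*)$, inserted next to an existing base point $*_i$. By the rule for the weights of admissible disks, each occurrence of $*_i$ along a disk boundary is accompanied by an adjacent occurrence of $*$ of the same sign, so in $\mc{A}(T')$ every occurrence of $t_i^{\pm1}$ in a differential or in a co-restriction image is replaced by $(t_is)^{\pm1}$, while $s$ occurs nowhere else. Setting $u:=t_is$, the elements $(t_j)_{j\neq i},u,s$ again freely generate the coefficient ring of $\mc{A}(T')$, and $u\mapsto t_i$, $s\mapsto s$, identity on all other generators, is an isomorphism of $\mb{Z}/2r$-graded DGAs $\mc{A}(T')\cong\mc{A}(T)\otimes_{\mb Z}\mb{Z}[s^{\pm1}]$ (with $|s|=0$, $\partial s=0$); as $T_L=T'_L$, $T_R=T'_R$ and the co-restrictions $\iota_L,\iota_R$ do not involve $s$, this isomorphism intertwines the co-restrictions of $T'$ with those of $T$. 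Hence $\mr{Aug}_m(T',\epsilon_{\rho_L},\rho_R;k)\cong\mr{Aug}_m(T,\epsilon_{\rho_L},\rho_R;k)\times k^{*}$ compatibly with the defining fibre products; over $k=\mb C$ this raises the dimension by $1$, over $k=\mb F_q$ it multiplies the point count by $q-1$, so $\mr{aug}_m(T',\rho_L,\rho_R;q)=\tfrac{q-1}{q}\,\mr{aug}_m(T,\rho_L,\rho_R;q)=q^{-1/2}z\cdot\mr{aug}_m(T,\rho_L,\rho_R;q)$ (recall $q^{-1/2}z=1-q^{-1}$). Since $T'$ has $B+1$ base points,
\[
\mr{N.aug}_m(T',\rho_L,\rho_R;q)=q^{\frac{d+B+1}{2}}z^{-(B+1)}\cdot q^{-1/2}z\cdot\mr{aug}_m(T,\rho_L,\rho_R;q)=\mr{N.aug}_m(T,\rho_L,\rho_R;q).
\]

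For move (B): if $\widetilde T$ is $T$ with one base point moved along its connected component, then — as in Theorem~\ref{thm:stable isom} — there is an isomorphism $\mc{A}(T)\cong\mc{A}(\widetilde T)$; inspecting the explicit base-point-moving isomorphisms one may take it supported away from the strips $T_L,T_R$, hence compatible with $\iota_L,\iota_R$, giving $\mr{Aug}_m(T,\epsilon_{\rho_L},\rho_R;k)\cong\mr{Aug}_m(\widetilde T,\epsilon_{\rho_L},\rho_R;k)$; as $B,d$ are unchanged, $\mr{N.aug}_m$ is unaffected. Now I assemble: given admissible $c$, for each right cusp of $T$ its component already carries a base point of $c$, so I apply move (A) once per right cusp, inserting the new base point next to an existing one of the same component (consecutively if several right cusps share a component) — admissibility is preserved since every insertion is onto a component already carrying a base point — and then apply move (B) to slide each newly inserted base point onto its right cusp, producing a placement $c''$ with every right cusp marked and with $\mr{N.aug}_m$ unchanged. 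For $c''$, the established case of Theorem~\ref{thm:counting for tangles} gives $\mr{N.aug}_m(T,\rho_L,\rho_R;q)=<\rho_L|R_T^m(z)|\rho_R>$, hence the same holds for $c$, and $c$ was arbitrary. The one point needing care is the compatibility of the identification in move (A), and of the base-point-sliding isomorphism in move (B), with the co-restrictions $\iota_L,\iota_R$, so that the fibre-product description of $\mr{Aug}_m(T,\epsilon_{\rho_L},\rho_R;k)$ is transported correctly; the rest is bookkeeping, the essential observation being that the reduction to an ``each right cusp marked'' placement only ever inserts base points onto components that already carry one, so that we never create or destroy the last base point of a component.
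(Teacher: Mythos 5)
Your proposal is correct, and its technical core is the same as the paper's: the two DGA isomorphisms you use are exactly the ones in the paper's proof — sliding a base point across a crossing of $\mathrm{Res}(T)$ (the isomorphism $\phi(a)=t_i^{-1}a$ or $at_i$), and splitting off an adjacent base point via $t_i\mapsto t_is$, which yields $\mc{A}(T')\cong\mc{A}(T)\otimes\mb{Z}[s^{\pm1}]$ and hence $\mr{Aug}_m(T',\epsilon_{\rho_L},\rho_R;k)\cong\mr{Aug}_m(T,\epsilon_{\rho_L},\rho_R;k)\times k^*$, giving the factor $\frac{q-1}{q}=q^{-1/2}z$ that is absorbed by the normalization. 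Where you differ is the packaging: the paper proves the lemma self-containedly (position-invariance of $\mr{aug}_m$, then the adjacent-addition scaling), without invoking Theorem \ref{thm:counting for tangles}, so that the general case of that theorem can then be \emph{deduced} from the lemma; you instead anchor every admissible placement to a ``each right cusp marked'' placement and quote the already-established marked case of Theorem \ref{thm:counting for tangles}, proving the sharper statement $\mr{N.aug}_m=\langle\rho_L|R_T^m(z)|\rho_R\rangle$ for all placements. This is not circular (the marked case does not use the lemma), and it even buys something: since you compare each placement with the base-point-free quantity $\langle\rho_L|R_T^m(z)|\rho_R\rangle$ rather than connecting two arbitrary placements by moves, you automatically handle placements that differ on components without right cusps (where the adjacent-addition move alone cannot create a first base point); the cost is that your proof of the lemma now depends on the whole counting machinery of Section \ref{subsec:computation of aug number}. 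One point to tighten: the compatibility of the sliding isomorphism with the boundary conditions should not be argued by saying it is ``supported away from $T_L,T_R$'' — the image of $\iota_R$ involves interior generators — but by checking, as the paper does with the explicit formulas, that the geometric move intertwines the co-restriction maps (the disk counts defining $\iota_R$ transform by the same substitution as the differential), so that $\phi^*$ preserves the condition $\epsilon\circ\iota_R\in\mc{O}_m(\rho_R;k)$ and fixes $\epsilon\circ\iota_L=\epsilon_{\rho_L}$; likewise, sliding a base point past another base point is harmless because the $t_j$'s are central. These are verifications at the same level of detail as the paper's, not gaps in the approach.
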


\begin{proof}
To express the explicit dependence on the base points, we write $\mr{aug}_m(T,\rho_L,\rho_R;q)=\mr{aug}_m(T,*_1,\ldots,*_B,\rho_L,\rho_R;q)$ and  $\mr{N.aug}_m(T,\rho_L,\rho_R;q)=\mr{N.aug}_m(T,*_1,\ldots,*_B,\rho_L,\rho_R;q)$.

Firstly, we show that the (normalized) augmentation number is \emph{independent of the positions of the base points} in each connected component of $T$. It suffices to show that: Let $(*_1,\ldots,*_B)$ and $(*_1',\ldots,*_B')$ be 2 collections of base points on $T$, which are identical except that for some $i$, when $*_i'$ is the result of sliding $*_i$ across a crossing of $Res(T)$. Then $\mr{aug}_m(T,*_1,\ldots,*_B,\rho_L,\rho_R;q)=\mr{aug}_m(T,*_1',\ldots,*_B',\rho_L,\rho_R;q)$. Notice that a base point on a right cusp corresponds to a base point on the boundary of the ``invisible" disk after resolution.

Suppose $*_i,*_i'$ lie on the opposite sides of $a$ in $Res(T)$ and the orientation of $T$ goes from $*_i$ to $*_i'$, where $a$ is a crossing or right cusp of $T$. We firstly assume the strand containing $*_i,*_i'$ is the over-strand at $a$ of $Res(T)$. If $u$ is an \emph{admissible disk} as in Definition \ref{def:admissible_disks_via_fronts} with an initial vertex at $a$, and $w(u), w'(u)$ are the weights of $u$ in the DGAs $(\mc{A}(T,*_1,\ldots,*_B),\partial), (\mc{A}(T,*_1',\ldots,*_B'),\partial')$ respectively. Then $w(u)=t_i^{-1}w'(u)$, i.e. $\partial' (t_i^{-1}a)=\partial a$.
If $u$ is an admissible disk with at least one negative vertex at $a$, then $w'(u)$ is the result of replacing each $a$ by $t_i^{-1}a$ in $w(u)$. In other words, we have an isomorphism of $\mb{Z}/2r$-graded DGAs $\phi:\mc{A}(T,*_1,\ldots,*_B)\xrightarrow[]{\sim}\mc{A}(T,*_1',\ldots,*_B')$ given by $\phi(a)=t_i^{-1}a, \phi(a')=a'$ for all generators $a'\neq a$, and $\phi(t_j)=t_j$. It follows that $\phi$ induces an isomorphism $\phi^*:\mr{Aug}_m(T,*_1',\ldots,*_B',\epsilon_{\rho_L},\rho_R;k)
\xrightarrow[]{\sim}\mr{Aug}_m(T,*_1,\ldots,*_B,\epsilon_{\rho_L},\rho_R;k)$ defined by $\phi^*\epsilon'=\epsilon'\circ\phi$. Notice that $\phi'$ only changes the values of augmentations at $a$, the boundary condition $(\epsilon_{\rho_L},\rho_R)$ is indeed preserved by $\phi^*$. Now, by definition $\mr{aug}_m(T,*_1,\ldots,*_B,\rho_L,\rho_R;q)=\mr{aug}_m(T,*_1',\ldots,*_B',\rho_L,\rho_R;q)$.

If the strand containing $*_i,*_i'$ is the under-strand at $a$ of $Res(T)$. A similar argument shows that $\phi:\mc{A}(T,*_1,\ldots,*_B)\xrightarrow[]{\sim}\mc{A}(T,*_1',\ldots,*_B')$, given by $\phi(a)=at_i, \phi(a')=a'$ for $a'\neq a$ and $\phi(t_j)=t_j$, defines an isomorphism of $\mb{Z}/2r$-graded DGAs. Again, the desired equality follows as in the previous case.

Secondly, we show that the normalized augmentation number is \emph{independent of the number of base points} on $T$. By the first half of the result proved above, it suffices to show that: Let $*_1,\ldots,*_B,*_{B+1}$ a collection of base points on $T$ such that $*_B,*_{B+1}$ lie in a small neighborhood of $T$ avoiding the crossings, cusps and other base points, then $\mr{N.aug}_m(T,*_1,\ldots,*_B,\rho_L,\rho_R;q)=\mr{N.aug}_m(T,*_1,\ldots,*_B,*_{B+1},\rho_L,\rho_R;q)$, or equivalently,
\begin{eqnarray}\label{eqn:aug number and number of base points}
\mr{aug}_m(T,*_1,\ldots,*_{B+1},\rho_L,\rho_R;q)
=\frac{(q-1)}{q}\mr{aug}_m(T,*_1,\ldots,*_B,\rho_L,\rho_R;q)
\end{eqnarray}

In this case, there's a natural morphism of $\mb{Z}/2r$-graded DGAs $\phi:\mc{A}(T,*_1,\ldots,*_B)\rightarrow\mc{A}(T,*_1,\ldots,*_{B+1})$ given by $\phi(a)=a$ for all generators $a$, $\phi(t_i)=t_i$ for $i<B$ and $\phi(t_B)=t_{B}t_{B+1}$. Indeed, we obtain an isomorphism of DGAs $\Phi:\mc{A}(T,*_1,\ldots,*_B)[t,t^{-1}]\xrightarrow[]{\sim}\mc{A}(T,*_1,\ldots,*_{B+1})$ given by $\Phi(a)=a, \phi(t_i)=t_i,i<B, \Phi(t_B)=t_{B}t_{B+1}$ and $\Phi(t)=t_{B+1}$.
Hence, we obtain an induced isomorphism
\begin{eqnarray*}
\Phi^*:\mr{Aug}_m(T,*_1,\ldots,*_{B+1},\epsilon_{\rho_L},\rho_R;k)\xrightarrow[]{\sim} \mr{Aug}_m(T,*_1,\ldots,*_B,\epsilon_{\rho_L},\rho_R;k)\times k^*
\end{eqnarray*}
given by $\Phi^*:=\phi^*\times e_{B+1}$, with $\phi^*\epsilon(a)=\epsilon(a)$ for all generators $a$, $\phi^*\epsilon(t_i)=\epsilon(t_i)$ for $i<B$ and $\phi^*\epsilon(t_B)=\epsilon(t_B)\epsilon(t_{B+1})$, and $e_{B+1}(\epsilon)=\epsilon(t_{B+1})$. By definition of augmentation numbers,  it then follows that the equality (\ref{eqn:aug number and number of base points}) holds.
\end{proof}

Now, let's prove Lemma \ref{lem:a constant function of rulings}. For any $m$-graded normal ruling $\rho$ of $(T,\mu)$, \emph{define} $r'(\rho)$ to be the number of $m$-graded returns of $\rho$. It suffices to show $-\chi(\rho)+2r'(\rho)=-\chi(\rho')+2r'(\rho')$ for any $\rho, \rho'$ as in Lemma \ref{lem:a constant function of rulings}. However, $-\chi(\rho)=s(\rho)-c_R$ implies
\begin{eqnarray*}
-\chi(\rho)+2r'(\rho)&=&(s(\rho)+r'(\rho)+d(\rho))-c_R+r'(\rho)-d(\rho)\\
&=&r_m-c_R+r'(\rho)-d(\rho)
\end{eqnarray*}
where $r_m$ is the number of crossings of the front $T$ of degree $0$ modulo $m$. Hence, Lemma \ref{lem:a constant function of rulings} is a consequence of the following

\begin{proposition}\label{prop:r-d is independent of normal rulings}
Let $(T,\mu)$ be any Legendrian tangle and fix $m$-graded normal rulings $\rho_L,\rho_R$ of $T_L,T_R$ respectively. Then for any $m$-graded normal ruling $\rho$ such that $\rho|_{T_L}=\rho_L,\rho|_{T_R}=\rho_R$, $r'(\rho)-d(\rho)$ is independent of $\rho$.
\end{proposition}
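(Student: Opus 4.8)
The quantity $r'(\rho)-d(\rho)$ counts the $m$-graded returns minus the $m$-graded departures of $\rho$. The strategy is to interpret this difference intrinsically, independently of the choice of $\rho$, by analyzing the local contributions crossing by crossing. First I would note that a crossing $a$ of degree $\not\equiv 0 \pmod m$ is never a switch, return, or departure, so it contributes nothing; hence we may restrict attention to the set $C_0$ of crossings of degree $0 \pmod m$. For such a crossing, exactly one of four things happens: $a$ is a switch, an ($m$-graded) return, an ($m$-graded) departure, or none of these (the two strands through $a$ belong to eyes whose relative vertical position makes $a$ ``invisible'' to $\rho$, i.e. the two strands are either both upper-paths / both lower-paths of their eyes in the same nesting configuration). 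So $|C_0| = s(\rho) + r'(\rho) + d(\rho) + u(\rho)$, where $u(\rho)$ is the number of crossings of the fourth type. Since $|C_0|$ is independent of $\rho$, it suffices to show $r'(\rho) - d(\rho)$ depends only on the boundary data, equivalently that $s(\rho) + 2r'(\rho) + u(\rho)$ (or any other suitable combination) is boundary-determined; but it is cleaner to track $r'-d$ directly via a ``flow'' argument.

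The key step is a parity/flow bookkeeping argument along the strands. Following \cite{HR15}, I would assign to each point of $T$ lying on a strand of $\rho$ (away from crossings and cusps) a local ``state'' recording whether it is the upper path or the lower path of its eye, together with the nesting order of the two eyes meeting at a crossing. At a crossing of degree $0 \pmod m$, passing from left to right, a return and a departure act as mirror images on this state data: a departure ``creates'' a certain nesting configuration that a return ``destroys,'' while a switch preserves it and the fourth type is neutral. More precisely, I would define, for each vertical slice $x = x_0$ (not passing through a crossing or cusp), an integer-valued or $\mathbb{Z}/2$-valued invariant $N(x_0)$ of the normal ruling $\rho$ restricted to that slice — essentially counting, among pairs of strands $(i,j)$ with $i<j$ that lie in the same eye or in a ``nested-left'' configuration, a suitable signed count. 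Then show that $N$ is unchanged across arcs, left cusps, right cusps, switches, and the neutral fourth-type crossings, while at a return $N$ decreases by $1$ and at a departure $N$ increases by $1$ (or vice versa). Integrating from $x_L$ to $x_R$ gives
\[
N(x_R) - N(x_L) = d(\rho) - r'(\rho),
\]
and since $N(x_L)$ depends only on $\rho_L$ and $N(x_R)$ only on $\rho_R$, the difference $r'(\rho)-d(\rho)$ is independent of $\rho$, proving the proposition.

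An alternative, more structural route — which I would fall back on if the flow bookkeeping gets delicate at cusps — is to use the composition axiom philosophy: decompose $T = E_1 \circ \cdots \circ E_n$ into elementary tangles as in Section~\ref{subsec:computation of aug number}, and observe that for each elementary tangle $E_i$ and each choice of compatible $m$-graded isomorphism types on $(E_i)_L, (E_i)_R$, the value $r'(\rho|_{E_i}) - d(\rho|_{E_i})$ is determined: a crossing tangle contributes $+1$, $-1$, or $0$ to $r'-d$ according to whether $\rho$ has a return, a departure, or a switch/neutral crossing there, and this is forced by the pair $(\rho|_{(E_i)_L}, \rho|_{(E_i)_R})$ together with the local combinatorics; a left cusp contributes $0$; a right cusp contributes $0$ to $r'-d$ (right cusps affect $r(\rho)$ only through the $m=1$ convention, which we sidestep here by working with $r'$). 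Summing over $i$ and using that the intermediate isomorphism types $\rho_i = \rho|_{(T_i)_R}$ form a chain whose endpoints $\rho_0 = \rho_L, \rho_n = \rho_R$ are fixed, while the internal $\rho_i$ may vary, one still needs to check that the total telescopes to something depending only on $\rho_L, \rho_R$; this reduces precisely to the local claim that $r'(\rho|_{E_i}) - d(\rho|_{E_i})$ depends on $\rho$ only through $(\rho_{i-1}, \rho_i)$, which for a single crossing is an immediate case check against Figure~\ref{fig:NR}.

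**Main obstacle.** The delicate point is the behavior at right cusps and at left cusps, and more subtly the bookkeeping when three or more strands share nested eyes near a crossing: one must verify that the slice-invariant $N$ (or the elementary-tangle contribution) is genuinely well-defined and changes by exactly $\pm 1$ at returns/departures and by $0$ everywhere else, with no hidden dependence on how many eyes are nested around the crossing in question. This is exactly the content of \cite[Lem.~3.5]{HR15} in the nearly-plat case; the only new issue here is that $T$ is a tangle with boundary and need not be nearly plat, so the argument must be phrased so that the nearly-plat hypothesis is never used — which the slice-by-slice (or elementary-tangle-by-elementary-tangle) formulation accomplishes, since it is local in $x$ and makes no assumption that cusps or crossings have distinct or aligned $x$-coordinates.
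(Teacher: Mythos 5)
Your primary strategy is the same as the paper's: define a quantity attached to each generic vertical slice, show that as $x$ moves from left to right it jumps by $\pm 1$ exactly at $m$-graded returns and departures and by a $\rho$-independent amount everywhere else, and integrate from $x_L$ to $x_R$. However, as written the proposal stops short of a proof precisely where the content lies. Your slice invariant $N$ is never actually defined (``a suitable signed count'' of nested/same-eye pairs), and the local verification is only asserted. In the paper this is the whole work: the invariant is $A(\rho_x)$ of Definition~\ref{def:subsets of I determined by normal rulings} (for each upper strand $i$, count strands $j>i$ with $\mu(j)\equiv\mu(i)\ (\mathrm{mod}\ m)$ whose partner lies above the partner of $i$, plus for each lower strand $i$ all $j>i$ with $\mu(j)\equiv\mu(i)$), and the checks at crossings follow from Figure~\ref{fig:NR}, while the check at a cusp requires an explicit computation. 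Note also that your requirement that $N$ be \emph{unchanged} at left and right cusps is stronger than what is needed and is not what happens for the paper's invariant: at a cusp $A(x)$ does change, but by a constant determined by the Maslov potentials of the nearby strands, independent of $\rho$ --- and proving that $\rho$-independence at a right cusp is the one nontrivial calculation in the paper's proof. Without exhibiting a concrete $N$ and carrying out these local checks (in particular at cusps, where strands appear or disappear and the pairing with the new/removed strands interacts with all other eyes of the same Maslov potential class), the argument is a plan rather than a proof.

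Your fallback route via the elementary-tangle decomposition has an additional logical gap: knowing that $r'(\rho|_{E_i})-d(\rho|_{E_i})$ depends only on the adjacent slice data $(\rho_{i-1},\rho_i)$ does \emph{not} make the sum over $i$ depend only on $(\rho_L,\rho_R)$, because the intermediate isomorphism types $\rho_i$ vary with $\rho$. To get telescoping you need exactly a potential function on slices whose increment across $E_i$ equals the local contribution up to a $\rho$-independent constant --- which is again the slice invariant $A$ of the first route, so the fallback does not avoid the main construction.
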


Before the proof, let's firstly make some definitions. For any $m$-graded isomorphism type (Definition \ref{def:isomorphism type for trivial Legendrian tangles}) $\rho$ of a trivial Legendrian tangle $E$ of $n$ parallel strands. So $\rho$ determines a partition $I=U\sqcup L\sqcup H$ and a bijection $\rho:U\xrightarrow[]{\sim}L$, where $I=I(E)=\{1,2,\ldots,n\}$ is the set of left endpoints of $E$. Notice that $H=\emptyset$ when $\rho$ is a $m$-graded normal ruling (Remark \ref{rem:normal rulings via non-degenerate augmentations}). For each $i$ in $H$, we take $\rho(i):=\infty$.
Now, we define the subsets $I(i),i\in I$, $A(i)=A_{\rho}(i),i\in U\sqcup H$ of $I$, and an index $A(\rho)$, depending on $\rho$ as follows:

\begin{definition}\label{def:subsets of I determined by normal rulings}
For any $i\in I$, define
\begin{eqnarray*}
I(i):=\{j\in I|j>i, \mu(j)=\mu(i) (\mr{mod} m)\}.
\end{eqnarray*}
Note: $I(i)$ is independent of $\rho$. Now for any $i\in U\sqcup H$, define
\begin{eqnarray*}
A(i)=A_{\rho}(i):=\{j\in U\sqcup H| j\in I(i) \text{ and $\rho(j)<\rho(i)$}\}.
\end{eqnarray*}
Note: for any $j\in A(i)$, have $\rho(j)<\rho(i)\leq\infty$, hence we necessarily have $j\in U$.
Now, define $A(\rho)\in\mb{N}$ by
\begin{eqnarray*}
A(\rho):=\sum_{i\in U\sqcup H}|A(i)|+\sum_{i\in L}|I(i)|.
\end{eqnarray*}
See Corollary \ref{cor:structures for oribits determined by normal rulings} for an interpretation of $A(\rho)$.
\end{definition}

With the definition above, we can now prove the proposition.

\begin{proof}[Proof of Proposition \ref{prop:r-d is independent of normal rulings}]
Assume $T$ lives over the interval $[x_0,x_1]$. Let $\rho$ be any $m$-graded normal ruling of $T$ such that $\rho|_{T_L}=\rho_L,\rho|_{T_R}=\rho_R$. For each $x$ in $[x_0,x_1]$ avoiding the crossings and cusps of $T$, define $A(x):=A(\rho|_{\{x\}})$. In particular, $A(x_0)=A(\rho_L)$ and $A(x_1)=A(\rho_R)$.

Observe that, as $x$ increases, $A(x)$ increases (resp. decreases) by $1$ when passing an $m$-graded return (resp. $m$-graded departure) of $\rho$ and is unchanged when passing a crossing of all other types. When passing a right cusp $q$, let $x_c,x'_c$ be the $x$-coordinate immediately before and after $q$. Suppose $\rho_c:=\rho|_{x=x_c}$ and $\rho_c':=\rho|_{x=x_c'}$ determine the partitions $I_c=I(T|_{x=x_c})=U_c\sqcup L_c$ and $I_c'=I(T|_{x=x_c'})=U_c'\sqcup L_c'$ respectively. Suppose $q$ connects strands $k,k+1$ of $T|_{x=x_c}$, then $k\in U_c, k+1\in L_c$, $\rho_c(k)=k+1$ and $A_{\rho_c}(k)=\emptyset$. Denote $I_c^a:=\{i\in I(T|_{x=x_c})|\mu(i)=a (\mr{mod} m)\}$, $U_c^a:=\{i\in U_c|\mu(i)=a (\mr{mod} m)\}$ and $L_c^a:=\{i\in L_c|\mu(i)=a (\mr{mod} m)\}$ for all congruence classes $a (\mr{mod} m)$. Say, $\mu(k)=a (\mr{mod} m)$. It follows that
\begin{eqnarray*}
&&A(x_c)-A(x_c')-|I_c(k+1)|\\
&=&\sum_{i\in U_c,k\in A_{\rho_c}(i)}1+
\sum_{i\in L_c,k\in I_c(i)}1+\sum_{i\in L_c, k+1\in I_c(i)}1\\
&=&\sum_{i\in U_c^a, i<k,\rho_c(i)>k+1}1+\sum_{i\in L_c^a,i<k}1+\sum_{j=\rho_c^{-1}(i)\in U_c^a,j<\rho_c(j)=i<k+1}1\\
&=&\sum_{i\in U_c^a, i<k}1+\sum_{i\in L_c^a,i<k}1\\
&=&|\{i\in I_c^a|i<k\}|.
\end{eqnarray*}
Hence,
\begin{eqnarray*}
A(x_c)-A(x_c')&=&|\{i\in I_c|i>k+1,\mu(i)=\mu(k+1) (\mr{mod} m)\}|\\
&+&|\{i\in I_c|i<k,\mu(i)=\mu(k) (\mr{mod} m)\}|
\end{eqnarray*}
is independent of $\rho$. Similarly, when passing a left cusp, $A(x)$ only changes by a constant, which only depends on $(T,\mu)$ near the cusp, not on $\rho$.

As a consequence, by moving $x$ from $x_0$ to $x_1$, we obtain that $A(\rho_R)-A(\rho_L)=A(x_1)-A(x_0)=r'(\rho)-d(\rho)+C$ for some constant $C$, which depends only on $(T,\mu)$, not on $\rho$.
It follows that $r'(\rho)-d(\rho)$ is independent of $\rho$.
\end{proof}

\begin{remark}
In Section \ref{subsec:alternative generalization}, we will introduce the concepts of \emph{$m$-graded generalized normal rulings}. It turns out, by the same proof, the previous proposition still holds, when we replace ``$m$-graded normal ruling" by ``$m$-graded generalized normal ruling" everywhere. It follows that Lemma \ref{lem:a constant function of rulings} holds for $m$-graded generalized normal rulings as well, if we use Definition \ref{def:generalized Ruling polynomials_2} to define $\chi(\rho)$.
\end{remark}

\subsection{Example}
\begin{example}\label{ex:Ruling polynoimals vs aug numbers for bordered trefoil knot}
Consider the Legendrian tangle $(T,\mu)$ in Example \ref{ex:bordered trefoil knot} (See Figure \ref{fig:filling_surface} (left)), with no base point. Hence, $B=0$. Let's check Theorem \ref{thm:counting for tangles} with our example by a direct calculation.

Let $b_{ij}, 1\leq i<j\leq 4$ be the pairs of right end points of $T$, so $\mc{A}(T_R)$ is generated by $b_{ij}$'s with the grading: $|b_{12}|=|b_{13}|=|b_{24}|=|b_{34}|=0, |b_{23}|=-1, |b_{14}|=1$. By Example \ref{ex:bordered trefoil knot}, $T_R$ has 2 $m$-graded normal rulings $(\rho_R)_1,(\rho_R)_2$.
Let's firstly determine the orbits $\mc{O}_m((\rho_R)_1;k),\mc{O}_m((\rho_R)_2;k)$. Use the identification in Example \ref{ex:aug. variety for lines}, given any $m$-graded augmentation $\epsilon_R$ for $T_R$, denote by $d_R$ the corresponding differential for $C(T_R)$. Let $I:=\{1,2,3,4\}$ be the set of right endpoints of $T$.

By the proof of Lemma \ref{lem:Barannikov normal form}, $d_R\in\mc{O}_m((\rho_R)_1;k)$ if and only if the partition and bijection determined by $d_R$ is $I=U\sqcup L$ and $(\rho_R)_1:U\xrightarrow[]{\sim}L$, where $U=\{1,3\}, L=\{2,4\}$ and $(\rho_R)_1(1)=2, (\rho_R)_1(3)=4$. That is, the condition says $1,3$ are $d_R$-upper, and
$(\rho_R)_1(i)=\rho_{d_R}(i):=\mr{max}\{\rho_{d_R}(x)|x \text{ is $i$-admissible}\}$
for $i=1,3$, equivalently, $<d_Re_1,e_2>\neq 0$ and $<d_Re_3,e_4>\neq 0$. Hence, we have
\begin{eqnarray*}
\mc{O}_m((\rho_R)_1;k)=\{\epsilon_R\in\mr{Aug}_m(T_R;k)|\epsilon(b_{12})\neq 0, \epsilon(b_{34})\neq 0\}.
\end{eqnarray*}

Similarly, $d_R\in\mc{O}_m((\rho_R)_2;k)$ if and only if $1,2$ are $d_R$-upper and
$(\rho_R)_2(i)=\rho_{d_R}(i)=\mr{max}\{\rho_{d_R}(x)|x \text{ is $i$-admissible}\}$
for $i=1,2$. For $i=1$, the previous condition says $<d_Re_1,e_2>=0$ (otherwise, $\rho_{d_R}(1)=2$, contradiction), $<d_Re_2,e_3>=0$ (Otherwise, $2$ is $d_R$-upper and $\rho_{d_R}(2)=3$, contradiction), and $<d_Re_1,e_3>\neq 0$; For $i=2$, the previous condition says $<d_Re_2,e_4>\neq 0$ and $<d_Re_3,e_4>=0$. As a consequence, we have
\begin{eqnarray*}
\mc{O}_m((\rho_R)_2;k)=\{\epsilon_R\in\mr{Aug}_m(T_R;k)|\epsilon_R(b_{12}),\epsilon_R(b_{23}),\epsilon_R(b_{34})=0, \epsilon_R(b_{13}),\epsilon_R(b_{24})\neq 0\}.
\end{eqnarray*}

Now, let $\epsilon$ be any $m$-graded $k$-augmentation of $T$, denote by $\epsilon_L,\epsilon_R$ the restriction of $\epsilon$ to $T_L, T_R$ respectively. Let $x_i=\epsilon(a_i)$, and $x_{ij}=\epsilon(a_{ij})$ for $i<j$. Notice that $\epsilon(a_{23})=0=\epsilon(a_{14})$. By Example \ref{ex:DGA for bordered trefoil knot}, the full augmentation variety for $T$ is:
\begin{eqnarray*}
\mr{Aug}_m(T;k)=\{(x_i, x_{ij})|x_{23},x_{14}=0,\sum_{i<k<j}(-1)^{|a_{ik}|+1}x_{ik}x_{kj}=0\}
\end{eqnarray*}
for $m\neq 1$ and
\begin{eqnarray*}
\mr{Aug}_m(T;k)=\{(x_i, x_{ij})|x_{23}=0,\sum_{i<k<j}(-1)^{|a_{ik}|+1}x_{ik}x_{kj}=0\}
\end{eqnarray*}
for $m=1$. Moreover, the co-restriction $\iota_R:\mc{A}(T_R)\rightarrow \mc{A}(T)$ is given by
\begin{eqnarray*}
&&\iota_R(b_{12})=a_{13}(1+a_2a_3)+a_{12}(a_1+a_3+a_1a_2a_3);\\
&&\iota_R(b_{13})=a_{12}(1+a_1a_2)+a_{13}a_2;\\
&&\iota_R(b_{14})=a_{14};\iota_R(b_{23})=0;\\
&&\iota_R(b_{24})=(1+a_2a_1)a_{34}-a_2a_{24};\\
&&\iota_R(b_{34})=(1+a_3a_2)a_{24}-(a_1+a_3+a_3a_2a_1)a_{34}.
\end{eqnarray*}

It follows that
\begin{eqnarray*}
&&\epsilon_R(b_{12})=x_{13}(1+x_2x_3)+x_{12}(x_1+x_3+x_1x_2x_3);\\
&&\epsilon_R(b_{13})=x_{12}(1+x_1x_2)+x_{13}x_2;\\
&&\epsilon_R(b_{14})=x_{14};\epsilon_R(b_{23})=0;\\
&&\epsilon_R(b_{24})=(1+x_1x_2)x_{34}-x_2x_{24};\\
&&\epsilon_R(b_{34})=(1+x_2x_3)x_{24}-(x_1+x_3+x_1x_2x_3)x_{34}.
\end{eqnarray*}

With the preparation above, we have the following augmentation variety and augmentation number (with fixed boundary conditions) associated to $(T,\mu)$, corresponding to each case in Example \ref{ex:bordered trefoil knot}:

\noindent{}\textbf{(1).}
Notice that for $\epsilon_L=\epsilon_{(\rho_L)_1}$, have $x_{12}=1=x_{34}$ and $x_{ij}=0$ otherwise. Hence, for the boundary conditions $((\rho_L)_1,(\rho_R)_1)$ (see Definition \ref{def:aug varieties with boundary conditions}), have
\begin{eqnarray*}
&&\mr{Aug}_m(T,\epsilon_{(\rho_L)_1)},(\rho_R)_1;k)
=\{\epsilon\in\mr{Aug}_m(T;k)|\epsilon_L=\epsilon_{(\rho_L)_1},\epsilon_R\in\mc{O}_m((\rho_R)_1;k)\}\\
&=&\{(x_i)_{1\leq i\leq 3}\in k^3|x_1+x_3+x_1x_2x_3\neq 0\}\\
&=&k^*\times k\sqcup k^*\times k\sqcup (k^*)^3
\end{eqnarray*}
where in the decomposition of the last equality, the subvarieties are $\{x_1=0,x_3\neq 0\}$, $\{x_3= 0,x_1\neq 0\}$ and $\{x_1\neq 0,x_3\neq 0,x_1+x_3+x_1x_2x_3\neq 0\}$ respectively. Hence, by Definition \ref{def:augmentation number} and Example \ref{ex:bordered trefoil knot}, the augmentation number is
\begin{eqnarray*}
&&\mr{aug}_m(T,(\rho_L)_1,(\rho_R)_1;q)=q^{-3}(2(q-1)q+(q-1)^3)\\
&=&q^{-\frac{3}{2}}(2z+z^3)=q^{-\frac{d}{2}}<(\rho_L)_1|R_T^m(z)|(\rho_R)_1>
\end{eqnarray*}
where $z=q^{\frac{1}{2}}-q^{-\frac{1}{2}}$ and $3=d=\mr{max.deg}_z<(\rho_L)_1|R_T^m(z)|(\rho_R)_1>$.

\noindent{}\textbf{(2).} For the boundary conditions $((\rho_L)_1,(\rho_R)_2)$, have
\begin{eqnarray*}
&&\mr{Aug}_m(T,\epsilon_{(\rho_L)_1)},(\rho_R)_2;k)
=\{\epsilon\in\mr{Aug}_m(T;k)|\epsilon_L=\epsilon_{(\rho_L)_1},\epsilon_R\in\mc{O}_m((\rho_R)_2;k)\}\\
&=&\{\{(x_i)_{1\leq i\leq 3}\in k^3|x_1+x_3+x_1x_2x_3=0,1+x_1x_2\neq 0\}\\
&=&\{(x_1,x_2)\in k^2|1+x_1x_2\neq 0\}=k\sqcup (k^*)^2
\end{eqnarray*}
where in the decomposition of the last equality, the subvarieties are $\{x_1=0,x_2\in k\}$ and $\{x_1\neq 0, 1+x_1x_2\neq 0\}$ respectively. Hence, by Definition \ref{def:augmentation number} and Example \ref{ex:bordered trefoil knot}, the augmentation number is:
\begin{eqnarray*}
&&\mr{aug}_m(T,(\rho_L)_1,(\rho_R)_2;q)=q^{-2}(q+(q-1)^2)\\
&=&q^{-1}(1+z^2)=q^{-\frac{d}{2}}<(\rho_L)_1|R_T^m(z)|(\rho_R)_2>
\end{eqnarray*}
where $z=q^{\frac{1}{2}}-q^{-\frac{1}{2}}$ and $2=d=\mr{max.deg}_z<(\rho_L)_1|R_T^m(z)|(\rho_R)_2>$.

\noindent{}\textbf{(3).} Notice that for $\epsilon_L=\epsilon_{(\rho_L)_2}$, have $x_{13}=1=x_{24}$ and $x_{ij}=0$ otherwise. Hence, for the boundary conditions $((\rho_L)_2,(\rho_R)_1)$, have
\begin{eqnarray*}
&&\mr{Aug}_m(T,\epsilon_{(\rho_L)_2)},(\rho_R)_1;k)
=\{\epsilon\in\mr{Aug}_m(T;k)|\epsilon_L=\epsilon_{(\rho_L)_2},\epsilon_R\in\mc{O}_m((\rho_R)_1;k)\}\\
&=&\{\{(x_i)_{1\leq i\leq 3}\in k^3|1+x_2x_3\neq 0\}\\
&=&k^2\sqcup k\times(k^*)^2
\end{eqnarray*}
where in the decomposition of the last equality, the subvarieties are $\{x_2=0,(x_1,x_3)\in k^2\}$ and $\{x_2\neq 0, 1+x_2x_3\neq 0,x_1\in k\}$ respectively. Hence, by Definition \ref{def:augmentation number} and Example \ref{ex:bordered trefoil knot}, the augmentation number is:
\begin{eqnarray*}
&&\mr{aug}_m(T,(\rho_L)_1,(\rho_R)_2;q)=q^{-3}(q^2+q(q-1)^2)\\
&=&q^{-1}(1+z^2)=q^{-\frac{d}{2}}<(\rho_L)_2|R_T^m(z)|(\rho_R)_1>
\end{eqnarray*}
where $z=q^{\frac{1}{2}}-q^{-\frac{1}{2}}$ and $2=d=\mr{max.deg}_z<(\rho_L)_2|R_T^m(z)|(\rho_R)_1>$.

\noindent{}\textbf{(4).} For the boundary conditions $((\rho_L)_2,(\rho_R)_2)$, have
\begin{eqnarray*}
&&\mr{Aug}_m(T,\epsilon_{(\rho_L)_2)},(\rho_R)_2;k)
=\{\epsilon\in\mr{Aug}_m(T;k)|\epsilon_L=\epsilon_{(\rho_L)_2},\epsilon_R\in\mc{O}_m((\rho_R)_2;k)\}\\
&=&\{\{(x_i)_{1\leq i\leq 3}\in k^3|1+x_2x_3=0,x_2\neq 0\}\\
&=&k\times k^*
\end{eqnarray*}
Hence, by Definition \ref{def:augmentation number} and Example \ref{ex:bordered trefoil knot}, the augmentation number is:
\begin{eqnarray*}
&&\mr{aug}_m(T,(\rho_L)_2,(\rho_R)_2;q)=q^{-2}q(q-1)\\
&=&q^{-\frac{1}{2}}z=q^{-\frac{d}{2}}<(\rho_L)_2|R_T^m(z)|(\rho_R)_2>
\end{eqnarray*}
where $z=q^{\frac{1}{2}}-q^{-\frac{1}{2}}$ and $1=d=\mr{max.deg}_z<(\rho_L)_2|R_T^m(z)|(\rho_R)_2>$.

Altogether, the calculation matches with Theorem \ref{thm:counting for tangles} in each case.
\end{example}

\section{Augmentations for elementary Legendrian tangles}\label{sec:Augmentations for elementary Legendrian tangles}

The main goal of this section is to show Lemma \ref{lem:augmentation varieties for elementary tangles}. More generally, we also obtain a finer structure of the augmentation varieties $\mr{Aug}_m(T,\epsilon_L,\rho_R;k)$ (see Theorem \ref{thm:augmentation varieties for Legendrian tangles}).

\subsection{The identification between augmentations and A-form MCSs}\label{subsec:aug vs A-form MCS}

Let $(E,\mu)$ be any elementary Legendrian tangle: a single crossing $q$, a left cusp $q$, a (marked or unmarked) right cusp $q$, or $n$ parallel strands with a single base point $q$. Assume $E$ has $n_L$ left endpoints and $n_R$ right endpoints, and denote $\mu_L:=\mu|_{E_L}, \mu_R:=\mu|_{E_R}$.

Let $\epsilon$ be any $m$-graded $k$-augmentation of $\mc{A}(E)$. Denote $\epsilon_L:=\epsilon|_{E_L},\epsilon_R=\epsilon|_{E_R}$, where $\epsilon_R$ is induced from $\epsilon$ via $\iota_R:\mc{A}(E_R)\rightarrow \mc{A}(E)$. By Example \ref{ex:aug. variety for lines}, we can identify $\epsilon_L$ and $\epsilon_R$ with some $\mb{Z}/m$-graded filtered complexes $(C(E_L),d_L)$ and $(C(E_R),d_R)$ respectively. We know $d_R$ is completely determined by $d_L$ and the information of $\epsilon$ near $q$. To make this precise, we firstly introduce the following

\begin{definition}
A \emph{handleslide} is a \emph{vertical line segment} $H_r$ lying on two strands of $(T,\mu)$, equipped with a \emph{coefficient} $r\in k$, where $(T,\mu)$ is some trivial Legendrian tangle of $n$ parallel strands. For simplicity, we denote such a handleslide by $H_r$. A handleslide $H_r$ is \emph{$m$-graded} if either $r=0$ or its end-points belong to 2 strands having the same Maslov potential value modulo $m$.
\end{definition}
A $\mb{Z}/m$-graded handleslide $H_r$ with coefficient $r$ between strands $j<k$, is also equivalent to an $\mb{Z}/m$-graded filtered \emph{elementary transformation} $H_r:C((H_r)_L)\xrightarrow[]{\sim}C((H_r)_R)$ (closely related to Morse complex sequences (MCSs) in \cite{HR15}):
\begin{eqnarray*}
H_r(e_i)=\left\{\begin{array}{ll}
e_i & i\neq j\\
e_j-re_k & i=j
\end{array}\right.
\end{eqnarray*}

Now, by a direct calculation we have
\begin{lemma}\label{lem:aug and A-form MCS for elementary tangles}
Given $(E,\mu)$ and $\epsilon$ as above.

\begin{enumerate}
\item
If $E$ is a single crossing $q$ between strands $k,k+1$. Then there's an isomorphism of $Z/m$-graded (not necessarily filtered) complexes $\varphi: (C(E_L),d_L)\xrightarrow[]{\sim}(C(E_R),d_R)$ given by $\varphi=s_k\circ H_{r}$ for $r=-\epsilon(q)$, where $s_k:C(E_L)\xrightarrow[]{\sim}C(E_R)$ is the $\mb{Z}/m$-graded \emph{elementary transformation}
\begin{eqnarray*}
s_k(e_i)=\left\{\begin{array}{ll}
e_i & i\neq k, k+1\\
e_{k+1} & i=k\\
e_k & i=k+1
\end{array}\right.
\end{eqnarray*}
and $H_{r}: C(E_L)\xrightarrow[]{\sim}C(E_L)$ is the handleslide between strands $k,k+1$ of $E_L$. Note: $<d_Le_k,e_{k+1}>=0=<d_Re_k,e_{k+1}>$.\\
Pictorially, we can \emph{represent} $s_k$ by the front diagram $E$ with a crossing between strands $k,k+1$, hence $\varphi$ is represented by the front diagram $E$ with a handleslide of coefficient $r$ between strands $k,k+1$ to the left of $q$.

\item
If $E$ is a left cusp $q$ connecting strands $k,k+1$ of $E_R$. Then as a $\mb{Z}/m$-graded complex, $(C(E_R),d_R)$ is a direct sum of $(C(E_L), d_L)$ and the acyclic complex $(\mr{Span}\{e_k,e_{k+1}\}$, $d_Re_k=(-1)^{\mu_R(k)}e_{k+1})$, via the morphism $\varphi:(C(E_L),d_L)\hookrightarrow (C(E_R),d_R)$:
\begin{eqnarray*}
\varphi(e_i)=\left\{\begin{array}{ll}
e_i & i < k\\
e_{i+2} & i \geq k
\end{array}\right.
\end{eqnarray*}
\noindent{}Pictorially, we can simply \emph{represent} $\varphi$ by the front diagram $E$.

\item
If $E$ is a right cusp $q$ connecting strands $k,k+1$ of $E_L$. Let $t$ be the generator corresponding to the base point in $\mc{A}(E)$ if $q$ is marked, and $1$ otherwise. Then there's an morphism of complexes $\varphi: (C(E_L),d_L)\rightarrow (C(E_R),d_R)$ given by $\varphi=\varphi_c\circ Q\circ H_r$, where $H_r:(C(E_L),d_L)\xrightarrow[]{\sim}(C(E_L),d_L')$ is the handleslide with coefficient $r=-\epsilon(q)$ between strands $k,k+1$ of $E_L$, $Q:(C(E_L),d_L')\rightarrow (C(E_L),d_L')/\mr{Span}\{e_k,d_L'e_k\}$ is the natural quotient map, and
$\varphi_c: (C(E_L),d_L')/\mr{Span}\{e_k,d_L'e_k\}\xrightarrow[]{\sim}(C(E_R),d_R)$ is the isomorphism defined by
\begin{eqnarray*}
\varphi_c([e_i])=\left\{\begin{array}{ll}
e_i & i < k\\
e_{i-2} & i > k+1
\end{array}\right.
\end{eqnarray*}
Note: $<d_Le_k,e_{k+1}>=<d_L'e_k,e_{k+1}>=(-1)^{\mu_L(k)}(-\epsilon(t)^{\sigma(q)})$ (see Definition \ref{def:sign for a right cusp} for $\sigma(q)$), this ensures that the quotient $(C(E_L),d_L')/\mr{Span}\{e_k,d_L'e_k\}$ is freely generated by $[e_i], i\neq k, k+1$ as a $k$-module.\\
Pictorially, we can \emph{represent} $\varphi_c\circ Q$ by the front $E$ (with coefficient $\epsilon(t)^{\sigma(q)}$ attached to the base point if $q$ is marked), then $\varphi$ is represented by the front $E$ with a handleslide between strands $k,k+1$ of $E_L$ to the left of $q$.

\item
If $E$ is a single base point $q$ on the strand $k$. Let $\lambda:=\epsilon(q)$ (resp. $\epsilon(q)^{-1}$) if the orientation of the strand $k$ is right moving (resp. left moving). Then there's an isomorphism of complexes $\varphi:(C(E_L),d_L)\rightarrow (C(E_R),d_R)$ via
\begin{eqnarray*}
\varphi(e_i)=\left\{\begin{array}{ll}
e_i & i\neq k\\
\lambda e_k & i=k
\end{array}\right.
\end{eqnarray*}
\noindent{}Pictorially, we can simply \emph{represent} $\varphi$ by the front $E$ with the coefficient $\lambda$ attached to the base point.
\end{enumerate}
\end{lemma}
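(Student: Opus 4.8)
The proof is a case-by-case direct computation resting on three inputs: the identification (Example \ref{ex:aug. variety for lines}) of an $m$-graded augmentation $\epsilon$ of a trivial tangle with a filtered complex via $\langle de_i,e_j\rangle=(-1)^{\mu(i)}\epsilon(a_{ij})$; the defining relation \eqref{eqn:differential for pair of ends} together with $\epsilon\circ\partial=0$; and the explicit co-restriction map $\iota_R:\mc{A}(E_R)\to\mc{A}(E)$ of Definition/Proposition \ref{def:corestriction of DGA}, which for an elementary tangle is computed from the few admissible disks that can be drawn near the single crossing, cusp, or base point (the resolution-construction computation of \cite{Ng03}, cf. \cite{NRSSZ15}; the right-cusp case is precisely the $\iota_R$-formula recorded earlier, preceding Definition \ref{def:sign for a right cusp}). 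In each case the strategy is the same: express every generator $b_{ij}$ of $\mc{A}(E_R)$ in terms of the $a_{ij}$ and the distinguished generator $q$ (and, for a marked right cusp, $t$); pull back along $\epsilon$ to read off $\langle d_Re_i,e_j\rangle=\pm\epsilon_R(b_{ij})$; and check that the stated morphism $\varphi$ intertwines $d_L$ and $d_R$ entry by entry.

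I would then carry out the four cases. For a single crossing $q$ between strands $k,k+1$, the right endpoints are the left endpoints with $k,k+1$ transposed, so $\iota_R(b_{ij})$ equals $a_{s(i)s(j)}$ ($s$ the transposition $(k\ k+1)$) plus the unique correction disk through $q$, weighted by $q$; the constraint $\epsilon\circ\partial q=0$ forces $\epsilon(a_{k,k+1})=0$, i.e. $\langle d_Le_k,e_{k+1}\rangle=0$, and a short check then shows that conjugating $d_L$ by $s_k\circ H_{-\epsilon(q)}$ reproduces the pulled-back entries. For a left cusp $q$ joining strands $k,k+1$ of $E_R$, $\iota_R$ is literally the index-shifting inclusion of generators, while the invisible disk and $|q|=1$ give $d_Re_k=(-1)^{\mu_R(k)}e_{k+1}$, so the direct-sum decomposition is immediate. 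For a right cusp $q$ joining strands $k,k+1$ of $E_L$, the displayed $\iota_R$-formula together with $\epsilon\circ\partial q=0$ gives $\langle d_Le_k,e_{k+1}\rangle=(-1)^{\mu_L(k)}(-\epsilon(t)^{\sigma(q)})\in k^*$ (unchanged by the handleslide $H_{-\epsilon(q)}$ since $d_L$ is strictly upper triangular), so $\mr{Span}\{e_k,d_L'e_k\}$ is an acyclic subcomplex, the quotient is free on the $[e_i]$, $i\neq k,k+1$, and identifying its differential with $d_R$ through $\varphi_c$ is exactly the content of $\iota_R$. For a base point $q$ on strand $k$, a disk crossing $q$ acquires $t^{\pm1}$, so $\iota_R(b_{ij})=a_{ij}$ unless $k\in\{i,j\}$, in which case a factor $\epsilon(q)^{\pm1}$ (sign dictated by the orientation of strand $k$) appears, and conjugating $d_L$ by the diagonal automorphism scaling $e_k$ by $\lambda$ reproduces this.

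Two uniform observations finish the argument. First, $\varphi$ is $\mb{Z}/m$-graded because $\epsilon$ is: $\epsilon(q)=0$ whenever $|q|\neq0\ (\mr{mod} m)$, so any handleslide $H_{-\epsilon(q)}$ with nonzero coefficient connects strands of equal Maslov potential mod $m$, while the transpositions and index shifts respect the grading by the construction of $\mu_R$. Second, the pictorial ``representation'' statements are not separate claims: once $\varphi$ is written as a composite of transpositions, handleslides, direct-sum inclusions and quotient maps, its front-diagram picture is just the definition of how each decoration acts on $C$, the handleslide being placed to the left of $q$ precisely because it conjugates the left-hand complex $(C(E_L),d_L)$. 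The only genuine work, and the sole place requiring care, is the sign bookkeeping: the factors $(-1)^{\mu(i)}$ from Example \ref{ex:aug. variety for lines}, the orientation signs at even-degree crossings, and $\sigma(q)$ at a right cusp must be tracked simultaneously, and the content of the lemma is exactly that they conspire to yield the coefficients $-\epsilon(q)$, $(-1)^{\mu_R(k)}$, $-\epsilon(t)^{\sigma(q)}$ and $\lambda$ as stated.
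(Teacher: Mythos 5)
Your strategy is exactly what the paper intends: the paper gives no argument beyond ``by a direct calculation'', and your plan --- compute $\iota_R$ on the generators $b_{ij}$ from the admissible disks of the single elementary front, pull back along $\epsilon$ via $\langle d e_i,e_j\rangle=(-1)^{\mu(i)}\epsilon(a_{ij})$, and check entry by entry that the stated $\varphi$ conjugates $d_L$ into $d_R$ --- is that calculation. Your treatment of the right-cusp case (the $(k,k+1)$-entry forced by $\epsilon\circ\partial q=0$, unchanged under $H_{-\epsilon(q)}$ because $d_L$ is strictly upper triangular, hence an acyclic rank-two subcomplex with free quotient) and of the base-point case, as well as the uniform grading remark, are sound.

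Two details should be corrected, though neither affects the viability of the method. First, in the left-cusp case you attribute $d_Re_k=(-1)^{\mu_R(k)}e_{k+1}$ to ``the invisible disk and $|q|=1$''; but a left cusp is not a generator of $\mc{A}(E)$ (it has no degree), and invisible disks occur only at right cusps in the resolution construction. The correct source is the wedge-shaped admissible disk bounded by the two branches to the right of the cusp, which gives $\iota_R(b_{k,k+1})=1$, while the mixed generators $b_{ij}$ with exactly one index in $\{k,k+1\}$ receive no admissible disks (any candidate boundary would have to return to the right boundary outside or inside the positive segment), which is what yields the direct-sum statement. Second, in the crossing case the summary ``$a_{s(i)s(j)}$ plus the unique correction disk weighted by $q$'' is imprecise: for $(i,j)=(k,k+1)$ there is no straight-through disk at all, and in fact $\iota_R(b_{k,k+1})=0$ (which is what the Note $\langle d_Re_k,e_{k+1}\rangle=0$ records, and is also forced by the chain-map and grading constraints), while for the pairs that do meet the crossing the correction is a product such as $a_{ik}\,q$ or $(-1)^{|q|+1}q\,a_{k+1,j}$ (a corner at $q$ together with a pair of left endpoints), not a bare $q$. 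These are precisely the entries your ``short check'' must reproduce when conjugating by $s_k\circ H_{-\epsilon(q)}$, so the disk count should be recorded accurately before that verification is asserted.
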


\begin{corollary}\label{cor:nondegeneracy of augmentations}
There's an isomorphism $H_*(C(E_L),d_L)\xrightarrow[]{\sim}H_*(C(E_R),d_R)$ of $\mb{Z}/m$-graded $k$-modules .
In particular, if $\epsilon_L$ is acyclic, then so is $\epsilon_R$.
By induction, this result then generalizes to all Legendrian tangles.
\end{corollary}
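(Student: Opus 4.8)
The plan is to read the statement for elementary Legendrian tangles straight off Lemma~\ref{lem:aug and A-form MCS for elementary tangles}, and then to propagate it to an arbitrary Legendrian tangle by the composition decomposition $T=E_1\circ\cdots\circ E_n$. For an elementary tangle $(E,\mu)$ with $m$-graded $k$-augmentation $\epsilon$, write $d_L,d_R$ for the differentials that $\epsilon_L,\epsilon_R$ induce on $C(E_L),C(E_R)$, and go through the four cases of the lemma using the morphism $\varphi$ it produces. If $E$ is a single crossing or a single base point, $\varphi$ is already an \emph{isomorphism} of $\mb{Z}/m$-graded chain complexes (in the base-point case its one nontrivial coefficient $\lambda=\epsilon(q)^{\pm1}$ is a unit because $k$ is a field and $\epsilon(q)\neq 0$), so it induces an isomorphism on homology. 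If $E$ is a left cusp, the lemma exhibits $(C(E_R),d_R)$ as the direct sum of $\varphi\bigl(C(E_L),d_L\bigr)$ and the two-term acyclic complex $\bigl(\mr{Span}\{e_k,e_{k+1}\},\,d_Re_k=(-1)^{\mu_R(k)}e_{k+1}\bigr)$; the latter has vanishing homology, so $\varphi$ induces an isomorphism $H_*(C(E_L),d_L)\xrightarrow[]{\sim}H_*(C(E_R),d_R)$.

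The right-cusp case is where a little homological algebra is needed: there $\varphi=\varphi_c\circ Q\circ H_r$, with $H_r$ a filtered isomorphism of complexes, $\varphi_c$ an isomorphism of complexes, and $Q$ the quotient by the subcomplex $\mr{Span}\{e_k,d_L'e_k\}$. I would note that this subcomplex is acyclic: by the ``Note'' in the lemma the coefficient of $e_{k+1}$ in $d_L'e_k$ is $(-1)^{\mu_L(k)}(-\epsilon(t)^{\sigma(q)})$, a unit in the field $k$ since $\epsilon(t)\in k^{\times}$, so $d_L'$ restricts there to the acyclic complex $e_k\mapsto d_L'e_k\mapsto 0$ (this is also exactly what makes the quotient a free $k$-module, hence an honest complex). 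Quotienting by an acyclic subcomplex is a quasi-isomorphism, so $\varphi$ again induces an isomorphism on homology. Together with the previous cases this settles the claim for elementary tangles.

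For a general Legendrian tangle $T$, I would cut it into elementary pieces $T=E_1\circ\cdots\circ E_n$, so that $T_L=(E_1)_L$, $T_R=(E_n)_R$ and $(E_i)_R=(E_{i+1})_L$. Given an $m$-graded $k$-augmentation $\epsilon$ of $\mc{A}(T)$, the co-restriction maps $\mc{A}(E_i)\to\mc{A}(T)$ obtained by iterating the co-sheaf property (Proposition~\ref{prop:co-sheaf of DGAs}) pull $\epsilon$ back to augmentations $\epsilon^{(i)}$ of $\mc{A}(E_i)$, with $\epsilon^{(1)}|_{(E_1)_L}=\epsilon_L$ and $\epsilon^{(n)}|_{(E_n)_R}=\epsilon_R$; applying the elementary case to each $E_i$ yields isomorphisms $H_*(C((E_i)_L))\xrightarrow[]{\sim}H_*(C((E_i)_R))$. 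Since the right-hand complex of the $i$-th step is the left-hand complex of the $(i+1)$-st, composing the chain $H_*(C(T_L))\xrightarrow[]{\sim}H_*(C((E_1)_R))=H_*(C((E_2)_L))\xrightarrow[]{\sim}\cdots\xrightarrow[]{\sim}H_*(C(T_R))$ gives the desired isomorphism of $\mb{Z}/m$-graded $k$-modules. The last assertion is then immediate: if $\epsilon_L$ is acyclic, i.e.\ $H_*(C(T_L))=0$, the isomorphism forces $H_*(C(T_R))=0$, so $\epsilon_R$ is acyclic.

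The one point that I expect to need genuine care — the main obstacle — is the gluing used in the third paragraph: checking that ``restrict $\epsilon$ to $E_i$, then to $(E_i)_R$'' coincides with ``restrict $\epsilon$ to $E_{i+1}$, then to $(E_{i+1})_L$'', and similarly that restricting to the two ends reproduces $\epsilon_L$ and $\epsilon_R$. This is precisely the functoriality (associativity) of the co-restriction morphisms $\iota_{UV}$, which is encoded in the pushout description of Proposition~\ref{prop:co-sheaf of DGAs}; once it is granted, the rest is a straight invocation of Lemma~\ref{lem:aug and A-form MCS for elementary tangles} together with the elementary bookkeeping of acyclic sub- and quotient complexes above.
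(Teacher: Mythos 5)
Your proposal is correct and follows essentially the paper's own route: the crossing, base-point and left-cusp cases are read directly off Lemma \ref{lem:aug and A-form MCS for elementary tangles}, and the right cusp is handled by observing that $\mr{Span}\{e_k,d_L'e_k\}$ is an acyclic subcomplex (the paper phrases this as a short exact sequence and the long exact sequence in homology, which is the same argument as your ``quotient by an acyclic subcomplex is a quasi-isomorphism''). Your third paragraph just makes explicit the induction over elementary pieces, via the co-sheaf/restriction compatibility, that the paper leaves implicit in ``by induction''.
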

\begin{proof}
By the previous lemma, the only nontrivial case is when $E$ is a single right cusp, when we obtain a short exact sequence of $\mb{Z}/m$-graded complexes
$0\rightarrow \mr{Span}\{e_k,d_Le_k\}\rightarrow (C(E_L),d_L)\rightarrow (C(E_R),d_R)\rightarrow 0$ with the first term acyclic. Pass to the long exact sequence of homologies, we then obtain the desired isomorphism from $H_*((C(E_L),d_L))$ to $H_*(C(E_R),d_R)$.
\end{proof}

\begin{definition}\label{def:A-form MCS for elementary Legendrian tangles}
Given any elementary Legendrian tangle $(E,\mu)$, a \emph{$m$-graded $A$-form MCS} for $E$ is a triple $((C(E_L),d_L), \varphi,(C(E_R),d_R))$, where $(C(E_L),d_L), (C(E_R),d_R)$ are $\mb{Z}/m$-graded filtered complexes, $\varphi:(C(E_L),d_L)\rightarrow (C(E_R),d_R)$ is a $\mb{Z}/m$-graded morphism of complexes (or equivalently, the diagram $\varphi$), such that they satisfy the conditions in each case of Lemma \ref{lem:aug and A-form MCS for elementary tangles}. In particular, $(C(E_R),d_R)$ is determined by $(C(E_L),d_L)$ and $\varphi$.

\end{definition}

\begin{remark}\label{rem:aug and A-form MCS}
With the definition above, Lemma \ref{lem:aug and A-form MCS for elementary tangles} then shows that, there's an \emph{identification} between the augmentation variety $\mr{Aug}_m(E;k)$ and the set of $m$-graded A-form MCSs $MCS_m^A(E;k)$ for $E$, for any elementary Legendrian tangle $(E,\mu)$.

For any Legendrian tangle $(T,\mu)$, by cutting $T$ into elementary Legendrian tangles, one can define a $m$-graded A-form MCS for $T$ as a ``composition" of $m$-graded A-form MCSs for the elementary parts of $T$. We can similarly define the set $MCS_m^A(T;k)$ of all $m$-graded A-form MCSs for $T$. The lemma then shows by induction that, there's an identification $\mr{Aug}_m(T;k)\cong MCS_m^A(T;k)$.
\end{remark}

\subsection{Handleslide moves}

There're some identities involving the elementary transformations (represented by handleslides $H_r$ or crossings $s_k$ as in Lemma \ref{lem:aug and A-form MCS for elementary tangles}) between $\mb{Z}/m$-graded complexes. They can be represented by the \emph{local moves} (or \emph{handleslide moves}) of diagrams as in Figure \ref{fig:Handleslide_moves}: Each diagram represents a composition of elementary transformations with the maps going from left to right, and each local move represents an identity between 2 different compositions.

\begin{figure}[!htbp]
\begin{center}
\minipage{\textwidth}
\includegraphics[width=\linewidth, height=1.2in]{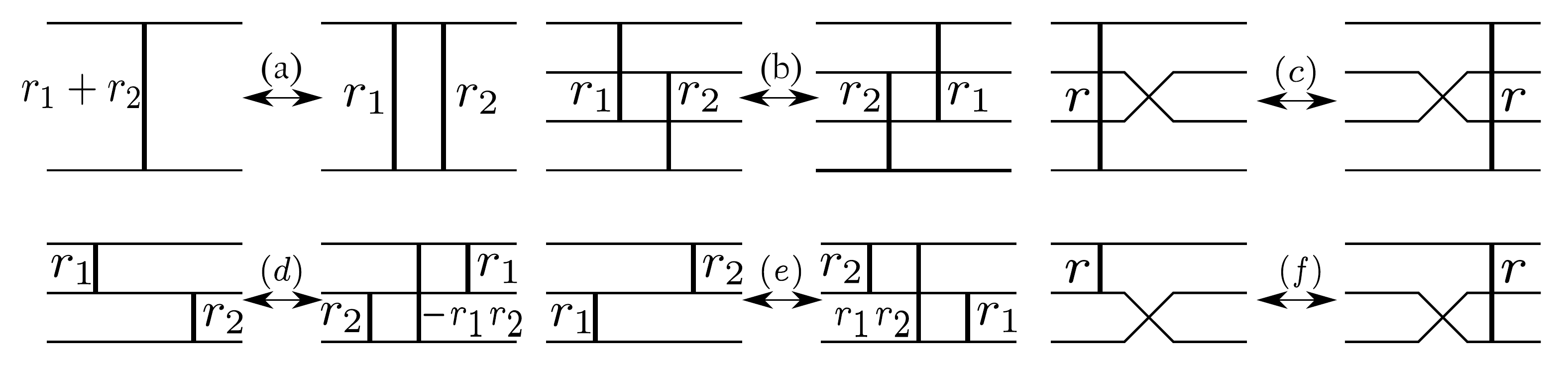}
\endminipage\hfill
\end{center}
\caption{Local moves of handleslides in a Legendrian tangle $T$ $=$ identities between different compositions of elementary transformations. The moves shown do not illustrate all the possibilities.}
\label{fig:Handleslide_moves}
\end{figure}

More precisely, the possible local moves in a Legendrian tangle $(T,\mu)$ are as follows (see also \cite[Section.6]{HR15}):

\begin{enumerate}
\item[\textbf{Type 0:}] 
(Introduce or remove a trivial handleslide) Introduce or remove a handleslide with coefficient $0$ and endpoints on two strands with the same Maslov potential value modulo $m$.
\item[\textbf{Type 1:}]
(Slide a handleslide past a crossing) Suppose $T$ contains one single crossing between strands $k$ and $k+1$, and exactly one handleslide $h$ between strands $i<j$, with $(i,j)\neq (k,k+1)$. We may slide $h$ (either left or right) past the crossing such that the endpoints of $h$ remain on the same strands of $T$. See Figure \ref{fig:Handleslide_moves} (c),(f) for two such examples.

\item[\textbf{Type 2:}]
(Interchange the positions of two handleslides) If $T$ contains exactly two handleslides $h_1$, $h_2$ between strands $i_1<j_1$, and $i_2<j_2$, with coefficients $r_1, r_2$ respectively. If $j_1\neq i_2$ and $i_1\neq j_2$, we may interchange the positions of the handleslides, see Figure \ref{fig:Handleslide_moves} (b) for an example; If $j_1=i_2$ (resp. $i_1=j_2$) and $h_1$ is to the left of $h_2$, we may interchange the positions of $h_1,h_2$, and introduce a new handleslide between strands $i_1, j_2$ (resp. $i_2,j_1$), with coefficient $-r_1r_2$ (resp. $r_1r_2$), see Figure (d) (resp. (e)).

\item[\textbf{Type 3:}]
(Merge two handleslides) Suppose $T$ contains exactly two handleslides $h_1, h_2$ between the same two strands, with coefficients $r_1, r_2$, respectively. We may merge the two handleslides into one between the same two strands, with coefficient $r_1+r_2$, see Figure \ref{fig:Handleslide_moves} (a).

\item[\textbf{Type 4:}]
(Introduce two canceling handleslides) Suppose $T$ contains no crossings, cusps or handleslides. We may introduce two new handleslides between the same two strands, with coefficients $r, -r$, where $r\in k$.
\end{enumerate}

Suppose $T$ contains no crossings or cusps, recall that as usual the strands of $T$ are labeled from top to bottom as $1,2,\ldots,s$. Given a handleslide $h$ in $T$, denote by $t_h<b_h$ the top and bottom strands of $h$.

\begin{definition}
Given 2 handleslides $h,h'$ in $T$, we say $h<h'$ if either $t_h>t_{h'}$ or $t_h=t_{h'}$ and $b_h<b_{h'}$.

\noindent{}A collection of handleslides $V$ in $T$ is called \emph{properly ordered} if given any 2 handleslides $h, h'$ in $V$, with $h$ to the left of $h'$, then $h<h'$.

\noindent{} Given a collection of handleslides $V$ in $T$, define $V^t$ to be the collection obtained from reversing the ordering of the $x$-coordinates of the handleslides in $V$.
\end{definition}

Assume $V$ is a collection of handleslides in $T$ such that either $V$ or $V^t$ is \emph{properly ordered}. There're 2 additional types of moves involving $V$, as a composition of Type 0, 2 and 3 moves:

\begin{enumerate}
\item[\textbf{Type 5:}]
(Incorporate a handleslide $h$ into a collection $V$) Suppose $h$ is a handleslide in $T$ immediately to the right of $V$, with coefficient $r$. We move $h$ into $V$ via Type 0, 2 and 3 moves to create a new collection $\overline{V}$, so that $\overline{V}$ has the \emph{same ordering property} as $V$:\\
If necessary, use a \emph{Type 0} move to introduce a trivial handleslide in $V$ with endpoints on the same strands as $h$, such that $V$ has the same ordering property as before. In this way, $V$ contains a unique handleslide $h'$ with endpoints on the same strands as $h$ and say, with coefficient $r'$ ($r'$ may be 0); Label the handleslides between $h$ and $h'$ from right to left by $h_1,h_2,\ldots,h_n$. For each $1\leq j\leq n$, move $h$ past $h_j$ via a \emph{Type 2} move, which possibly creates a new handleslide $h_j'$ (See Figure \ref{fig:Handleslide_moves} (d) and (e)); Merge $h_j'$ with the existing handleslides in $V$ with the same endpoints as $h_j'$ via \emph{Type 2} moves and one \emph{Type 3} move. The ordering property of $V$ ensures this does not introduce any new handleslides; After the above moves, $h$ and $h'$ are next to each other, use a \emph{Type 3} move to merge $h$ and $h'$. The resulting handleslide has coefficient $r+r'$.

\noindent{} When $h$ is immediately to the left of $V$, a similar procedure can be used to incorporate the handleslide $h$ into $V$ such that the resulting collection $\overline{V}$ has the same ordering property as $V$.

\item[\textbf{Type 6:}]
(Remove a handleslide $h$ from a collection $V$) Suppose $h$ is a handleslide with coefficient $r$ in $V$. Use \emph{Type 2} moves, we can remove $h$ from $V$ with coefficient unchanged, so that it appears either to the left or right of the remaining handleslides (with possibly new handleslides, see Figure \ref{fig:Handleslide_moves} (d), (e)), denoted by $\overline{V}$; Use \emph{Type 2} and \emph{Type 3} moves to reorder and merge handleslides in $\overline{V}$ so that $\overline{V}$ has the same ordering property as $V$. The ordering property of $V$ ensures this can be done without introducing any new handleslides.
\end{enumerate}

\subsection{Augmentation varieties for elementary Legendrian tangles}\label{subsec:aug vars for elementary Leg tangles}

Now, we're able to prove Lemma \ref{lem:augmentation varieties for elementary tangles}.

\begin{proof}[Proof of Lemma \ref{lem:augmentation varieties for elementary tangles}]
The result is trivial if $E$ is a left cusp, a marked right cusp or $2n$ parallel strands with a single base point. Also , if $E$ is a single crossing $q$ and $|q|\neq 0 (\mr{mod} m)$, then by definition, $\chi(\rho)=r(\rho)=B=0$ and $\mr{Aug}_m^{\rho}(E,\epsilon_L;k)=\{(\epsilon_L,\epsilon(q))|\epsilon(q)=0\}$ is a single point, the result follows. Now, assume $E$ is a single crossing $q$ between strands $k,k+1$ of $E_L$ and $|q|=0 (\mr{mod} m)$. Since $B=0$ and $-\chi(\rho)=s(\rho)-c_R=s(\rho)$, we need to show: $\mr{Aug}_m^{\rho}(E,\epsilon_L;k)\cong (k^*)^{s(\rho)}\times k^{r(\rho)}$.

Let $(C(E_L),d_L)$ be the complex corresponding to $\epsilon_L$, under the identification in Remark \ref{rem:aug and A-form MCS}, we have
\begin{equation}\label{eqn:aug vs A-form MCS}
\mr{Aug}_m^{\rho}(E,\epsilon_L;k)=\{r\in k|(C(E_R),d_R):=s_k\circ H_r((C(E_L),d_L))\in \mc{O}_m(\rho_R;k)\}
\end{equation}
Here $r=-\epsilon(q)$ corresponds to $\epsilon\in\mr{Aug}_m^{\rho}(E,\epsilon_L;k)$ and $H_r$ is the handleslide (which represents an elementary transformation) with coefficient $r$ between strands $k,k+1$ of $E_L$. Use the identification (\ref{eqn:aug vs A-form MCS}) above, given any $r$ in $\mr{Aug}_m^{\rho}(E,\epsilon_L;k)$, denote $(C(E_R),d_R):=s_k\circ H_r((C(E_L),d_L))$. For simplicity, we simply write $d_R=(s_k\circ H_r) \cdot d_L$.

Firstly, we show the lemma in the case \emph{when $\epsilon_L$ is a standard augmentation}, or equivalently the complex $(C(E_L),d_L)$ is standard (See Remark \ref{rem:standard Morse complex via augmentations} for the definition).

Notice that $\rho_L(k)\neq k+1$. Let $A=\{k,k+1,\rho_L(k),\rho_L(k+1)\}$, $\alpha=\min A, \beta= \min (A\setminus \{\alpha,\rho_L(\alpha)\})$. Let $a=<d_Le_{\alpha},e_{\rho_L(\alpha)}>, b=<d_Le_{\beta},e_{\rho_L(\beta)}>$. Notice that both $a$ and $b$ are nonzero, as $(C(E_L),d_L)$ is standard with respect to $\rho_L$ (see Remark \ref{rem:standard Morse complex via augmentations}).
Given any $r$ in $\mr{Aug}_m^{\rho}(E,\epsilon_L;k)$, we divide the discussion into several cases:

\begin{enumerate}[label=(\arabic*)]
\item
$\rho_L(k)<k<k+1<\rho_L(k+1)$. If $r=0$, then $d_R$ is standard and $q$ is a $m$-graded departure of $\rho$.\\
If $r\neq 0$, then $H\cdot d_R$ is standard for some composition of $m$-graded handleslides $H$ (See Figure \ref{fig:SR-form MCS} (S1)). Notice that any $m$-graded handleslide represents a $m$-graded filtration preserving automorphism of a $m$-graded filtered complex, which doesn't change the isomorphism class, hence the $m$-graded normal ruling determined by the complex. It follows that $H\cdot d_R$ and $d_R$ determines the same $m$-graded normal ruling of $E_R$. Hence, $q$ is a (S1) switch of $\rho$ (See Figure \ref{fig:SR-form MCS} (S1)).

\item
\begin{enumerate}[label=(\alph*)]
\item
$\rho_L(k+1)<\rho_L(k)<k<k+1$ or
\item
$k<k+1<\rho_L(k+1)<\rho_L(k)$.
\end{enumerate}

\noindent{}If $r=0$, then $d_R$ is standard (with respect to $\rho_R$), and $q$ is a \emph{$m$-graded departure} of $\rho$.\\
If $r\neq 0$, then $H\cdot d_R$ is standard for $H=$ some composition of handleslides (See Figure \ref{fig:SR-form MCS} (S2) (resp. (S3))). And it follows that $q$ is a \emph{(S2) (resp. (S3)) switch} in the case (2a) (resp. (2b)).

\item
$\rho_0(k+1)<k<k+1<\rho_0(k)$. Then $d_R$ is standard and $q$ is a $m$-graded \emph{(R1) return} (See Figure \ref{fig:SR-form MCS} (R1)).

\item
\begin{enumerate}[label=(\alph*)]
\item
$\rho_0(k)<k<k+1<\rho_0(k+1)$ or
\item
$k<k+1<\rho_0(k)<\rho_0(k+1)$.
\end{enumerate}

\noindent{}Then $d_R$ is standard and $q$ is a $m$-graded \emph{(R2) (resp. (R3)) return} in the case (4a) (resp. (4b)) (See Figure \ref{fig:SR-form MCS} (R2) (resp. (R3))).
\end{enumerate}

\begin{figure}[!htbp]
\begin{center}
\minipage{\textwidth}
\includegraphics[width=\linewidth, height=2in]{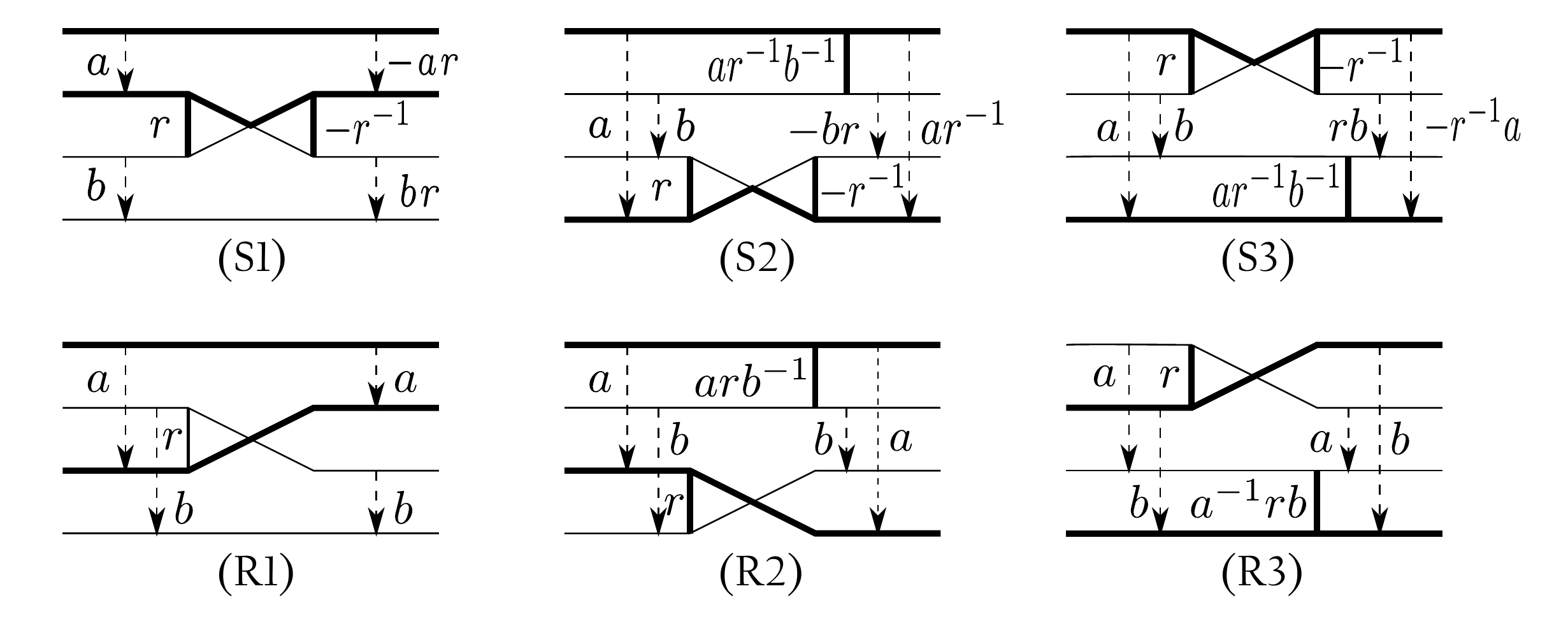}
\endminipage\hfill
\end{center}
\caption{The handleslide moves which preserve standard complexes: the crossing is a switch on the top row and
a $m$-graded return on the bottom row respectively, of the normal ruling determined by the complexes on the 2 ends. The dashed arrows correspond to the nonzero coefficients $<de_i,e_j>$ of the complexes associated to the 2 ends.}
\label{fig:SR-form MCS}
\end{figure}

As a consequence, via the identification (\ref{eqn:aug vs A-form MCS}) we obtain that
\begin{eqnarray*}
\mr{Aug}_m^{\rho}(E,\epsilon_L;k)=
\left\{\begin{array}{lll}
\{r|r=0\} & \text{If $q$ is a $m$-graded departure of $\rho$;}\\
\{r|r\neq 0\} & \text{If $q$ is a switch of $\rho$;}\\
\{r|r\in k\} & \text{If $q$ is a $m$-graded return of $\rho$.}
\end{array}\right.
\end{eqnarray*}
If follows that $\mr{Aug}_m^{\rho}(E,\epsilon_L;k)\cong (k^*)^{s(\rho)}\times k^{r(\rho)}$, as desired.

In the general case, by Remark \ref{rem:standard Morse complex via augmentations}, there exists an unipotent automorphism $\varphi_0$ of $C(E_L)$ such that $d_0:=\varphi_0^{-1}\cdot d_L$ is standard. We can represent $\varphi_0$ by a collection of handleslides $V$ which is properly ordered. Hence, $d_R=(s_k\circ H_r\circ V)\cdot d_0$. Pictorially, the morphism $s_k\circ H_r\circ V$ is represented by the Legendrian tangle front $E$, with a handleslide $H_r$ to the left of $q$ and a collection of handleslides $V$ to the left of $H_r$. Let $h'$ be the handleslide between strands $k,k+1$ in $V$, with coefficient $r'$, where $r'=r'(\varphi_0)$ is a constant depending on $\varphi_0$.

Use a \emph{type 5} move to incorporate $H_r$ into $V$, to obtain a collection of handleslides $V_1$. The handleslide $h_1$ between strands $k,k+1$ in $V_1$ has coefficient $r+r'$. Use a \emph{Type 6} move to remove $h_1$ from $V_1$ so that it appears to the left of the remaining handleslides, denoted by $V_2$. Now, $V_2$ contains no handleslides between strands $k,k+1$. Hence, we can use \emph{Type 2} moves to slide $V_2$ past the crossing $q$ to obtain a collection of handleslides $V_3$ to the right of $q$. The meaning of the procedure is that $s_k\circ H_r\circ V=V_3\circ s_k\circ h_1$ as a $\mb{Z}/m$-graded isomorphism from $C(E_L)$ to $C(E_R)$. Hence, $d_R=(s_k\circ H_r\circ V)\cdot d_0=(V_3\circ s_k\circ h_1)\cdot d_0$, which is isomorphic to $(s_k\circ h_1)\cdot d_0$, where $h_1=H_{r+r'}$ is the handleslide between strands $k,k+1$ of $E_L$. It follows that
\begin{eqnarray*}
\mr{Aug}_m^{\rho}(E,\epsilon_L;k)=\{r\in k|(s_k\circ H_{r+r'})\cdot d_0\in\mc{O}_m(\rho_R;k)\}.
\end{eqnarray*}
Now, $d_0$ is standard, we have reduced the problem to the previous case. More precisely, we have
\begin{eqnarray}\label{eqn:aug var for elementary Legendrian tangles}
\mr{Aug}_m^{\rho}(E,\epsilon_L;k)=
\left\{\begin{array}{lll}
\{r|r+r'=0\} & \text{If $q$ is a $m$-graded departure of $\rho$;}\\
\{r|r+r'\neq 0\} & \text{If $q$ is a switch of $\rho$;}\\
\{r|r+r'\in k\} & \text{If $q$ is a $m$-graded return of $\rho$.}
\end{array}\right.
\end{eqnarray}
and the desired result follows.

\end{proof}

In Remark \ref{rem:standard Morse complex via augmentations}, it turns out that there's canonical choice of the unipotent automorphism $\varphi_0$:

\begin{lemma}\label{lem:canonical automorphism via non-degenerate augmentation}
Let $(T,\mu)$ be a trivial Legendrian tangle of $n$ parallel strands, and $\rho$ be any $m$-graded isomorphism type (Definition \ref{def:isomorphism type for trivial Legendrian tangles}) of $(T,\mu)$. Equivalently, given the isomorphism class $\mc{O}_m(\rho;k)=\mr{Aut}(C)\cdot d_{\rho}$ of $\mb{Z}/m$-graded filtered complexes $(C=C(T),d)$ over $k$, determined by the Barannikov normal form $d_{\rho}$. There's a canonical algebraic map $\varphi_0:\mc{O}_m(\rho;k)\rightarrow \mr{Aut}(C)$ such that, $\varphi_0(d)$ is unipotent and $\varphi_0(d)^{-1}\cdot d$ is standard for all $d\in\mc{O}_m(\rho;k)$. Equivalently, the principal bundle $\mr{Aut}(C)\rightarrow \mc{O}_m(\rho;k)=\mr{Aut(C)}\cdot d_{\rho}$ has a canonical section $\varphi$.
\end{lemma}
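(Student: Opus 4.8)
The idea is to upgrade the non‑canonical automorphism $\varphi_0$ of Remark~\ref{rem:standard Morse complex via augmentations} to one produced by a deterministic recipe, and then to verify that the recipe is regular on the whole orbit $\mc{O}_m(\rho;k)$. As in that remark, $C=C(T)$ carries the ordered basis $e_1,\dots,e_n$ with $|e_i|=\mu(i)\pmod m$ and the filtration $F^iC=\mr{Span}\{e_{i+1},\dots,e_n\}$; for $d\in\mc{O}_m(\rho;k)$ I will use freely from the proof of Lemma~\ref{lem:Barannikov normal form} that $d$ induces the partition $I=U\sqcup H\sqcup L$ and the bijection $\rho\colon U\xrightarrow[]{\sim}L$, both constant on $\mc{O}_m(\rho;k)$.

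First I would fix a deterministic version of the Barannikov reduction. Process $e_n,e_{n-1},\dots,e_1$ in this order, maintaining vectors $v_i\in e_i+\mr{Span}\{e_k:k>i,\ \mu(k)\equiv\mu(i)\}$: starting from $v_i=e_i$, while $dv_i\neq0$ and some already‑finalized $v_{i'}$ (necessarily $i'>i$) has the same leading index $\ell(i'):=\min\{j:<dv_{i'},e_j>\neq0\}$ as $\ell(i)$, replace $v_i$ by $v_i-\tfrac{<dv_i,e_{\ell(i)}>}{<dv_{i'},e_{\ell(i)}>}v_{i'}$. Since $\ell(i')=\ell(i)$ forces $\mu(i)\equiv\mu(i')$, this stays grading‑ and filtration‑preserving and unipotent, and it is exactly the reduction of Lemma~\ref{lem:Barannikov normal form} carried out with a fixed choice‑free order; hence it terminates with $dv_i=0$ precisely for $i\in L\sqcup H$, with $<dv_i,e_{\rho(i)}>\neq0$ for $i\in U$, and with the $\ell(i)$, $i\in U$, distinct. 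Then set $w_i:=v_i$ for $i\in U\sqcup H$ and $w_{\rho(i)}:=\tfrac{1}{<dv_i,e_{\rho(i)}>}dv_i$ for $i\in U$; using $d^2=0$ one checks that $w_{\rho(i)}\in e_{\rho(i)}+\mr{Span}\{e_k:k>\rho(i),\ \mu(k)\equiv\mu(\rho(i))\}$, that $\{w_i\}$ is a basis, and that $dw_i=<dv_i,e_{\rho(i)}>w_{\rho(i)}$ for $i\in U$ and $dw_i=0$ otherwise. Define $\varphi_0(d)\in\mr{Aut}(C)$ by $e_i\mapsto w_i$: it is unipotent by construction and $\varphi_0(d)^{-1}\cdot d$ is standard with respect to $\rho$.

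Next I would check that $d\mapsto\varphi_0(d)$ is a morphism on $\mc{O}_m(\rho;k)$. Every step above is a linear combination whose coefficients are polynomials in the entries $<de_i,e_j>$ of $d$ divided by one of the finitely many pivot coefficients $<dv_{i'},e_{\ell(i')}>$ (reduction) or $<dv_i,e_{\rho(i)}>$ (clean‑up), all with $i',i\in U$; these never vanish on $\mc{O}_m(\rho;k)$, because $U,H,L$ and $\rho$ are orbit invariants and the inequalities cutting out $\mc{O}_m(\rho;k)$ assert exactly, via the definition of the index types, that for $i\in U$ the maximal achievable leading position is finite and attained. The only apparent branching is which column is reduced against which, and this is dictated by $\rho$ (the unique finalized column with leading index $p$, if any, is $v_{\rho^{-1}(p)}$), while a vanishing numerator simply makes the corresponding step the identity. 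So on the single orbit the whole algorithm is one fixed composition of rational operations, regular everywhere, and $\varphi_0$ is a morphism. Finally, letting $D(d)\in\mr{Aut}(C)$ be the diagonal automorphism scaling $e_{\rho(i)}$ by $<dv_i,e_{\rho(i)}>$ for each $i\in U$ and fixing all remaining $e_k$, one has $D(d)\cdot d_{\rho}=\varphi_0(d)^{-1}\cdot d$, so $\varphi(d):=\varphi_0(d)\circ D(d)$ satisfies $\varphi(d)\cdot d_{\rho}=d$ and is a regular section of $\mr{Aut}(C)\to\mc{O}_m(\rho;k)=\mr{Aut}(C)\cdot d_{\rho}$; reading off the shape of $\varphi_0(d)$ also identifies $\mc{O}_m(\rho;k)$ with $N_0\times(k^{*})^{|U|}$ for an explicit subgroup $N_0$ of the unipotent radical of $\mr{Aut}(C)$.

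I expect the only real difficulty to be the regularity claim: a priori the reduction looks genuinely piecewise, and the work is to argue that on one orbit the case distinctions are governed solely by the invariants $(U,H,L,\rho)$ together with the vanishing of explicit polynomials, so that they assemble into a single regular formula rather than a truly piecewise one.
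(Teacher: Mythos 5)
Your plan is correct in substance, but it reaches canonicity by a genuinely different mechanism than the paper. The paper does not fix an algorithm: it characterizes the canonical element intrinsically, proving (via the sets $A(i)$ of Definition \ref{def:subsets of I determined by normal rulings}) that there is a \emph{unique} $i$-admissible element $e_i'=e_i+\sum_{j\in A(i)}a_je_j$ with $de_i'$ $\rho(i)$-admissible, and extracting algebraicity from the resulting invertible triangular linear system for the coefficients $a_j$; canonicity is then automatic and regularity is immediate from Cramer-type formulas with nonvanishing pivot denominators. You instead fix a deterministic greedy column reduction and argue that, on a single orbit, it unrolls into one fixed composition of rational steps. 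That works, but the two points you flag as delicate are exactly where a careful write-up is needed: (i) the assertion that your greedy reduction terminates with $dv_i=0$ precisely for $i\in L\sqcup H$ and with leading index exactly $\rho(i)$ for $i\in U$ is not literally ``the reduction of Lemma \ref{lem:Barannikov normal form} in a fixed order'' (that proof reduces against closed elements and against upper elements of larger $\rho$, not by pivot collisions), so you should prove it directly, e.g.\ by noting that at termination the leading index of $dv_i$ equals the maximum of $\rho_d(x)$ over $i$-admissible $x$, which is $\rho(i)$ by definition; and (ii) for the regularity claim one must also rule out orbit-dependent early termination, i.e.\ observe that at every intermediate stage the leading index of $dv_i$ is either $\rho(i)$ or a pivot $\rho(i')$ with $i'>i$ and $\rho(i')<\rho(i)$ (otherwise the maximality characterization of $\rho(i)$ would be violated), so the only steps ever performed lie at the orbit-determined set of such pivots, processed in increasing order, with vanishing numerators giving identity steps and denominators equal to the nonvanishing pivot entries. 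With these two points supplied your argument is complete; in fact, since each of your $v_i$ has support contained in $\{i\}\cup A(i)$, the paper's uniqueness statement shows your section coincides with the paper's $\varphi_0$, and invoking that uniqueness would let you bypass the unrolling argument altogether. What your route buys is an explicit algorithmic description of the section; what the paper's route buys is that canonicity and algebraic dependence come for free from a uniqueness property rather than from an analysis of an algorithm's case structure.
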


\begin{proof}
As in the proof of Lemma \ref{lem:Barannikov normal form}, let $I=I(T)=\{1,2,\ldots,n\}$ be the set of left endpoints of $T$. Then $\rho$ determines a partition $I=U\sqcup L\sqcup H$ and a bijection $\rho:U\xrightarrow[]{\sim}L$, where $U$,$L$ and $H$ are the sets of \emph{upper},\emph{lower} and \emph{homological} indices in $I$ determined by the isomorphism class $\mc{O}_m(\rho;k)$.

As in Definition \ref{def:subsets of I determined by normal rulings}, define the subsets $I(i),i\in I$ and $A(i),i\in U\sqcup H$ of $I$. Clearly, for each $j\in A(i)$ have $A(j)\subsetneq A(i)$.

\noindent{}\textbf{Claim:} Given any $d$ in $\mc{O}_m(\rho;k)$ and any upper or homological index $i$ in $U\sqcup H$, there exists a unique $i$-admissible element in $C$ of the form $e_i'=e_i+\sum_{j\in A(i)}a_je_j$, such that $de_i'$ is $\rho(i)$-admissible. When $\rho(i)=\infty$, $de_i'$ is $\infty$-admissible means $de_i'=0$. Moreover, $e_i'=e_i'(d)$ depends on $d$ algebraically.

\begin{proof}[Proof of Claim:]
We firstly show the \emph{existence}. By the proof of Lemma \ref{lem:Barannikov normal form}, we know: $\rho(i)=\mr{max}\{\rho_d(x)|x \text{ is $i$-admissible}.\}$ for any $i\in U\sqcup H$. Hence, we can take an $i$-admissible element of the form $x=e_i+\sum_{j\in I(i)}a_je_j$, such that $\rho_d(x)=\rho(i)$, i.e. $dx$ is $\rho(i)$-admissible.

If $a_j=0$ for all $j\in I(i)\setminus A(i)$, then $e_i'=x$ is the desired element. Otherwise, for $x_0=x$ above, \emph{define} $j=j(x_0):=\mr{min}\{j|j\in I(i)\setminus A(i) \text{ and $a_j=<x_0,e_j>\neq 0$.}\}$. Then by definition of $A(i)$, either $j\in L\sqcup H$ or $j\in U$ and $\rho(j)>\rho(i)$. If $j\in L\sqcup H$, then there exists an $j$-admissible element of the form $y_j=e_j+\sum_{l>j}*_le_l$ such that $dy_j=0$. It follows that $x_1=x_0-a_jy_j$ is $i$-admissible and $dx_1$ is still $\rho(i)$-admissible, but $j(x_1)>j=j(x_0)$. Here, we define $\mr{min} \emptyset:=\infty$. If $j\in U$ and $\rho(j)>\rho(i)$, then there exists a $j$-admissible element of the form $y_j=e_j+\sum_{l>j}*_le_l$ such that $dy_j$ is $\rho(j)$-admissible. It follows again that $x_1=x_0-a_jy_j$ is $i$-admissible and $dx_1$ is still $\rho(i)$-admissible, but $j(x_1)>j(x_0)$.

If $j(x_1)=\infty$, then $e_i'=x_1$ is the desired element in the Claim. Otherwise, replace $x_0$ by $x_1$ and repeat the procedure above. Inductively, for some sufficiently large $N$, we obtain in the end an $i$-admissible element of the form $x_N=e_i+\sum_{j\in I(i)}a_je_j$ such that $dx_N$ is $\rho(i)$-admissible and $j(x_N)=\infty$. Now, $e_i'=x_N$ fulfils the Claim.

\noindent{}\emph{uniqueness}. We show the uniqueness by induction on $|A(i)|$. If $A(i)=\emptyset$, then $e_i'=e_i$, which is clearly unique. For the inductive procedure, assume the uniqueness holds when $|A(i)|<k$, and consider the case when $|A(i)|=k$. Let $e_i'=e_i+\sum_{j\in A(i)}a_je_j$ be any element satisfying the Claim. Since $A(j)\subsetneq A(i)$ for all $j\in A(i)$, by induction we can rewrite $e_i'=e_i+\sum_{j\in A(i)}b_je_j'$, where $e_j'$ is uniquely determined by $d$ for all $j\in A(i)$. We want to show the uniqueness of $b_j$'s.

Assume $A(i)=\{i_1<i_2<\ldots<i_k\}$ and $\rho(A(i))=\{j_1<j_2<\ldots<j_k\}\subset L$. By definition of $A(i)$, we know $\rho(i_l)<\rho(i)$ for all $1\leq l\leq k$. By the conditions of the Claim, $de_i'$ is $\rho(i)$-admissible, hence $<de_i',e_{\rho(i_l)}>=0$ for all $1\leq l\leq k$. That is, the following system of linear equations for $b_j$'s holds:
\begin{eqnarray*}
(<e_{\rho(i_p)},de_{i_q}'>)_{p,q}(b_q)_{q}=(-<e_{\rho(i_p)},de_i>)_p
\end{eqnarray*}
Notice that the coefficient matrix $(<e_{\rho(i_p)},de_{i_q}'>)$ is similar to $(<e_{j_p},de_{\rho^{-1}(j_q)}'>)$. And by definition, $de_{\rho^{-1}(j_q)}'$ is $j_q$-admissible, hence $<e_{j_p},de_{\rho^{-1}(j_q)}'>=0$ if $p<q$ and $<e_{j_q},de_{\rho^{-1}(j_q)}'>\neq 0$. That is, the square matrix $(<e_{j_p},de_{\rho^{-1}(j_q)}'>)$ is lower triangular and invertible, hence $(<e_{\rho(i_p)},de_{i_q}'>)$ is also invertible. It follows that
\begin{eqnarray*}
(b_q)_{q}=(<e_{\rho(i_p)},de_{i_q}'>)_{p,q}^{-1}(-<e_{\rho(i_p)},de_i>)_p
\end{eqnarray*}
where by induction $e_{i_q}'$'s are uniquely determined by $d$, hence so is the right hand side. The uniqueness in the Claim then follows. The previous equation also shows by induction that $e_i'=e_i'(d)$ depends algebraically on $d$.
\end{proof}

On the other hand, for each $j\in L$, so $j=\rho(i)$ for some $i\in U$. By the Claim, $de_i'=c_je_j'$ for some $c_j\neq 0$ and some $j$-admissible element of the form $e_j'=e_j+\sum_{l>j}*_le_l$. The claim shows that both $c_j$ and $e_j'$ are also uniquely determined and depend on $d$ algebraically. Now, define an unipotent isomorphism $\varphi_0(d)$ of $C$ by $\varphi_0(d)(e_i)=e_i'$ for all $1\leq i\leq n$. It follows that $\varphi_0:\mc{O}_m(\rho;k)\rightarrow \mr{Aut}(C)$ defines a canonical algebraic map. Moreover, for any $d\in\mc{O}_m(\rho;k)$ and any $i\in U$, have
\begin{eqnarray*}
(\varphi_0(d)^{-1}\cdot d)e_i&=&\varphi_0(d)^{-1}\circ d\circ \varphi_0(d)(e_i)
=\varphi_0(d)^{-1}\circ d(e_i')\\
&=&c_{\rho(i)}\varphi_0(d)^{-1}(e_{\rho(i)}')
=c_{\rho(i)}e_{\rho(i)}.
\end{eqnarray*}
Similarly, $(\varphi_0(d)^{-1}\cdot d)e_i=0$ for $i\in H\sqcup L$. So $\varphi_0(d)^{-1}\cdot d$ is standard.

Finally, for the canonical section of $\mr{Aut}(C)\rightarrow \mc{O}_m(\rho;k)$, we simply take $\varphi(d):=D(d)\circ \varphi_0(d)$, where $D(d)(e_i')=e_i'$ for $i\in U\sqcup H$ and $D(d)(e_j')=c_je_j'$ for $j\in L$.
\end{proof}

\begin{corollary}\label{cor:structures for oribits determined by normal rulings}
Let $(T,\mu)$ be a trivial Legendrian tangle of $n$ parallel strands, and $\rho$ be any $m$-graded isomorphism type of $(T,\mu)$. Then
\begin{eqnarray*}
\mc{O}_m(\rho;k)\cong (k^*)^{|L|}\times k^{A(\rho)}
\end{eqnarray*}
where $A(\rho)$ is defined as in Definition \ref{def:subsets of I determined by normal rulings}.
\end{corollary}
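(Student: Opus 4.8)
The plan is to read off $\mc{O}_m(\rho;k)$ directly from the canonical parametrization furnished by Lemma \ref{lem:canonical automorphism via non-degenerate augmentation}, rather than by a dimension count. Recall $\mc{O}_m(\rho;k)=\mr{Aut}(C)\cdot d_\rho$ and that the lemma produces a canonical algebraic section $\varphi\colon\mc{O}_m(\rho;k)\to\mr{Aut}(C)$ of the orbit map, with $\varphi(d)\cdot d_\rho=d$, built as $\varphi(d)=D(d)\circ\varphi_0(d)$. First I would pin down the image $W:=\varphi(\mc{O}_m(\rho;k))\subseteq\mr{Aut}(C)$. By construction $\varphi(d)(e_i)=e_i'(d)=e_i+\sum_{j\in A(i)}a^{(i)}_j e_j$ for $i\in U\sqcup H$, and $\varphi(d)(e_j)=c_j\,e_j'(d)$ with $c_j\in k^*$ and $e_j'(d)=e_j+\sum_{l\in I(j)}b^{(j)}_l e_l$ for $j\in L$ — the supports being exactly $A(i)$ and $I(j)$ respectively, by the Claim in the proof of that lemma. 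Hence $W$ is the locus inside $\mr{Aut}(C)$ where the diagonal entries indexed by $U\sqcup H$ equal $1$ and all off-diagonal entries $\psi_{ij}$ with $i\in U\sqcup H$, $j\in I(i)\setminus A(i)$ vanish; in the surviving coordinates $\{a^{(i)}_j\}_{i\in U\sqcup H,\,j\in A(i)}$, $\{c_j\}_{j\in L}$, $\{b^{(j)}_l\}_{j\in L,\,l\in I(j)}$ it is visibly isomorphic as a variety to $(k^*)^{|L|}\times k^{N}$ with $N=\sum_{i\in U\sqcup H}|A(i)|+\sum_{j\in L}|I(j)|$, which by Definition \ref{def:subsets of I determined by normal rulings} is exactly $A(\rho)$.

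It then remains to check that $\varphi$ is an isomorphism of varieties onto $W$. It is a morphism because $e_i'(d)$, $e_j'(d)$ and $c_j$ depend algebraically on $d$, as established at the end of the proof of Lemma \ref{lem:canonical automorphism via non-degenerate augmentation}. Its inverse candidate is $W\to\mc{O}_m(\rho;k)$, $\psi\mapsto\psi\cdot d_\rho=\psi\circ d_\rho\circ\psi^{-1}$, again a morphism since $\det\psi=\prod_{j\in L}c_j$ is a unit, so $\psi^{-1}$ has regular entries. One composite is the identity because $\varphi(d)\cdot d_\rho=d$. For the other: given $\psi\in W$ put $d:=\psi\cdot d_\rho$; then for $i\in U\sqcup H$ we have $d(\psi(e_i))=\psi(d_\rho e_i)$, which is $\rho(i)$-admissible (equal to $c_{\rho(i)}\psi(e_{\rho(i)})$, or to $0$ when $i\in H$), so $\psi(e_i)$ has precisely the form characterizing $e_i'(d)$ in the Claim; by the \emph{uniqueness} asserted there, $e_i'(d)=\psi(e_i)$, and unwinding the definition of $D(d)$ on the indices in $L$ then forces $\varphi(d)=\psi$. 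Therefore $\varphi\colon\mc{O}_m(\rho;k)\xrightarrow{\sim}W\cong(k^*)^{|L|}\times k^{A(\rho)}$, which is the assertion.

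The only genuinely load-bearing step — and the place where any real work hides — is this last identity $\varphi(\psi\cdot d_\rho)=\psi$, i.e.\ that running the canonical-representative algorithm on $\psi\cdot d_\rho$ returns $\psi$ exactly; everything else is routine bookkeeping with grading- and filtration-respecting triangular matrices. But that identity is precisely the uniqueness clause of the Claim inside the proof of Lemma \ref{lem:canonical automorphism via non-degenerate augmentation}, so the corollary is in the end a repackaging of that lemma rather than a new argument.
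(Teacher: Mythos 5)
Your proof is correct and follows essentially the same route as the paper: both identify $\mc{O}_m(\rho;k)$ with the image of the canonical section $\varphi$ from Lemma \ref{lem:canonical automorphism via non-degenerate augmentation} and read off the free coordinates $(c_j)_{j\in L}$ together with the off-diagonal entries supported on $A(i)$ for $i\in U\sqcup H$ and on $I(j)$ for $j\in L$. Your explicit verification that $\varphi(\psi\cdot d_\rho)=\psi$, via the uniqueness clause of the Claim in that lemma's proof, simply makes precise the surjectivity onto this parameter space that the paper's ``It follows that'' leaves implicit.
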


\begin{proof}
By the previous lemma, there's an identification between $d\in\mc{O}_m(\rho;k)$ and $\varphi(d)$, where $\varphi$ is the canonical section of $\mr{Aut}(C(T))\rightarrow \mc{O}_m(\rho;k)=\mr{Aut}(C(T))\cdot d_{\rho}$. But, use the notations in the proof of Lemma \ref{lem:canonical automorphism via non-degenerate augmentation} (see also Definition \ref{def:subsets of I determined by normal rulings}), the general form of $\varphi(d)$ is $\varphi(d)=D(d)\circ \varphi_0(d)$, where $\varphi_0(d)(e_i)=e_i'=e_i+\sum_{j\in A(i)}*_{ij}e_j$ for $i\in U\sqcup H$ and $\varphi_0(e_i)=e_i+\sum_{j\in I(i)}*_{ij}e_j$ for $i\in L$. Moreover, $D(d)(e_i')=e_i'$ for $i\in U\sqcup H$, and $D(d)(e_i')=c_ie_i'$ for $i\in L$ and some $c_i\in k^*$. It follows that
\begin{eqnarray*}
\mc{O}_m(\rho;k)
&\cong&\{(*_{ij},i\in U\sqcup H, j\in A(i),\text{ or $i\in L, j\in I(i)$},c_i,i\in L)|c_i\neq 0\}\\
&\cong& (k^*)^{|L|}\times k^{A(\rho)}
\end{eqnarray*}
by Definition \ref{def:subsets of I determined by normal rulings}.
\end{proof}

The previous lemma allows us to show a stronger result than Lemma \ref{lem:augmentation varieties for elementary tangles}:

\begin{lemma}\label{lem:fibration for augmentation varieties of elementary tangles}
Let $(E,\mu)$ be an elementary Legendrian tangle: a single crossing $q$, a left cusp $q$, a marked right cusp $q$ or $2n$ parallel strands with a single base point $*$. Let $\rho$ be a $m$-graded normal ruling of $E$, denote $\rho_L:=\rho|_{E_L},\rho_R:=\rho|_{E_R}$\footnote{Notice that $\rho$ is uniquely determined by $\rho_L,\rho_R$.}. Then the natural map $P: \mr{Aug}_m(E,\rho_L,\rho_R;k)\rightarrow\mc{O}_m(\rho_L;k)$ given by $\epsilon\rightarrow \epsilon_L=\epsilon|_{E_L}$ is algebraically a \emph{trivial fiber bundle} with fibers isomorphic to $(k^*)^{-\chi(\rho)+B}\times k^{r(\rho)}$, where $B$ is the number of base points, $\chi(\rho)=c_R-s(\rho)$ (see Definition \ref{def:switches and returns}).
\end{lemma}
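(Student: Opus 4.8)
The plan is to reduce everything to the fiberwise description already in hand and then upgrade it to a global trivialization. By Definition~\ref{def:aug varieties with boundary conditions} the fiber of $P$ over $\epsilon_L\in\mc{O}_m(\rho_L;k)$ is precisely $\mr{Aug}_m(E,\epsilon_L,\rho_R;k)$, which Lemma~\ref{lem:augmentation varieties for elementary tangles} identifies with $(k^*)^{-\chi(\rho)+B}\times k^{r(\rho)}$; so the only thing left to prove is that $P$ admits an \emph{algebraic} trivialization. I would do this one elementary type at a time, throughout using the identification $\mr{Aug}_m(E;k)\cong MCS_m^A(E;k)$ of Remark~\ref{rem:aug and A-form MCS} and the explicit structure maps of Lemma~\ref{lem:aug and A-form MCS for elementary tangles}. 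Since $\rho_L$ is a normal ruling, every $\epsilon_L\in\mc{O}_m(\rho_L;k)$ is acyclic, so by Corollary~\ref{cor:nondegeneracy of augmentations} $\epsilon_R$ is acyclic and the hypotheses persist along the fibers.

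For the three non-crossing cases the idea is that the structure map $\varphi$ of Lemma~\ref{lem:aug and A-form MCS for elementary tangles} forces the isomorphism type of $\epsilon_R$ to depend only on that of $\epsilon_L$, so the constraint $\epsilon_R\in\mc{O}_m(\rho_R;k)$ is automatic over $\mc{O}_m(\rho_L;k)$. For a left cusp $\mc{A}(E)=\mc{A}(E_L)$ and $(C(E_R),d_R)=(C(E_L),d_L)\oplus(\text{a fixed acyclic complex})$, so $P$ is literally the identity; for a single base point on strand $k$ one has $\mr{Aug}_m(E;k)=\mr{Aug}_m(E_L;k)\times k^*$ via $\epsilon\mapsto(\epsilon_L,\epsilon(q))$ and $\varphi$ is an isomorphism, so $\mr{Aug}_m(E,\rho_L,\rho_R;k)=\mc{O}_m(\rho_L;k)\times k^*$ with $P$ the projection; for a marked right cusp $q$ joining strands $k,k+1$ of $E_L$ one has $\partial q=t^{\sigma(q)}+a_{k,k+1}$ (Definition~\ref{def:sign for a right cusp}) and $|q|=1$, so $\epsilon(t)$ is the regular function $(-\epsilon_L(a_{k,k+1}))^{\sigma(q)}$ of $\epsilon_L$ (well-defined since $\rho_L(k)=k+1$ forces $\epsilon_L(a_{k,k+1})\ne 0$ on $\mc{O}_m(\rho_L;k)$), while $\epsilon(q)$ is free in $k$ if $m=1$ and $0$ otherwise. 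Here one small computation is needed: conjugating $H_r\cdot d_L$ into the Barannikov normal form for $\rho_L$ and passing to the quotient by $\mr{Span}\{e_k,(H_r\cdot d_L)e_k\}$ as in Lemma~\ref{lem:aug and A-form MCS for elementary tangles}(3) produces a complex whose isomorphism type is $\rho_L$ with the pair $(k,k+1)$ deleted, independently of $r=-\epsilon(q)$ — hence equal to $\rho_R$ — so $\epsilon\mapsto(\epsilon_L,\epsilon(q))$ trivializes $P$ with fiber $k^{r(\rho)}$. In each subcase a direct comparison of $-\chi(\rho)+B=(s(\rho)-c_R)+B$ and $r(\rho)$ with the strands involved confirms the stated rank.

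The substantive case is $E$ a single crossing $q$ between strands $k,k+1$ of $E_L$. If $|q|\not\equiv 0\ (\mr{mod}\ m)$ then $\epsilon(q)=0$ is forced, $q$ is neither a switch nor an $m$-graded return, so the fiber is a point and $P$ is an isomorphism. If $|q|\equiv 0$, then $B=c_R=0$ and I would repeat the general-case argument in the proof of Lemma~\ref{lem:augmentation varieties for elementary tangles}: let $\varphi_0\colon\mc{O}_m(\rho_L;k)\to\mr{Aut}(C(E_L))$ be the canonical regular section of Lemma~\ref{lem:canonical automorphism via non-degenerate augmentation}, so $d_0(\epsilon_L):=\varphi_0(\epsilon_L)^{-1}\cdot d_L$ is standard for $\rho_L$; represent $\varphi_0(\epsilon_L)$ by a properly ordered collection of handleslides and let $r'(\epsilon_L)$ be the coefficient of its handleslide between strands $k,k+1$, a regular function on $\mc{O}_m(\rho_L;k)$. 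The Type~5 and Type~6 handleslide moves show, with $r=-\epsilon(q)$, that $d_R=(s_k\circ H_r)\cdot d_L$ is isomorphic to $(s_k\circ H_{r+r'(\epsilon_L)})\cdot d_0(\epsilon_L)$, and equation~(\ref{eqn:aug var for elementary Legendrian tangles}) (the three configurations of Figure~\ref{fig:SR-form MCS}) cuts the fiber out as $\{r+r'(\epsilon_L)=0\}$, $\{r+r'(\epsilon_L)\ne 0\}$, or all of $k$, according as $q$ is an $m$-graded departure, a switch, or an $m$-graded return of $\rho$. Hence $\epsilon\mapsto(\epsilon_L,\,-\epsilon(q)+r'(\epsilon_L))$ is an algebraic isomorphism onto $\mc{O}_m(\rho_L;k)\times F$ with $F$ a point, $k^*$, or $k=k^{r(\rho)}$, and its inverse is algebraic because $r'$ is.

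The main obstacle — indeed the only non-formal point — is algebraicity: one must know that the correction constant $r'$, and the identification of the isomorphism type of $\epsilon_R$, vary \emph{regularly} over the whole stratum $\mc{O}_m(\rho_L;k)$, not merely pointwise. This is supplied by Lemma~\ref{lem:canonical automorphism via non-degenerate augmentation}, whose content is exactly that $\mr{Aut}(C)\to\mc{O}_m(\rho_L;k)$ carries a canonical regular section, together with the explicit formulas of Lemma~\ref{lem:aug and A-form MCS for elementary tangles}; granting these, the remainder is the finite case-check described above.
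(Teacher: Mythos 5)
Your proposal is correct and follows essentially the same route as the paper: reduce to the single crossing $q$ with $|q|=0\ (\mr{mod}\ m)$, invoke Lemma \ref{lem:canonical automorphism via non-degenerate augmentation} to make the correction term $r'(\epsilon_L)$ from Equation (\ref{eqn:aug var for elementary Legendrian tangles}) an algebraic function on $\mc{O}_m(\rho_L;k)$, and trivialize $P$ via $(\epsilon_L,r)\mapsto(\epsilon_L,r+r'(\epsilon_L))$. The additional detail you give for the left-cusp, base-point and marked-right-cusp cases is consistent with the paper, which simply treats those cases as trivial.
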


\begin{proof}
As in the proof of Lemma \ref{lem:augmentation varieties for elementary tangles}, the only nontrivial case is when $E$ contains a single crossing $q$ and $|q|=0 (\mr{mod} m)$. In the proof of Lemma \ref{lem:augmentation varieties for elementary tangles} for the general case (i.e. for a general augmentation $\epsilon_L\in\mc{O}_m(\rho_L;k)$ or the corresponding differential $d_L$ for $C(E_L)$), the unipotent automorphism $\varphi_0$ can be taken to be canonical: $\varphi_0:=\varphi_0(\epsilon_L)$, by lemma \ref{lem:canonical automorphism via non-degenerate augmentation} above. Hence, the constant $r'=r'(\epsilon_l)$ in Equation (\ref{eqn:aug var for elementary Legendrian tangles}) depends algebraically on $\epsilon_L$. Use the identification in Remark \ref{rem:aug and A-form MCS}, it follows that
\begin{eqnarray*}
\mr{Aug}_m(E,\rho_L,\rho_R)=
\left\{\begin{array}{lll}
\{(\epsilon_L,r)|\epsilon_L\in\mc{O}_m(\rho_L), r+r'(\epsilon_L)=0\} & \text{$q$ is a departure;}\\
\{(\epsilon_L,r)|\epsilon_L\in\mc{O}_m(\rho_L), r+r'(\epsilon_L)\neq 0\} & \text{$q$ is a switch;}\\
\{(\epsilon_L,r)|\epsilon_L\in\mc{O}_m(\rho_L)\} & \text{$q$ is a  return.}
\end{array}\right.
\end{eqnarray*}
with the natural map $P$ given by $(\epsilon_L,r)\rightarrow \epsilon_L$. Here we have ignored the coefficient field $k$. Therefore, we obtain an isomorphism
\begin{eqnarray*}
P\times R:\mr{Aug}_m(E,\rho_L,\rho_R;k)\xrightarrow[]{\sim}
\mc{O}_m(\rho_L;k)\times ((k^*)^{-\chi(\rho)+B}\times k^{r(\rho)})
\end{eqnarray*}
where $R(\epsilon_L,r)=r+r'(\epsilon_L)$. The result then follows.
\end{proof}

As a consequence, we obtain
\begin{theorem}\label{thm:augmentation varieties for Legendrian tangles}
Let $(T,\mu)$ be any Legendrian tangle, with $B$ base points placed on $T$ so that \emph{each right cusp is marked}.
Fix $m$-graded normal rulings $\rho_L,\rho_R$ of $T_L, T_R$ respectively. Fix $\epsilon_L\in\mc{O}_m(\rho_L;k)$. Then
there's a decomposition of augmentation varieties into disjoint union of subvarieties
\begin{eqnarray*}
\mr{Aug}_m(T,\epsilon_L,\rho_R;k)=\sqcup_{\rho}\mr{Aug}_m^{\rho}(T,\epsilon_L,\rho_R;k)
\end{eqnarray*}
(See Definition \ref{def:augmentaion varieties via normal rulings}),
where $\rho$ runs over all $m$-graded normal rulings of $T$ such that $\rho|_{T_L}=\rho_L,\rho|_{T_R}=\rho_R$. Moreover,
\begin{eqnarray}
\mr{Aug}_m^{\rho}(T,\epsilon_L,\rho_R;k)\cong (k^*)^{-\chi(\rho)+B}\times k^{r(\rho)}.
\end{eqnarray}
\end{theorem}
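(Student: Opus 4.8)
The plan is to deduce the statement from the elementary-tangle case by a short induction, since the decomposition $\mr{Aug}_m(T,\epsilon_L,\rho_R;k)=\sqcup_{\rho}\mr{Aug}_m^{\rho}(T,\epsilon_L,\rho_R;k)$ is already Equation (\ref{eqn:partition of aug var}): once $\rho_R=\rho_n$ is fixed, the variety $\mr{Aug}_m^{\rho}(T,\epsilon_L;k)$ of Definition \ref{def:augmentaion varieties via normal rulings} is exactly $\mr{Aug}_m^{\rho}(T,\epsilon_L,\rho_R;k)$, and $\rho$ runs over the $m$-graded normal rulings of $T$ restricting to $\rho_L$ and $\rho_R$. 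So the only thing to prove is the product description of each stratum, and I would do this by induction on the number $n$ of elementary pieces in a decomposition $T=E_1\circ\cdots\circ E_n$ obtained by cutting $T$ along vertical lines at the crossings, cusps and base points away from the right cusps, exactly as in Section \ref{subsec:computation of aug number}.

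For the base case $n=1$ the claim $\mr{Aug}_m^{\rho}(E_1,\epsilon_L,\rho_1;k)\cong(k^*)^{-\chi(\rho)+B}\times k^{r(\rho)}$ is Lemma \ref{lem:augmentation varieties for elementary tangles} (equivalently, the fibre over $\epsilon_L$ of the trivial bundle in Lemma \ref{lem:fibration for augmentation varieties of elementary tangles}). For the inductive step I would set $\rho':=\rho|_{T_{n-1}}$ and $\rho_{n-1}:=\rho|_{(T_{n-1})_R=(E_n)_L}$, and use associativity of the iterated fibre product in Definition \ref{def:augmentaion varieties via normal rulings} to write
\begin{equation*}
\mr{Aug}_m^{\rho}(T_n,\epsilon_L,\rho_n;k)\;\cong\;\mr{Aug}_m^{\rho'}(T_{n-1},\epsilon_L,\rho_{n-1};k)\times_{\mc{O}_m(\rho_{n-1};k)}\mr{Aug}_m(E_n,\rho_{n-1},\rho_n;k),
\end{equation*}
where the right factor maps to $\mc{O}_m(\rho_{n-1};k)$ by restriction to $(E_n)_L$. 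By Lemma \ref{lem:fibration for augmentation varieties of elementary tangles} this map is an algebraically trivial bundle with fibre $F_n:=(k^*)^{-\chi(\rho|_{E_n})+B(E_n)}\times k^{r(\rho|_{E_n})}$; hence its pullback along $\mr{Aug}_m^{\rho'}(T_{n-1},\epsilon_L,\rho_{n-1};k)\to\mc{O}_m(\rho_{n-1};k)$ is again trivial, and the fibre product collapses to $\mr{Aug}_m^{\rho'}(T_{n-1},\epsilon_L,\rho_{n-1};k)\times F_n$. Applying the inductive hypothesis and then adding exponents, I would conclude using the additivities $\chi(\rho)=\chi(\rho')+\chi(\rho|_{E_n})$ (the identity established in the proof of the composition axiom of Theorem \ref{thm:invariance and composition of Ruling polynomials}, or equivalently $\chi(\rho)=c_R-s(\rho)$ with $c_R$ and $s(\rho)$ both counted locally), $B=B(T_{n-1})+B(E_n)$, and $r(\rho)=r(\rho')+r(\rho|_{E_n})$ (since $m$-graded returns, and when $m=1$ also right cusps, are local data). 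This yields $\mr{Aug}_m^{\rho}(T,\epsilon_L,\rho_R;k)\cong(k^*)^{-\chi(\rho)+B}\times k^{r(\rho)}$.

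I expect the main obstacle to be essentially already resolved before this point: the real work is the triviality assertion in Lemma \ref{lem:fibration for augmentation varieties of elementary tangles}, which rests on the canonical section of the principal bundle $\mr{Aut}(C)\to\mc{O}_m(\rho_L;k)$ constructed in Lemma \ref{lem:canonical automorphism via non-degenerate augmentation}. Granting that, the present theorem is formal; the only genuine point of care is that the fibre product over the intermediate orbit $\mc{O}_m(\rho_{n-1};k)$ really does collapse to a direct product --- which is exactly what triviality of the elementary bundle provides, and would fail if one only knew that the fibres were abstractly isomorphic via the iterated fibration (\ref{eqn:iterated fibration for augmentation varieties}) --- together with the routine bookkeeping of the three exponents. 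As an alternative packaging I could instead prove the stronger statement that the left-restriction map $\mr{Aug}_m^{\rho}(T,\rho_L,\rho_R;k)\to\mc{O}_m(\rho_L;k)$ is itself an algebraically trivial bundle with fibre $(k^*)^{-\chi(\rho)+B}\times k^{r(\rho)}$, by the same induction but invoking Lemma \ref{lem:fibration for augmentation varieties of elementary tangles} at every step rather than only at the last one; the theorem is then recovered as the fibre over $\epsilon_L$.
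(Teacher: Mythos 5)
Your proposal is correct and follows essentially the same route as the paper: the decomposition is Equation (\ref{eqn:partition of aug var}), and the product structure of each stratum is obtained by induction over the elementary pieces, using the triviality of the bundle $\mr{Aug}_m(E_n,\rho_{n-1},\rho_n;k)\rightarrow\mc{O}_m(\rho_{n-1};k)$ from Lemma \ref{lem:fibration for augmentation varieties of elementary tangles} (the paper phrases the collapse of the fibre product as a base change) together with the base case Lemma \ref{lem:augmentation varieties for elementary tangles} and the local additivity of $\chi$, $B$ and $r$. Your alternative packaging is exactly the content of Remark \ref{rem:structure thm for aug var}.
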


\begin{proof}
Use the notations in Definition \ref{def:augmentaion varieties via normal rulings}, $T=E_1\circ\ldots\circ E_n$ is a composition of $n$ elementary tangles, and the canonical projection $P_n:\mr{Aug}_m^{\rho}(T_n,\epsilon_L;k)\rightarrow \mr{Aug}_m^{\rho|_{T_{n-1}}}(T_{n-1},\epsilon_L;k)$ is
a base change of the projection $\mr{Aug}_m(E_n,\rho_{n-1},\rho_n;k)\rightarrow\mc{O}_m(\rho_{n-1};k)$. The latter, by the previous lemma, is a trivial fiber bundle with fibers isomorphic to $(k^*)^{-\chi(\rho|_{E_n})+B(E_N)}\times k^{r(\rho|_{E_n})}$. Hence, so is the projection $P_n$ and
\begin{eqnarray*}
\mr{Aug}_m^{\rho}(T_n,\epsilon_L;k)\cong \mr{Aug}_m^{\rho|_{T_{n-1}}}(T_{n-1},\epsilon_L;k)\times (k^*)^{-\chi(\rho|_{E_n})+B(E_n)}\times k^{r(\rho|_{E_n})}
\end{eqnarray*}
By induction, the desired result then follows from Lemma \ref{lem:augmentation varieties for elementary tangles}.
\end{proof}

\begin{remark}\label{rem:structure thm for aug var}
We've defined varieties $\mr{Aug}_m^{\rho}(T,\rho_L,\rho_R;k)=\mr{Aug}_m^{\rho}(T,\rho_L;k)$ in Definition \ref{def:augmentaion varieties via normal rulings}. Use Lemma \ref{lem:fibration for augmentation varieties of elementary tangles}, a similar argument as in the proof above also shows that
\begin{eqnarray}
\mr{Aug}_m(T,\rho_L,\rho_R;k)=\sqcup_{\rho}\mr{Aug}_m^{\rho}(T,\rho_L,\rho_R;k)
\end{eqnarray}
with
\begin{eqnarray}
\mr{Aug}_m^{\rho}(T,\rho_L,\rho_R;k)
&\cong& \mc{O}_m(\rho_L;k)\times (k^*)^{-\chi(\rho)+B}\times k^{r(\rho)}\\
&\cong& (k^*)^{-\chi(\rho)+B+n_L'}\times k^{r(\rho)+A(\rho_L)}\nonumber
\end{eqnarray}
where $n_L=2n_L'$ is the number of left endpoints of $T$.
\end{remark}

\section{Ruling polynomials compute augmentation numbers: a generalization}\label{sec:Ruling vs aug:general case}

So far we have established the ``Ruling polynomials compute augmentation numbers" result for acyclic augmentations (Remark \ref{rem:nondegeneracy of augmentations}). For the purpose of studying Legendrian knots in $J^1\mb{R}$, that is all we need.
However, by the decomposition in Equation (\ref{eqn:decomp for the full aug var}), we see that in general the full augmentation variety $\mr{Aug}_m(T;k)$ contains also non-acyclic augmentations. It's natural to expect these additional augmentations are counted appropriately by some generalized Ruling polynomials for $T$. This is indeed the case, and the arguments we have for acyclic augmentations suggest what might be the right way to generalize the picture.

\subsection{Generalized normal rulings and Ruling polynomials}\label{subsec:generalized normal rulings}

Let $(T,\mu)$ be any Legendrian tangle in $J^1U$ with $n_L$ left endpoints and $n_R$ right endpoints. Notice that $n_L+2c_L=n_R+2c_R$, so $n_L, n_R$ have the same parity. Take any $m$-graded isomorphism type $\rho_L$ for $T_L$, as in Definition \ref{def:isomorphism type for trivial Legendrian tangles}, it determines a partition $I(T_L)=U\sqcup L\sqcup H$. $H$ determines an isomorphism class $H_*:=[H]$ of $\mb{Z}/m$-graded $k$-modules, represented by the module $\mr{Span}_k\{e_i, i\in H||e_i|=\mu(i) (\mr{mod} m)\}$. We will \emph{say} $\rho_L$ is of \emph{homology type $H_*$}.

Fix a homology type $H_*$, say $k:=\mr{dim}_k H_*$, and $k_i:=\mr{dim}_k H_i=(H_*)_i$ is the dimension of the degree $i$ part of $H_*$, for $i=0,\ldots,m-1 (\mr{mod} m)$. $H_*$ can also be identified with the collection $(k_0,\ldots,k_{m-1})$ of natural numbers. \emph{Define} a Legendrian tangle $\hat{T}$ by adding $k$ parallel strands $T(H_*)=\sqcup_iT(H_i)=\{1,2,\ldots,k\}$, above the front $T$ in the front plane $U\times \mb{R}_z$, labeled from top to bottom. We may assume that $T(H_{i})$ lies above $T(H_{i+1})$ for $i=0,\ldots,m-2 (\mr{mod} m)$.
Extend $\mu$ to a $\mb{Z}/2r$-valued Maslov potential $\mu$ on $\hat{T}$ so that $\mu$ takes the same value and $\mu=i+1 (\mr{mod} m)$ on the strands $T(H_i)$. We will fix such a $\mu$. Now, the numbers of left and right endpoints of $\hat{T}$ are even.

\begin{definition}\label{def:generalized normal rulings}
A \emph{$m$-graded normal ruling of homology type $H_*$} for $(T,\mu)$ is a $m$-graded normal ruling $\hat{\rho}$ of $(\hat{T},\mu)$ such that when restricted to $T_L$, $\hat{\rho}$ descends to a $m$-graded isomorphism type of homology type $H_*$ for $(T_L,\mu)$. In other words, $\hat{\rho}_L$ determines a partition $I(\hat{T}_L)=\hat{U}\sqcup \hat{L}$ and a bijection $\hat{\rho}_L:\hat{U}\xrightarrow[]{\sim}\hat{L}$, hence descends to a $m$-graded isomorphism type $\rho_L:=\hat{\rho}|_{T_L}$ given by $I(T_L)=U\sqcup L\sqcup H$ with $U:=\hat{U}\cap I(T_L)$, $L:=\hat{\rho}_L(U)$, $H=\hat{\rho}_L(T(H_*))$, and $\rho_L:=\hat{\rho}_L|_{U}\xrightarrow[]{\sim}L$.
\end{definition}

Similarly, for any generic $x$-coordinate in $U$, denote by $\hat{\rho}|_{T_a}$ the $m$-graded isomorphism type of $T_a=T|_{\{x=a\}}$ obtained by restricting $\hat{\rho}$ to $T_a$. Then by definition, it's clear that the homology type of $\hat{\rho}|_{T_a}$ is \emph{independent of $a$}.

\begin{definition}\label{def:generalized Ruling polynomials}
Given any Legendrian tangle $(T,\mu)$ and homology type $H_*$, so $(\hat{T},\mu)$ is defined. Given any $m$-graded normal rulings $\hat{\rho}_L,\hat{\rho}_R$ of homology type $H_*$ for $T_L,T_R$ respectively, the \emph{$m$-graded Ruling polynomial (with boundary conditions $(\hat{\rho}_L,\hat{\rho}_R$))} for $(T,\mu)$ is
\begin{eqnarray*}
<\hat{\rho}_L|R_T^m(z)|\hat{\rho}_R>:=<\hat{\rho}_L|R_{\hat{T}}^m(z)|\hat{\rho}_R>
\end{eqnarray*}
where the right hand side is the usual Ruling polynomial for $\hat{T}$ as in Definition \ref{def:Ruling polynomial}.
When $\hat{\rho}_L, \hat{\rho}_R$ are of different homology types, then we simply define $<\hat{\rho}_L|R_T^m(z)|\hat{\rho}_R>:=0$. This is compatible with the fact that $<\hat{\rho}_L|R_{\hat{T}}^m(z)|\hat{\rho}_R>=0$ if $\hat{\rho}_L|_{T_L}, \hat{\rho}_R|_{T_R}$ are of different homology types.
\end{definition}

By definition, it follows immediately from Theorem \ref{thm:invariance and composition of Ruling polynomials} that
\begin{corollary}\label{cor:invariance and composition of generalized ruling polynomials}
The Ruling polynomials $<\hat{\rho}_L|R_T^m(z)|\hat{\rho}_R>$ are Legendrian isotopy invariants for $(T,\mu)$.
Moreover, if $T=T_1\circ T_2$ is a composition of 2 Legendrian tangles, then the \emph{composition axiom} for $<\hat{\rho}_L|R_T^m(z)|\hat{\rho}_R>$ holds:
\begin{eqnarray*}
<\hat{\rho}_L|R_T^m(z)|\hat{\rho}_R>=
\sum_{\hat{\rho}_I}<\hat{\rho}_L|R_T^m(z)|\hat{\rho}_I>
<\hat{\rho}_I|R_T^m(z)|\hat{\rho}_R>
\end{eqnarray*}
where $\hat{\rho}_I$ runs over all $m$-graded normal rulings of $(T_1)_R=(T_2)_L$ of any homology types.
\end{corollary}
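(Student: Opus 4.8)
The plan is to reduce both claims to Theorem~\ref{thm:invariance and composition of Ruling polynomials}, applied not to $T$ but to the enlarged tangle $\hat{T}$ obtained by adjoining the $k=\mr{dim}_k H_*$ parallel strands $T(H_*)$. By Definition~\ref{def:generalized Ruling polynomials} the left-hand sides in the corollary are \emph{literally} ordinary Ruling polynomials of $\hat{T}$ (resp.\ of $\hat{T}_1$, $\hat{T}_2$), so the only work is to lift a Legendrian isotopy, respectively a decomposition $T=T_1\circ T_2$, to the enlarged picture and then to keep track of homology types.

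For the invariance statement, I would extend a Legendrian isotopy $h$ between $(T,\mu)$ and $(T',\mu')$ to a Legendrian isotopy $\hat{h}$ between $(\hat{T},\mu)$ and $(\hat{T}',\mu)$ by running $h$ on the $T$-part and leaving $T(H_*)$ and its Maslov potential fixed; this is harmless because $T(H_*)$ is a fixed family of disjoint parallel strands, which we may place above the support of $h$, so it never interacts with $h$. Since $T_L=T'_L$ and $T_R=T'_R$ imply $\hat{T}_L=\hat{T}'_L$ and $\hat{T}_R=\hat{T}'_R$, the rulings $\hat{\rho}_L,\hat{\rho}_R$ are boundary conditions for both $\hat{T}$ and $\hat{T}'$, and the ordinary invariance half of Theorem~\ref{thm:invariance and composition of Ruling polynomials} gives $<\hat{\rho}_L|R_{\hat{T}}^m(z)|\hat{\rho}_R>=<\hat{\rho}_L|R_{\hat{T}'}^m(z)|\hat{\rho}_R>$, which is exactly the assertion. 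Along the way one should record that $\hat{T}$ is well defined up to Legendrian isotopy, independently of the placement of $T(H_*)$ and of the chosen $\mb{Z}/2r$-lift of its Maslov potential, since $m$-graded Ruling polynomials only see $\mu$ modulo $m$; this is what makes Definition~\ref{def:generalized Ruling polynomials} unambiguous.

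For the composition axiom, write $T=T_1\circ T_2$ and build $\hat{T}$ with an adjoined family $T(H_*)$ passing straight through the cut, so that $\hat{T}=\hat{T}_1\circ\hat{T}_2$ with interface $(\hat{T}_1)_R=(\hat{T}_2)_L$ equal to $(T_1)_R$ with $k$ strands adjoined. Applying the ordinary composition axiom to this decomposition expresses $<\hat{\rho}_L|R_{\hat{T}}^m(z)|\hat{\rho}_R>$ as a sum over ordinary $m$-graded normal rulings $\hat{\rho}_I$ of that enlarged interface of the products $<\hat{\rho}_L|R_{\hat{T}_1}^m(z)|\hat{\rho}_I>\,<\hat{\rho}_I|R_{\hat{T}_2}^m(z)|\hat{\rho}_R>$, and it remains to match this with the stated formula. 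Here I would use the one non-formal input, namely that the homology type of $\hat{\rho}|_{T_a}$ is independent of the slice $a$ (recorded just after Definition~\ref{def:generalized normal rulings}): a nonzero factor $<\hat{\rho}_L|R_{\hat{T}_1}^m(z)|\hat{\rho}_I>$ requires a normal ruling of $\hat{T}_1$ restricting on the left to $\hat{\rho}_L$, which is of homology type $H_*$, hence of homology type $H_*$ throughout, so $\hat{\rho}_I$ is a normal ruling of $(T_1)_R$ of homology type $H_*$ in the sense of Definition~\ref{def:generalized normal rulings} (in particular the adjoined strands are not paired among themselves), and for such $\hat{\rho}_I$ that factor equals $<\hat{\rho}_L|R_{T_1}^m(z)|\hat{\rho}_I>$; likewise for the $\hat{T}_2$-factor. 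Normal rulings of the enlarged interface that descend to a homology type other than $H_*$ contribute $0$, and conversely the $\hat{\rho}_I$ of homology type $\ne H_*$ in the corollary's sum contribute $0$ there by the convention in Definition~\ref{def:generalized Ruling polynomials}; so the two sums coincide. The only point that is not mere unwinding of definitions is this slice-independence of homology type, and since it is already available, I do not expect any real obstacle.
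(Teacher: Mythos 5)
Your proposal is correct and follows essentially the same route as the paper, which deduces the corollary directly from Theorem \ref{thm:invariance and composition of Ruling polynomials} applied to $\hat{T}$ via Definition \ref{def:generalized Ruling polynomials}. Your write-up merely makes explicit the steps the paper leaves implicit (lifting the isotopy to $\hat{T}$, decomposing $\hat{T}=\hat{T}_1\circ\hat{T}_2$, and using the slice-independence of the homology type recorded after Definition \ref{def:generalized normal rulings} to discard the vanishing terms), so there is no substantive difference.
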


\subsection{Augmentation numbers for augmentations of given homology type}

Let $(T,\mu)$ be any Legendrian tangle, with base points $*_1,\ldots,*_B$ so that each connected component containing a right cusp has at least one base point. As usual, fix an nonnegative integer $m$ dividing $2r$. Recall that in Definition \ref{def:aug varieties with boundary conditions}, we have introduced augmentation varieties $\mr{Aug}_m(T,\epsilon_L,\rho_R;k)$ (resp. $\mr{Aug}_m(T,\rho_L,\rho_R;k)$) with boundary conditions, where $\rho_L,\rho_R$ are $m$-graded isomorphism types of $T_L,T_R$ respectively, and $\epsilon_L\in\mc{O}_m(\rho_L;k)$.

Use the identification in Lemma \ref{lem:augmentation varieties for elementary tangles}, if follows from Corollary \ref{cor:nondegeneracy of augmentations} that, if $\rho_L,\rho_R$ are of different homology type, then $\mr{Aug}_m(T,\rho_L,\rho_R;k)=\emptyset$, hence so is $\mr{Aug}_m(T,\epsilon_L,\rho_R;k)$. From now on, we assume $\rho_L,\rho_R$ are of the same fixed homology type $H_*$.
Define $(\hat{T},\mu)$ as in Section \ref{subsec:generalized normal rulings} above and \emph{fix} any $m$-graded normal ruling $\hat{\rho}_L$ of homology type $H_*$ for $(T,\mu)$ so that $\hat{\rho}|_{T_L}=\rho_L$. Take any $m$-graded augmentation $\hat{\epsilon_L}\in\mc{O}_m(\hat{\rho}_L;k)$ that restricts to $\epsilon_L$, where the restriction map is induced by the natural inclusion of DGAs $\mc{A}(T_L)\hookrightarrow\mc{A}(\hat{T}_L)$.

In fact, there's a canonical choice for $\hat{\epsilon_L}$: Denote by $d_L$ the differential for $C(T_L)$ corresponding to $\epsilon_L$, under the identification in Example \ref{ex:aug. variety for lines}.
By Lemma \ref{lem:canonical automorphism via non-degenerate augmentation}, there's a canonical automorphism $\varphi=\varphi(d_L)$ of $C(T_L)$ so that $\varphi(d_L)^{-1}\cdot d_L=d_{\rho_L}$ is the Barannikov normal form. Denote the isomorphism type determined by $\rho_L$ by $I(T_L)=U_L\sqcup L_L\sqcup H_L$ and $\rho_L:U_L\xrightarrow[]{\sim}L_L$ (see Definition \ref{def:isomorphism type for trivial Legendrian tangles}). Assume $\varphi(e_i)=e_i'$ for $i\in I(T_L)$, then $d_Le_i'=e_{\rho_L(i)}'$ for all $i\in U_L$ and $d_Le_i'=0$ for $i\in H_L$. We then take the differential $\hat{d_L}$ for $C(\hat{T}_L)=C(T_L)\oplus C(T(H_*))$ given by $\hat{d_L}|_{C(T_L)}=d_L$, and $\hat{d_L}(e_i)=e_{\hat{\rho}_L(i)}'$ for $i\in T(H_*)$. And define $\hat{\epsilon_L}$ to be the augmentation corresponding to $\hat{d_L}$.

So, from now on, we will always assume the augmentation $\hat{\epsilon_L}$ is the canonical one that restricts to $\epsilon_L$. The map $\epsilon_L\rightarrow\hat{\epsilon_L}$ thus defines a canonical embedding $\mc{O}_m(\rho_L;k)\hookrightarrow\mc{O}_m(\hat{\rho_L};k)$. Denote by $\hat{\mc{O}}_m(\rho_L;k)$ the image of $\mc{O}_m(\rho_L;k)$. By definition of $\hat{T}$, we have a push-out diagram of $\mb{Z}/2r$-graded DGAs
\begin{equation*}
\xymatrix{
\mc{A}(T_L) \ar[r] \ar[d] & \mc{A}(\hat{T}_L) \ar[d] \\
\mc{A}(T) \ar[r] & \mc{A}(\hat{T})
}
\end{equation*}
Pass to augmentation varieties, we then obtain an identification between the augmentation varieties $\mr{Aug}_m(T,\epsilon_L,\rho_R;k)$ and
\begin{eqnarray*}
\mr{Aug}_m(T,\hat{\epsilon_L},\rho_R;k):=
\{\hat{\epsilon}\in\mr{Aug}_m(\hat{T};k)|\hat{\epsilon}|_{\hat{T}_L}=\hat{\epsilon_L}, \hat{\epsilon}|_{T_R}\in\mc{O}_m(\rho_R;k)\}
\end{eqnarray*}
Similarly, we obtain an identification between $\mr{Aug}_m(T,\rho_L,\rho_R;k)$ and
\begin{eqnarray*}
\mr{Aug}_m(T,\hat{\rho_L},\rho_R;k):=
\{\hat{\epsilon}\in\mr{Aug}_m(\hat{T};k)|\hat{\epsilon}|_{\hat{T}_L}\in\hat{\mc{O}}_m(\rho_L;k), \hat{\epsilon}|_{T_R}\in\mc{O}_m(\rho_R;k)\}
\end{eqnarray*}

Under this identification, we obtain a decomposition of the augmentation varieties into subvarieties
\begin{eqnarray}\label{eqn:decomp for non-acyclic aug}
&&\mr{Aug}_m(T,\epsilon_L,\rho_R;k)=\sqcup_{\hat{\rho_R}}\mr{Aug}_m(T,\hat{\epsilon_L},\hat{\rho_R};k)\\
&&\mr{Aug}_m(T,\rho_L,\rho_R;k)=\sqcup_{\hat{\rho_R}}\mr{Aug}_m(T,\hat{\rho_L},\hat{\rho_R};k)\nonumber
\end{eqnarray}
where $\hat{\rho_R}$ runs over all $m$-graded normal rulings of homology type $H_*$ of $T$ that descends to $\rho_R$, and
\begin{eqnarray*}
&&\mr{Aug}_m(T,\hat{\epsilon_L},\hat{\rho_R};k):=
\{\hat{\epsilon}\in\mr{Aug}_m(\hat{T};k)|\hat{\epsilon}|_{\hat{T}_L}=\hat{\epsilon_L}, \hat{\epsilon}|_{\hat{T}_R}\in\mc{O}_m(\hat{\rho_R};k)\}\\
&&\mr{Aug}_m(T,\hat{\rho_L},\hat{\rho_R};k):=
\{\hat{\epsilon}\in\mr{Aug}_m(\hat{T};k)|\hat{\epsilon}|_{\hat{T}_L}\in\hat{\mc{O}}_m(\rho_L;k), \hat{\epsilon}|_{\hat{T}_R}\in\mc{O}_m(\hat{\rho_R};k)\}
\end{eqnarray*}
Note: $\mr{Aug}_m(T,\hat{\epsilon_L},\hat{\rho_R};k)=\mr{Aug}_m(\hat{T},\hat{\epsilon_L},\hat{\rho_R};k)$, where the latter is as in Definition \ref{def:aug varieties with boundary conditions}.

Inspired by the decomposition (\ref{eqn:decomp for non-acyclic aug}), we introduce
\begin{definition}\label{def:augmentation number:general case}
Given $(T,\mu)$ as usual, let $\hat{\rho_L},\hat{\rho_R}$ be any $m$-graded normal rulings of $T_L,T_R$ respectively of the same homology type $H_*$, so $\hat{T}$ is defined.
Then, the \emph{$m$-graded augmentation number with boundary conditions $(\hat{\rho_L},\hat{\rho_R})$} for $(T,\mu)$ is
\begin{eqnarray*}
&&\mr{aug}_m(T,\hat{\rho_L},\hat{\rho_R};q):=\mr{aug}_m(\hat{T},\hat{\rho_L},\hat{\rho_R};q)
=q^{-\mr{dim}\mr{Aug}_m(T,\hat{\epsilon_L},\hat{\rho_R};k)}|\mr{Aug}_m(T,\hat{\epsilon_L},\hat{\rho_R};\mb{F}_q)|
\end{eqnarray*}
\end{definition}

By Corollary \ref{cor:invariance of augmentation numbers}, $\mr{aug}_m(T,\hat{\rho_L},\hat{\rho_R};q)$'s are Legendrian isotopy invariants for $(T,\mu)$.
Moreover, by Theorem \ref{thm:counting for tangles}, we immediately obtain the following
\begin{corollary}\label{cor:ruling vs aug:general case}
Given Legendrian tangle $(T,\mu)$ with $B$ base points so that each connected component containing a right cusp has at least one base point, let $\hat{\rho}_L,\hat{\rho}_R$ be any $m$-graded normal rulings of any homology type $H_*$ for $T_L,T_R$ respectively. Then
\begin{eqnarray*}
\mr{aug}_m(T,\hat{\rho}_L,\hat{\rho_R};q)=q^{-\frac{d+B}{2}}z^{-B}<\hat{\rho}_L|R_T^m(z)|\hat{\rho}_R>
\end{eqnarray*}
\end{corollary}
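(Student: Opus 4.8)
The plan is to deduce this directly from Theorem~\ref{thm:counting for tangles} by reducing everything to the enlarged tangle $\hat{T}$. Recall that by Definition~\ref{def:augmentation number:general case} one has $\mr{aug}_m(T,\hat{\rho}_L,\hat{\rho}_R;q)=\mr{aug}_m(\hat{T},\hat{\rho}_L,\hat{\rho}_R;q)$, and by Definition~\ref{def:generalized Ruling polynomials} one has $<\hat{\rho}_L|R_T^m(z)|\hat{\rho}_R>=<\hat{\rho}_L|R_{\hat{T}}^m(z)|\hat{\rho}_R>$, the ordinary Ruling polynomial of $\hat{T}$ (both sides vanishing unless $\hat{\rho}_L,\hat{\rho}_R$ have the same homology type $H_*$, in which case $\hat{T}$ is defined as in Section~\ref{subsec:generalized normal rulings}). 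So it suffices to verify that $(\hat{T},\mu)$, together with the chosen base points and the $m$-graded normal rulings $\hat{\rho}_L,\hat{\rho}_R$ of $\hat{T}_L,\hat{T}_R$, satisfies the hypotheses of Theorem~\ref{thm:counting for tangles}, and then to quote that theorem.

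First I would check the hypotheses. By construction $\hat{T}$ is obtained from $T$ by adjoining $k=\dim_k H_*$ parallel strands above the front, carrying the extended $\mb{Z}/2r$-valued Maslov potential $\mu$; the numbers of left and right endpoints of $\hat{T}$ are both even (as noted in the construction of $\hat{T}$ preceding Definition~\ref{def:generalized normal rulings}); and since the adjoined strands contain no cusps, every connected component of $\hat{T}$ containing a right cusp is a connected component of $T$ containing that right cusp, hence carries one of the original $B$ base points. Thus $(\hat{T},\mu)$ with its $B$ base points meets the standing assumption, and $\hat{\rho}_L,\hat{\rho}_R$ are genuine $m$-graded normal rulings of $\hat{T}_L,\hat{T}_R$ of the given homology type.

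Applying Theorem~\ref{thm:counting for tangles} to $(\hat{T},\mu)$ with boundary conditions $(\hat{\rho}_L,\hat{\rho}_R)$ then gives $\mr{aug}_m(\hat{T},\hat{\rho}_L,\hat{\rho}_R;q)=q^{-\frac{d+B}{2}}z^{B}<\hat{\rho}_L|R_{\hat{T}}^m(z)|\hat{\rho}_R>$, where $d$ is the top $z$-degree of $<\hat{\rho}_L|R_{\hat{T}}^m(z)|\hat{\rho}_R>=<\hat{\rho}_L|R_T^m(z)|\hat{\rho}_R>$ and $z=q^{1/2}-q^{-1/2}$. Substituting the two identifications of the first paragraph yields the asserted relation between $\mr{aug}_m(T,\hat{\rho}_L,\hat{\rho}_R;q)$ and $<\hat{\rho}_L|R_T^m(z)|\hat{\rho}_R>$.

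There is essentially no hard step here: the corollary is purely a matter of unwinding the definitions of the generalized augmentation number and generalized Ruling polynomial and then invoking the already-proved Theorem~\ref{thm:counting for tangles}. The only point that required genuine work — the compatibility, used in Definition~\ref{def:augmentation number:general case}, between $\mr{Aug}_m(T,\epsilon_L,\rho_R;k)$ and $\mr{Aug}_m(\hat{T},\hat{\epsilon}_L,\rho_R;k)$ arising from the pushout square of $\mb{Z}/2r$-graded DGAs $\mc{A}(T_L)\to\mc{A}(\hat{T}_L)$, $\mc{A}(T)\to\mc{A}(\hat{T})$ and the canonical lift $\epsilon_L\mapsto\hat{\epsilon}_L$ — was already established in the discussion preceding the corollary, so the proof reduces to assembling those statements.
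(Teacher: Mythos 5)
Your proposal is correct and is exactly the paper's route: the paper proves this corollary precisely by unwinding Definition \ref{def:augmentation number:general case} ($\mr{aug}_m(T,\hat{\rho}_L,\hat{\rho}_R;q)=\mr{aug}_m(\hat{T},\hat{\rho}_L,\hat{\rho}_R;q)$) and Definition \ref{def:generalized Ruling polynomials} ($<\hat{\rho}_L|R_T^m(z)|\hat{\rho}_R>=<\hat{\rho}_L|R_{\hat{T}}^m(z)|\hat{\rho}_R>$) and then quoting Theorem \ref{thm:counting for tangles} for $(\hat{T},\mu)$, whose hypotheses you verify just as needed. Note only that your computation yields the factor $z^{B}$, matching Theorem \ref{thm:counting for tangles}; the $z^{-B}$ in the corollary as printed is evidently a typo.
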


Also, given any $m$-graded isomorphism types $\rho_L,\rho_R$ of any homology type $H_*$ of $T_L,T_R$ respectively, and $\epsilon_L\in\mc{O}_m(\rho;k)$, Theorem \ref{thm:augmentation varieties for Legendrian tangles}, together with the decomposition (\ref{eqn:decomp for non-acyclic aug}), immediately induces a structure theorem for $\mr{Aug}_m(T,\epsilon_L,\rho_R;k)$. Similarly as in Remark \ref{rem:structure thm for aug var}, there's also a structure theorem for $\mr{Aug}_m(T,\rho_L,\rho_R;k)$. That is,
\begin{eqnarray}
&&\mr{Aug}_m(T,\epsilon_L,\rho_R;k)=\sqcup_{\hat{\rho}}\mr{Aug}_m^{\hat{\rho}}(T,\hat{\epsilon_L};k)\\
&&\mr{Aug}_m(T,\rho_L,\rho_R;k)=\sqcup_{\hat{\rho}}\mr{Aug}_m^{\hat{\rho}}(T,\hat{\rho_L};k)\nonumber
\end{eqnarray}
where $\hat{\rho}$ runs over all $m$-graded normal rulings of homology type $H_*$ for $(T,\mu)$ such that
$\hat{\rho}|_{\hat{T}_L}=\hat{\rho_L}, \hat{\rho}|_{T_R}=\rho_R$, and
\begin{eqnarray*}
&&\mr{Aug}_m^{\hat{\rho}}(T,\hat{\epsilon_L};k):=\mr{Aug}_m^{\hat{\rho}}(\hat{T},\hat{\epsilon_L};k)\\
&&\mr{Aug}_m^{\hat{\rho}}(T,\hat{\rho_L};k):=
\{\hat{\epsilon}\in \mr{Aug}_m^{\hat{\rho}}(\hat{T},\hat{\rho_L};k)|\hat{\epsilon}|_{\hat{T}_L}\in\hat{\mc{O}}_m(\rho_L;k)\}
\end{eqnarray*}
where we have used Definition \ref{def:augmentaion varieties via normal rulings} for the right hand sides. Notice that $\hat{\rho_L}$ is fixed.
Moreover, have
\begin{eqnarray}
&&\mr{Aug}_m^{\hat{\rho}}(\hat{T},\hat{\epsilon_L};k)\cong
(k^*)^{-\chi(\hat{\rho})+B}\times k^{r(\hat{\rho})}\\
&&\mr{Aug}_m^{\hat{\rho}}(\hat{T},\hat{\rho_L};k)\cong
\mc{O}_m(\rho_L;k)\times(k^*)^{-\chi(\hat{\rho})+B}\times k^{r(\hat{\rho})}.\nonumber
\end{eqnarray}

\subsection{An alternative generalization}\label{subsec:alternative generalization}

There's an alternative generalization of the ``Ruling polynomials compute augmentation numbers" result.
Given a Legendrian tangle $(T,\mu)$ in $J^1U$ with $U=(x_L,x_R)$ with generic front, and given $m$-graded isomorphism types $\rho_L,\rho_R$ of $T_L,T_R$ respectively, we can instead define
the augmentation number $\mr{aug}_m(T,\rho_L,\rho_R;q)$ as in Definition \ref{def:augmentation number}. It turns out, this will also be a Legendrian isotopy invariant. This motivates us to give the alternative generalization of normal rulings and Ruling polynomials for $(T,\mu)$.

Notice that Definition \ref{def:normal_ruling} can be reformulated in terms of the language used in \cite[Def.2.1]{HR15}. That is, a $m$-graded normal ruling $\rho$ of $(T,\mu)$ is a family of \emph{fixed-point free involutions} $\rho_x$ of the strands of $T$ over the generic $x$-coordinates $x_L\leq x\leq x_R$ such that:
\begin{enumerate}[label=(\arabic*)]
\item
Label the strands of $T_x=T|_{\{x\}}$ from top to bottom by $1,2,\ldots,s_x$ over each generic $x$. Then $\rho_x:I(T_x)=\{1,2,\ldots,s_x\}\xrightarrow[]{\sim}I(T_x)$ pairs the strands, with $\mu(\text{ upper-strand})-\mu(\text{ lower-strand})=1 (\mr{mod} m)$ for each pair. Equivalently, $\rho_x$ can be identified with a $m$-graded isomorphism type (Definition \ref{def:isomorphism type for trivial Legendrian tangles}) for $T_x$: $I(T_x)=U_x\sqcup L_x\sqcup H_x$ with $H_x=\emptyset$, and $\rho_x:U_x\xrightarrow[]{\sim}L_x$.

\noindent{}Recall that the endpoints or strands in $U_x$, $L_x$ and $H_x$ are called \emph{upper}, \emph{lower} and \emph{homological} respectively.

\item
As $x$ goes from left to right, $\rho_x$ remains unchanged, except when we pass a cusp or a crossing: near a cusp, the only change is, the 2 strands connected to the cusp are paired with each other; near a crossing, the change of $\rho_x$ is indicated as in Figure \ref{fig:NR}.
\end{enumerate}

Using this language, we generalize the definition to also allow involutions with fixed-points:
\begin{definition}\cite[Def.2.4]{LR12}\label{def:generalized normal rulings_2}
A \emph{$m$-graded generalized normal ruling} $\rho$ of $(T,\mu)$ is a family of $m$-graded isomorphism types $\rho_x$ for the strands $T_x$ of $T$ over generic $x$-coordinates $x$, such that: As $x$ goes from left to right, $\rho_x$ remains unchanged except when we pass a cusp or a crossing. Near a cusp, the only change is, the 2 strands connected to the cusp are paired with each other (so the upper (resp. lower) strand is \emph{upper} (resp. \emph{lower})). Near a crossing $q$ between strands $k,k+1$, we require\\
\noindent{}either: the over and under strand of $q$ are both \emph{homological}, both before and after $q$. In that case, we say $q$ is \emph{homological};\\
\noindent{}or: No matter before or after $q$, at most one of the two strands near $q$ is homological, and we require that: For each generic $x$, so $\rho_x$ determines a partition $I(T_x)=U_x\sqcup L_x\sqcup H_x$. For each $i\in H_x$, denote $\rho_x(i):=\infty$, and pretend that the strand $i$ is paired with a fixed strand at $z=-\infty$. Then the behavior of the pairing of strands near $x$ is shown as in Figure \ref{fig:NR} (Now, the figures may also contain a strand at $z=-\infty$).
In this case, define the \emph{switches}, \emph{returns} and \emph{departures} of $\rho$ as usual.
\end{definition}
\noindent{}Note: When $T$ is a trivial Legendrian tangle, a $m$-graded generalized normal ruling is the same as a $m$-graded isomorphism type. Moreover, now the crossings of degree 0 modulo $m$ are divided into 4 types: $m$-graded homological crossings, switches, $m$-graded returns and $m$-graded departures.

\begin{definition}\label{def:generalized Ruling polynomials_2}
Given a $m$-graded generalized normal ruling $\rho$ for $(T,\mu)$, we define $s(\rho),r(\rho),d(\rho)$ as in Definition \ref{def:switches and returns}. Let $h(\rho)$ (resp. $r'(\rho)$) to be the number of $m$-graded homological crossings (resp. $m$-graded returns) of $q$. Define the \emph{Euler characteristic} of $\rho$ to be:
\begin{eqnarray}
\chi(\rho):=c_R-s(\rho)-h(\rho)
\end{eqnarray}
where $c_R$ is the number of right cusps in $T$. Moreover, given any $m$-graded generalized normal rulings $\rho_L,\rho_R$ for $T_L,T_R$ respectively, the \emph{$m$-graded generalized Ruling polynomial (with boundary conditions $(\rho_L,\rho_R)$)} for $(T,\mu)$ is:
\begin{eqnarray}
<\rho_L|\tilde{R}_T^m(z)|\rho_R>:=\sum_{\rho}z^{-\chi(\rho)}(\frac{q}{q-1})^{h(\rho)}
\end{eqnarray}
where $\rho$ runs over all $m$-graded generalized normal rulings of $T$ such that $\rho|_{T_L}=\rho_L,\rho|_{T_R}=\rho_R$, and $z=q^{\frac{1}{2}}-q^{-\frac{1}{2}}$.
\end{definition}

\noindent{}Note: When $\rho$ is a $m$-graded normal ruling, then $h(\rho)=0$, so by Remark \ref{rem:filling_surface_computation_formula}, $\chi(\rho)$ coincides with Definition \ref{def:Ruling polynomial}. Hence, $<\rho_L|\tilde{R}_T^m(z)|\rho_R>=<\rho_L|R_T^m(z)|\rho_R>$.

For generalized normal rulings, the analogue for Lemma \ref{lem:filling_surface} no longer holds. However, by looking at the change under Legendrian isotopies, we still have

\begin{corollary}\label{cor:inv of generalized ruling poly}
Given any Legendrian tangle $(T,\mu)$ and any $m$-graded generalized normal rulings $\rho_L,\rho_R$ for $T_L,T_R$ respectively, $<\rho_L|\tilde{R}_T^m(z)|\rho_R>$ defines a Legendrian isotopy invariant. Moreover, if $T=T_1\circ T_2$ is a composition of two Legendrian tangles, then the composition axiom for the generalized Ruling polynomials holds:
\begin{eqnarray*}
<\rho_L|\tilde{R}_T^m(z)|\rho_R>=\sum_{\rho_I}<\rho_L|\tilde{R}_T^m(z)|\rho_I><\rho_I|\tilde{R}_T^m(z)|\rho_R>
\end{eqnarray*}
where the sum is over all $m$-graded generalized normal rulings $\rho_I$ of $(T_1)_R=(T_2)_L$.
\end{corollary}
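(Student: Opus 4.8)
The plan is to run the argument of Theorem~\ref{thm:counting for tangles} and Corollary~\ref{cor:invariance of augmentation numbers} with ``$m$-graded normal ruling'' replaced throughout by ``$m$-graded generalized normal ruling'', the point being that the weight $(\tfrac{q}{q-1})^{h(\rho)}$ appearing in Definition~\ref{def:generalized Ruling polynomials_2} is exactly the correction that turns $\langle\rho_L|\tilde R^m_T(z)|\rho_R\rangle$ into a normalized count of augmentations with boundary conditions $(\rho_L,\rho_R)$, where now $\rho_L,\rho_R$ are allowed to be $m$-graded isomorphism types with nonempty homological part. Concretely, I would first extend Lemma~\ref{lem:aug and A-form MCS for elementary tangles} and Lemma~\ref{lem:augmentation varieties for elementary tangles} to the situation in which some of the strands entering an elementary Legendrian tangle $E$ are homological. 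The only genuinely new case is a crossing $q$ between two homological strands: there the handleslide $H_{-\epsilon(q)}$ of Lemma~\ref{lem:aug and A-form MCS for elementary tangles}(1) is filtration preserving, so every value $\epsilon(q)\in k$ leaves the complex in the prescribed isomorphism type on the right; thus an $m$-graded homological crossing contributes a free $k$-parameter to $\mr{Aug}_m^\rho(E,\epsilon_L;k)$, and a direct check of the corresponding handleslide moves (as in Figure~\ref{fig:SR-form MCS}) identifies $\mr{Aug}_m^\rho(E,\epsilon_L;k)$ in each of the cases switch/return/departure/homological, yielding the analogue of Lemma~\ref{lem:augmentation varieties for elementary tangles} with $\chi$ as in Definition~\ref{def:generalized Ruling polynomials_2}.

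With this in hand, the iterated-fibration computation of Section~\ref{subsec:computation of aug number} applies verbatim and yields a formula for $\mr{aug}_m(T,\rho_L,\rho_R;q)$ (Definition~\ref{def:augmentation number}) as a weighted sum over the $m$-graded generalized normal rulings $\rho$ with $\rho|_{T_L}=\rho_L,\rho|_{T_R}=\rho_R$. The remark following Proposition~\ref{prop:r-d is independent of normal rulings}, applied to generalized normal rulings, gives that $-\chi(\rho)+2\bigl(r'(\rho)+h(\rho)\bigr)$ is independent of $\rho$ for fixed $(\rho_L,\rho_R)$; feeding this into the point count and using the substitution $z=q^{1/2}-q^{-1/2}$ together with the identity $z\cdot\tfrac{q}{q-1}=q^{1/2}$ (which is precisely what absorbs the homological crossings) rewrites the formula as
\begin{equation*}
\mr{aug}_m(T,\rho_L,\rho_R;q)=q^{-\frac{d+B}{2}}\,z^{-B}\,\langle\rho_L|\tilde R^m_T(z)|\rho_R\rangle ,
\end{equation*}
where $d$ and $B$ depend only on Legendrian-isotopy-invariant data. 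Hence invariance of $\langle\rho_L|\tilde R^m_T(z)|\rho_R\rangle$ is equivalent to invariance of $\mr{aug}_m(T,\rho_L,\rho_R;q)$, and the composition axiom follows exactly as in the proof of Theorem~\ref{thm:invariance and composition of Ruling polynomials}: the co-sheaf property (Proposition~\ref{prop:co-sheaf of DGAs}) provides the fiber-product decomposition of $\mr{Aug}_m$ over the cut, $\chi(\rho)$ and $h(\rho)$ are additive across the cut, and one sums over the intermediate generalized normal rulings $\rho_I$ of $(T_1)_R=(T_2)_L$ (alternatively it is inherited from the composition axiom for $\mr{aug}_m$).

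The main obstacle is the Legendrian invariance of $\mr{aug}_m(T,\rho_L,\rho_R;q)$ itself, because --- as noted in the remark preceding the corollary --- there is no analogue of the bijection $\phi_h$ of Lemma~\ref{lem:filling_surface}: a Legendrian Reidemeister move can turn a homological crossing into an ordinary switch, return or departure and back, so the set of generalized normal rulings genuinely changes. I would prove invariance by the same move-by-move strategy as in Lemma~\ref{lem:filling_surface}, but tracking the weighted generating function $\sum_\rho(q-1)^{-\chi(\rho)+B}q^{r'(\rho)+h(\rho)}$ (equivalently $z^B\langle\rho_L|\tilde R^m_T(z)|\rho_R\rangle$ up to the invariant factor $q^{(d+B)/2}$) rather than a bijection: localize in a disk $D$ around the move, observe that outside $D$ the ruling is unchanged and fixes the upper/lower/homological status of each strand entering $D$, and for each such boundary datum enumerate the finitely many completions of $\rho$ inside $D$ and check that the local contribution to the weighted sum is unchanged. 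The delicate cases are the type III move and the crossing-reordering smooth isotopy when one or two participating strands are homological; there the new homological-crossing terms must exactly compensate the change in the switch/return/departure terms, and verifying this balance --- which is where the specific weight $\tfrac{q}{q-1}$ is forced --- is the technical heart of the proof. One could instead try to invoke that $\mc{A}(T)$ is a Legendrian invariant up to stable isomorphism compatibly with the co-restrictions to $T_L,T_R$ (which are fixed throughout the isotopy), so that the normalized count is automatically unchanged; but making the compatibility with co-restriction precise amounts to essentially the same local analysis.
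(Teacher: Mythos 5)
Your plan for the invariance part is, at its core, the same as the paper's own proof: since the bijection $\phi_h$ of Lemma \ref{lem:filling_surface} is unavailable, one verifies move by move that the weighted count defining $<\rho_L|\tilde{R}_T^m(z)|\rho_R>$ is unchanged, the only nontrivial case being the Reidemeister III move, where the set of generalized rulings genuinely changes and the weight $(\frac{q}{q-1})^{h(\rho)}$ must exactly compensate the change in switches/returns/departures at $z=q^{1/2}-q^{-1/2}$. The paper gets the composition axiom first, directly from the definition (additivity of $\chi$ and $h$ across a vertical cut --- no DGA co-sheaf input is needed), and then uses it to reduce the RIII check to the tangle consisting of exactly the three crossings with arbitrary boundary isomorphism types, which is your ``localize in a disk'' step made precise; the balancing is then a finite enumeration, e.g.\ in Figure \ref{fig:Generalized Ruling under type III move} two rulings with $(s,h)=(1,0),(2,1)$ against one with $(s,h)=(1,2)$, and indeed $z+z^3\frac{q}{q-1}=zq=z^3(\frac{q}{q-1})^2$. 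So you have identified the right difficulty, but the verification you defer as ``the technical heart'' is precisely the content of the proof; everything else in your write-up is scaffolding that the corollary does not need.

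Two concrete problems with that scaffolding. First, the detour through augmentation numbers cannot deliver invariance, as you concede at the end: in the paper the implication runs the other way --- Corollary \ref{cor:inv of generalized ruling poly} is proved by the combinatorial check and is then what gives Legendrian invariance of $\mr{aug}_m(T,\rho_L,\rho_R;q)$ for general isomorphism types, precisely because the argument of Corollary \ref{cor:invariance of augmentation numbers} fails once homological crossings are allowed. Second, your stated constancy claim is false: $-\chi(\rho)+2\bigl(r'(\rho)+h(\rho)\bigr)$ is \emph{not} independent of $\rho$, because $h(\rho)$ itself varies among generalized rulings with the same boundary conditions (in the RIII example above, $h=0$ versus $h=1$ for the same $(\rho_L,\rho_R)$). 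What does generalize is Proposition \ref{prop:r-d is independent of normal rulings}: $r'(\rho)-d(\rho)$ is constant (homological crossings leave the quantity $A(x)$ unchanged), hence $-\chi(\rho)+2r'(\rho)$ is constant with $\chi$ as in Definition \ref{def:generalized Ruling polynomials_2}; applying this to the point count $(q-1)^{-\chi(\rho)-h(\rho)+B}q^{r(\rho)+h(\rho)}=(q-1)^{-\chi(\rho)+B}q^{r(\rho)}(\frac{q}{q-1})^{h(\rho)}$ gives $\mr{aug}_m(T,\rho_L,\rho_R;q)=q^{-\frac{d+B}{2}}z^{B}<\rho_L|\tilde{R}_T^m(z)|\rho_R>$ (note $z^{B}$, matching Theorem \ref{thm:counting for tangles}, not $z^{-B}$). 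Your elementary-tangle analysis --- a homological crossing contributing a free $k$-parameter --- is correct and agrees with the paper's structure result; with the constancy statement corrected and the RIII enumeration actually carried out, your argument closes up and is then essentially the published one.
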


\begin{proof}
The composition axiom follows from the definition. The Legendrian isotopy invariance is slightly nontrivial. The proof is similar to that of Theorem \ref{thm:invariance and composition of Ruling polynomials}, we need to study the behavior of $<\rho_L|\tilde{R}_T^m(z)|\rho_R>$ under a smooth isotopy, or a Legendrian Reidemeister move of Type I, II, III respectively. The only nontrivial case is the Legendrian Reidemeister Type III move, when the bijection in Lemma \ref{lem:filling_surface} can fail. Otherwise, the proof is the same as that of Theorem \ref{thm:invariance and composition of Ruling polynomials}. By the composition axiom, it suffices to show $<\rho_L|\tilde{R}_T^m(z)|\rho_R>=<\rho_L|\tilde{R}_{T'}^m(z)|\rho_R>$, when $T, T'$ differ by Legendrian Reidemeister Type III move, and $T,T'$ both consist of exactly the 3 crossings appeared in the Type III move (Figure \ref{fig:LR} (right)). We only illustrate the proof by showing one such nontrivial case, as in Figure \ref{fig:Generalized Ruling under type III move}. The other cases are either trivial or similar. In the example of the figure, the 3 crossings are all of degree 0 modulo $m$. Say, the endpoints connected to one of the 3 crossings are $k-1,k,k+1$ (both in $T$ and $T'$), then $k,k+1$ are homological and $k-1$ is lower with respect to $\rho_L$ (and also $\rho_R$). With these fixed boundary isomorphism types $\rho_L,\rho_R$, $T$ admits 2 $m$-graded generalized normal rulings $\rho_1,\rho_2$ (Figure \ref{fig:Generalized Ruling under type III move} (left)),  and $T'$ admits a unique $m$-graded generalized normal ruling $\rho'$ (Figure \ref{fig:Generalized Ruling under type III move} (right)). Moreover, $s(\rho_1)=1,h(\rho_1)=0$, $s(\rho_2)=2,h(\rho)=1$, and $s(\rho')=1,h(\rho')=2$. Hence by Definition \ref{def:generalized normal rulings_2}, have $<\rho_L|\tilde{R}_T^m(z)|\rho_R>=z(\frac{q}{q-1})^0+z^3(\frac{q}{q-1})^1$ and $<\rho_L|\tilde{R}_{T'}^m(z)|\rho_R>=z^3(\frac{q}{q-1})^2$. One can see that they are equal precisely when $z=q^{\frac{1}{2}}-q^{-\frac{1}{2}}$.
\end{proof}

\begin{figure}[!htbp]
\begin{center}
\minipage{0.8\textwidth}
\includegraphics[width=\linewidth, height=1in]{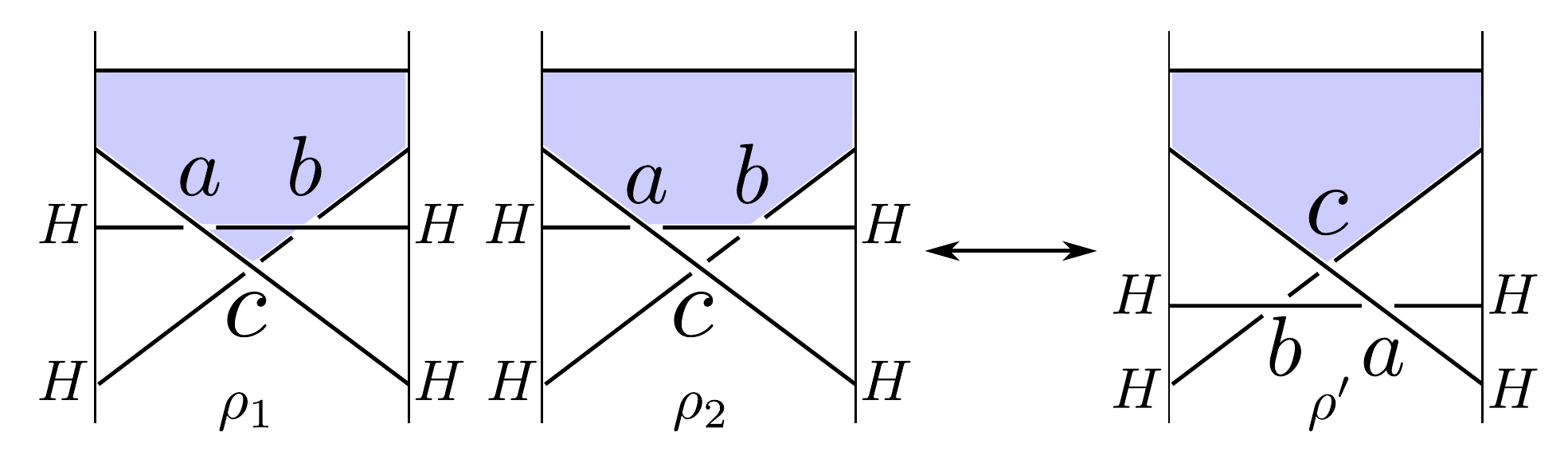}
\endminipage\hfill
\end{center}
\caption{In the example of the figure, the Reidemeister Type III move induces a correspondence between two $m$-graded generalized normal rulings $\rho_1,\rho_2$ for $T$ (left), and one $m$-graded generalized normal ruling $\rho'$ for $T'$ (right), with the fixed boundary isomorphism types. In the figure, the crossings all have degree 0 modulo $m$, ``$H$" indicates the homological endpoints, the switches are the crossings appeared in the boundary of the shadowed disks.}
\label{fig:Generalized Ruling under type III move}
\end{figure}

On the other hand, the main results of Section \ref{subsec:computation of aug number}-\ref{subsec:Ruling vs aug: acyclic case} admit a direct generalization to the case of generalized normal rulings, with completely the same arguments. The only exception is Corollary \ref{cor:invariance of augmentation numbers}, where the same argument there doesn't apply to the general case. However, this is not an issue since we didn't use the corollary anywhere else.

More specifically, fix $m$-graded generalized normal rulings $\rho_L,\rho_r$ for $T_L,T_R$ respectively and take any $\epsilon_L$ in $\mc{O}_m(\rho_L;k)$. As in Equation \ref{eqn:partition of aug var}, we obtain a decomposition of the augmentation variety
\begin{eqnarray}
\mr{Aug}_m(T,\epsilon_L,\rho_R;k)=\sqcup_{\rho}\mr{Aug}_m^{\rho}(T,\epsilon_L;k)
\end{eqnarray}
where $\rho_L$ runs over all $m$-graded generalized normal rulings of $T$ such that $\rho|_{T_L}=\rho_L,\rho|_{T_R}=\rho_R$, and $\mr{Aug}_m^{\rho}(T,\epsilon_L;k)$ is defined as in Definition \ref{def:augmentaion varieties via normal rulings}. Moreover, as in Theorem \ref{thm:augmentation varieties for Legendrian tangles}, have
\begin{eqnarray}
\mr{Aug}_m^{\rho}(T,\epsilon_L;k)\cong (k^*)^{-\chi(\rho)-h(\rho)+B}\times k^{r(\rho)+h(\rho)}
\end{eqnarray}
The proof is completely similar. Finally, as in Theorem \ref{thm:counting for tangles}, we obtain the following direct generalization:
\begin{eqnarray}
\mr{aug}_m(T,\rho_L,\rho_R;q)=q^{-\frac{d+B}{2}}z^B<\rho_L|\tilde{R}_T^m(z)|\rho_R>.
\end{eqnarray}
In particular, this also shows the Legendrian isotopy invariance of $\mr{aug}_m(T,\rho_L,\rho_R;q)$ without using the arguments in the proof of Corollary \ref{cor:invariance of augmentation numbers}. Similarly, as in Remark \ref{rem:structure thm for aug var}, one also obtains a structure theorem for $\mr{Aug}_m(\rho_L,\rho_R;k)$ for any $m$-graded generalized normal rulings $\rho_L,\rho_R$ of $T_L,T_R$ respectively.

\end{document}